\numberwithin{equation}{section}
\newtheorem{theorem}[equation]{Theorem}
\newtheorem{lemma}[equation]{Lemma}
\newtheorem{proposition}[equation]{Proposition}
\newtheorem{corollary}[equation]{Corollary}
\newtheorem{theoremintr}{Theorem}
\theoremstyle{definition}
\newtheorem{definition}[equation]{Definition}
\newtheorem{example}[equation]{Example}
\newtheorem{remark}[equation]{Remark}
\newtheorem{remarkintr}{Remark}
\newcommand{\forg}{\mathrm{forg}}
\newcommand{\bforg}{\mathrm{forg}'} 
\newcommand{\einv}{\nicefrac{1}{e}}
\newcommand{\slice}{\mathrm{s}}
\newcommand{\effcov}{\mathrm{f}}
\newcommand{\Bnd}{\mathrm{B}}
\newcommand{\Cyc}{\mathrm{Z}}
\newcommand{\Ker}{\mathrm{Ker}}
\newcommand{\Hml}{\mathrm{H}}
\newcommand{\Jhom}{\mathrm{J}}
\newcommand{\mot}{\mathrm{mot}}
\newcommand{\Spc}{\mathbf{Spc}}
\newcommand{\Spt}{\mathbf{Sp}}
\newcommand{\Smk}{\mathbf{Sm}_S}
\newcommand{\SHk}{\mathbf{SH}(S)}
\newcommand{\sph}{\mathbbm{1}}
\newcommand{\Th}{{\mathrm{Th}}}
\newcommand{\ggl}{\gamma}
\newcommand{\gwall}{\gamma^{\mathrm{W}}}
\newcommand{\SL}{\mathrm{SL}}
\newcommand{\GL}{\mathrm{GL}}
\newcommand{\KWall}{\mathrm{K^{Wall}}}
\newcommand{\KWallrk}{\mathrm{K^{Wall}_{rk=0}}}
\newcommand{\Krk}{\mathrm{K_{rk=0}}}
\newcommand{\KSL}{\mathrm{K^{SL}}}
\newcommand{\KSLrk}{\mathrm{K^{SL}_{rk=0}}}
\newcommand{\PShv}{\mathrm{PSh}}
\newcommand{\WGr}{\mathrm{WGr}}
\newcommand{\BettiC}{\mathrm{Re_{B\mathbb{C}}}}
\newcommand{\C}{\mathbb{C}}
\newcommand{\R}{\mathbb{R}}
\newcommand{\BU}{\mathrm{BU}}
\newcommand{\Eone}{\mathbb{E}_1}
\newcommand{\Einf}{\mathbb{E}_\infty}
\newcommand{\RP}{\mathbb{R}\mathrm{P}}
\newcommand{\CP}{\mathbb{C}\mathrm{P}}
\newcommand{\Sp}{{\mathrm{Sp}}}
\newcommand{\Gr}{{\mathrm{Gr}}}
\newcommand{\Proj}{\mathbb{P}}
\newcommand{\Z}{{\mathbb{Z}}}
\newcommand{\HZ}{{\mathrm{H\Z}}}
\newcommand{\MGL}{\mathrm{MGL}}
\newcommand{\MSL}{\mathrm{MSL}}
\newcommand{\MSp}{\mathrm{MSp}}
\newcommand{\MSU}{\mathrm{MSU}}
\newcommand{\MW}{\mathrm{MWL}}
\newcommand{\MU}{\mathrm{MU}}
\newcommand{\KQ}{\mathrm{KQ}}
\newcommand{\GW}{\mathrm{GW}}
\newcommand{\Vect}{{\rm Vect}}
\newcommand{\W}{\mathrm{W}}
\newcommand{\K}{\mathrm{K}}
\newcommand{\A}{\mathbb{A}}
\newcommand{\NN}{\mathbb{N}}
\newcommand{\thc}{{th}}
\newcommand{\id}{\operatorname{id}}
\newcommand{\colim}{\operatorname*{colim}}
\newcommand{\cofib}{\operatorname*{cofib}}
\newcommand{\fib}{\operatorname*{fib}}
\newcommand{\KW}{\mathrm{KW}}
\newcommand{\struct}{\mathcal{O}}
\newcommand{\Gm}{{\mathbb{G}_m}}
\newcommand{\SH}{\mathbf{SH}}
\newcommand{\BO}{\mathrm{BO}}
\newcommand{\T}{\mathrm{T}}
\newcommand{\Spec}{\operatorname{Spec}}
\newcommand{\Pic}{\operatorname{Pic}}
\newcommand{\etale}{\'etale }
\newcommand{\la}{\langle}
\newcommand{\ra}{\rangle}
\newcommand{\EE}{\mathcal{E}}
\newcommand{\etatop}{\eta_{\mathrm{top}}}
\DeclareRobustCommand{\pmod}{\@ifstar\@pmods\@@pmod}
\def\@pmods#1{\mkern4mu({\operator@font mod}\mkern 6mu#1)}
\newcommand{\pour}{\ar@{}[ur]|(0.2){\text{\pigpenfont G}}}
\newcommand{\podr}{\ar@{}[dr]|(0.2){\text{\pigpenfont A}}}
\newcommand{\bigslant}[2]{{\left.\raisebox{.2em}{$#1$}\middle/\raisebox{-.2em}{$#2$}\right.}}
\title{The geometric diagonal of the special linear algebraic~cobordism}
\author{Egor Zolotarev}
\address{LMU M\"unchen, Mathematisches Institut, Theresienstr. 39, 80333 M\"unchen, Germany}
\keywords{Motivic homotopy theory, algebraic cobordism, hermitian K-theory}
\subjclass[2020]{Primary 14F42; Secondary 19G38, 57R77}
\email{\href{mailto:zolotarev@math.lmu.de}{zolotarev@math.lmu.de}, \href{mailto:zolotarev-egv@yandex.ru}{zolotarev-egv@yandex.ru}}
\begin{document}
\begin{abstract}
The motivic version of the $c_1$-spherical cobordism spectrum is constructed. A connection of this spectrum with other motivic Thom spectra is established. Using this connection, we compute the $\Proj^1$-diagonal of the homotopy groups of the special linear algebraic cobordism $\pi_{2*,*}(\MSL)$ over a local Dedekind domain $k$ with $1/2\in k$ after inverting the exponential characteristic of the residue field of $k$. We discuss the action of the motivic Hopf element $\eta$ on this ring, obtain a description of the localization away from $2$, and compute the $2$-primary torsion subgroup. The complete answer is given in terms of the special unitary cobordism ring. An important component of the computation is the construction of Pontryagin characteristic numbers with values in the hermitian K-theory. We also construct Chern numbers in this setting, prove the motivic version of the Anderson--Brown--Peterson theorem, and briefly discuss classes of Calabi--Yau varieties in the SL-cobordism ring.
\end{abstract}
\maketitle
\tableofcontents
\section{Introduction}
The computation of various cobordism rings was one of the main directions of research in homotopy theory in the 1960s. The reason why it is connected to algebraic topology is the link between cobordism theories and Thom spectra, known as the Pontryagin--Thom construction. In modern terms, it states that the cobordism theory $\Omega^G_*$ as a generalized homology theory is isomorphic to the generalized homology theory represented by the Thom spectrum $\mathrm{M}G$. Therefore, the computation of the respective cobordism ring is equivalent to the study of the homotopy groups of a spectrum, which is a problem of stable homotopy theory.

The simplest of cobordism theories, unoriented cobordism, was the subject of Thom's seminal paper \cite{Thom}, who completely calculated the ring $\pi_*(\mathrm{MO})$. In the complex case, Milnor \cite{Mil60} and Novikov \cite{Nov0} obtained a complete description of the unitary cobordism ring $\pi_*(\MU)$, and later on, Quillen proved that this ring is isomorphic to the coefficient ring of the universal formal group law \cite{Quillen}. These results led to the emergence of the Adams--Novikov spectral sequence and the chromatic point of view, which have contributed immensely to the study of the stable  homotopy category, see \cite{Ra04}. The description of the oriented cobordism ring $\pi_*(\mathrm{MSO})$ was treated by Novikov \cite{Nov0, Nov} (the ring structure modulo
torsion) and by Wall \cite{Wall} (completely). For this purpose, Wall introduced the cobordism theory of manifolds with $\RP^1$-reduction, which sits in between oriented and unoriented cobordism theories. In \cite{CF}, Conner and Floyd used this idea in the unitary context, introducing the cobordism theory of complex manifolds with $\CP^1$-reduction $\pi_*(\W)$. As an application, they computed the cobordism ring of manifolds with a stable special unitary structure $\pi_*(\MSU)$ (see also \cite{CLP}, \cite{CP23} for a more modern exposition). The latter two constructions are the main topological insights for this paper. All of the above calculations are presented uniformly in Stong's book \cite{Sto}.

In the setting of motivic homotopy theory, the Thom spectrum $\MGL$ was introduced by Voevodsky in his ICM address \cite{Voe98}. This spectrum is the universal oriented commutative ring spectrum \cite{PPR08}, where ``oriented'' means that it possesses Thom classes for vector bundles. In the same paper with the definition, Voevodsky proposes a conjecture that the $\Proj^1$-diagonal of the coefficient ring of $\MGL$ over a regular local ring should be isomorphic to the coefficient ring of the universal formal group law. This motivic version of the Quillen theorem was proved over fields of characteristic zero by Hopkins and Morel (unpublished) and over fields away from the characteristic by Hoyois \cite{Hoy15}. The result of Hoyois was further generalized by Spitzweck to Noetherian local rings which are regular over discrete valuation rings \cite{SpiMGL} (away from the characteristic of the residue field). These computations lead to a large number of applications in motivic homotopy theory, see e.g., \cite{Chowt,RSO19,RSO24}. In addition, the construction of this spectrum serves as an inspiration for the Levine--Morel algebraic cobordism theory \cite{LevMor}, which is extensively studied now and has many applications to the problems of algebra and algebraic geometry \cite{LevPand,Vishik,SS21}. The connection between $\MGL$ and Levine--Morel algebraic cobordism $\Omega^*$ over fields of characteristic zero can be viewed as the motivic version of the Pontryagin--Thom theorem \cite{Levine}. However, this comparison is actually a posteriori, since the only known proof uses computations of the corresponding coefficient rings.

The story continues with the definition of the special linear and symplectic motivic Thom spectra due to Panin and Walter \cite{PW22}. Similarly to Voevodsky's algebraic cobordism, these spectra are universal ring spectra among those admitting Thom classes for oriented (or symplectic) vector bundles. An important difference between the motivic situation and the topological one, that such weakly oriented spectra play more important role here (even rationally); see \cite{ALP, DFJK}. Also, we should note that the spectra $\MSL$ and $\MSp$ are ``closer'' to the motivic sphere spectrum $\sph$ than $\MGL$. Although the definition of these Thom spectra is standard, little is known about them at the moment. The goal of this work is to present a computation of the $\Proj^1$-diagonal of the coefficient ring of the special linear algebraic cobordism spectrum $\MSL$ over some bases. This computation can be viewed as an $\SL$-analogue of the Voevodsky conjecture. Nevertheless, unlike in the case of $\GL$-cobordism, the answer for $\MSL$ depends on the base even for fields, as can be seen from Yakerson's description of the zero homotopy module \cite{Yakerson}.

Let us point out what is known in the literature on this issue apart from the zero homotopy group. To the best of our knowledge, historically the first partial computation appeared in the paper of Levine, Yang, and Zhao \cite{LYZ}. Assuming that the base is a spectrum of a perfect field of exponential characteristic $e$, they obtain a description of the localization away from $2e$ of the ``constant part'' modulo some divisible subgroup (see Remark \ref{remark_intro_LYZ}(2)). For this purpose, they used the motivic Adams spectral sequence. The only other advancement is the computation by Bachmann and Hopkins of the homotopy groups of the $\eta$-periodization over fields of characteristic different from $2$ \cite{BHop}, which was further generalized by Bachmann to Dedekind domains in which $2$ is invertible \cite{BacDVR}. In our terms, this can be restated as the computation of the stabilization of the $\Proj^1$-diagonal with respect to the multiplication by the motivic Hopf element $\eta$. Notice that in contrast to the classical picture, the Hopf element is not nilpotent here.
\subsection{Overview of results}
Now we formulate the main results of the present paper. For notation and conventions see \S \ref{table_of_not}.

To describe the main idea, let us look at the real and complex Betti realizations of $\MSL$. They are given by the oriented Thom spectrum $\mathrm{MSO}\in\Spt$ and the special unitary Thom spectrum $\MSU\in\Spt$ respectively. Because of this it seems reasonable to try to adopt Wall, Conner, and Floyd's constructions to motivic reality. This is done in this paper: we define the $c_1$-spherical algebraic cobordism spectrum $\MW$ over an arbitrary base scheme $S$ such that its real and complex Betti realizations are given by the cobordism spectra of manifolds with $\RP^1$ and $\CP^1$-reduction respectively. Furthermore, there are forgetful morphisms \[\MSL\xrightarrow{\forg} \MW\xrightarrow{\bforg} \MGL.\] The first technical issue of the present paper is the computation of the respective cofibers. We summarize these results in the following theorem. The first point states that we can think of the $c_1$-spherical algebraic cobordism spectrum as a ``geometric'' model for the cofiber $\MSL/\eta$. 
\begin{theoremintr}[Corollary \ref{cor:cofiber} and Theorem \ref{thm:fiber_seq}]\label{theorem_A}
    Let $S$ be a base scheme. Then there are cofiber sequences of motivic spectra over $S$ \begin{enumerate} \item $\Sigma^{1,1}\MSL\xrightarrow{\eta}\MSL\xrightarrow{\forg}\MW$, \item $\MW\xrightarrow{\bforg}\MGL\xrightarrow{\Delta}\Sigma^{4,2}\MGL$, \end{enumerate} where $\eta$ is the motivic Hopf element and $\Delta$ is the cohomological operation that corresponds to the characteristic class $c_1(\det\ggl)\cdot c_1(\det\ggl^\vee)$ under the Thom isomorphism $\MGL^{4,2}(\MGL)\cong \MGL^{4,2}(\mathrm{B}\GL)$.
\end{theoremintr}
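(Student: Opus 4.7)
Having constructed $\MW$ in the preceding sections as the Thom spectrum of a tautological bundle over a motivic Wall Grassmannian $\WGr$, the plan is to prove each cofiber sequence by unfolding this geometric model and comparing it with the Grassmannians underlying $\MSL$ and $\MGL$.

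\textbf{For (1):} I would argue that $\WGr$ is, by design, a motivic analogue of the Conner--Floyd ``coning off $\eta$'' construction on $\mathrm{BSL}$. Concretely, one should exhibit $\WGr$ as the homotopy cofiber of a map $\Sigma^{1,1}\mathrm{BSL}\to \mathrm{BSL}$ which, after Thomification, becomes multiplication by $\eta$ on $\MSL$. Since the Thom spectrum functor preserves colimits, this cofiber sequence of classifying spaces yields the cofiber sequence (1) after passing to Thom spectra, identifying $c$ with the projection $\MSL\to \MSL/\eta$. A clean way to produce the required map is to write $\mathrm{BSL}$ as a colimit of finite Grassmannians and to observe that the low-dimensional attaching maps factor through $\eta$.

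\textbf{For (2):} The plan is in two steps. First, define $\Delta:\MGL\to \Sigma^{4,2}\MGL$: under the Thom isomorphism $\MGL^{4,2}(\MGL)\cong \MGL^{4,2}(\mathrm{B}\GL)$, stable cohomology operations of bidegree $(4,2)$ correspond to characteristic classes, and $\Delta$ is taken to correspond to $c_1(\det\ggl)\cdot c_1(\det\ggl^\vee)$. Second, verify that $\Delta\circ \bar c=0$: by construction the universal bundle over $\WGr$ admits a ``$\Proj^1$-reduction'' of its determinant, which forces the class $c_1(\det\ggl)\cdot c_1(\det\ggl^\vee)$ to vanish on $\WGr$. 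This produces an induced map $\MW\to \fib(\Delta)$, which one then shows is an equivalence.

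\textbf{Main obstacle.} The principal difficulty is establishing the equivalence $\MW\simeq \fib(\Delta)$. My approach would be a direct cellular/filtration comparison of $\WGr$ and $\mathrm{B}\GL$: both admit exhaustive filtrations by finite Grassmannian-like pieces, and after Thomification the successive quotients should match except for the cells contributing precisely a shifted copy of $\MGL$ via the characteristic class $c_1(\det\ggl)\cdot c_1(\det\ggl^\vee)$. A tempting shortcut is to verify the analogous topological statements via complex and real Betti realization---where the corresponding cofiber sequences go back to Conner--Floyd for $\MSU$ and Wall for $\mathrm{MSO}$---and then lift them motivically, but this requires a conservativity statement that is delicate over general bases, so one likely has to carry out the equivalence directly in $\SHk$.
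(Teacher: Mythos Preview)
Your plan has the right broad shape, but both parts miss the key structural observation that the paper uses and that resolves your ``main obstacle'' directly.

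\textbf{For (1).} You propose to exhibit $\WGr$ as a cofiber of a map $\Sigma^{1,1}\mathrm{B}\SL\to\mathrm{B}\SL$ and then Thomify. This framing is off: $\eta$ does not act on the classifying space, and $\WGr$ is not a cofiber of $\mathrm{B}\SL$'s. What the paper does instead is observe a \emph{product} splitting at the $\K$-theory presheaf level: the determinant map $\Krk\to\Pic$ has a section, so $\KSLrk\times\Pic\xrightarrow{\sim}\Krk$, and analogously $\KSLrk\times\Proj^1\xrightarrow{\sim}\KWallrk$ (as presheaves over $\K$). Applying the Thom functor to the second splitting gives $\MW\simeq\MSL\wedge\Th_{\Proj^1}(\struct(-1)\ominus\struct)$. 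Now $\eta$ enters because $\Th_{\Proj^1}(\struct(-1))\simeq\Proj^2$ and $\Sigma^{\infty-(2,1)}\Proj^2\simeq\sph/\eta$; thus $\MW\simeq\MSL/\eta$. Checking that $c$ matches the quotient map is a short compatibility lemma.

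\textbf{For (2).} You reverse the logic: define $\Delta$, check $\Delta\circ\bar{c}=0$, then try to prove $\MW\simeq\fib(\Delta)$ by a cellular comparison you admit is the hard part. The paper instead computes $\cofib(\bar{c})$ first and identifies the connecting map afterward. Using the same product splittings, $\bar{c}$ is identified with $\MSL$ smashed with $\Sigma^{\infty-(2,1)}(\Proj^2\hookrightarrow\Proj^\infty)$, so $\cofib(\bar{c})\simeq\MSL\wedge\Sigma^{\infty-(2,1)}(\Proj^\infty/\Proj^2)$. Homotopy purity gives $\Proj^\infty/\Proj^2\simeq\Th_{\Proj^\infty}(\struct(1)^3)$; a Thom-isomorphism argument (trivializing the $\SL$-bundle $\struct(1)\oplus\struct(-1)$) converts this to $\Sigma^{4,2}\MSL\wedge\Th_{\Proj^\infty}(\struct(-1)\ominus\struct)\simeq\Sigma^{4,2}\MGL$. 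The operation $\Delta$ is then read off by pulling back the Thom class along this explicit chain of equivalences, which is where the formula $c_1(\det\ggl)\cdot c_1(\det\ggl^\vee)$ appears.

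In short, the decomposition $\KWallrk\simeq\KSLrk\times\Proj^1$ (and its $\Pic$ analogue for $\Krk$) is the missing idea; it reduces both cofiber sequences to the elementary cofiber $\Proj^2\to\Proj^\infty\to\Proj^\infty/\Proj^2$ and removes the need for any cellular comparison or realization argument.
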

\setcounter{remarkintr}{1}
\begin{remarkintr}
    The Betti realizations of the cofiber sequence (2) are split (see Proposition \ref{appendix:split_cofib} for the complex case). The same happens in the motivic context, at least over some bases. This fact and its consequences will be explored elsewhere.
\end{remarkintr}
After proving this result, we move on to more specific calculations. To be more precise, we use the second cofiber sequence to compute some homotopy groups of the $c_1$-spherical algebraic cobordism spectrum, restricting ourselves to the case of a local Dedekind domain $k$ (which is a field or a discrete valuation ring); see Propositions \ref{prop:vanishing}, \ref{prop:wall_geom_part}.  This is necessary since we use the isomorphism (see \cite[\S6]{SpiMGL},\cite[Proposition 8.2]{Hoy15}): \[\pi_{2*}(\MU)[\einv]\xrightarrow{\simeq} \pi_{2*,*}(\MGL)[\einv],\] where $e$ is the exponential characteristic of the residue field of $k$. As the first application, we prove the following theorem.
\setcounter{theoremintr}{2}
\begin{theoremintr}[Theorem \ref{theorem:eta_stab} and Corollary \ref{cor:modulo_eta_tors}]
    Suppose that $k$ is a local Dedekind domain and $e$ is the exponential characteristic of the residue field of $k$. Then the multiplication by the motivic Hopf element \[\eta\colon\pi_{2n+m,n+m}(\MSL)[\einv]\to \pi_{2n+m+1,n+m+1}(\MSL)[\einv],\] is an epimorphism if $m=0$ and an isomorphism if $m>0$. In particular, there is an equality of the torsion subgroups ${}_{\eta}\pi_{2*,*}(\MSL)[\einv]={}_{\eta^N}\pi_{2*,*}(\MSL)[\einv]$ for $N\geq 1$. If in addition $e\neq 2$, then there is an isomorphism of rings \[\bigslant{\pi_{2*,*}(\MSL)}{{}_{\eta}\pi_{2*,*}(\MSL)}[\einv]\cong \W(k)[\einv][y_4,y_8,\dots],\ \text{where}\ \mathrm{deg}(y_i)=(2i,i). \]
\end{theoremintr}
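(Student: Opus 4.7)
The plan is to derive all three parts of the theorem from the cofiber sequences of Theorem \ref{theorem_A}, combined with the Hopkins--Morel--Hoyois--Spitzweck identification $\pi_{2\ast,\ast}(\MGL)[1/e] \simeq \pi_{2\ast}(\MU)[1/e] \simeq \mathbb{L}[1/e]$, the motivic connectivity $\pi_{p,q}(\MGL)[1/e] = 0$ for $p < 2q$ (by the slice spectral sequence for $\MGL$), and the explicit $\MW$-homotopy computations in Propositions \ref{prop:vanishing} and \ref{prop:wall_geom_part}. Applying $\pi_{\ast,\ast}(-)[1/e]$ to the first cofiber sequence of Theorem \ref{theorem_A} produces the five-term exact sequence
\[
\pi_{p+1,q}(\MW)[1/e] \xrightarrow{\partial} \pi_{p-1,q-1}(\MSL)[1/e] \xrightarrow{\eta} \pi_{p,q}(\MSL)[1/e] \xrightarrow{c_\ast} \pi_{p,q}(\MW)[1/e] \xrightarrow{\partial} \pi_{p-2,q-1}(\MSL)[1/e],
\]
so that setting $(p,q) = (2n+m+1,n+m+1)$ reduces surjectivity of the $\eta$-multiplication to vanishing of the image of the right-hand $\partial$, and injectivity to vanishing of the image of the left-hand $\partial$. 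The task is therefore to control $\pi_{2n+m+1,n+m+1}(\MW)[1/e]$ and $\pi_{2n+m+2,n+m+1}(\MW)[1/e]$.

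These $\MW$-groups I would treat via the second cofiber sequence $\MW \to \MGL \xrightarrow{\Delta} \Sigma^{4,2}\MGL$: the associated long exact sequence, together with connectivity of $\MGL$, immediately yields $\pi_{2n+m+1,n+m+1}(\MW)[1/e] = 0$ for $m \geq 1$ and $\pi_{2n+m+2,n+m+1}(\MW)[1/e] = 0$ for $m \geq 2$, since in these bidegrees all flanking $\MGL$-groups lie strictly below the diagonal and hence vanish. In the remaining two ``critical'' strata ($m = 0$ for surjectivity, $m = 1$ for injectivity) the $\MW$-group is identified with a cokernel of the operation $\Delta_\ast$---induced by the characteristic class $c_1(\det\ggl)\cdot c_1(\det\ggl^\vee)$ and hence governed by the formal-group-law expression $c_1 \cdot [-1]_F c_1$---acting between Lazard-ring pieces. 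Propositions \ref{prop:vanishing} and \ref{prop:wall_geom_part} then either establish the vanishing of these cokernels (reflecting the parity vanishing of $\pi_\ast(\W)$ in odd degrees in the Conner--Floyd topological picture) or show that every class lifts through $c_\ast$ from $\pi_{\ast,\ast}(\MSL)[1/e]$ via the geometric model of $\MW$ in terms of manifolds with $c_1$-spherical reduction, in either case forcing $\partial = 0$.

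The torsion identity is then a one-line induction: for $x \in \pi_{2n,n}(\MSL)[1/e]$ with $\eta^N x = 0$, the element $\eta x$ sits in the $m = 1$ stratum where $\eta$ is an isomorphism by part (1), so $\eta^{N-1}(\eta x) = 0$ forces $\eta x = 0$. For the ring isomorphism when $e \neq 2$, the $c_\ast$-embedding $\pi_{2\ast,\ast}(\MSL)/{}_\eta \hookrightarrow \pi_{2\ast,\ast}(\MW)[1/e]$ identifies the quotient with a specific subring of $\mathbb{L}[1/e]$; the base ring $\W(k)[1/e]$ appears through Yakerson's identification $\pi_{0,0}(\MSL)/\eta \simeq \W(k)$, and the polynomial generators $y_i$ in bidegrees $(2i,i)$ for $i \in 4\Z_{>0}$ are realised as explicit $\SL$-oriented cobordism classes detected by the Pontryagin and Chern characteristic numbers developed earlier in the paper, with their images in $\mathbb{L}[1/e]$ forming a polynomial generating set of the relevant subring (the restriction to indices divisible by $4$ mirroring Wall's classical description of the torsion-free part of $\pi_\ast(\MSU)[1/2]$ and encoding the $\det$-triviality distinguishing $\MSL$ from $\MGL$). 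The hard part will be the critical-stratum analysis inside Propositions \ref{prop:vanishing}--\ref{prop:wall_geom_part}, where motivic connectivity of $\MGL$ does not suffice and one must instead exploit the geometric model of $\MW$ to produce explicit $\SL$-oriented lifts; a secondary difficulty is matching the polynomial generators $y_i$ with Wall/Conner--Floyd-style topological generators via the Betti realisations.
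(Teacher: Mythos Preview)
Your treatment of the first two assertions is correct and matches the paper's argument: the long exact sequence attached to $\Sigma^{1,1}\MSL\xrightarrow{\eta}\MSL\xrightarrow{c}\MW$ reduces everything to the vanishing of $\pi_{2n+m+1,n+m+1}(\MW)[1/e]$ for $m\geq 0$ and of $\pi_{2n+m+2,n+m+1}(\MW)[1/e]$ for $m\geq 1$, and both lie in the range $i<2j$ covered by Proposition~\ref{prop:vanishing}. (You re-derive that proposition inline via the second cofiber sequence, which is fine but unnecessary; the paper simply cites it.) Your induction for the torsion identity is also the right one.

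The gap is in the ring isomorphism. You write that $c_*$ yields an embedding $\pi_{2*,*}(\MSL)/{}_\eta\hookrightarrow\pi_{2*,*}(\MW)[1/e]\subset\mathbb{L}[1/e]$, but this confuses the kernel of $\eta$ with its image. Exactness gives $\ker(c_*)=\eta\cdot\pi_{2*-1,*-1}(\MSL)$, so $c_*$ factors through the quotient by the \emph{image} of $\eta$, not by ${}_\eta\pi_{2*,*}(\MSL)$. In fact the quotient you want cannot embed in $\mathbb{L}[1/e]$ at all: in degree $(0,0)$ it is $\W(k)[1/e]$, whereas $(\mathbb{L}[1/e])_0=\Z[1/e]$, and for most $k$ (e.g.\ $k=\mathbb{Q}$) there is no injection $\W(k)\hookrightarrow\Z$. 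The Pontryagin/Chern-number construction you invoke also sits in \S\ref{subsection:pontryagin_char}, \emph{after} this corollary, and is not how the paper produces the $y_i$.

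The paper's argument for this part is much shorter and follows immediately from what you have already established. Part (1) says that
\[
\pi_{2n,n}(\MSL)[1/e]\xrightarrow{\eta}\pi_{2n+1,n+1}(\MSL)[1/e]\xrightarrow[\simeq]{\eta}\pi_{2n+2,n+2}(\MSL)[1/e]\xrightarrow[\simeq]{\eta}\cdots
\]
stabilises after the first step; the colimit is $\pi_n(\MSL[\eta^{-1}])[1/e]$ and the kernel of the first map is precisely ${}_\eta\pi_{2n,n}(\MSL)[1/e]$. Hence the quotient is canonically $\pi_*(\MSL[\eta^{-1}])[1/e]$, which Bachmann--Hopkins \cite{BHop} and Bachmann \cite{BacDVR} have already computed as $\W(k)[1/e][y_4,y_8,\dots]$ when $e\neq 2$. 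No embedding into $\MW$-homotopy, no characteristic-number construction of generators, and no comparison with $\pi_*(\MSU)[1/2]$ is needed here.
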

This theorem can be viewed as a lift of the $\eta$-periodic answer of Bachmann and Hopkins to the geometric diagonal. If one thinks about the real Betti realization, their answer should be seen as an analog of $\pi_*(\mathrm{MSO})[\nicefrac{1}{2}]$, and this theorem improves it to $\pi_*(\mathrm{MSO})/{}_2\pi_*(\mathrm{MSO})$. Moreover, the part about the equality of annihilators corresponds to the statement that the $2$-primary torsion in $\pi_*(\mathrm{MSO})$ is of exponent $2$. The proof of this theorem uses the cofiber sequence (1) and vanishing areas in the homotopy groups of $\MW[\einv]$. 

The further computation is given by the pedantic analysis of the exact sequence of homotopy groups induced by the cofiber sequence (1). For this purpose, we use two new key ingredients. The first one is a motivic version of the Conner--Floyd homology, which by definition stands on the second page of the $\eta$-Bockstein spectral sequence for $\MSL$. These homology groups are computable on the geometric diagonal; see Theorem \ref{theorem:conner_floyd_hom}. The second necessary component that we introduce is the Pontryagin characteristic numbers with values in the hermitian K-theory. We prove that these characteristic numbers determine some homotopy groups of $\MSL$; see Corollary \ref{cor:pontryagin_8n+1}. Using these tools allows us to get a complete answer to the question at hand. We summarize it in the following theorem. Below $\mathrm{I}_\MSL(k)$ denotes the graded subgroup of $\pi_{2*,*}(\MSL)$ which in degree $n$ is given by $\eta\cdot\pi_{2n-1,n-1}(\MSL)$ if $n$ is divisible by $4$ and zero otherwise. 
\begin{theoremintr}[Proposition \ref{prop:pi_0,0(MSL)}, Theorems \ref{theorem:complete_modulo_imslk}, \ref{theorem:complete_as_pullback}]\label{theorem:D}
    Suppose that $k$ is a local Dedekind domain and $e\neq 2$ is the exponential characteristic of the residue field of $k$. The ring $\pi_{2*,*}(\MSL)[\einv]$ admits a canonical structure of $\GW(k)$-algebra.
    \begin{enumerate}
        \item The subgroup $\mathrm{I}_\MSL(k)[\einv]$ is a graded ideal and there is an isomorphism of graded $\GW(k)$-algebras \[\bigslant{\pi_{2*,*}(\MSL)}{\mathrm{I}_\MSL(k)}[\einv]\cong \pi_{2*}(\MSU)[\einv],\] where the $\GW(k)$-algebra structure on the right hand side is induced by the rank homomorphism. If $k=\C$ then the complex Betti realization functor induces such an isomorphism.
        \item There is a cartesian square of graded $\GW(k)$-algebras \begin{center} \begin{tikzcd} {\pi_{2*,*}(\MSL)[\einv]} \arrow[dr, phantom, "\lrcorner", very near start] \arrow[dd, two heads] \arrow[rr, two heads] & & {\pi_{2*}(\MSU)[\einv]} \arrow[dd, two heads] \\  & {} & & \\{\W(k)[\einv][y_4,y_8,\dots]} \arrow[rr, "\overline{\mathrm{rk}}", two heads] &  & {\Z/2[y_4,y_8,\dots]},\end{tikzcd} \end{center} where the top arrow is defined via (1), and the vertical maps are quotients by the annihilators of $\eta\in\pi_{1,1}(\MSL)[\einv]$ and $\etatop\in\pi_1(\MSU)[\einv]$ respectively.
    \end{enumerate}
\end{theoremintr}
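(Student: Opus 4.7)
The plan is to combine the cofiber sequence (1) of Theorem~\ref{theorem_A} with the preceding $\eta$-stabilization result and then identify the resulting quotient with $\pi_{2*}(\MSU)[1/e]$ via the Pontryagin characteristic numbers with values in Hermitian K-theory.

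For part~(1), I would analyze the long exact sequence associated with $\Sigma^{1,1}\MSL\xrightarrow{\eta}\MSL\xrightarrow{c}\MW$. An element $x\in\pi_{2n,n}(\MSL)[1/e]$ lies in $\ker(c)$ if and only if it is an $\eta$-multiple, and the computation of $\pi_{2*,*}(\MW)[1/e]$ (Propositions~\ref{prop:vanishing} and~\ref{prop:wall_geom_part}) together with Theorem~\ref{theorem:eta_stab} confines the nontrivial $\eta$-multiples in diagonal degrees to $n\equiv 0\pmod 4$, yielding the description of $\mathrm{I}_\MSL(k)[1/e]$ and showing that this subgroup is an ideal. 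The identification of the quotient with $\pi_{2*}(\MSU)[1/e]$ is the central content: for $k=\C$ the complex Betti realization sends the cofiber sequence above to the classical Conner--Floyd cofiber $\MSU\xrightarrow{\etatop}\MSU\to\W$ and induces the ring isomorphism directly. For general $k$, the motivic Conner--Floyd homology (Theorem~\ref{theorem:conner_floyd_hom}) and the Pontryagin characteristic numbers with values in Hermitian K-theory (Corollary~\ref{cor:pontryagin_8n+1}) jointly supply enough invariants to construct the isomorphism on generators and to verify compatibility with the ring structure.

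For part~(2), the cartesian square is essentially formal once part~(1) and the preceding theorem are in place. The left vertical map is the content of Theorem~\ref{theorem:eta_stab}, the top is part~(1), the right vertical is the classical Conner--Floyd surjection $\pi_{2*}(\MSU)\twoheadrightarrow\Z/2[y_4,y_8,\dots]$ obtained by quotienting out the $\etatop$-annihilator, and the bottom is the augmentation $\W(k)[1/e]\twoheadrightarrow\Z/2$ extended by the identity on polynomial generators. The two paths from the top-left to the bottom-right agree because both amount to the quotient by the sum $\mathrm{I}_\MSL(k)[1/e]+{}_\eta\pi_{2*,*}(\MSL)[1/e]$; the matching of the generators $y_i$ on both sides is forced by the construction of the $y_i$ via Pontryagin characteristic numbers that commute with realization. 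The square is cartesian thanks to the annihilator equality ${}_\eta\pi_{2*,*}(\MSL)[1/e]={}_{\eta^N}\pi_{2*,*}(\MSL)[1/e]$: if $x$ is simultaneously $\eta$-torsion and of the form $\eta\cdot y$, then $\eta^2 y=\eta x=0$, so $y$ is $\eta$-torsion, whence $x=\eta y=0$.

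The hardest step is the ring isomorphism in part~(1) over a general local Dedekind~$k$. The additive structure follows cleanly from the long exact sequence, but upgrading this to a ring isomorphism with $\pi_{2*}(\MSU)[1/e]$ requires the newly-constructed Pontryagin characteristic numbers in Hermitian K-theory to faithfully detect $\MSL$-classes compatibly with the Thom-spectrum product and with the complex realization. This in turn rests on the motivic Anderson--Brown--Peterson theorem and on the explicit construction of generators from $\SL$-oriented varieties such as Calabi--Yau hypersurfaces, both advertised as contributions of this paper.
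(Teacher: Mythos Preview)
Your treatment of part~(2) is essentially the paper's argument: set $R=\pi_{2*,*}(\MSL)[1/e]$, $I=\mathrm{I}_\MSL(k)[1/e]$, $J={}_\eta R$, identify $R/(I+J)$ with $\Z/2[y_4,y_8,\dots]$ via the rank map, and invoke $R/(I\cap J)\simeq R/I\times_{R/(I+J)}R/J$ together with $I\cap J=0$. Your justification of $I\cap J=0$ via the annihilator equality ${}_\eta={}_{\eta^2}$ is equivalent to the paper's observation that $I$ injects into $\pi_*(\MSL[\eta^{-1}])$.

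Part~(1), however, contains a genuine gap and a misidentification. First, $\mathrm{I}_\MSL(k)$ is \emph{not} the full kernel of $c$ (equivalently, not the full image of $\eta$): by Proposition~\ref{prop:trickysubgroup_general} there are nonzero $\eta$-multiples in degrees $n\equiv 1\pmod 4$ as well (namely $(\Z/2)^{p((n-1)/4)}$), whereas $\mathrm{I}_\MSL(k)$ is \emph{by definition} only the $\eta$-multiples in degrees $n\equiv 0\pmod 4$. So your claim that the long exact sequence ``confines the nontrivial $\eta$-multiples to $n\equiv 0\pmod 4$'' is false, and consequently your argument that $\mathrm{I}_\MSL(k)$ is an ideal (as the kernel of $c$) does not go through. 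The paper instead exhibits $\mathrm{I}_\MSL(k)[1/e]$ as the kernel of the base-change ring map $\pi_{2*,*}(\MSL_k)[1/e]\to\pi_{2*,*}(\MSL_L)[1/e]$ for $L$ a quadratically closed field; this is what makes it an ideal.

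Second, your strategy for the ring isomorphism over general $k$---constructing it ``on generators'' via Pontryagin numbers, the motivic Anderson--Brown--Peterson theorem, and Calabi--Yau classes---does not work as stated. The ring $\pi_{2*}(\MSU)$ has no known explicit presentation (Remark~\ref{appendix:image_msu}), so there are no generators to match; the Calabi--Yau generation statement is only proved after inverting $2$; and the Anderson--Brown--Peterson theorem in the paper is a downstream application, not an input. The paper's route is different and avoids all of this: it proves a \emph{rigidity} statement (Proposition~\ref{thm:complete_answer}) that the quotient $\pi_{2*,*}(\MSL)/\mathrm{I}_\MSL(k)[1/e]$ is invariant under base change between local Dedekind domains, then reduces to $k=\C$ (where, as you correctly note, complex Betti realization applied to the short exact sequence~\eqref{equation:cofib_cut} gives the isomorphism with $\pi_{2*}(\MSU)$), and finally propagates the result to arbitrary $k$ via a chain of base changes as in Lemma~\ref{lemma:boundary_map}. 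The rigidity step is the missing key idea in your proposal.
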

Roughly speaking, the above theorem says that the quotient by the ideal $\mathrm{I}_\MSL(k)[\einv]$ is the rigid part of the answer which turns out to be isomorphic to the topological one (at least after inverting $e$), and to recover the complete picture, we need to attach to it a certain number of the fundamental ideals. Below we provide implications and explanations for the obtained result.
\setcounter{remarkintr}{4}
\begin{remarkintr}\label{remark_intro_LYZ}
    \begin{enumerate}
        \item Looking at degree $0$, we get an isomorphism of rings (see Proposition \ref{prop:pi_0,0(MSL)} for a precise statement) \[\pi_{0,0}(\MSL)[\einv]\cong \W(k)[\einv]\times_{\Z/2} \Z[\einv]\cong \GW(k)[\einv],\] which generalizes Yakerson's computation of the zero homotopy group from fields of characteristic zero \cite[Proposition 3.6.3]{Yakerson}.
        \item Localizing the above cartesian square away from $2$, the bottom right corner becomes trivial, and we obtain an isomorphism \[\pi_{2*,*}(\MSL)[\nicefrac{1}{2e}]\cong \Z[\nicefrac{1}{2e}][x_2,x_3,\dots]\times \W(k)[\nicefrac{1}{2e}][y_4,y_8,\dots],\] see Corollary \ref{corollary:local_away_from2} and Remark \ref{rmk:away_2_LYZ}. Of course, this decomposition into the product of two rings corresponds to Morel's splitting of the motivic stable homotopy category (see \cite[Remark 4]{ALP}, \cite{DFJK}) \[\SH(k)[\nicefrac{1}{2}]=\SH(k)[\nicefrac{1}{2}]^+\times \SH(k)[\nicefrac{1}{2}]^-.\] In the case of a perfect field, Levine, Yang, and Zhao compute the respective plus part modulo a maximal subgroup that is $l$-divisible for all primes $l$ different from $2$ and $e$. They conjectured that this subgroup should be zero \cite[Remark 1.2]{LYZ}. In particular, our result proves this conjecture.
        \item It follows from the previous remark and the usual considerations of the Witt ring of $k$ that the $\Proj^1$-diagonal of $\MSL$ does not contain odd torsion (different from $e$). The structure of the $2$-primary torsion is investigated in detail in Corollaries \ref{cor:2_torsion} and \ref{cor:2tors_multiples}.
        \item It can be seen from the pullback diagram, that the ideal $\mathrm{I}_\MSL(k)[\einv]$ is given by $\mathrm{I}(k)[\einv]^{p(\frac{n}{4})}$ in degrees $n\equiv 0\pmod*{4}$, and is trivial otherwise. Here $p(m)$ is the number of partitions of $m$. In particular, if $k$ is a quadratically closed field we have an isomorphism \[\pi_{2*,*}(\MSL)[\einv]\cong \pi_{2*}(\MSU)[\einv].\]
    \end{enumerate}
\end{remarkintr}
Combining the last two points of the remark, we get a complete additive structure of $\pi_{2*,*}(\MSL)[\einv]$. In the following table we summarize the answer for the first few groups (we set $\pi_{2n,n}=\pi_{2n,n}(\MSL)$ and omit inverting $e$ to simplify the formulas):
\begin{center}
\begin{tabular}{|c|c|c|c|c|c|c|c|c|c|}
    \hline $\pi_{0,0}$ & $\pi_{2,1}$ & $\pi_{4,2}$ & $\pi_{6,3}$ & $\pi_{8,4}$ & $\pi_{10,5}$ & $\pi_{12,6}$ & $\pi_{14,7}$ & $\pi_{16,8}$ & $\pi_{18,9}$ \\
    \hline $\GW(k)$ & $\Z/2$ & $\Z$ & $\Z$ & $\GW(k)\oplus \Z$ & $\Z^2\oplus\Z/2$ & $\Z^4$ & $\Z^4$ & $\GW(k)^2\oplus\Z^5$ & $\Z^8\oplus (\Z/2)^2$ \\
    \hline
\end{tabular}
\end{center}
However, the ring structure of the $\Proj^1$-diagonal of $\MSL$ is complicated. This is the case even for $k=\C$ since an explicit description of the ring $\pi_{2*}(\MSU)$ is unknown; see Remark \ref{appendix:image_msu}. In light of this, the above answer seems to be the best possible modulo topological issues.

Let us comment on the isomorphism between $\pi_{2*,*}(\MSL)/\mathrm{I}_\MSL(k)[\einv]$ and $\pi_{2*}(\MSU)[\einv]$ stated in the previous theorem. The strategy is to prove a rigidity statement, which is that the quotient ring is stable under base change along homomorphisms of local Dedekind domains. Then we show the claim for $k=\C$ and extend it using the rigidity for various base changes. Therefore, in order to obtain the result for fields of positive characteristic, we need to include the case of discrete valuation rings.

In addition to this complete computation, we prove the motivic version of the Anderson--Brown--Peterson theorem that states that the geometric diagonal of $\MSL[\einv]$ is determined by the $\HZ$-character\-istic numbers and the $\KQ$-characteristic numbers; see Theorem \ref{theorem:motivic_abp}. We also compute the image of the canonical map $\pi_{2*,*}(\MSL)[\einv]\to \pi_{2*,*}(\MGL)[\einv].$ In the case of a field, we deduce formulas for the characteristic numbers of classes of smooth projective Calabi--Yau varieties and show that the plus part \[\bigslant{\pi_{2*,*}(\MSL)}{\mathrm{I}_\MSL(k)}[\nicefrac{1}{2e}]\cong\Z[\nicefrac{1}{2e}][x_2,x_3,\dots]\] is generated by such classes under the additional assumption that the field $k$ is infinite; see Proposition \ref{prop:char_numbers} and Corollary \ref{cor:calabi-yau_gen}. We also describe the classes of smooth projective Calabi--Yau varieties of dimension $\leq 2$ in $\MSL[\nicefrac{1}{e}]$; see Proposition \ref{prop:dim_zero_and_one}.
\begin{remarkintr}
    The only reason why we invert $e$ in all the results is the status of the Voevodsky conjecture about the $\Proj^1$-diagonal of $\MGL$. If it is ever proved integrally, then all results of the present paper will be valid without inversion of $e$.
\end{remarkintr}
\subsection{Organization}
The beginning of each section contains more detailed information about its contents. In Section \ref{section-2}, we recall basics about the motivic Thom functor formalism and construct the $c_1$-spherical algebraic cobordism spectrum $\MW$. In Section \ref{section-3}, we prove Theorem \ref{theorem_A}. Starting from Section \ref{section-4}, we assume that our base scheme is the spectrum of a local Dedekind domain. There we compute some homotopy groups of $\MW$ and introduce the algebraic Conner--Floyd homology. In Section \ref{section-5}, we lift the $\eta$-periodic computations of Bachmann and Hopkins to the geometric part and introduce Pontryagin characteristic numbers with values in the coefficient ring of an $\SL$-oriented homotopy commutative ring spectrum. In Section \ref{section-6}, we use the previous results to compute the $2$-primary torsion subgroup and obtain a complete answer for the $\Proj^1$-diagonal of $\MSL$. In Section \ref{section-7}, we define Chern numbers and use them to prove the motivic version of the Anderson--Brown--Peterson theorem.

In Appendix \ref{appendix_A}, we recall basic facts about the hermitian K-theory spectrum and compute its geometric diagonal over a regular local ring in which $2$ is invertible. In Appendix \ref{appendix_B}, we summarize all topological results that are used in the main part of the text for the convenience of the reader. 
\subsection{Table of notation}\label{table_of_not}

Throughout the paper, we employ the following notation and conventions.

\begin{longtable}{p{3.5cm}|l}
    $k$ & local Dedekind domain, i.e., a field or a discrete valuation ring \\
	base scheme $S$ & quasi-compact quasi-separated scheme $S$ \\
    $\mathbf{Sch}$ & category of qcqs schemes \\ 
	$\Smk$ & category of smooth, qcqs schemes over $S$ \\
    $\PShv(\Smk)$ & $\infty$-category of (space-valued) presheaves  on $\Smk$ \\
    $\mathrm{B}G$ & sheaf that classifies Nisnevich $G$-torsors for a group scheme $G$ \\
    $\mathbf{H}(S)$ & $\infty$-category of motivic spaces over $S$ \cite[\S 3]{Voe98}, \cite[\S 3.2]{MV99} \\
    $\Th_X(E)$ & Thom space of a vector bundle $E\to X$ \\
	$\SHk$ & stable $\infty$-category of motivic spectra over $S$ \cite[\S 5]{Voe98}, \cite[\S 4.1]{BH21} \\
    $\Sigma^{i,j}$ & $(i,j)$-suspension endofunctor of $\SHk$ \\
    $\Sigma^\infty$, $\Omega^\infty$ & infinite $\Proj^1$-suspension and $\Proj^1$-loop functors \\
    $\sph$ & motivic sphere spectrum \\
    $\MGL$, $\MSL$ & algebraic cobordism and special linear algebraic cobordism \cite[\S 4]{PW22} \\
    $\KQ$, $\KW$ & hermitian $\K$-theory spectrum and Witt spectrum, see Appendix \ref{appendix_A} \\
    $\mathrm{H}A$ & Spitzweck's motivic cohomology spectrum with $A$-coefficients \cite{SpiHZ} \\
    $\EE\in\mathrm{CAlg(h\SHk)}$ & homotopy commutative ring spectrum $\EE$ \\
    $\EE\in\mathrm{CAlg(}\SHk)$ & $\Einf$-ring spectrum $\EE$ \\
    $[-,-]$ & homotopy classes of maps in $\SHk$ \\
    $\pi_{i,j}(\EE)$, $\underline{\pi}_{i,j}(\EE)$ & bigraded homotopy groups and sheaves of a spectrum $\EE$ \\
    $\pi_{2*,*}(\EE)$ & $\Proj^1$-diagonal/geometric diagonal/geometric part of a spectrum $\EE$ \\
    $\pi_{i,j}(\alpha)$ or $\alpha_*$ & for $\alpha\in [\EE,\mathcal{F}]$ the induced map $\pi_{i,j}(\EE)\to \pi_{i,j}(\mathcal{F})$ \\
    $\EE_{i,j}(-)$, $\EE^{i,j}(-)$ & homology and cohomology theory represented by a spectrum $\EE$ \\
    $\eta\in\pi_{1,1}(\sph)$ & motivic Hopf element, i.e., stabilization of $\A^2\setminus\{0\}\to\Proj^1$ \cite[\S6.2]{Mor03} \\
    ${}_{\eta^m}\pi_{2*,*}(\MSL)$ & annihilator $\mathrm{Ann}_{\pi_{2*,*}(\MSL)}(\eta^m)=\{\alpha\in\pi_{2*,*}(\MSL)\,|\,\alpha\cdot\eta^m=0\}$ \\
    $\EE[\eta^{-1}]$ & $\colim[\EE\xrightarrow{\eta} \Sigma^{-1,-1}\EE\xrightarrow{\eta}\Sigma^{-2,-2}\EE\xrightarrow{\eta} \cdots]$ \\
    $\W(-)$, $\GW(-)$, $\mathrm{I}(-)$ & Witt ring, Grothendieck--Witt ring and fundamental ideal \\
    ${}_{l^m}A$ & torsion subgroup $\{a\in A\,|\,a\cdot l^m=0\}$ of an abelian group $A$ \\
    $p(n)$ & number of partitions of $n$ \\
    $\Spt$ & stable $\infty$-category of spectra \\
    $\etatop\in\pi_1(\sph_{\mathrm{top}})$ & classical Hopf element \\
    $\etatop\in \pi_{1,0}(\sph)$ & image of $\etatop$ under the constant functor $\Spt\to \SHk$ \cite[Def. 4.5]{Ana21} \\
    $\MU$, $\MSU$ & complex cobordism and special unitary cobordism, see Appendix \ref{appendix_B} \\
    $\mathrm{KO}$ & real $\K$-theory spectrum \\
\end{longtable}
\subsection{Acknowledgements} 
I am deeply grateful to Alexey Ananyevskiy for introducing me to the subject of this paper, numerous conversations, and constant support during the work. I would like to thank sincerely Andrei Lavrenov, Ivan Panin, and Oliver R\"ondigs for helpful discussions, Vasily Ionin for careful reading of a draft of this paper, and the anonymous referee for useful suggestions that helped improve the exposition. The initial part of the research was done during my stay at the Byurakan Astrophysical Observatory. The work is supported by the DFG research grant AN 1545/4-1.
\section{\texorpdfstring{$c_1$}{c1}-spherical algebraic cobordism}\label{section-2}
In this section we construct the $c_1$-spherical algebraic cobordism spectrum in the stable motivic homotopy category over a scheme $S$ (see Definition \ref{definition:MWL}) and establish its basic properties. We equip it with a natural action of $\MSL$ (see Lemma \ref{prop:wall_multi}) and compute the complex Betti realization (see Proposition \ref{prop:betti_realiz_mwl}). For this purpose, we use the motivic Thom functor formalism from \cite{BH21}. An equivalent definition in terms of Thom spaces over appropriate Grassmannians is provided, see Proposition \ref{prop:wall_colim}.
\subsection{Recollection on the motivic Thom functor}
Let $\Pic(\SH)$ denote the presheaf on $\Smk$ that takes a smooth $S$-scheme $X$ to the $\Einf$-space of $\wedge$-invertible motivic spectra $\Pic(\SH(X))$. To a morphism of presheaves of spaces $\beta\colon\mathrm{B}\to \Pic(\SH)$ we associate a motivic Thom spectrum $\mathrm{M}\beta\in \SHk$ by the colimit construction \[ \mathrm{M}(\beta\colon\mathrm{B}\to \Pic(\SH)):=\colim_{\substack{f\colon X\to S\;\mathrm{smooth} \\ b\in \mathrm{B}(X)}} f_{\#}\beta(b).\] This defines a symmetric monoidal functor of $\infty$-categories $\mathrm{M}\colon\PShv(\Smk)_{/\Pic(\SH)}\to \SHk.$ Moreover, this functor inverts Nisnevich equivalences and even motivic equivalences over a motivic space; see \cite[Proposition 16.9]{BH21}.

For a scheme $X$ we denote by $\Vect(X)$ the $\infty$-groupoid of vector bundles over $X$. Taking the group completion and the Zariski localization of the presheaf $\Vect\in\PShv(\mathbf{Sch})$ we get the Thomason--Trobaugh $\K$-theory presheaf $\K:=\mathrm{L_{Zar}}(\Vect^{\mathrm{gp}})$ (see \cite[Theorems 7.6 and 8.1]{TT90}). Restricting $\K$ to $\Smk$, we can construct the motivic $\Jhom$-homomorphism \[\Jhom\colon\K\to \Pic(\SH),\ E\mapsto \Sigma^\infty\Th(E). \] This is a map of presheaves of grouplike $\Einf$-spaces. Restricting the motivic Thom functor along the $\Jhom$-homomorphism, we obtain a symmetric monoidal functor of $\infty$-categories \[\mathrm{M}\colon\PShv(\Smk)_{/\K}\to\SHk.\]
\begin{example}
    \begin{enumerate}[label=({\arabic*}),ref=2.1.(\arabic*)]
    \item \label{exmpl:virtual_thom}  Let $X$ be a smooth (ind-)scheme over $S$, let $E\ominus\struct^n$ be a virtual vector bundle over $X$, and let $[E\ominus\struct^n]\colon X\to \K$ be its class in the $\K$-theory space. Then the corresponding Thom spectrum $\mathrm{M}([E\ominus\struct^n]\colon X\to\K)$ is given by $\Sigma^{\infty-(2n,n)}\Th_X(E)$. We denote this spectrum by $\Th_X(E\ominus \struct^n)\in\SHk$ or $\Th(E\ominus \struct^n)$ if $X$ is clear from the context.
    \item   The motivic Thom spectrum of the rank zero summand in $\K$-theory $\iota\colon\Krk=\K\times_{\underline{\Z}}\{0\}\to \K$ is the Voevodsky algebraic cobordism spectrum \[\mathrm{M}(\Krk\xrightarrow{\iota}\K)\cong \MGL.\] This can be shown using a motivic description of $\Krk$ in terms of Grassmannians together with some basic properties of the motivic Thom functor; see \cite[Lemma 4.6]{BHop}.
    \item   For a scheme $X$ consider the $\infty$-groupoid of vector $\SL$-bundles $\Vect^{\SL}(X)$ (see e.g., \cite[\S 2]{Ana20}). Taking the group completion and the Zariski localization of the presheaf $\Vect^\SL\in\PShv(\mathbf{Sch})$ we get the special linear $\K$-theory presheaf $\KSL$. Restricting it to $\Smk$ and applying the Thom functor to the rank zero presheaf $\KSLrk=\KSL\times_{\underline{\Z}}\{0\}$ through the natural map $\KSLrk\to \Krk$, we obtain the special linear algebraic cobordism spectrum of Panin and Walter \cite{PW22} \[\mathrm{M}(\KSLrk\to \K)\cong \MSL.\]
    \end{enumerate}
\end{example}
\begin{remark}\label{remark:det}
    The presheaf $\KSL$ from the last example is equivalent to the fiber of the determinant map $\det\colon\K\to \Pic$; see \cite[Example 3.3.4]{EHK}. We stress that $\KSL$ is a presheaf of $\Eone$-spaces, while $\KSLrk$ is a presheaf of $\Einf$-spaces; see e.g., \cite[Example A.0.6]{EHK}.
\end{remark}
\subsection{The main construction}\label{section_2-2}
Consider the following composition of maps of presheaves on $\Smk$ \[ \phi\colon\K\times \Proj^1\to \Pic\times \Pic\to \Pic,\] where the first morphism is given by the product of $\det\colon\K\to \Pic$ and $\struct(1)\colon\Proj^1\to \Pic$ and the second one is the multiplication on $\Pic$. Denote by $\KWall$ the fiber of $\phi$, and by $\KWallrk$ the respective rank zero presheaf $\KWall\times_{\underline{\Z}} \{0\}$.
\begin{lemma}\label{lem:kspace_natural}
    Let $f\colon T\to S$ be a morphism of schemes. Then the canonical map of presheaves of spaces on $\mathbf{Sm}_T$ \[f^*(\K_S^{\mathrm{Wall}})\to \K_T^{\mathrm{Wall}}\] is a Zariski equivalence. The same holds for the rank zero presheaf $\KWallrk$.
\end{lemma}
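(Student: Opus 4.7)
The plan is to reduce the statement to the analogous compatibility for the three building-block presheaves $\K$, $\Pic$, and $\Proj^1$ out of which $\KWall$ is assembled. By construction $\KWall_S = \fib(\phi_S)$ with $\phi_S\colon \K_S\times\Proj^1_S\to \Pic_S$, so if $f^*$ commutes with this fiber up to Zariski equivalence and preserves each of the three building blocks up to Zariski equivalence, we are done.

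\textbf{Step 1 (left exactness of $f^*$).} I would first invoke the standard fact that the inverse-image functor $f^*$ between $\infty$-categories of Zariski sheaves of spaces is left exact; this is a general property of inverse-image functors between Grothendieck $\infty$-topoi. In particular, $f^*$ commutes with the finite limit used to define $\KWall$, provided we pass to Zariski sheaves.

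\textbf{Step 2 (the building blocks).} Next I would verify the base-change identifications for $\K$, $\Pic$, and $\Proj^1$. For $\Proj^1$ this is immediate because $\Proj^1$ is representable and stable under base change of schemes, so $f^*\Proj^1_S \simeq \Proj^1_T$. For $\Pic$ and $\K = \mathrm{L_{Zar}}(\Vect^{\mathrm{gp}})$, the key observation is that the $\infty$-groupoids $\Pic(X)$ and $\Vect(X)$ depend only on $X$ as a scheme, not on its structure morphism to the base. Combined with compatibility of $f^*$ with Zariski sheafification (itself a consequence of the left-exactness in Step~1), this yields Zariski equivalences $f^*\K_S \to \K_T$ and $f^*\Pic_S \to \Pic_T$. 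Finally, the map $\phi$ is assembled from $\det$, $\struct(-1)$, and the tensor product of line bundles, all natural under base change, so $f^*\phi_S$ is identified with $\phi_T$ under the equivalences just established.

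\textbf{Step 3 (assembly and rank-zero version).} Combining Steps 1 and 2 upgrades the comparison morphism $f^*(\KWall_S) \to \KWall_T$ to a Zariski equivalence. For the rank-zero statement, one writes $\KWallrk$ as the fiber product of $\KWall \to \underline{\Z}$ with $\{0\} \hookrightarrow \underline{\Z}$; the constant presheaves $\underline{\Z}$ and $\{0\}$ are manifestly preserved by $f^*$, and the rank map is natural in the base, so the same argument applies.

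The main obstacle is the rigorous setup of Step 1 in the paper's $\infty$-categorical framework: the precise interpretation of $f^*$ (left Kan extension at the presheaf level versus inverse image on Zariski sheaves) affects whether and how $f^*$ commutes with fibers. A cleaner alternative, which I would pursue if the abstract formalism were cumbersome, is to argue pointwise -- showing that for each smooth $T$-scheme $X$ both $(f^*\KWall_S)(X)$ and $\KWall_T(X)$ compute, up to Zariski sheafification, the same fiber of $\K(X)\times\Proj^1(X) \to \Pic(X)$ -- thereby bypassing the topos-theoretic machinery entirely.
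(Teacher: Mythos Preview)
Your proposal is correct and follows essentially the same strategy as the paper's proof: both apply $f^*$ to the defining fiber sequence and reduce to checking Zariski base-change for $\K$, $\Proj^1$, and $\Pic$ separately, then deduce the rank-zero case formally. The paper resolves your Step~1 concern simply by observing that presheaf-level $f^*$ (precomposition along $\mathbf{Sm}_T\to\mathbf{Sm}_S$) preserves fiber sequences outright, and for the building blocks it cites \cite[Lemma~16.12]{BH21} for $\K$ and the Zariski equivalence $\Pic\simeq\mathrm{B}\Gm$ (via \cite[Lemma~2.6]{NSO}) for $\Pic$.
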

\begin{proof}
    Applying $f^*$ to the fiber sequence $\K_S^{\mathrm{Wall}}\to \K_S\times \Proj^1_S\to \Pic_S$ we get a fiber sequence of presheaves over $T$. The map $f^*(\K_S\times \Proj^1_S)=f^*(\K_S)\times \Proj^1_T\to \K_T\times \Proj^1_T$ is a Zariski equivalence by \cite[Lemma 16.12]{BH21} and $f^*(\Pic_S)\to \Pic_T$ is a Zariski equivalence since $\Pic_S$ is Zariski equivalent to $\mathrm{B}\Gm_{,S}$; see \cite[Lemma 2.6]{NSO}. Hence, the claim is proved for $\KWall$. The case of the rank zero presheaves follows immediately.
\end{proof}
The embedding $\infty\colon S\hookrightarrow \Proj^1$ leads to the commutative diagram of presheaves \[\xymatrix{\K\times S \ar@{=}[r] \ar[rd] & \K \ar[r]^{\det} & \Pic \ar@{=}[d] \\ & \K\times \Proj^1 \ar[r]^\phi & \Pic.}\] It induces a morphism between the fibers of the horizontal arrows. Combining this with Remark \ref{remark:det}, we obtain a map $\KSL\to \KWall$. The restriction to the rank zero presheaves gives $\KSLrk\to \KWallrk$.
\begin{lemma}\label{lemma:kwall_action}
    The presheaf $\KWall$ has a natural left $\KSL$-module structure such that $\KSL\to\KWall\to\K$ are maps of $\Eone$-modules. Similarly, the rank zero presheaf $\KWallrk$ has a natural $\KSLrk$-module structure such that $\KSLrk\to \KWallrk\to \Krk$ are maps of $\Einf$-modules. 
\end{lemma}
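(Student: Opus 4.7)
The plan is to identify $\KWall$ with a pullback of $\K$ along a map to $\Pic$ and then obtain the module structure by base-changing the natural action of $\KSL$ on $\K$.

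\textbf{Pullback description.} Since $\Pic$ is a grouplike $\Einf$-space, for any two maps $f\colon X \to \Pic$ and $g\colon Y \to \Pic$ the fiber of the product $f \cdot g\colon X \times Y \to \Pic$ is canonically equivalent to the pullback $X \times_\Pic Y$, provided one replaces $g$ by its inverse $g^{-1}$. Applied to $\phi = \det \cdot \struct(-1)$, this gives an equivalence
$$ \KWall \simeq \K \times_\Pic \Proj^1, $$
where $\K \to \Pic$ is $\det$ and $\Proj^1 \to \Pic$ is the composition of $\struct(-1)$ with the inversion on $\Pic$. Under this identification, $\KSL = \K \times_\Pic \{\star\}$ sits inside $\KWall$ as the fiber over the basepoint $\infty \in \Proj^1$, and the map $\KWall \to \K$ is just the projection on the first factor.

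\textbf{An action of $\KSL$ on $\K$ over $\Pic$.} The addition $+\colon \K \times \K \to \K$ is $\Einf$, and $\det\colon (\K,+) \to \Pic$ is an $\Einf$-homomorphism. Restricting the first factor to $\KSL$ yields a map $+\colon \KSL \times \K \to \K$ together with a canonical homotopy
$$ \det \circ (+) \simeq \det \circ \pi_2 \colon \KSL \times \K \to \Pic, $$
supplied by the trivialization of $\det|_{\KSL}$ that defines $\KSL$ as the fiber of $\det$. Together with the associativity and unitality of $+$, this promotes addition to the structure map of a left $\Eone$-module $\K$ over $\KSL$ in the slice $\PShv(\Smk)_{/\Pic}$. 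The drop from $\Einf$ to $\Eone$ reflects the symmetry issue recorded in Remark \ref{remark:det}.

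\textbf{Base change.} The pullback functor along $\struct(-1)^{-1}\colon \Proj^1 \to \Pic$ is symmetric monoidal and therefore preserves $\Eone$-modules. Since $\KSL \times \K \to \Pic$ factors through the second projection, base change produces an equivalence $(\KSL \times \K) \times_\Pic \Proj^1 \simeq \KSL \times \KWall$, and transports the map from Step~2 to the desired action
$$ \KSL \times \KWall \to \KWall. $$
The maps $\KSL \to \KWall$ and $\KWall \to \K$ are module maps by functoriality of base change. For the rank-zero variant one restricts the entire construction to the rank-zero components $\KSLrk \subset \KSL$, $\Krk \subset \K$, and $\KWallrk \subset \KWall$; since these are $\Einf$-spaces, the analogous argument yields an $\Einf$-action of $\KSLrk$ on $\KWallrk$.

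\textbf{Main obstacle.} The technical heart of the argument lies in the second step: producing the required homotopies and higher coherences that make $\K$ into a left $\Eone$-module over $\KSL$ inside $\PShv(\Smk)_{/\Pic}$ at the $\infty$-categorical level, rather than only in the homotopy category. Once this is in place, the remaining steps are formal consequences of the symmetric monoidality of pullback, and the compatibility with $\KSL \to \KWall \to \K$ is automatic.
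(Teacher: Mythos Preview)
Your approach is essentially the same as the paper's, just phrased through pullbacks and base change rather than fibers. The paper endows $\K\times\Proj^1$ and $\Pic$ with $\KSL$-module structures (the natural and trivial ones), observes that $\phi$ is a module map, and invokes \cite[Corollary 4.2.3.3]{LHA} to conclude that the fiber $\KWall$ inherits a $\KSL$-module structure; the compatibility of $\KSL\to\KWall\to\K$ is then immediate.

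Your reformulation via ``$\K$ is a $\KSL$-module in the slice $\PShv(\Smk)_{/\Pic}$'' is not quite right as stated: the Cartesian monoidal structure on the slice is the fiber product over $\Pic$, so for $\KSL$ mapping trivially to $\Pic$ one has $\KSL\times_\Pic\KSL\simeq\KSL\times\KSL\times\Omega\Pic$, and the algebra/module axioms in that monoidal structure are not the ones you want. What you actually use in Step~3 is simply that the action map $\KSL\times\K\to\K$ lies over $\Pic$, which is exactly the statement that $\phi$ (or $\det$) is a map of $\KSL$-modules in the ordinary Cartesian monoidal structure on $\PShv(\Smk)$. The coherence issue you flag as the ``main obstacle'' is then dispatched by the fact that the forgetful functor from $\KSL$-modules to $\PShv(\Smk)$ creates limits --- this is precisely the content of the Lurie reference the paper cites, and it replaces your base-change argument cleanly.
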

\begin{proof}
    Let us endow $\K\times \Proj^1$ and $\Pic$ with the natural and trivial left action of $\KSL$ respectively. Then the map $\phi$ is a morphism of left $\KSL$-modules, and it follows from \cite[Corollary 4.2.3.3]{LHA} that the left $\KSL$-module structure on $\K\times \Proj^1$ lifts to the fiber $\KWall$. The map $ \KWall\to \K$ has a lift to a morphism of $\KSL$-modules via the composition $\KWall\to\K\times \Proj^1\to\K$. In turn, the map $\KSL\to\KWall$ lifts to a morphism of $\KSL$-modules according to the above diagram. The case of the rank zero presheaf $\KWallrk$ is similar with the difference that $\KSLrk$ is a presheaf of $\Einf$-spaces (see Remark \ref{remark:det}).
\end{proof}
\begin{definition}\label{definition:MWL}
The \textit{$c_1$-spherical algebraic cobordism spectrum} $\MW_S$ is the motivic Thom spectrum associated with the composition $\KWallrk\to \Krk\xrightarrow{\iota} \K$ \[ \MW_S:=\mathrm{M}(\KWallrk\to \K)\in \SHk.\] When $S$ is clear from the context we denote it simply by $\MW$. Applying the motivic Thom functor to the maps $\KSLrk\to \KWallrk\to \Krk$ of presheaves over $\K$, we obtain \[\MSL\xrightarrow{\forg} \MW\xrightarrow{\bforg} \MGL.\]
\end{definition}
\begin{proposition}\label{prop:wall_multi}
    The motivic spectrum $\MW$ has a natural $\MSL$-module structure such that the above morphisms are maps of $\MSL$-modules.
\end{proposition}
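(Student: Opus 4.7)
The plan is to obtain Proposition 2.5 as a formal consequence of the symmetric monoidal structure of the motivic Thom functor, combined with the $\KSLrk$-module structure on $\KWallrk$ already produced in Lemma \ref{lemma:kwall_action}.

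First I would invoke the formalism recalled in \S 2.1: the restricted motivic Thom functor $\mathrm{M}\colon \PShv(\Smk)_{/\K}\to \SHk$ is symmetric monoidal (following \cite{BH21}). Any symmetric monoidal functor between symmetric monoidal $\infty$-categories sends $\Einf$-algebras to $\Einf$-algebras and modules over them to modules over the images, preserving morphisms of such. In the source, the slice symmetric monoidal structure is inherited from the $\Einf$-group structure on $\K$, so an object $X\to\K$ is an $\Einf$-algebra exactly when $X$ is an $\Einf$-space and the structure map is an $\Einf$-map, with the analogous description for modules.

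Next I would apply this principle to the data assembled in Lemma \ref{lemma:kwall_action}. The maps $\KSLrk\to\KWallrk\to\Krk\xrightarrow{\iota}\K$ are $\Einf$ and are compatible with the $\KSLrk$-actions, so they organise $\KSLrk$, $\KWallrk$, and $\Krk$ into an $\Einf$-algebra and two $\Einf$-modules over it in $\PShv(\Smk)_{/\K}$. Pushing these through $\mathrm{M}$ produces an $\Einf$-algebra structure on $\MSL=\mathrm{M}(\KSLrk\to\K)$ (recovering the Panin--Walter ring structure from \cite{PW22}), an $\MSL$-module structure on $\MW=\mathrm{M}(\KWallrk\to\K)$, and the maps $c\colon\MSL\to\MW$ and $\bar c\colon\MW\to\MGL$ as morphisms of $\MSL$-modules, which is exactly the content of the proposition.

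There is no substantial obstacle; the argument is purely formal once Lemma \ref{lemma:kwall_action} is in place. The one point that deserves care is the $\Einf$ versus $\Eone$ dichotomy of Remark \ref{remark:det}: at the level of the full $\K$-theory presheaves, the Wall module structure is only $\Eone$ (since $\KSL$ is only $\Eone$), whereas restricting to rank zero yields an $\Einf$-structure because $\KSLrk$ is $\Einf$. It is therefore essential to perform the construction exclusively at the rank-zero level, which is precisely the setting in which $\MW$ and the proposition are formulated.
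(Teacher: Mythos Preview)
Your proposal is correct and follows essentially the same approach as the paper: both arguments lift the $\KSLrk$-module structure from Lemma~\ref{lemma:kwall_action} to the slice category $\PShv(\Smk)_{/\K}$ and then push it through the symmetric monoidal motivic Thom functor. Your version is more explicit about the formal mechanism (and about the $\Einf$ versus $\Eone$ subtlety), but the content is the same.
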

\begin{proof}
    The natural $\KSLrk$-module structure constructed in the previous lemma gives the $\KSLrk$-module structure in the slice $\infty$-category $\PShv(\Smk)_{/\K}$. Moreover, the maps $\KSLrk\to \KWallrk\to\Krk$ are compatible with the canonical morphisms to $\K$. The result follows since the motivic Thom functor is symmetric monoidal.
\end{proof}
\begin{proposition}\label{prop:wall_natural}
    The $c_1$-spherical algebraic cobordism spectrum $\MW$ is stable under base change.
\end{proposition}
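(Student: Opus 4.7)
The plan is to combine Lemma \ref{lem:kspace_natural} with the fact that the motivic Thom functor is compatible with base change. Concretely, for any morphism of schemes $f\colon T\to S$ and any map of presheaves $\beta\colon \mathrm{B}\to \Pic(\SH)$ over $S$, one has a canonical equivalence $f^*\mathrm{M}(\beta)\simeq \mathrm{M}(f^*\beta)$ in $\SH(T)$, where $f^*\beta$ denotes the composition $f^*\mathrm{B}\to f^*\Pic(\SH)\to \Pic(\SH)$ over $T$. This follows directly from the defining colimit formula for the motivic Thom functor together with smooth base change in $\SH$ (the equivalence $f^*g_{\#}\simeq g'_{\#}f'^*$ for a smooth $g\colon X\to S$ with pullback $g'\colon X_T\to T$), and from the fact that $\Pic(\SH)$ is stable under $f^*$.

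Applying this principle to the presentation $\MW_S=\mathrm{M}(\KWallrk\to \K\xrightarrow{\Jhom}\Pic(\SH))$, the comparison $f^*\KWallrk_S\to \KWallrk_T$ of presheaves over $T$ supplied by Lemma \ref{lem:kspace_natural} induces a canonical morphism $f^*\MW_S\to \MW_T$ in $\SH(T)$. By the same lemma, this comparison is a Zariski equivalence, and since the motivic Thom functor inverts Nisnevich (hence Zariski) equivalences by \cite[Proposition 16.9]{BH21}, the induced map on Thom spectra is an equivalence, which is the desired statement.

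The only potentially delicate point is the compatibility of the $\Jhom$-homomorphism $\K\to \Pic(\SH)$ with base change, i.e., the assertion that pulling back commutes with forming the Thom spectrum of a vector bundle. This is an instance of the general formalism of \cite[\S 16]{BH21} and is already implicit in the analogous base-change stability of $\MGL$ and $\MSL$; once it is in place, the proof reduces to the short two-step combination outlined above.
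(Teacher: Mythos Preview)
Your argument is correct and follows essentially the same route as the paper: the paper's proof simply cites Lemma~\ref{lem:kspace_natural} together with \cite[Lemma 16.7 and Proposition 16.9.(1)]{BH21}, where Lemma 16.7 is precisely the base-change compatibility of the motivic Thom functor that you spell out, and Proposition 16.9.(1) is the invariance under Nisnevich equivalences you invoke. Your additional remarks on the $\Jhom$-homomorphism are a reasonable elaboration but not a different strategy.
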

\begin{proof}
    Follows from Lemma \ref{lem:kspace_natural} and \cite[Lemma 16.7 and Proposition 16.9.(1)]{BH21}. 
\end{proof}
\subsection{Description via Grassmannians}
Let $\Gr_n(\A^m)$ denote the Grassmannian of $n$-dimen\-sional vector subbundles of $\struct^m_S$ and let $\ggl_{n,m}$ be the tautological rank $n$ vector bundle over $\Gr_n(\A^m)$. Taking the colimit over the closed embeddings $\Gr_n(\A^m)\hookrightarrow \Gr_n(\A^{m+1})$ in $\PShv(\Smk)$, we get the ind-scheme $\Gr_n(\A^\infty)\in\PShv(\Smk)$ or simply $\Gr_n$. We also use the symbol $\ggl_{n}$ to denote the colimit of the corresponding tautological bundles.
\begin{definition}
    The \textit{Wall Grassmannian} is the complement of the zero section of the line bundle $\det(\ggl_{n,m})\boxtimes \struct(1)$ over $\Gr_n(\A^m)\times \Proj^1$, \[\WGr_n(\A^m):=(\det(\ggl_{n,m})\boxtimes \struct(1))^\circ\in \Smk.\] Denote by $\gwall_{n,m}$ the pullback of the tautological vector bundle $\ggl_{n,m}$ along the canonical projection $\WGr_n(\A^m)\to \Gr_n(\A^m)$. 
\end{definition}
    The closed embeddings of the Grassmannians $\Gr_n(\A^m)\hookrightarrow \Gr_n(\A^{m+1})$ induce closed embeddings of the Wall Grassmannians $\WGr_n(\A^m)\hookrightarrow \WGr_n(\A^{m+1})$. Taking the colimit over these maps we obtain the ind-scheme \[\WGr_n=\WGr_n(\A^\infty):=\colim_{m}\WGr_n(\A^m)\in \PShv(\Smk).\] We also denote by $\gwall_{n}$ the colimit of the vector bundles $\gwall_{n,m}$. In addition, there are maps $\WGr_m\to \WGr_{m+1}$ induced by $\Gr_m\to \Gr_{m+1}$.
\begin{remark}
    Let us explain the intuition behind the definition of the Wall Grassmannians. Consider the presheaf $\mathrm{BW}_n$ defined as the fiber of the following morphism 
    $$ \mathrm{BGL}_n\times \Proj^1\xrightarrow{\det\times\struct(1)^\circ} \mathrm{B}\Gm\times \mathrm{B}\Gm\to \mathrm{B}\Gm, $$
    where the second map is induced by the multiplication on $\Gm$. Then the set of maps of presheaves $X\to \mathrm{BW}_n$ for a smooth $S$-scheme $X$ is in bijection with the set of triples $(E,f,\lambda)$, where $E$ is a rank $n$ vector bundle over $X$, $f\colon X\to \Proj^1$ is a morphism of $S$-schemes and $\lambda\colon\det(E)\xrightarrow{\simeq} f^*\struct(-1)$ is an isomorphism of line bundles. In other words, this presheaf classifies rank $n$ vector bundles whose determinants come from $\Proj^1$. Note that the existence of such a lift $\det(E)\colon X\to \Proj^1$ is a strong homotopical assumption on the vector bundle $E$ (for example, it implies vanishing of $c_1(E)^2\in H^4(X,\Z(2))$).
    We also stress that this additional structure is not linear in the sense that it cannot be described as a reduction of the structure group from $\GL_n$ to a linear algebraic group $G$. In turn, the Wall Grassmannian $\WGr_n$ gives us a geometric model for the object $\mathrm{BW}_n$ in the unstable $\A^1$-homotopy category. To be more precise, one can prove that the vector bundle $\gamma^\W_n$ induces a motivic equivalence $\WGr_n\cong_{\mot}\mathrm{BW}_n$. However, we do not consider $\mathrm{BW}_n$ in the main part of the text since $\WGr_n$ is enough for all our purposes. 
\end{remark}
    Our goal is to give a description of $\MW$ in terms of the Wall Grassmannians. Consider the following commutative diagram \[\xymatrixcolsep{7pc}\xymatrix{\Gr_n\times \Proj^1 \ar[r]^-{(\det(\ggl_{n})\,\boxtimes\:\struct(1))^\circ} \ar[d]_-{[\ggl_{n}\,\ominus\: \struct^n]\times \id} & \mathrm{B}\Gm \ar[d] \\ \Krk\times \Proj^1 \ar[r]^-{\phi_{\mathrm{rk=0}}} & \Pic,}\] where the top map classifies the respective $\Gm$-torsor and $[\ggl_{n}\,\ominus\,\struct^n]$ stands for the corresponding class of the virtual vector bundle in the $\K$-theory space. It induces a map between the fibers of the horizontal morphisms \[\WGr_n\to \KWallrk.\] Since the arrow $\Gr_n\to \Krk$ agrees with the maps $\Gr_m\to \Gr_{m+1}$, we have a natural morphism \[\colim_{n} \WGr_n\to \KWallrk.\]
\begin{lemma}
    The canonical map of presheaves $\colim_{n} \WGr_n\to \KWallrk$ is a motivic equivalence.
\end{lemma}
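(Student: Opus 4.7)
The plan is to present both presheaves as homotopy pullbacks along $S\to \mathrm{B}\Gm\simeq\Pic$ and then exchange the filtered colimit with the pullback. First I would observe that, by the very construction of $\WGr(n,\infty)$ as the complement of the zero section of the line bundle $\det(\ggl_{n,\infty})\boxtimes \struct(-1)$, there is a Cartesian square of presheaves
\[
\begin{tikzcd}
\WGr(n,\infty) \arrow[r] \arrow[d] & S \arrow[d] \\
\Gr(n,\infty)\times\Proj^1 \arrow[r] & \mathrm{B}\Gm,
\end{tikzcd}
\]
with the bottom map classifying $\det(\ggl_{n,\infty})\boxtimes \struct(-1)$ and the right vertical map the trivial $\Gm$-torsor. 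Analogously, unwinding the definition of $\KWallrk$ as the fiber of $\phi_{\mathrm{rk}=0}$ and using the Zariski equivalence $\mathrm{B}\Gm\simeq\Pic$ of \cite[Lemma 2.6]{NSO} (already invoked in the proof of Lemma \ref{lem:kspace_natural}), one obtains a Zariski-local equivalence $\KWallrk \simeq (\Krk\times\Proj^1)\times_{\mathrm{B}\Gm} S$.

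Since $\PShv(\Smk)$ is an $\infty$-topos, filtered colimits commute with finite limits. Taking $\colim_n$ of the first pullback therefore yields
\[
\colim_n \WGr(n,\infty) \simeq \Bigl(\bigl(\colim_n \Gr(n,\infty)\bigr) \times \Proj^1\Bigr) \times_{\mathrm{B}\Gm} S,
\]
and the map of interest is precisely the one induced on pullbacks by the canonical comparison $\colim_n \Gr(n,\infty) \to \Krk$. Thus it suffices to show two things: (a) this comparison is a motivic equivalence compatible with the determinant maps to $\mathrm{B}\Gm\simeq\Pic$; and (b) pullback along the $\Gm$-torsor $S\to\mathrm{B}\Gm$ preserves motivic equivalences.

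Point (a) is the standard motivic presentation of the rank-zero $\K$-theory space by finite Grassmanians — precisely the ingredient invoked in Example 2.2 to identify $\MGL$ with $\mathrm{M}(\Krk\to \K)$, cf.\ \cite[Lemma 4.6]{BHop}; the compatibility with determinants holds tautologically, since both sides send $[V]-[\struct^n]$ to $\det V$. Point (b) is an instance of smooth base change for motivic equivalences: the universal $\Gm$-torsor $S\to \mathrm{B}\Gm$ is smooth after Zariski-locally trivializing $\mathrm{B}\Gm$, so pullback along it preserves both Nisnevich-local and $\A^1$-local equivalences. The main obstacle will be this last verification — making rigorous the interaction between pullback along the non-representable morphism $S\to\mathrm{B}\Gm$ and motivic localization — which is cleanest by working with a Zariski atlas of $\mathrm{B}\Gm$ and reducing termwise to smooth base change between representables.
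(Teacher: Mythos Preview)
Your outline is essentially the paper's argument: both present the two sides as fibers over $\mathrm{B}\Gm\simeq\Pic$, commute the filtered colimit with the pullback, and reduce to the known motivic equivalence $\colim_n\Gr(n,\infty)\to\Krk$. The place where you diverge is exactly the place where the real work is, namely your point (b).

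What you need is that motivic localization preserves the fiber sequence $F\to E\to \mathrm{B}\Gm$ --- equivalently, that pullback along $S\to\mathrm{B}\Gm$ takes motivic equivalences to motivic equivalences. Your sketch (``smooth after Zariski-locally trivializing $\mathrm{B}\Gm$, then smooth base change between representables'') does not close this gap: the map $S\to\mathrm{B}\Gm$ is not a map of schemes, smooth base change as usually stated applies to pullbacks along morphisms \emph{of the base}, and working with the \v{C}ech nerve of $S\to\mathrm{B}\Gm$ recovers $E$ from $E\times_{\mathrm{B}\Gm}S$ rather than the other way around. Making this rigorous would amount to reproving the relevant special case of \cite[Theorem 2.2.5]{AHW}, which is exactly what the paper invokes: since $\mathrm{B}\Gm$ and $\Pic$ are Nisnevich sheaves with $\A^1$-invariant $\pi_0$, the fiber sequences survive motivic localization. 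You should simply cite that result. A second point you skip is that the paper first proves the lemma for regular $S$ and then deduces the general case by base change from $\Spec(\Z)$ via Lemma~\ref{lem:kspace_natural}; you should track whether your ingredients hold over an arbitrary base, and if not, insert the same reduction.
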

\begin{proof}
    Suppose first that $S$ is a regular scheme. Consider the diagram \[ \xymatrix{\colim_{n} \WGr_n \ar[r] \ar[d] & \colim_{n} \Gr_n\times \Proj^1 \ar[r] \ar[d] & \mathrm{B}\Gm \ar[d] \\ \KWallrk \ar[r] & \Krk\times \Proj^1 \ar[r] & \Pic,} \] which commutes by the discussion above. By the universality of colimits in the $\infty$-topos of presheaves, horizontal lines in the diagram form fiber sequences in $\PShv(\Smk)$. Since $\mathrm{B}\Gm$ and $\Pic$ are Nisnevich sheaves with $\A^1$-invariant $\pi_0$, these fiber sequences are motivic fiber sequences (i.e., they remain fiber sequences after applying the  motivic localization $\mathrm{L}_\mot$) by \cite[Theorem 2.2.5]{AHW}. Hence, it suffices to show that the middle and the right vertical maps are motivic equivalences. The middle one follows from the discussion just before \cite[Theorem 16.13]{BH21} since the motivic localization functor commutes with finite products; the right one is an equivalence even before localization.
    
    For an arbitrary base scheme $S$ the result follows from Lemma \ref{lem:kspace_natural} by base change from $\Spec(\Z)$.
\end{proof}
\begin{proposition}\label{prop:wall_colim}
    The canonical morphism $\colim_{n} \WGr_n\to\KWallrk$ induces an equivalence \[ \colim_{n} \Th(\gwall_{n}\ominus \struct^n)\cong \MW, \] of motivic spectra over $S$.
\end{proposition}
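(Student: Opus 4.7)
The plan is to apply the motivic Thom functor $\mathrm{M}\colon \PShv(\Smk)_{/\K}\to\SHk$ to the motivic equivalence $\colim_{n} \WGr(n,\infty)\to \KWallrk$ established in the preceding lemma, and to identify both sides of the resulting equivalence with the spectra appearing in the statement.

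First I would verify that, for each $n$, the composition $\WGr(n,\infty)\to \KWallrk\to \Krk\xrightarrow{\iota}\K$ classifies the virtual bundle $\gwall_{n,\infty}\ominus\struct^n$. This is essentially built into the construction of $\WGr(n,\infty)\to \KWallrk$ as a map of fibers: its composition with $\KWallrk\to \Krk$ factors as $\WGr(n,\infty)\to\Gr(n,\infty)\xrightarrow{[\ggl_{n,\infty}\ominus\struct^n]}\Krk$, and by definition $\gwall_{n,\infty}$ is the pullback of $\ggl_{n,\infty}$ along $\WGr(n,\infty)\to\Gr(n,\infty)$. Combined with Example \ref{exmpl:virtual_thom}, this gives $\mathrm{M}(\WGr(n,\infty)\to\K)\simeq \Th(\gwall_{n,\infty}\ominus\struct^n)$, while $\mathrm{M}(\KWallrk\to\K)=\MW$ by the definition of $\MW$.

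Next I would invoke the fact that the motivic Thom functor is symmetric monoidal and, being defined as a colimit, commutes with colimits in its argument, yielding $\mathrm{M}(\colim_n \WGr(n,\infty)\to\K)\simeq \colim_n \Th(\gwall_{n,\infty}\ominus\struct^n)$. Finally, by \cite[Proposition 16.9]{BH21} the motivic Thom functor inverts motivic equivalences, so applying it to the motivic equivalence from the previous lemma produces the desired identification with $\MW$. The only step requiring genuine care is the first one, namely tracing which virtual bundle the composition into $\K$ classifies; however, this becomes tautological once the construction of $\WGr(n,\infty)\to\KWallrk$ is unwound, so I do not expect any serious obstacle beyond that bookkeeping.
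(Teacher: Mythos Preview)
Your proposal is correct and follows essentially the same route as the paper: identify the map $\WGr(n,\infty)\to\K$ with the class of $\gwall_{n,\infty}\ominus\struct^n$ (Example~\ref{exmpl:virtual_thom}), use that $\mathrm{M}$ commutes with colimits, and invoke that $\mathrm{M}$ inverts the motivic equivalence from the preceding lemma. The only refinement the paper adds is a more precise citation, \cite[Proposition 16.9.(2) and Remark 16.11]{BH21}, since Proposition~16.9 only asserts invariance under motivic equivalences \emph{over a motivic space}, and Remark~16.11 is what makes this applicable here; you may want to include that reference.
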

\begin{proof}
From \cite[Proposition 16.9.(2) and Remark 16.11]{BH21} it follows that the motivic Thom functor inverts the motivic equivalence from the previous lemma. Thus, it induces \[\colim_n \mathrm{M}(\WGr_n\to \K)\cong \mathrm{M}(\colim_{n} \WGr_n\to \K)\xrightarrow{\simeq} \MW,\] and we obtain the claim by Example \ref{exmpl:virtual_thom}. 
\end{proof}
    Using the above description we can identify the complex Betti realization of $\MW$. Recall that if $S=\Spec(\C)$, then there is a complex realization functor $\BettiC\colon\SH(\C)\to \Spt$, which is symmetric monoidal and satisfies $\BettiC(\Sigma^\infty_+ X)\cong \Sigma^\infty_+ X(\mathbb{C})$ for $X\in\mathbf{Sm}_\C$. We refer to Appendix \ref{appendix_B} for a recollection on the $c_1$-spherical cobordism spectrum in topology. 
    \begin{proposition}\label{prop:betti_realiz_mwl}
    We have $\BettiC(\MW_\C)\cong \W$, where $\W$ is the $c_1$-spherical cobordism spectrum.
    \end{proposition}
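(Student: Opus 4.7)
The plan is to reduce the statement to the description of $\MW$ as a colimit of Thom spectra of algebraic vector bundles from Proposition \ref{prop:wall_colim}, and then to invoke the standard compatibility of $\BettiC$ with colimits and Thom spectra.

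First I would recall the key properties of $\BettiC$: it is a symmetric monoidal colimit-preserving functor $\SH(\C) \to \SH$, and for an algebraic vector bundle $E \to X$ over a smooth $\C$-scheme, it sends $\Sigma^{\infty - (2n,n)}\Th_X(E)$ to the suitable desuspension of the topological Thom space of the underlying complex vector bundle over $X(\C)$. Applying $\BettiC$ to the equivalence of Proposition \ref{prop:wall_colim} then yields
$$\BettiC(\MW_\C) \simeq \colim_n \Th\bigl(\gwall_{n,\infty}(\C) \ominus \C^n\bigr).$$

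Next I would identify the right-hand side with the topological model for $\W$ recalled in Appendix \ref{appendix_B}. Taking $\C$-points in the definition $\WGr(n,m) = (\det(\ggl_{n,m}) \boxtimes \struct(-1))^\circ$, and using that the complement of the zero section of a complex line bundle deformation retracts onto its unit circle bundle, one gets a homotopy equivalence
$$\WGr(n,\infty)(\C) \simeq S\bigl(\det(\gamma_n) \boxtimes \mathcal{O}(-1)\bigr)$$
over $\BU(n) \times \CP^1$, where $\gamma_n$ is the universal complex rank $n$ bundle, and $\gwall_{n,\infty}(\C)$ is the pullback of $\gamma_n$ along the structure map to $\BU(n)$. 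Comparing with the topological construction, this is precisely the classifying data for stable complex vector bundles with $\CP^1$-reduction, and the colimit over $n$ of the associated Thom spectra recovers $\W$.

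The only potentially non-trivial step is the bookkeeping of matching these topological data with the conventions of Appendix \ref{appendix_B}; everything else is formal once the Thom-functor compatibility of $\BettiC$ is in place. In particular, no deep homotopical input beyond Proposition \ref{prop:wall_colim} and standard properties of Betti realization is required.
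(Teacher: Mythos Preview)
Your proposal is correct and follows essentially the same route as the paper: both apply $\BettiC$ to the colimit description of Proposition \ref{prop:wall_colim}, use that Betti realization commutes with colimits and Thom spectra, and identify $\WGr(n,\infty)(\C)$ with the $S^1$-bundle $\mathrm{BW}(n)$ via the deformation retract of the $\C^\times$-bundle onto its unit circle bundle (cf.\ Remark \ref{remark:bwn_s1bundle}). The paper's proof is terser but contains no additional ideas beyond what you outline.
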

    \begin{proof} 
        By construction, the complex realization of $\WGr_n$ is given by the complement to the zero section of $\det(\mathrm{EU}(n))\boxtimes \struct(1)$ over $\Gr_n(\C^\infty)\times\CP^1$. The underlying $\C^\times$-bundle is equivalent to the respective principal $S^1$-bundle $\mathrm{BW}(n)$; see Remark \ref{remark:bwn_s1bundle}. Since the complex realization commutes with colimits, we see that $\BettiC(\Th(\gwall_{n}))\cong\mathrm{TBW}(n)$ (here $\mathrm{TBW}(n)$ is the Thom space of the tautological complex vector bundle over $\mathrm{BW}(n)$) and $\BettiC(\MW_\C)\cong\W$.
    \end{proof}
    \begin{remark}
        Analogously, it can be shown that the real Betti realization (or more generally real \etale realization) of $\MW$ is given by Wall's $w_1$-spherical cobordism spectrum $\W_\R$ \cite[Chapter VIII]{Sto}. However, we do not need this here.
    \end{remark}
\section{Connection with \texorpdfstring{\textrm{MSL}}{MSL} and \texorpdfstring{\textrm{MGL}}{MGL}}\label{section-3}
In this section we discuss maps $\forg\colon\MSL\to \MW$ and $\bforg\colon\MW\to \MGL$ in detail. In particular, we compute their cofibers and identify maps in the respective cofiber sequences, see Corollary \ref{cor:cofiber} and Theorem \ref{thm:fiber_seq}. This section is the main technical part of the paper.
\subsection{The \texorpdfstring{$c_1$}{c1}-spherical algebraic cobordism spectrum as a cofiber}
Denote by $s\colon\Pic(X)\to \Vect(X)$ the morphism of groupoids that takes a line bundle over a smooth $S$-scheme $X$ to itself viewed as a vector bundle. This rule defines a section $s\colon\Pic\to \Vect$ to the determinant map: $\det\circ\,s=\id$. Applying the group completion and the Zariski localization, we get a section $s\colon\Pic\to \K$ to the determinant morphism $\det\colon\K\to \Pic$. Similarly there is a section $s_{\mathrm{rk=0}}\colon\Pic\to \Krk$ to $\det_{\mathrm{rk=0}}\colon\Krk\to \Pic$.
\begin{proposition}\label{prop:k=kslxpic}
    Let $S$ be a base scheme. Then the morphism of presheaves over $S$ \[ \KSL\times \Pic\xrightarrow{\id\times s} \KSL\times \K\xrightarrow{\mathrm{act}} \K, \] is an equivalence, where $\mathrm{act}$ is the action on the $\KSL$-module. The same holds for the rank zero presheaves $\KSLrk\times \Pic\xrightarrow{\simeq} \Krk$.
\end{proposition}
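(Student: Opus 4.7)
The plan is to exhibit the map in the statement as the standard splitting of the fiber sequence that defines $\KSL$. By Remark \ref{remark:det}, $\KSL$ is identified with $\fib(\det\colon\K\to \Pic)$, and by construction $s\colon\Pic\to \K$ is a section of $\det$. Hence this fiber sequence splits, and the splitting map is exactly the composition in the statement.

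Concretely, denote the map in the statement by $\psi\colon\KSL\times \Pic\to \K$; under the (additive) $\KSL$-action on $\K$ coming from the inclusion $\KSL\to \K$, it is given by $\psi(V,L)=V+s(L)$. The first step is to compute $\det\circ\psi=\mathrm{pr}_2$: the trivialization of $\det(V)$ for $V\in\KSL$, together with $\det\circ s=\id$, yields $\det(V+s(L))=\det(V)\otimes L=L$. The second step is to fit $\psi$ into a commutative diagram of fiber sequences of presheaves
\[
\begin{tikzcd}[column sep=large]
\KSL \arrow[r] \arrow[d, "{+s(\struct)}"'] & \KSL\times \Pic \arrow[r, "\mathrm{pr}_2"] \arrow[d, "\psi"] & \Pic \arrow[d, "\id"] \\
\KSL \arrow[r] & \K \arrow[r, "\det"] & \Pic,
\end{tikzcd}
\]
in which the top row is tautologically a fiber sequence and the bottom is one by Remark \ref{remark:det}. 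The left vertical arrow is translation by $s(\struct)=[\struct]\in\K$, which preserves $\KSL$ (the determinant is unchanged up to the trivial line) and is a self-equivalence since $\K$ is grouplike. The long exact sequence of homotopy sheaves attached to this diagram then forces the middle map $\psi$ to be an equivalence.

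For the rank-zero variant I would repeat the argument with $\KSLrk$, $\Krk$, and $s_{\mathrm{rk=0}}$. Since $s_{\mathrm{rk=0}}(\struct)=0\in \Krk$, the left vertical map in the analogous diagram is the identity, so the conclusion is immediate from the same five-lemma argument.

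The only real obstacle is bookkeeping: one has to make explicit the section property $\det\circ s=\id$ and the compatibility of the action map with the determinant in order to verify that the square on the right of the diagram commutes. There is no substantive homotopical difficulty; we are just making explicit the standard splitting of a fiber sequence of presheaves of grouplike $\Eone$-spaces by a given section of its base map.
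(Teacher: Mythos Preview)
Your proposal is correct and follows essentially the same route as the paper: both arguments invoke the split fiber sequence $\KSL\to\K\xrightarrow{\det}\Pic$ with section $s$ and conclude via the induced map on homotopy groups/sheaves. The paper simply appeals to ``the standard topological argument,'' which is exactly the diagram of fiber sequences and five-lemma reasoning you spelled out.
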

\begin{proof}
    From Remark \ref{remark:det} and the discussion above, we have a fiber sequence of presheaves with a section \[\begin{tikzcd} \KSL \ar{r} & \K \ar{r}{\det} & \Pic. \ar[shift left=1ex]{l}{s} \end{tikzcd}\] The action of $\KSL$ gives the desired equivalence since it induces an isomorphism on the homotopy groups due to the standard topological argument. The proof for the rank zero presheaves is analogous.
\end{proof}
\begin{remark}
    Consider the following motivic equivalences $\mathrm{B}\GL=\colim_n \mathrm{B}\GL_n\cong_\mot \Krk$, $\mathrm{B}\SL=\colim_n \mathrm{B}\SL_n\cong_\mot \KSLrk$, and $\mathrm{B}\Gm\cong\Pic$. Combining these descriptions with the above isomorphism we obtain a motivic equivalence $\mathrm{B}\SL\times \mathrm{B}\Gm\cong_\mot\mathrm{B}\GL$. This generalizes the splittings $\mathrm{BSU}\times \BU(1)\cong \BU$ and $\mathrm{BSO}\times \BO(1)\cong \BO$ in topology.
\end{remark}
\begin{corollary}[see also {\cite[Theorem 1.1]{Nan23}}] \label{corollary:mgl=mslxpinf}
    Let $S$ be a base scheme. Then the morphism of motivic spectra over $S$ \[\MSL\wedge \Th_{\Proj^\infty}(\struct(-1)\ominus \struct)\xrightarrow{\id\wedge \, \mathrm{in}} \MSL\wedge \MGL\xrightarrow{\mathrm{act}} \MGL,\] is an equivalence of $\MSL$-modules. Here $\mathrm{in}\colon\Th_{\Proj^\infty}(\struct(-1)\ominus \struct)\to \MGL$ is the canonical map and $\mathrm{act}$ is the action on the $\MSL$-module.
\end{corollary}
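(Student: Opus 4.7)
The plan is to apply the symmetric monoidal motivic Thom functor $\mathrm{M}$ to the rank-zero version of the equivalence from Proposition \ref{prop:k=kslxpic}, namely $\KSLrk \times \Pic \xrightarrow{\simeq} \Krk$. View this as an equivalence in the slice category $\PShv(\Smk)_{/\K}$: the structure map on the source is the tensor product (formed via the $\Einf$-multiplication on $\K$) of $\KSLrk \xrightarrow{\iota} \K$ and $\Pic \xrightarrow{s_{\mathrm{rk=0}}} \Krk \xrightarrow{\iota} \K$, while the target uses $\iota$. Because the $\Jhom$-homomorphism $\K \to \Pic(\SH)$ is a map of $\Einf$-spaces, the functor $\mathrm{M}$ carries such tensor products over $\K$ into smash products of spectra.

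Applying $\mathrm{M}$ then produces an equivalence
\[\MSL \wedge \mathrm{M}(\Pic \to \K) \xrightarrow{\simeq} \MGL.\]
To identify the second smash factor, I would use the motivic equivalences $\Pic \simeq_\mot \mathrm{B}\Gm \simeq_\mot \Proj^\infty$; under these, the composition $\Pic \xrightarrow{s_{\mathrm{rk=0}}} \Krk \xrightarrow{\iota} \K$ corresponds to the classifying map of the virtual bundle $[\struct(-1)] - [\struct]$ on $\Proj^\infty$. Hence Example \ref{exmpl:virtual_thom} yields $\mathrm{M}(\Pic \to \K) \simeq \Th_{\Proj^\infty}(\struct(-1) \ominus \struct)$.

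Finally, I would verify that the resulting equivalence matches the composition $\mathrm{act} \circ (\id \wedge \mathrm{in})$ in the statement. This is a matter of unravelling: the presheaf-level map factors as $\id \times (s_{\mathrm{rk=0}} \circ \iota)$ followed by the $\KSL$-action on $\K$, and under $\mathrm{M}$ these two steps become $\id \wedge \mathrm{in}$ and $\mathrm{act}$ respectively. The $\MSL$-equivariance is automatic: the source equivalence is $\KSL$-equivariant by construction, and the symmetric monoidal Thom functor promotes this to equivariance of $\MSL$-modules (as in the proof of Proposition \ref{prop:wall_multi}). The main technical obstacle is the identification $\mathrm{M}(\Pic \to \K) \simeq \Th_{\Proj^\infty}(\struct(-1) \ominus \struct)$, which requires careful bookkeeping of the motivic model $\Pic \simeq_\mot \Proj^\infty$ together with the rank-zero correction $s_{\mathrm{rk=0}}$ versus $s$; the remaining steps are formal consequences of the Thom functor formalism.
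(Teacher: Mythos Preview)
Your proposal is correct and follows essentially the same route as the paper: apply the symmetric monoidal Thom functor to the rank-zero equivalence $\KSLrk\times\Pic\xrightarrow{\simeq}\Krk$ viewed in $\PShv(\Smk)_{/\K}$, then identify $\mathrm{M}(\Pic\to\K)$ with $\Th_{\Proj^\infty}(\struct(-1)\ominus\struct)$ via the motivic equivalence $\struct(-1)\colon\Proj^\infty\to\Pic$. The paper handles the latter identification by citing \cite[Proposition 16.9(2) and Remark 16.11]{BH21}, which addresses exactly the bookkeeping you flag as the main technical point.
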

\begin{proof}
    It is clear that the desired morphism is a map of $\MSL$-modules. Let us consider the presheaf $\Pic$ as an object of $\PShv(\Smk)_{/\K}$ via the map $\Pic\xrightarrow{s_{\mathrm{rk=0}}} \Krk\xrightarrow{\iota} \K$. Then the equivalence $\KSLrk\times \Pic\cong \Krk$ from the previous lemma takes place in the slice category. Applying the motivic Thom functor we obtain that the composition \[\MSL\wedge\mathrm{M}(\Pic\xrightarrow{\iota\, \circ\, s_{\mathrm{rk=0}}}\K)\rightarrow \MSL\wedge \MGL\to \MGL\] is an isomorphism. The standard motivic equivalence $\struct(-1)\colon\Proj^\infty\to \Pic$ induces an equivalence of the Thom spectra \[\Th(\struct(-1)\ominus \struct)\xrightarrow{\simeq} \mathrm{M}(\Pic\xrightarrow{\iota\, \circ\, s_{\mathrm{rk=0}}} \K)\] by \cite[Proposition 16.9(2) and Remark 16.11]{BH21}. This concludes the proof.
\end{proof}
Recall the map $\phi$ from \S \ref{section_2-2} and consider the morphism of presheaves $([\struct(-1)],\id)\colon\Proj^1\to \K\times \Proj^1$ over a scheme $S$. The choice of the isomorphism $\struct(-1)\otimes\struct(1)\cong \struct$ gives a null-homotopy for the composition $\phi\circ([\struct(-1)],\id)$ and thus yields a lift $t$: \[\xymatrix{& \Proj^1 \ar[d]^-{([\struct(-1)],\id)} \ar@{-->}[ld]_-t & \\ \KWall \ar[r] & \K\times \Proj^1 \ar[r]^-{\phi} & \Pic.}\] It follows from the diagram that $t$ is a section of the projection $\KWall\to \Proj^1$. Taking the restriction of the above picture to the rank zero presheaves, we obtain a section $t_{\mathrm{rk=0}}\colon\Proj^1\to \KWallrk$ to the map $\KWallrk\to \Proj^1$.
\begin{lemma}\label{lemma:KslP1_KWall}
    Let $S$ be a base scheme. Then the morphism of presheaves over $S$ \[\KSL\times \Proj^1\xrightarrow{\id\times \,t} \KSL\times \KWall\xrightarrow{\mathrm{act}} \KWall\] is an equivalence, where $\mathrm{act}$ is the action constructed in Lemma \ref{lemma:kwall_action}. The same holds for the rank zero presheaves $\KSLrk\times \Proj^1\xrightarrow{\simeq} \KWallrk$.
\end{lemma}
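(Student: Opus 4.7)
The strategy is a direct analog of the proof of Proposition \ref{prop:k=kslxpic}. The first step is to establish a fiber sequence of presheaves
$$\KSL \to \KWall \xrightarrow{\pi} \Proj^1,$$
in which $\pi$ is induced by the projection $\K \times \Proj^1 \to \Proj^1$. This is obtained by pulling back the defining fiber sequence $\KWall \to \K \times \Proj^1 \xrightarrow{\phi} \Pic$ along a point $x\colon * \to \Proj^1$: the result is the fiber of $\det\colon \K \to \Pic$ over $\struct(1)(x)$, which is equivalent (non-canonically) to $\KSL$. By construction of $t$ via the commutative diagram in the excerpt, the map $t\colon \Proj^1 \to \KWall$ is a section to $\pi$.

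Second, I would observe that the $\KSL$-action from Lemma \ref{lemma:kwall_action} is compatible with $\pi$: it is obtained from the action on $\K \times \Proj^1$ given by left translation on the first factor and trivially on the second, and it descends to $\KWall$ because $\phi$ is $\KSL$-equivariant (with the trivial action on the target $\Pic$). Hence the composite $\KSL \times \Proj^1 \xrightarrow{\id \times t} \KSL \times \KWall \xrightarrow{\mathrm{act}} \KWall$ covers $\id_{\Proj^1}$ and restricts on each fiber of $\pi$ to the translation action of $\KSL$ on itself, which is an equivalence. The standard argument for a split fiber sequence with a group action on the fibers (precisely the one invoked in Proposition \ref{prop:k=kslxpic}) upgrades the fiberwise equivalences to an equivalence of presheaves on homotopy sheaves and hence to a genuine equivalence.

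The rank-zero statement follows by exactly the same reasoning, replacing $\KSL$ with $\KSLrk$, the section $t$ with $t_{\mathrm{rk=0}}$, and using the $\KSLrk$-action from Lemma \ref{lemma:kwall_action}; alternatively, it is obtained from the integral version by taking the connected components of rank zero, since all maps in sight are compatible with the rank. The only delicate point is checking that the $\KSL$-action on $\KWall$ is really compatible with the projection $\pi$ and that $t$ realizes a base-point in each fiber $\KSL$; both are immediate from the explicit construction of the action in Lemma \ref{lemma:kwall_action} and the diagram defining $t$, so the proof should be short and formal.
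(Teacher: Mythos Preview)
Your proposal is correct and follows essentially the same strategy as the paper: establish a split fiber sequence $\KSL \to \KWall \to \Proj^1$ with section $t$, then invoke the argument of Proposition~\ref{prop:k=kslxpic}. The paper obtains the fiber sequence slightly more cleanly by observing that the square
\[
\xymatrix{\KWall \ar[r] \ar[d] & \Proj^1 \ar[d]^{\struct(1)} \\ \K \ar[r]^-{\det} & \Pic}
\]
is a pullback (immediate from the definition of $\KWall$), so the induced map on fibers identifies $\fib(\KWall\to\Proj^1)$ with $\KSL$ canonically, rather than computing the fiber over a single point and noting a non-canonical identification. This also pins down the map $\KSL\to\KWall$ as the canonical one constructed earlier, which you leave implicit. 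Otherwise the arguments coincide.
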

\begin{proof}
    Consider the commutative diagram in which rows are fiber sequences in $\PShv(\Smk)$ \[\xymatrix{\fib(\KWall\to\Proj^1) \ar[r] \ar[d] & \KWall \ar[r] \ar[d] & \Proj^1 \ar[d]^{\struct(-1)} \\ \KSL \ar[r] & \K \ar[r]^-{\det} & \Pic.}\] By definition of $\KWall$ the right square is a pullback square. Thus, the left vertical arrow is an equivalence and there is a fiber sequence of presheaves with a section \[\begin{tikzcd}
    \KSL \arrow{r} & \KWall \ar{r} & \Proj^1. \arrow[shift left=1ex]{l}{t} \end{tikzcd}\] The rest of the proof is the same as in Proposition \ref{prop:k=kslxpic}. The argument for the rank zero presheaves is similar.
\end{proof}
\begin{remark}
    The morphism $t_{\mathrm{rk=0}}$ constructed above induces a map of the respective Thom spectra $\Th(t_{\mathrm{rk=0}})\colon\sph/\eta\cong\Th_{\Proj^1}(\struct(-1)\ominus\struct)\to \MW$ (see the diagram \ref{equation:diagram} for the first equivalence). Actually, it can be shown that the Thom space $\Th_{\Proj^1}(\struct(-1))$ is motivically equivalent to the first space $\Th(\gwall_{1})$ of the $\T$-spectrum $\MW$. However, we do not need this here.
\end{remark}
\begin{theorem}\label{thm:msl/eta=mwl}
    Let $S$ be a base scheme. Then the morphism of motivic spectra over $S$ \[\MSL/\eta=\MSL\wedge \sph/\eta \xrightarrow{\id\wedge \, \Th(t_\mathrm{rk=0})} \MSL\wedge \MW\xrightarrow{\mathrm{act}}\MW\] is an equivalence of $\MSL$-modules. Here $\mathrm{act}$ is the action constructed in Proposition \ref{prop:wall_multi}.
\end{theorem}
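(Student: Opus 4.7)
The approach mirrors that of Corollary \ref{corollary:mgl=mslxpinf}. The plan is to apply the symmetric monoidal motivic Thom functor $\mathrm{M}\colon\PShv(\Smk)_{/\K}\to\SHk$ to the equivalence of rank-zero presheaves furnished by the preceding lemma, viewing both sides as objects of the slice category $\PShv(\Smk)_{/\K}$ via the canonical projection $\KWallrk\to \Krk\xrightarrow{\iota}\K$.

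Concretely, I first view $\Proj^1$ as an object of $\PShv(\Smk)_{/\K}$ through the composition $\Proj^1\xrightarrow{t_{\mathrm{rk=0}}}\KWallrk\to \K$. The preceding lemma then reads as an equivalence in the slice category
\[ \KSLrk\times \Proj^1\xrightarrow{\id\times t_{\mathrm{rk=0}}}\KSLrk\times \KWallrk\xrightarrow{\mathrm{act}}\KWallrk, \]
the compatibility of the $\KSLrk$-action with the structural map to $\K$ being provided by the $\Einf$-module structure produced in Lemma \ref{lemma:kwall_action}. Applying $\mathrm{M}$ and invoking its symmetric monoidality yields an equivalence of motivic spectra
\[ \MSL\wedge \mathrm{M}(\Proj^1\to \K)\xrightarrow{\simeq} \MW. \]
The remark preceding the theorem identifies $\mathrm{M}(\Proj^1\to \K)$ with $\sph/\eta$ and records that the induced map from $\sph/\eta$ to $\MW$ is precisely $\Th(t_{\mathrm{rk=0}})$; combining these identifications produces the desired equivalence.

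The $\MSL$-module structure comes for free: the $\KSLrk$-module structure on $\KWallrk$ from Lemma \ref{lemma:kwall_action} already lives in the slice category, and Proposition \ref{prop:wall_multi} promotes it to the $\MSL$-module structure on $\MW$, so the composition built from the action respects this structure by functoriality of $\mathrm{M}$.

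The main technical point will be the careful bookkeeping required to confirm that once the Thom functor is applied to the $\KSLrk$-action on $\KWallrk$, the resulting action at the level of spectra agrees with the $\MSL$-action on $\MW$ from Proposition \ref{prop:wall_multi}, and that the equivalence so produced is exactly the composition $\mathrm{act}\circ(\id\wedge \Th(t_{\mathrm{rk=0}}))$ written in the theorem. This amounts to checking that symmetric monoidality of $\mathrm{M}$ transports module structures coherently, essentially the same verification as in the proof of Corollary \ref{corollary:mgl=mslxpinf}, and should be handled by unwinding definitions rather than any new geometric input.
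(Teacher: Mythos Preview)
Your proposal is correct and follows essentially the same approach as the paper, which simply says ``The proof is the same as in Corollary \ref{corollary:mgl=mslxpinf}, using the previous lemma instead of Proposition \ref{prop:k=kslxpic}.'' Your unwinding of what this means---passing the rank-zero equivalence $\KSLrk\times\Proj^1\simeq\KWallrk$ to the slice category, applying the symmetric monoidal Thom functor, and identifying $\mathrm{M}(\Proj^1\to\K)$ with $\sph/\eta$ via the preceding remark---is exactly the intended argument.
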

\begin{proof}
    The proof is the same as in Corollary \ref{corollary:mgl=mslxpinf}, using the previous lemma instead of Proposition \ref{prop:k=kslxpic}.
\end{proof}
\begin{lemma}
    The morphism of presheaves $\KSL=\KSL\times S\xrightarrow{\id\times \infty} \KSL\times \Proj^1\xrightarrow{\simeq} \KWall$ is homotopic to the canonical map $\KSL\to \KWall$. The same holds for the rank zero presheaves.
\end{lemma}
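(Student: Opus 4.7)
The plan is to verify the homotopy by unfolding both constructions and comparing them on fibers of the projection $\KWall \to \Proj^1$. First, I would check that both maps $c, c'\colon \KSL \to \KWall$ (the canonical map and the composition through the equivalence) are $\KSL$-equivariant with respect to the translation action on the source and the module action from Lemma \ref{lemma:kwall_action} on the target: for $c$ this is part of the statement of that lemma, while for $c'$ it follows because the equivalence $\KSL \times \Proj^1 \xrightarrow{\simeq} \KWall$ from the previous lemma is itself $\KSL$-equivariant by construction (it is built from the action map). Next, I would verify that post-composing either map with the projection $\KWall \to \Proj^1$ yields the constant map at $\infty$: for $c$ this is immediate from the defining commutative square, and for $c'$ from the facts that $t$ is a section and that the $\KSL$-action preserves the fibers of $\KWall \to \Proj^1$.

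Consequently, both maps factor through the fiber of $\KWall \to \Proj^1$ over $\infty$. By the pullback description $\KWall \simeq \K \times_{\Pic} \Proj^1$ and the canonical trivialization $\infty^*\struct(-1) \simeq \struct$ coming from the pointing of $\Proj^1$, this fiber is canonically identified with $\KSL$ in a $\KSL$-equivariant manner. Under this identification, the canonical map $c$ becomes the identity of $\KSL$ by the definition of the pullback identification. It then remains to check that the composition $c'$, which is determined by its value on the basepoint $0 \in \KSL$ together with $\KSL$-equivariance, also becomes the identity; this reduces to showing that $c'(0) = t(\infty)$ corresponds to the basepoint under the identification of the fiber with $\KSL$, using the compatibility between the canonical trivializations $\infty^*\struct(1) \simeq \struct$ and $\infty^*\struct(-1) \simeq \struct$ via the evaluation pairing $\struct(1) \otimes \struct(-1) \simeq \struct$ used to lift $([\struct(1)], \id)$ to $\KWall$.

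The rank zero version follows from the same argument applied to the rank zero components throughout, using that the restriction $t_{\mathrm{rk=0}}\colon \Proj^1 \to \KWallrk$ sends $\infty$ to the basepoint by construction. The main technical obstacle is the careful tracking of basepoints and canonical identifications in the pointed $\infty$-categorical setting; once these are pinned down, the rest is bookkeeping with the universal property of fibers and the $\KSL$-equivariance of all maps involved.
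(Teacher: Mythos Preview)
Your proposal is correct and implements essentially the same argument as the paper, just in different language. Both proofs reduce the claim to two ingredients: (i) the $\KSL$-equivariance of each map (from Lemma~\ref{lemma:kwall_action} and from the construction of the equivalence $\KSL\times\Proj^1\xrightarrow{\simeq}\KWall$ via the action), and (ii) the fact that $t(\infty)$ agrees with the canonical basepoint of $\KWall$. You package this as ``both maps factor through the fiber over $\infty$, which is $\KSL$, and an equivariant self-map of $\KSL$ is determined by its value at $0$''; the paper packages the same two checks as the commutativity of the two triangles in a single square with an auxiliary dashed arrow $\KSL\to\KSL\times\KWall$, $x\mapsto(x,e)$. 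The paper's diagrammatic version is a bit more economical---it avoids the explicit fiber identification and the discussion of compatibility of the trivializations $\infty^*\struct(\pm1)\simeq\struct$---but neither approach requires anything the other doesn't.
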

\begin{proof}
    We need to show that the following diagram commutes up to homotopy \[ \xymatrix{ \KSL \ar[r] \ar[d] \ar@{-->}[rd] & \KSL\times \Proj^1 \ar[d]^-{\id\times \, t} \\ \KWall & \KSL\times \KWall. \ar[l]_-{\mathrm{act}} } \] Define the dashed arrow $\KSL=\KSL\times S\to \KSL\times \KWall$ as the product of $\KSL\xrightarrow{\id}\KSL$ and $S\xrightarrow{[\struct]} \KSL\to \KWall$. We need to check that both triangles commute up to a homotopy. The commutativity of the bottom one follows from Lemma \ref{lemma:kwall_action}. To prove the commutativity of the top triangle we need to show that the maps $$S\xrightarrow{\infty}\Proj^1\xrightarrow{t} \KWall\ \ \text{and}\ \ S\xrightarrow{[\struct]} \KSL\to\KWall$$ are homotopic. The second composite is determined up to homotopy by the morphism $([\struct],\infty)\colon S\to \K\times \Proj^1$ (see the pullback square in the proof of Lemma \ref{lemma:KslP1_KWall}) essentially by construction of $\KSL\to \KWall$. Similarly, the first composite is determined up to homotopy by $([\struct(-1)\vert_{S}],\infty)\colon S\to \K\times\Proj^1$. Hence, choosing an isomorphism $\struct(-1)\vert_{S}\cong\struct$ we obtain the desired homotopy. The argument for the rank zero presheaves is similar.
\end{proof}
\begin{corollary}\label{cor:cofiber}
    Multiplication with the motivic Hopf element induces the cofiber sequence in $\SH(S)$ \[\Sigma^{1,1}\MSL\xrightarrow{\eta} \MSL\xrightarrow{\forg} \MW.\]
\end{corollary}
\begin{proof}
    By Theorem \ref{thm:msl/eta=mwl}, it is enough to show that the composition $\MSL\to \MSL/\eta\xrightarrow{\simeq} \MW$ is homotopic to the morphism $\forg\colon\MSL\to \MW$. This is true by the previous lemma.
\end{proof}
\subsection{The \texorpdfstring{$c_1$}{c1}-spherical algebraic cobordism spectrum as a fiber}
The equivalences from the previous subsection (see Corollary \ref{corollary:mgl=mslxpinf} and Theorem \ref{thm:msl/eta=mwl}) are compatible in the sense that the following diagram commutes \[ \xymatrixcolsep{4pc}\xymatrix{\MSL\wedge \Th_{\Proj^1}(\struct(-1)\ominus \struct) \ar[r]^{\id\wedge \Th(\mathrm{in})} \ar[d]^\simeq & \MSL\wedge \Th_{\Proj^\infty}(\struct(-1)\ominus \struct) \ar[d]^\simeq \\ \MW \ar[r]^{\bforg} & \MGL,} \] where $\Th(\mathrm{in})$ is the map on Thom spaces induced by the embedding $\mathrm{in}\colon\Proj^1\hookrightarrow\Proj^\infty$ and $\bforg$ is the morphism from Definition \ref{definition:MWL}. Thus, the cofiber of $\bforg$ is equivalent to the cofiber of the top arrow. To compute it, we use motivic equivalences $\Th_{\Proj^1}(\struct(-1))\cong_\mot\Proj^2$ and $\Th_{\Proj^\infty}(\struct(-1))\cong_\mot\Proj^\infty$. These identifications follow from the diagram \begin{equation}\label{equation:diagram}
\begin{gathered}\xymatrix{\A^{n+1}\setminus\{0\} \ar[r] & \Proj^{n} \ar[r] & \Proj^{n+1} \\ \struct(-1)^\circ \ar[u]_-{\simeq_\mot} \ar@{^{(}->}[r] & \struct(-1) \ar[u]_-{\simeq_\mot} \ar[r] & \Th_{\Proj^n}(\struct(-1)), \ar[u] }\end{gathered}\end{equation} (see e.g., \cite[Lemma 6.2.1]{Mor03}). Therefore, $\cofib(\bforg)$ is isomorphic to the cofiber of $$\MSL\wedge \Sigma^{\infty-(2,1)}\Proj^2\to \MSL\wedge \Sigma^{\infty-(2,1)}\Proj^\infty,$$ which is given by $\MSL\wedge \Sigma^{\infty-(2,1)}\Proj^\infty/\Proj^2$.
\begin{lemma}\label{lemma:purity_proj_spaces}
    Let $S$ be a base scheme. Then there is a motivic equivalence $\Proj^\infty/\Proj^2\cong_\mot \Th_{\Proj^{\infty}}(\struct(1)^3)$. Furthermore, the map $$\MGL^{*,*}(\Th_{\Proj^{\infty}}(\struct(1)^3))\to\widetilde{\MGL}{}^{*,*}(\Proj^\infty)$$ obtained by pullback along the morphism $\Proj^\infty\to\Proj^\infty/\Proj^2\cong_\mot\Th_{\Proj^{\infty}}(\struct(1)^3)$ sends the Thom class $\thc(\struct(1)^3)$ to $c_1(\struct(1))^3$. Here the right hand side is $\MGL$-cohomology of $\Proj^\infty$ pointed at any point of the image $\Proj^2\hookrightarrow\Proj^\infty$.
\end{lemma}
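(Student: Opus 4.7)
The strategy is to establish both claims first at each finite level $\Proj^{n+3}$ via motivic purity, and then pass to the colimit as $n \to \infty$. I would choose coordinates on $\Proj^{n+3}$ so that the $\Proj^2$ from the statement is cut out by $\{x_3 = \cdots = x_{n+3} = 0\}$, and consider the disjoint closed subscheme $\Proj^n \subset \Proj^{n+3}$ cut out by $\{x_0 = x_1 = x_2 = 0\}$; the normal bundle of the latter is $\struct(1)^3$.

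For part (1), applying Morel--Voevodsky purity to $\Proj^n \hookrightarrow \Proj^{n+3}$ yields the cofiber sequence $\Proj^{n+3} \setminus \Proj^n \to \Proj^{n+3} \to \Th_{\Proj^n}(\struct(1)^3)$ in $\mathbf{H}(S)$. The projection $[x_0 : \cdots : x_{n+3}] \mapsto [x_0 : x_1 : x_2]$ identifies $\Proj^{n+3} \setminus \Proj^n$ with the total space of $\struct(1)^{n+1}$ over $\Proj^2$, so the natural inclusion $\Proj^2 \hookrightarrow \Proj^{n+3} \setminus \Proj^n$ is the zero section and hence a motivic equivalence. Substituting, the cofiber sequence becomes $\Proj^2 \hookrightarrow \Proj^{n+3} \to \Th_{\Proj^n}(\struct(1)^3)$ with first map motivically the canonical inclusion; comparing with the tautological cofiber sequence $\Proj^2 \hookrightarrow \Proj^{n+3} \to \Proj^{n+3}/\Proj^2$ yields $\Proj^{n+3}/\Proj^2 \simeq_\mot \Th_{\Proj^n}(\struct(1)^3)$, and the filtered colimit over $n$ gives the first claim.

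For part (2), I would exploit that the composition $\Proj^n \hookrightarrow \Proj^{n+3} \to \Th_{\Proj^n}(\struct(1)^3)$ coincides with the zero section of the Thom space, which pulls the Thom class back to the Euler class $c_1(\struct(1))^3 \in \MGL^{6,3}(\Proj^n)$. For $n \geq 3$ the restriction map $\MGL^{6,3}(\Proj^{n+3}) \to \MGL^{6,3}(\Proj^n)$ is an isomorphism (both are free over $\MGL^{*,*}(S)$ on $1, c_1, c_1^2, c_1^3$ in the relevant bidegrees), forcing the pullback of $\thc$ to $\Proj^{n+3}$ also to equal $c_1(\struct(1))^3$; passing to the colimit concludes.

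The main point to verify carefully is that the first arrow in the purity cofiber sequence, after composition with the motivic equivalence $\Proj^2 \xrightarrow{\simeq} \Proj^{n+3} \setminus \Proj^n$, really agrees with the canonical closed embedding $\Proj^2 \hookrightarrow \Proj^{n+3}$. This reduces to the explicit coordinate check that the zero section of the bundle $\struct(1)^{n+1}$ over $\Proj^2$, embedded back into $\Proj^{n+3}$, is exactly $\{x_3 = \cdots = x_{n+3} = 0\}$, which is direct.
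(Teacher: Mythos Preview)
Your argument for the first claim is correct and matches the paper's proof: both identify $\Proj^{n+3} \setminus \Proj^n$ as a vector bundle over $\Proj^2$ with $\Proj^2$ as zero section, then invoke homotopy purity for $\Proj^n \hookrightarrow \Proj^{n+3}$ and pass to the colimit.

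For the second claim, your idea of using the zero section is fine, but the step asserting that the restriction $\MGL^{6,3}(\Proj^{n+3}) \to \MGL^{6,3}(\Proj^n)$ is an isomorphism is incorrect. The coefficient ring $\MGL^{*,*}(S)$ is nontrivial in negative bidegrees along the diagonal (for instance $\MGL^{-2i,-i}(S) = \pi_{2i,i}(\MGL) \neq 0$ for $i>0$), so $\MGL^{6,3}(\Proj^m) = \bigoplus_{j=0}^m \MGL^{6-2j,\,3-j}(S)\cdot c_1^j$ genuinely grows with $m$; the restriction is surjective but has nonzero kernel. Thus knowing the image of your class in $\MGL^{6,3}(\Proj^n)$ does not determine it on $\Proj^{n+3}$.

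The fix is immediate: pass to the colimit \emph{before} comparing. At the infinite level the shift-by-three inclusion $\Proj^\infty \hookrightarrow \Proj^\infty$ is a motivic equivalence, so your factorisation shows that the map $\Proj^\infty \to \Th_{\Proj^\infty}(\struct(1)^3)$ is, up to this equivalence, the zero section, and pulling back the Thom class gives the Euler class $c_1(\struct(1))^3$ directly. Alternatively, the paper notes that the composite of the Thom isomorphism with the pullback to $\Proj^N$ is the Gysin pushforward along $\Proj^{N-3} \hookrightarrow \Proj^N$, which sends $1$ to $c_1(\struct(1))^3$ since $\Proj^{N-3}$ is the zero locus of three sections of $\struct(1)$; this sidesteps the restriction issue entirely.
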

\begin{proof}
    For $n>2$, consider the embedding of $\Proj^{n-3}$ into $\Proj^{n}$ as the linear subspace of points whose first three coordinates are zero. Then, the map $\Proj^2\hookrightarrow \Proj^n$ is the zero section of the vector bundle $\Proj^n\setminus \Proj^{n-3}\to \Proj^2$ given by projection onto the first three coordinates. Hence, we have motivic equivalences \[\Proj^n/\Proj^2\cong_\mot \Proj^n/(\Proj^n\setminus \Proj^{n-3})\cong_\mot \Th_{\Proj^{n-3}}(\struct(1)^3),\] where the last isomorphism is given by the homotopy purity; see \cite[Theorem 2.23]{MV99}. Consider the morphism \[\MGL^{*,*}(\Proj^{n-3})\xrightarrow{\simeq}\MGL^{*+6,*+3}(\Th_{\Proj^{n-3}}(\struct(1)^3))\to\widetilde{\MGL}{}^{*+6,*+3}(\Proj^n), \] where the left map is the Thom isomorphism in $\MGL$-cohomology and the right one is the pullback along $\Proj^n\to\Th_{\Proj^{n-3}}(\struct(1)^3)$. This composition is the pushforward along the closed embedding $\Proj^{n-3}\to\Proj^n$ (see \cite[\S2.2]{Pan09}), which is the multiplication by $c_1(\struct(1))^3$. Consequently, the second homomorphism sends the Thom class to $c_1(\struct(1))^3$. Taking the colimit over $n$, we obtain the claim.
\end{proof}
\begin{lemma}\label{lemma:cofib_step1}
    Let $S$ be a base scheme. Then there is an equivalence in $\SHk$
    \[\cofib(\bforg\colon\MW\to \MGL)\cong \MSL\wedge \Th_{\Proj^{\infty}}(\struct(1)^3\ominus \struct).\]
\end{lemma}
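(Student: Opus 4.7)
The plan is to chain together the identifications already recorded in the subsection and reduce to \Cref{lemma:purity_proj_spaces}. By \Cref{corollary:mgl=mslxpinf} together with \Cref{thm:msl/eta=mwl}, the morphism $\bar{c}\colon\MW\to\MGL$ is obtained from smashing $\MSL$ with the canonical map $\Th_{\Proj^1}(\struct(-1)\ominus\struct)\to\Th_{\Proj^\infty}(\struct(-1)\ominus\struct)$, as recorded in the commutative diagram at the start of \S3.2. Since smashing with $\MSL$ preserves cofiber sequences, it therefore suffices to identify the cofiber of this latter map. Under the equivalences $\Th_{\Proj^1}(\struct(-1))\simeq_\mot\Proj^2$ and $\Th_{\Proj^\infty}(\struct(-1))\simeq_\mot\Proj^\infty$ provided by diagram (\ref{equation:diagram}), the virtual desuspension yields an identification of the cofiber with $\MSL\wedge\Sigma^{\infty-(2,1)}(\Proj^\infty/\Proj^2)$.

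First I would verify that the square involving the two horizontal identifications commutes, so that the cofibers match up. This is a direct check because diagram (\ref{equation:diagram}) is natural in the embedding $\Proj^n\hookrightarrow\Proj^N$. Next I would apply \Cref{lemma:purity_proj_spaces} to obtain the motivic equivalence $\Proj^\infty/\Proj^2\simeq_\mot\Th_{\Proj^\infty}(\struct(1)^3)$. Finally, unwinding the definition of the virtual Thom spectrum in Example \ref{exmpl:virtual_thom} gives
\[\Sigma^{\infty-(2,1)}\Th_{\Proj^\infty}(\struct(1)^3)\simeq\Th_{\Proj^\infty}(\struct(1)^3\ominus\struct),\]
and smashing with $\MSL$ produces the claimed equivalence.

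I do not anticipate a serious obstacle here: every step is either a compatibility already built into the preceding constructions, a direct consequence of \Cref{lemma:purity_proj_spaces}, or a reindexing via the definition of virtual Thom spectra. The only point requiring some care is checking that the suspension shift $\Sigma^{\infty-(2,1)}$ inherited from the virtual rank of $\struct(-1)\ominus\struct$ matches the shift appearing after applying homotopy purity, so that the resulting virtual bundle on the target $\Proj^\infty$ really is $\struct(1)^3\ominus\struct$ (of virtual rank $2$) rather than an off-by-one variant; this is handled by \Cref{lemma:purity_proj_spaces}, which already encodes the rank~$3$ normal bundle of $\Proj^2\hookrightarrow\Proj^\infty$.
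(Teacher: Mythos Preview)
Your proposal is correct and follows essentially the same approach as the paper: the paper's own proof simply says ``Follows immediately from the previous lemma and the above discussion,'' where the ``above discussion'' is exactly the chain of identifications you spell out (the commutative square at the start of \S3.2, the reduction to $\MSL\wedge\Sigma^{\infty-(2,1)}(\Proj^\infty/\Proj^2)$ via diagram~(\ref{equation:diagram}), and then Lemma~\ref{lemma:purity_proj_spaces}). Your version is more explicit about the bookkeeping, but the argument is the same.
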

\begin{proof}
    Follows immediately from the previous lemma and the above discussion. 
\end{proof}
For the next step in the computation of the cofiber we prove a slight improvement of the Thom isomorphism. We state it for an arbitrary $\SL$-oriented homotopy commutative ring spectrum $\EE$; see \cite[\S 3]{Ana15} and \cite[\S 5]{PW22} for further information. The reader can assume that $\EE=\MSL$.

Let $\EE\in \SHk$ be an $\SL$-oriented homotopy commutative ring spectrum over a base scheme $S$ and let $(E,\lambda)$ be a rank $n$ special linear vector bundle over a smooth $S$-scheme $X\in \Smk$. Recall that it means that $E$ is a rank $n$ vector bundle over $X$ and $\lambda\colon\det(E)\xrightarrow{\simeq} \struct_X$ is a trivialization of the determinant of $E$. Also let $F$ be a vector bundle over $X$. Consider the composition \begin{gather*} \EE\wedge \Th_X(E\oplus F)\xrightarrow{\id\wedge \Th(\Delta)} \EE\wedge \Th_{X\times_S X}(E\boxplus F)\cong \EE\wedge \Th_X(E)\wedge\Th_X(F)\xrightarrow{\id\wedge \thc(E,\lambda)\wedge \id} \\ \xrightarrow{\id\wedge \thc(E,\lambda)\wedge \id} \EE\wedge\Sigma^{2n,n}\EE\wedge\Th_X(F)\xrightarrow{m_\EE\wedge \id} \EE\wedge\Sigma^{2n,n}\Th_X(F), \end{gather*} where $\Th(\Delta)$ is the map induced by the diagonal morphism $\Delta\colon X\to X\times_S X$, the element $\thc(E,\lambda)\in \EE^{2n,n}(\Th_X(E))$ is the Thom class of the special linear vector bundle $(E,\lambda)$, and $m_\EE\colon\EE\wedge \EE\to \EE$ is the multiplication map. Note, that if $F=0$ then the above composition is the standard Thom isomorphism; see \cite[Lemma 4.11]{BHop}.  For a general $F$ it is an equivalence as well.
\begin{lemma}\label{lemma:thom_iso_rev}
    Let $\EE\in \SHk$ be an $\SL$-oriented homotopy commutative ring spectrum, let $(E,\lambda)$ be a special linear vector bundle over $X\in \Smk$ and let $F$ be a vector bundle over $X$. Then the map $\EE\wedge \Th(E\oplus F)\to \EE\wedge \Sigma^{2n,n}\Th(F)$ constructed above is an equivalence in $\SHk$.
\end{lemma}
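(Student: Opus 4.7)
The plan is to import the standard Thom isomorphism, i.e.\ the case $E'=0$ covered by \cite[Lemma~4.11]{BHop}, and to observe that $\Th_X(E')$ plays a purely passive role in the construction: the diagonal $\Delta\colon X\to X\times_S X$ is the only step that couples $E$ with $E'$, and after it the Thom class and the multiplication on $\EE$ act only on the $\EE\wedge\Th_X(E)$ factor.

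First I would work relative to $X$. Write $f\colon X\to S$ for the structure morphism, put $\EE_X:=f^*\EE$, and let $\Th_X(-)$ denote the relative Thom spectrum in $\SH(X)$. Then $f_\#\Th_X(F)\simeq\Th(F)$ for any vector bundle $F$ on $X$, and the projection formula gives $f_\#(\EE_X\wedge_X G)\simeq\EE\wedge f_\# G$. The $\SL$-orientation on $\EE$ pulls back to an $\SL$-orientation on $\EE_X$, so \cite[Lemma~4.11]{BHop} applied in $\SH(X)$ produces a relative Thom equivalence $\Theta\colon\EE_X\wedge_X\Th_X(E)\xrightarrow{\simeq}\Sigma^{2n,n}\EE_X$.

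Next I would smash $\Theta$ with $\Th_X(E')$ over $X$ and use the symmetric monoidality $\Th_X(E)\wedge_X\Th_X(E')\simeq\Th_X(E\oplus E')$ to obtain an equivalence
\[\EE_X\wedge_X\Th_X(E\oplus E')\xrightarrow{\simeq}\Sigma^{2n,n}\EE_X\wedge_X\Th_X(E')\]
in $\SH(X)$. Applying $f_\#$ together with the projection formula then yields an equivalence $\EE\wedge\Th(E\oplus E')\xrightarrow{\simeq}\Sigma^{2n,n}\EE\wedge\Th(E')$ in $\SHk$.

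The remaining task is to check that this equivalence agrees, up to homotopy, with the composition constructed just before the lemma. This is a diagram chase: under $f_\#$ combined with the projection formula, the relative identification $\Th_X(E\oplus E')\simeq\Th_X(E)\wedge_X\Th_X(E')$ corresponds to the external identification $\Th_{X\times_S X}(E\boxplus E')\simeq\Th(E)\wedge\Th(E')$ precomposed with $\Th(\Delta)$, while the factors $\id\wedge\thc(E,\lambda)\wedge\id$ and $m_\EE\wedge\id$ are precisely the image of $\Theta\wedge_X\id$ under this identification. The main obstacle is keeping this symmetric-monoidal bookkeeping clean; an alternative route that bypasses it is to repeat the proof of \cite[Lemma~4.11]{BHop} verbatim with $X_+$ replaced by $\Th_X(E')$ throughout, since that argument never interacts with the passive $X_+$-factor.
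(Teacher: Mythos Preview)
Your proof is correct and shares its first move with the paper: both invoke the smooth projection formula to reduce to working in $\SH(X)$, i.e.\ to replace $S$ by $X$. From there the two arguments diverge in how they dispose of $E'$. The paper, having assumed $X=S$, uses Nisnevich separation and functoriality of Thom classes to reduce to the case where $E'$ is trivial, so that $\Th(E')$ is literally a sphere and the map becomes the ordinary Thom isomorphism plus a suspension. You instead exploit the monoidal identity $\Th_X(E\oplus E')\simeq\Th_X(E)\wedge_X\Th_X(E')$ in $\SH(X)$ to factor $\Th_X(E')$ out as a passive smash factor before pushing down along $f_\#$. Your route is a bit more structural and avoids the Nisnevich localization step entirely; the price is the bookkeeping you flag at the end, matching the projection-formula/diagonal identifications with the explicit composite defined before the lemma. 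The paper's route is shorter to state but hides the same bookkeeping inside the phrase ``we can assume that $X=S$''.
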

\begin{proof}
     A cohomological version of this fact is treated in \cite[Lemma 3.6]{Ana16}. Below we repeat the proof of \textit{loc. cit.} at the level of spectra. Denote by $p\colon F^\circ\to X$ the projection from the complement of the zero section of $F$ and consider the following diagram
    \[\xymatrix{ \EE\wedge \Th_{F^\circ}(p^*E) \ar[r] \ar[d]^-\simeq & \EE\wedge \Th_X(E) \ar[r] \ar[d]^-\simeq & \EE\wedge\Th_X(E\oplus F) \ar[d] \\ \EE\wedge\Sigma^{\infty+(2n,n)}_+ F^\circ \ar[r] & \EE\wedge \Sigma^{\infty+(2n,n)}_+ X \ar[r] & \EE\wedge\Sigma^{2n,n}\Th_X(F), } \]
    where the middle and left vertical morphisms are the Thom isomorphisms for the special linear vector bundles $(E,\lambda)$ and $(p^*E,p^*\lambda)$ respectively (see e.g., \cite[Lemma 4.11]{BHop}), the right vertical morphism is our desired map, the bottom horizontal morphisms are induced by the definition of $\Th_X(F)$, the top left horizontal arrow is induced by $p$, and the top right horizontal arrow is induced by the inclusion $E=E\oplus 0\hookrightarrow E\oplus F$. It is easy to see that the rows are cofiber sequences in $\SHk$. Hence, it suffices to prove that the diagram commutes (up to homotopy). The left square commutes by the functoriality of the Thom isomorphism, while the commutativity of the right one reduces to 
    \[ \xymatrix{\Th_X(E) \ar[r] \ar[d]^{\Th(\Delta)} & \Th_X(E\oplus F) \ar[d]^{\Th(\Delta)} \\ \Th_{X\times_S X}(E\boxplus 0) \ar[r] \ar[d]^-\simeq & \Th_{X\times_S X}(E\boxplus F) \ar[d]^-\simeq \\ \Th_X(E)\wedge X_+ \ar[r] & \Th_X(E)\wedge\Th_X(F), \ } \]
    where the horizontal maps are induced by $0\hookrightarrow F$. This diagram commutes up to homotopy already in the $\infty$-category of pointed Nisnevich sheaves (but not in $\PShv(\Smk)$ since the vertical equivalences are not defined there).
\end{proof}
\begin{remark}
    Of course, a similar statement is true more generally for a vector $G$-bundle $E$ and $G$-oriented homotopy commutative ring spectrum $\EE\in \SHk$ for any $G=\GL,\,\SL,\,\mathrm{SL^c},\,\Sp$; see \cite[\S 3]{Ana20} for a discussion of these notions.
\end{remark}
\begin{proposition}\label{prop:cofib}
    Let $S$ be a base scheme. Then there is an equivalence in $\SHk$
    \[\cofib(\bforg\colon\MW\to \MGL)\cong \Sigma^{4,2}\MGL.\]
\end{proposition}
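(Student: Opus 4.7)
The plan is to continue from Lemma \ref{lemma:cofib_step1}, which identifies $\cofib(\bar c)$ with $\MSL\wedge\Th_{\Proj^\infty}(\struct(1)^3\ominus\struct)$, and the task is then to produce an equivalence between this spectrum and $\Sigma^{4,2}\MGL$. The natural approach is to exploit the refined $\MSL$-Thom isomorphism of Lemma \ref{lemma:thom_iso_rev} together with the description of $\MGL$ as an $\MSL$-module provided by Corollary \ref{corollary:mgl=mslxpinf}.

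Concretely, I would first apply Lemma \ref{lemma:thom_iso_rev} to the canonical SL-oriented rank $2$ vector bundle $\struct(1)\oplus\struct(-1)$ on $\Proj^\infty$, whose determinant is trivialized via the canonical isomorphism $\struct(1)\otimes\struct(-1)\xrightarrow{\sim}\struct$. Writing $\struct(1)^3\ominus\struct$ in $K$-theory as $(\struct(1)\oplus\struct(-1)) + E$ with $E=\struct(1)^{\oplus 2}\ominus\struct(-1)\ominus\struct$ a virtual rank zero class, the refined Thom isomorphism produces an equivalence
$$\MSL\wedge\Th_{\Proj^\infty}(\struct(1)^3\ominus\struct)\simeq \Sigma^{4,2}\MSL\wedge\Th_{\Proj^\infty}(E).$$
I would then identify $\MSL\wedge\Th_{\Proj^\infty}(E)$ with $\MGL$ by using Proposition \ref{prop:k=kslxpic}: after the canonical splitting $\Krk\simeq\KSLrk\times\Pic$, the $\KSL$-part of $E$ is absorbed by the $\MSL$-action and only the determinant data in $\Pic(\Proj^\infty)$ remains, and Corollary \ref{corollary:mgl=mslxpinf} (or rather the underlying construction via $\Pic\simeq\mathrm{B}\Gm$) gives the equivalence with $\MGL$.

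The main obstacle is precisely the bookkeeping of determinants: the above decomposition forces $\det E = \struct(3)$, whereas the class presenting $\MGL$ in Corollary \ref{corollary:mgl=mslxpinf} has determinant $\struct(-1)$, and these differ by $\struct(4)\in\Pic(\Proj^\infty)$, which is not trivializable. Bridging this gap cannot be accomplished by the SL Thom isomorphism alone; I would resolve it by combining the motivic automorphism of $\Proj^\infty\simeq\mathrm{B}\Gm$ that inverts line bundles (and so exchanges $\struct(n)$ with $\struct(-n)$) with an iterated application of the refined Thom isomorphism to larger SL-oriented bundles whose dual pieces absorb the remaining determinant discrepancy. Alternatively, one could argue octahedrally using the cofiber sequence $\MW\to\MGL\wedge\Proj^1_+\to C$ obtained from the $\Gm$-torsor presentation $\WGr(n,\infty)\hookrightarrow\Gr(n,\infty)\times\Proj^1$ together with the projection $\MGL\wedge\Proj^1_+\to\MGL$ (whose cofiber is $\Sigma^{3,1}\MGL$), directly identifying $C$ and the connecting map. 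Either route reduces to the same technical heart, namely matching the $\struct(3)$-twisted Thom spectrum with the $\struct(-1)$-twisted one after smashing with $\MSL$, and this is the step I would expect to be the most delicate.
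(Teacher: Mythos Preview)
Your starting point and overall strategy match the paper exactly: begin with Lemma \ref{lemma:cofib_step1}, apply the refined $\MSL$-Thom isomorphism of Lemma \ref{lemma:thom_iso_rev} to an $\SL$-oriented rank $2$ summand, and finish with Corollary \ref{corollary:mgl=mslxpinf}. You also correctly isolate the only genuine difficulty, namely the determinant bookkeeping.

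However, the resolution you sketch is unnecessarily indirect and, as written, not complete. The inversion automorphism of $\mathrm{B}\Gm$ sends $\struct(3)$ to $\struct(-3)$, not to $\struct(-1)$, so it does not by itself close the gap; and ``iterated application to larger SL-oriented bundles'' is left vague precisely at the point where the argument needs to be pinned down. The paper bypasses all of this with a single clean move you are missing: \cite[Lemma 4.1]{Ana20}, which says that after smashing with an $\SL$-oriented ring spectrum one may replace a line-bundle summand by its dual in a Thom spectrum. Applying this twice gives
\[
\MSL\wedge\Th_{\Proj^\infty}(\struct(1)^3\ominus\struct)\;\simeq\;\MSL\wedge\Th_{\Proj^\infty}(\struct(1)\oplus\struct(-1)^2\ominus\struct).
\]
Now Lemma \ref{lemma:thom_iso_rev} applies directly with $E=\struct(1)\oplus\struct(-1)$ and $E'=\struct(-1)$ (an honest vector bundle, so no virtual-bundle extension of the lemma is needed), yielding $\Sigma^{4,2}\MSL\wedge\Th_{\Proj^\infty}(\struct(-1)\ominus\struct)$, which is $\Sigma^{4,2}\MGL$ by Corollary \ref{corollary:mgl=mslxpinf} on the nose. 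In other words, the ``delicate step'' you anticipate evaporates once you invoke the dual-line-bundle lemma; your decomposition $\struct(1)^3=(\struct(1)\oplus\struct(-1))+E$ with $\det E=\struct(3)$ is simply the wrong one to start from.
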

\begin{proof}
    By Lemma \ref{lemma:cofib_step1}, there is an equivalence $\cofib(\bforg)\cong \MSL\wedge \Th(\struct(1)^3\ominus \struct)$. Passage to the dual line bundles via \cite[Lemma 4.1]{Ana20} yields an isomorphism \[\cofib(\bforg)\cong\MSL\wedge \Th(\struct(1)\oplus\struct(-1)^2\ominus \struct).\] Applying the above lemma with $\EE=\MSL$, $E=\struct(1)\oplus\struct(-1)$ and $F=\struct(-1)$, we obtain an equivalence \[\cofib(\bforg)\cong \MSL\wedge \Th(\struct(-1)\oplus \struct)\cong\Sigma^{4,2}\MSL\wedge\Th(\struct(-1)\ominus\struct),\] and the result follows from Corollary \ref{corollary:mgl=mslxpinf}.
\end{proof}
The corresponding morphism $\MGL\to \Sigma^{4,2}\MGL$ is given by the chain of arrows \begin{equation}\label{chain} \begin{gathered} \MGL\xleftarrow[(6)]{\simeq} \MSL\wedge \Th(\struct(-1)\ominus \struct)\xleftarrow[(5)]{\simeq} \MSL\wedge \Sigma^{\infty-(2,1)} \Proj^\infty\xrightarrow[(4)]{} \MSL\wedge \Th(\struct(1)^3\ominus \struct) \xrightarrow[(3)]{\simeq} \\ \xrightarrow[(3)]{\simeq} \MSL\wedge \Th(\struct(1)\oplus \struct(-1)^2\ominus \struct)\xrightarrow[(2)]{\simeq} \MSL\wedge \Th(\struct(-1)\oplus \struct)\xrightarrow[(1)]{\simeq} \Sigma^{4,2}\MGL, \end{gathered} \end{equation}
where (1) and (6) come from Corollary \ref{corollary:mgl=mslxpinf}, (2) and (3) come from the (proof of the) previous proposition, (4) comes from Lemma \ref{lemma:purity_proj_spaces}, and (5) is discussed at the beginning of this subsection. All these maps, except for (4), are equivalences based on what we proved above. Notice, that all morphisms here are maps of $\MSL$-modules. We denote by $\Phi$ the motivic equivalence $\mathrm{B}\SL\times \Proj^\infty\cong_\mot \mathrm{B}\GL$ and by
$\Th(\Phi)$ the induced equivalence of the Thom spectra from Corollary \ref{corollary:mgl=mslxpinf}.

Consider the stable $\infty$-category of $\MSL$-modules $\mathrm{Mod}_{\MSL}(\SHk)$. By the usual free-forgetful adjunction \[\MSL\wedge-\colon\SHk\rightleftarrows \mathrm{Mod}_{\MSL}(\SHk)\colon \mathrm{Forg}\] there is an isomorphism $[\MSL\wedge X,Y]_{\MSL}\cong[X,Y]$ for $X\in \SHk$ and $Y\in \mathrm{Mod}_{\MSL}(\SHk)$. It is given by $$(f\colon\MSL\wedge X\to Y)\mapsto(X=\sph\wedge X\xrightarrow{1\wedge\, \id} \MSL\wedge X\xrightarrow{f} Y)$$ and $$(g\colon X\to Y)\mapsto(\MSL\wedge X\xrightarrow{\id\wedge\, g} \MSL\wedge Y\xrightarrow{\mathrm{act}} Y).$$
\begin{theorem}\label{thm:fiber_seq}
    Let $S$ be a base scheme. Then the forgetful map  induces the cofiber sequence in $\SHk$ \[ \MW\xrightarrow{\bforg} \MGL\xrightarrow{\Delta} \Sigma^{4,2} \MGL, \] where the operation $\Delta$ corresponds to the characteristic class $c_1(\det\ggl)\cdot  c_1(\det\ggl^\vee)$ under the Thom isomorphism $\MGL^{4,2}(\MGL)\cong \MGL^{4,2}(\mathrm{B}\GL)$.
\end{theorem}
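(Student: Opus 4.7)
The cofiber sequence itself is Proposition \ref{prop:cofib}; the remaining task is to identify the connecting operation $\Delta\in\MGL^{4,2}(\MGL)$. The plan is to trace $\Delta$ through the chain (\ref{chain}), compute its ``scalar'' $\MSL$-linear component as a class in $\widetilde{\MGL}^{6,3}(\Proj^\infty)$, and then translate that class into a characteristic class in $\MGL^{4,2}(\mathrm{B}\GL)$ using the motivic splitting $\mathrm{B}\SL\times\mathrm{B}\Gm\simeq_\mot\mathrm{B}\GL$ from the remark following Proposition \ref{prop:k=kslxpic}.

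First I would use the $\MSL$-linearity of every arrow in (\ref{chain}) together with the adjunction $[\MSL\wedge X,Y]_\MSL\simeq[X,Y]$ recalled just before the statement. This reduces the identification of $\Delta$ to that of its underlying map out of $\Sigma^{\infty-(2,1)}\Proj^\infty$, namely an element $\delta\in\widetilde{\MGL}^{6,3}(\Proj^\infty)$, where we invoke the motivic equivalence $\Th_{\Proj^\infty}(\struct(-1))\simeq_\mot\Proj^\infty$ implicit in (\ref{equation:diagram}).

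Second I would compute $\delta$ by chasing $\Delta$ backwards through (\ref{chain}). Arrow (4) is induced by the collapse $\Proj^\infty\to\Proj^\infty/\Proj^2\simeq_\mot\Th_{\Proj^\infty}(\struct(1)^3)$; arrows (3) and (2) are equivalences coming from line-bundle dualization \cite[Lemma 4.1]{Ana20} and the $\MSL$-linear Thom isomorphism of Lemma \ref{lemma:thom_iso_rev}; arrow (1) is the identification of $\Th_{\Proj^\infty}(\struct(-1)\oplus\struct)$ with a suspension of $\MGL$ via Corollary \ref{corollary:mgl=mslxpinf}. Tracking the canonical class of $\Sigma^{4,2}\MGL$ through these identifications, it arrives as a Thom class pulled back along the collapse map, and Lemma \ref{lemma:purity_proj_spaces} then yields $\delta=c_1(\struct(1))^3$.

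Third I would translate $\delta$ into $\MGL^{4,2}(\mathrm{B}\GL)$ under the standard Thom isomorphism $\MGL^{4,2}(\MGL)\simeq\MGL^{4,2}(\mathrm{B}\GL)$. Along the splitting $\mathrm{B}\GL\simeq_\mot\mathrm{B}\SL\times\mathrm{B}\Gm$ one has $\det\ggl|_{\mathrm{B}\Gm}\simeq\struct(-1)$, so $c_1(\det\ggl)\cdot c_1(\det\ggl^\vee)$ restricts on $\mathrm{B}\Gm\simeq\Proj^\infty$ to $c_1(\struct(-1))\cdot c_1(\struct(1))$. Because $\Delta$ is $\MSL$-linear, the characteristic class representing it is determined by its restriction to the $\mathrm{B}\Gm$-factor, so matching this restriction with $\delta$ (after absorbing the $(2,1)$ Thom shift of $\struct(-1)$) completes the identification.

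The main obstacle will be the sign and shift bookkeeping in the second step: the dualization of \cite[Lemma 4.1]{Ana20} proceeds via an intrinsic trivialization of $L\otimes L^\vee$ that must be tracked compatibly with the $(2,1)$-suspensions coming from the Thom classes of $\struct(\pm1)$, in order to correctly match $c_1(\struct(1))^3$ with the characteristic class $c_1(\det\ggl)\cdot c_1(\det\ggl^\vee)$ after the translation in Step 3.
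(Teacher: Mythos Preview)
Your overall architecture matches the paper's proof exactly: reduce via $\MSL$-linearity to a class in $\widetilde{\MGL}^{6,3}(\Proj^\infty)$, compute it by chasing the chain (\ref{chain}), and then translate back through the splitting $\mathrm{B}\GL\simeq_\mot\mathrm{B}\SL\times\mathrm{B}\Gm$. The gap is in your second step, and it is exactly the ``obstacle'' you flag at the end but then underestimate.

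You assert that tracking the canonical class through arrows (1)--(3) yields the Thom class of $\struct(1)^3$, so that Lemma \ref{lemma:purity_proj_spaces} gives $\delta=h^3$ with $h=c_1(\struct(1))$. This is false: the dualization equivalence $\Th(\struct(1)^3)\simeq\Th(\struct(1)\oplus\struct(-1)^2)$ from \cite[Lemma 4.1]{Ana20} does \emph{not} preserve $\MGL$-Thom classes. Comparing zero-section pullbacks on both sides, one finds that $\thc(\struct(1)\oplus\struct(-1)^2)$ is sent to $g(h)^2\cdot\thc(\struct(1)^3)$, where $g(h)=\chi(h)/h$ and $\chi$ is the formal inverse for the $\MGL$ formal group law. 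Consequently the class landing in $\widetilde{\MGL}^{4,2}(\Sigma^{\infty-(2,1)}\Proj^\infty)$ after step (4) is $h\cdot\chi(h)^2$, not $h^3$; these differ already in $\MGL^{6,3}$ (they agree only after passing to $\HZ$). One must then also track step (5): the equivalence $\Proj^\infty\simeq\Th(\struct(-1))$ sends $\thc(\struct(-1))$ to $e(\struct(-1))=\chi(h)$, so the final element is $h\cdot\chi(h)\cdot\thc(\struct(-1)\ominus\struct)$, i.e.\ $h\cdot\chi(h)$ after the Thom isomorphism. Only this value matches $c_1(\det\ggl)\cdot c_1(\det\ggl^\vee)$ under $\Phi^*$; your $h^3$ would not.

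So the proof sketch is on the right track but the computation in the middle is wrong by a unit power series that is not $1$, and fixing it requires precisely the formal-group-law bookkeeping you hoped to avoid.
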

\begin{proof}
    By the previous proposition there is a cofiber sequence $\MW\xrightarrow{\bforg} \MGL\to \Sigma^{4,2}\MGL$, where the second map is given by the chain of morphisms \eqref{chain}. Hence, it remains to verify that this composition is homotopic to $\Delta$. The idea is to compute the pullback of $\id\in [\Sigma^{4,2}\MGL,\Sigma^{4,2}\MGL]_\MSL$ along this chain. Let us explain why we do that in the stable $\infty$-category of $\MSL$-modules. The maps (3)--(5) are morphisms of free $\MSL$-modules, which are constant on $\MSL$. Combined with the free-forgetful adjunction (see above), this observation allows us to ignore the additional $\MSL$ factor in these steps and work with the $\MGL$-cohomology of the corresponding Thom spaces over $\Proj^\infty$ directly. We stress that below all Thom classes are Thom classes in $\MGL$-cohomology for the canonical orientation of $\MGL$. \begin{enumerate} \item The pullback of $\id$ along $\Sigma^{4,2}\Th(\Phi)$ is equal to $\Sigma^{4,2}\Th(\Phi)$. It can be viewed as the Thom class of the Thom spectrum $\MSL\wedge \Th(\struct(-1)\oplus \struct)$ in $\MGL$-cohomology. \item A straightforward computation shows that the pullback of $\Sigma^{4,2}\Th(\Phi)$ along (2) is given by the respective Thom class. It corresponds to $\thc(\struct(1)\oplus \struct(-1)^2\ominus \struct)\in [\Th(\struct(1)\oplus \struct(-1)^2\ominus \struct),\Sigma^{4,2}\MGL]$ under the bijection \[[\MSL\wedge\Th(\struct(1)\oplus \struct(-1)^2\ominus \struct),\Sigma^{4,2}\MGL]_\MSL\cong [\Th(\struct(1)\oplus \struct(-1)^2\ominus \struct),\Sigma^{4,2}\MGL].\] \end{enumerate} 
    As already mentioned at the beginning, we can omit the smash product with $\MSL$ in the next three steps thanks to the free-forgetful adjunction. \begin{enumerate}\setcounter{enumi}{2} \item By the construction of equivalence $\Th(\struct(1)^3)\cong \Th(\struct(1)\oplus \struct(-1)^2)$ (see \cite[Lemma 4.1]{Ana20}) the following diagram commutes \[\xymatrix{\Th(\struct(1)^3) \ar[rr]^-\simeq & & \Th(\struct(1)\oplus \struct(-1)^2) \\ & \Proj^\infty_+, \ar[lu] \ar[ru] & }\] where the diagonal maps are induced by the zero sections. Denote by $\chi(x)$ the formal inverse to $x$ with respect to the formal group law of $\MGL^{*,*}(S)$. It follows from the diagram that the induced map on the $\MGL$-cohomology sends the Thom class $\thc(\struct(1)\oplus \struct(-1)^2)$ to $g(h)^2\cdot\thc(\struct(1)^3)$, where $g(h)\in \MGL^{0,0}(\Proj^\infty)$ is a power series in $h=c_1(\struct(1))$ defined by the formula $g(h)=\chi(h)/h$. Hence, the desired element is equal to $g(h)^2\cdot\thc(\struct(1)^3\ominus\struct)$. \item The pullback along (4) sends the Thom class $\thc(\struct(1)^3\ominus \struct)$ to $\Sigma^{-2,-1}h^3$ by Lemma \ref{lemma:purity_proj_spaces}. Therefore, we have an element $\Sigma^{-2,-1}h\cdot\chi(h)^2\in \widetilde{\MGL}{}^{4,2}(\Proj^\infty)$. \item The pullback along the equivalence $\Proj^\infty\xrightarrow{\simeq}\Th(\struct(-1))$ is given by $\thc(\struct(-1))\mapsto e(\struct(-1))=\chi(h)$. It follows that our composition corresponds to $h\cdot\chi(h)\cdot\thc(\struct(-1)\ominus\struct)$. \end{enumerate} Now we have the commutative diagram \[\xymatrix{\MGL^{4,2}(\Proj^\infty) \ar[r]^-\simeq \ar[d] & \MGL^{4,2}(\Th(\struct(-1)\ominus \struct)) \ar[d] \\ \MGL^{4,2}(\mathrm{B}\SL\times \Proj^\infty) \ar[r]^-\simeq & \MGL^{4,2}(\MSL\wedge \Th(\struct(-1)\ominus \struct)) \\ \MGL^{4,2}(\mathrm{B}\GL) \ar[u]^-\simeq_-{\Phi^*} \ar[r]^-\simeq & \MGL^{4,2}(\MGL), \ar[u]^-\simeq_-{\Th(\Phi)^*} }\] where the horizontal arrows are given by the Thom isomorphisms and the top vertical maps are induced by the multiplication with the canonical elements of $[\mathrm{B}\SL,\MGL]$ and $[\MSL,\MGL]$ respectively. Thus, the resulting characteristic class is equal to $h\cdot\chi(h)\in \MGL^{4,2}(\mathrm{B}\SL\times \Proj^\infty)$. To conclude the proof, it remains to note that $c_1(\det\ggl)$ goes to $h$ under the isomorphism $\Phi^*\colon\MGL^{4,2}(\mathrm{B}\GL)\xrightarrow{\simeq}\MGL^{4,2}(\mathrm{B}\SL\times \Proj^\infty)$. This follows from the construction of $\Phi$ and the functoriality of the Chern classes.
\end{proof}
\section{Cohomological operations and homotopy groups of \textrm{MWL}}\label{section-4}
Let $S$ be a base scheme. Then Theorem \ref{thm:fiber_seq} leads to the exact sequences \begin{equation}\label{equation:fiber} \cdots\to\pi_{i-3,j-2}(\MGL)\to \pi_{i,j}(\MW)\xrightarrow{\bforg_*} \pi_{i,j}(\MGL)\xrightarrow{\Delta_*} \pi_{i-4,j-2}(\MGL)\to \cdots.\end{equation} In this section we use them to compute some homotopy groups of $\MW$ over specific bases, see Proposition \ref{prop:wall_geom_part}. Afterwards, we introduce a motivic version of the Conner--Floyd homology and get a partial computation of these groups, see Theorem \ref{theorem:conner_floyd_hom}.
\subsection{Homotopy groups of \texorpdfstring{$\MW$}{MWL}}
Given a cohomological operation $\varphi\in \MGL^{i,j}(\MGL)$, it induces an action on the coefficient ring $\varphi_*\colon\pi_{*,*}(\MGL)\to \pi_{*-i,*-j}(\MGL),\ f\mapsto\varphi\circ f$. This gives $\pi_{*,*}(\MGL)$ the structure of a left $\MGL^{*,*}(\MGL)$-module. 
The morphism $\mathrm{in}\colon\Th(\struct(-1)\ominus\struct)\to \MGL$ together with the standard motivic equivalence $\Th(\struct(-1))\cong_{\mot}\Proj^\infty$ endow $\MGL$ with the canonical orientation, which induces a graded formal group law $\mathbb{L}\to \pi_{2*,*}(\MGL)$. Similarly, the canonical complex orientation of $\MU$ induces a homomorphism $\mathbb{L}\to \pi_*(\MU)$. 
Hence, we have the following span
\begin{align}\label{span_algebroids} \xymatrix{& \mathbb{L} \ar[rd] \ar[ld]_-\simeq & \\ \pi_{2*}(\MU) & & \pi_{2*,*}(\MGL). } \end{align} The left arrow is an isomorphism due to a result of Quillen (see \cite{Quillen}) and we will usually identify these rings. 
The right morphism becomes an isomorphism after inverting the residue characteristic due to results of Hopkins--Morel, Hoyois, and Spitzweck:
\begin{proposition}\label{prop:hopf_algebroids}
    Let $S$ be a spectrum of a local Dedekind domain $k$ and let $e$ be the exponential characteristic of the residue field of $k$. Then the homomorphism \[ \pi_{2*}(\MU)[\einv]\to \pi_{2*,*}(\MGL)[\einv]\] classifying the formal group law of $\MGL[\einv]$ is an isomorphism. 
\end{proposition}
\begin{proof}
    If $k$ is a field, the result is proven in \cite[Proposition 8.2]{Hoy15}. The case of discrete valuation rings is considered in \cite[Theorem 6.7]{SpiMGL}. 
\end{proof}
\begin{definition}
    We say that characteristic classes $c\in \MGL^{2*,*}(\mathrm{B}\GL)[\einv]$ and $c'\in \MU^{2*}(\BU)[\einv]$ are \textit{the same} if they both correspond to the same element of $\mathbb{L}[\einv][\![c_1,c_2,\dots]\!]$ under the isomorphisms
    \[\xymatrix{\MU^{2*}(\BU)[\einv] & \mathbb{L}[\einv][\![c_1,c_2,\dots]\!] \ar[r]^-\simeq \ar[l]_-\simeq & \MGL^{2*,*}(\mathrm{B}\GL)[\einv], }\]
    which are induced by the span \eqref{span_algebroids} and the canonical orientations of $\MU$ and $\MGL$ (see \cite[Proposition 6.2]{NSOLandw} for the computation of $\MGL^{*,*}(\mathrm{BGL})$).
\end{definition}
We begin with the next lemma. We stress that if $f\colon k\to k'$ is a homomorphism of local Dedekind domains then the exponential characteristic of the residue field of $k'$ is either equal to the exponential characteristic of the residue field of $k$ or equal to $1$.
\begin{lemma}\label{lemma:charclasses_and_operations}
    Let $k$ and $e$ be as above. Suppose that $\varphi\in \MGL^{2n,n}(\MGL)[\einv]$ and $\varphi'\in \MU^{2n}(\MU)[\einv]$ are cohomological operations that correspond to the same characteristic classes under the Thom isomorphisms. Then the following diagram commutes \[\xymatrix{\pi_{2*}(\MU)[\einv] \ar[r]^-{\varphi'_*} \ar[d]_-\simeq &  \pi_{2*-2n}(\MU)[\einv] \ar[d]^-\simeq \\ \pi_{2*,*}(\MGL)[\einv] \ar[r]^-{\varphi_*} &  \pi_{2*-2n,*-n}(\MGL)[\einv].}\]
\end{lemma}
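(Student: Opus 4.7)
The plan is to reduce the claim to Lemma \ref{lemma:action}, applied in parallel on the topological and motivic sides, and then to use the isomorphism of Hopf algebroids from Proposition \ref{prop:hopf_algebroids} to match the two computations. First I would unpack the hypothesis. The Thom isomorphisms $\MGL^{2*,*}(\MGL)\simeq \MGL^{2*,*}(\mathrm{B}\GL)$ and $\MU^{2*}(\MU)\simeq \MU^{2*}(\mathrm{B}\mathrm{U})$ together with the canonical orientations give presentations $\MGL^{2*,*}(\MGL)[1/e]\simeq \pi_{2*,*}(\MGL)[1/e][[c_1,c_2,\dots]]$ and $\MU^{2*}(\MU)[1/e]\simeq \pi_{2*}(\MU)[1/e][[c_1,c_2,\dots]]$, and the dual (topologically, pro-) of the Hopf algebroid iso from Proposition \ref{prop:hopf_algebroids} identifies these two rings of cohomological operations along the identification of coefficient rings. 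The assumption that $\varphi$ and $\varphi'$ correspond to the same power series in $\mathbb{L}[1/e][[c_1,c_2,\dots]]$ is exactly the statement that they are matched under this identification of cohomology-operation rings.

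Next I would express the actions via Lemma \ref{lemma:action}: for $y\in\pi_{2m}(\MU)[1/e]$ with image $x\in\pi_{2m,m}(\MGL)[1/e]$, one has
\[ \varphi'_*(y)=\langle\varphi',\eta_R(y)\rangle,\qquad \varphi_*(x)=\langle\varphi,\eta_R(x)\rangle. \]
Because Proposition \ref{prop:hopf_algebroids} is an isomorphism of Hopf algebroids, the right unit $\eta_R\colon\pi_{2*}(\MU)[1/e]\to \MU_{2*}(\MU)[1/e]$ is intertwined with the motivic right unit $\eta_R\colon\pi_{2*,*}(\MGL)[1/e]\to \MGL_{2*,*}(\MGL)[1/e]$ under the vertical isomorphisms, so $\eta_R(y)$ and $\eta_R(x)$ correspond. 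Similarly, the pairings of $\MU^{2*}(\MU)[1/e]$ with $\MU_{2*}(\MU)[1/e]$ and of $\MGL^{2*,*}(\MGL)[1/e]$ with $\MGL_{2*,*}(\MGL)[1/e]$ are compatible with the identifications, since the former is obtained from the latter by dualizing the structure maps of the Hopf algebroid. Combining these two compatibilities with the fact that $\varphi\leftrightarrow\varphi'$ yields $\langle\varphi',\eta_R(y)\rangle \leftrightarrow \langle\varphi,\eta_R(x)\rangle$, which is the desired commutativity.

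The main subtle point is justifying that the Hopf algebroid isomorphism of Proposition \ref{prop:hopf_algebroids} really induces the claimed identification of cohomology operations, compatibly with Thom isomorphisms and canonical orientations; once that bookkeeping is in place, the rest is formal. To handle it cleanly, I would verify separately that (a) the canonical map $\MU^{*}\to \MGL^{*,*}[1/e]$ takes the $\MU$-orientation of the tautological bundle to the $\MGL$-orientation (so that characteristic classes on $\mathrm{B}\mathrm{U}$ go to those on $\mathrm{B}\GL$ as claimed in the definition just before the lemma), and (b) both Thom isomorphisms commute with the relevant Kronecker pairings, which reduces to functoriality of $\langle -,-\rangle$ along the isomorphism $\MGL\wedge\MGL\simeq\MGL\wedge\Sigma^\infty\mathrm{B}\GL_+$ (and its $\MU$-analogue) coming from the orientation. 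Both verifications are standard and parallel the classical arguments in \cite[Chapter II]{AdamsBB}.
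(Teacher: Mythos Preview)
Your proposal is correct and follows essentially the same route as the paper: both reduce to Lemma~\ref{lemma:action}, then use the Hopf algebroid isomorphism of Proposition~\ref{prop:hopf_algebroids} to match the right units and Kronecker pairings on the two sides. The only difference is presentational: the paper makes the ``subtle point'' you flag explicit by passing through the homology Thom isomorphisms and writing a single commutative diagram with middle row $\mathbb{L}[1/e][b_1,b_2,\dots]\xrightarrow{\simeq}\mathbb{L}[1/e][\beta_1,\beta_2,\dots]\to\mathbb{L}[1/e]$, so that the compatibility of pairings with orientations and Thom isomorphisms is visible rather than deferred to verifications (a) and (b).
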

\begin{proof}
    Let $f\colon k\to k'$ be a homomorphism of local Dedekind domains. Denote the operation $\varphi$ over $k$ (respectively $k'$) by $\varphi_k$ (respectively $\varphi_{k'}$) and consider the following diagram 
    \[\xymatrix{ \pi_{2*}(\MU)[\einv] \ar[r]^-{\varphi'_*} \ar[d]^-\simeq & \pi_{2*-2n}(\MU)[\einv] \ar[d]^-\simeq \\ \pi_{2*,*}(\MGL_k)[\einv] \ar[r]^-{(\varphi_k)_*} \ar[d]^-\simeq & \pi_{2*-2n,*-n}(\MGL_k)[\einv] \ar[d]^-\simeq \\ \pi_{2*,*}(\MGL_{k'})[\einv] \ar[r]^-{(\varphi_{k'})_*} & \pi_{2*-2n,*-n}(\MGL_{k'})[\einv], }\]
    where the vertical morphisms in the bottom square are given by the base change maps. The bottom square clearly commutes, and we obtain that the result holds over $k$ if and only if it holds over $k'$ (for one implication, we use the injectivity of the homomorphism $\pi_{2*,*}(\MGL_k)[\einv]\to \pi_{2*,*}(\MGL_{k'})[\einv]$).

    Let us treat $k=\C$. The complex Betti realization sends the canonical orientation of $\MGL_\C$ to the canonical complex orientation of $\MU$ essentially by the constructions of these spectra. It follows that the realization sends Chern classes to Chern classes, the operation $\varphi_\C$ to the operation $\varphi'$, and, hence, the action of $\varphi_\C$ on the geometric diagonal $\pi_{2*,*}(\MGL_\C)$ to the action of $\varphi'$ on $\pi_*(\MU)$. This implies the result over the complex numbers.
        
    Now we prove the remaining cases by various base changes using the above observation. First, we can extend the result from $\C$ to $\mathbb{Q}$ and hence to an arbitrary field of characteristic zero. Second, we get the desired statement for a discrete valuation ring of mixed characteristic passing to the fraction field. For the next step, assume that $k$ is a field of positive characteristic $p=e>1$. Then there exists a discrete valuation ring of mixed characteristic $R$ such that its residue field is $k$. If $k$ is a perfect field, we can choose $R$ to be the ring of $p$-adic Witt vectors $\mathbb{W}_{p^\infty}(k)$ \cite[Theorems 6.19, 6.20]{HAZEWINKEL}. In general, we can take the Cohen ring of $k$ for $R$ (see \S 6.23 of \textit{loc. cit.}). Hence, base change along $R\to k$ solves this step. The situation with an equicharacteristic discrete valuation ring is similar to the mixed characteristic case.
    %
\end{proof}
\begin{remark}
    Roughly speaking, the above lemma says that the left action of $\MGL^{2*,*}(\MGL)[\einv]$ on the $\Proj^1$-diagonal $\pi_{2*,*}(\MGL)[\einv]$ is the same as in topology. We usually denote cohomological operations that correspond to the same characteristic classes by the same letter. To distinguish them, we label the base ring near the motivic operation as in the proof of the previous lemma.
\end{remark}
\begin{lemma}\label{lemma:vanishing_mgl}
    Suppose that $k$ is a local Dedekind domain and $e$ is the exponential characteristic of the residue field. Then we have $\pi_{i,j}(\MGL)[\einv]=0$ for $i<2j$ or $i<j$.
\end{lemma}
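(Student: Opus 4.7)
The plan is to deduce both vanishings from the slice spectral sequence for $\MGL$, together with the computation of its slices after inverting $e$. By the theorem of Hopkins--Morel--Hoyois \cite{Hoy15} over fields, extended by Spitzweck \cite{SpiMGL} to local Dedekind domains, the slices of $\MGL[1/e]$ are given by
\[ s_n\MGL[1/e] \simeq \Sigma^{2n,n}\HZ[1/e]\otimes \mathbb{L}_n, \]
where $\mathbb{L}_n$ denotes the degree-$2n$ summand of the Lazard ring (vanishing for $n<0$).

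First I would pin down the bidegrees where the slices can carry non-trivial homotopy. The bigraded homotopy of $\HZ$ over a local Dedekind base $k$ is $\pi_{a,b}(\HZ) = H^{-a,-b}(\Spec(k),\Z)$, which is supported in the range $0\leq -a\leq -b$ by Beilinson-type vanishing and the non-negativity of motivic cohomology. Consequently
\[ \pi_{i,j}(s_n\MGL)[1/e] = \pi_{i-2n,j-n}(\HZ)[1/e]\otimes \mathbb{L}_n \]
can be non-zero only for $n$ satisfying $0\leq 2n-i\leq n-j$ and $n\geq 0$, that is
\[ \max(0,\lceil i/2\rceil)\leq n\leq i-j. \]
A direct case analysis (separating $i>0$ and $i\leq 0$) shows this range is non-empty precisely when $i\geq 2j$ and $i\geq j$; equivalently, if $i<2j$ or $i<j$, no slice of $\MGL[1/e]$ has non-trivial $(i,j)$-homotopy.

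Then I would invoke convergence of the slice spectral sequence for $\MGL$, standard in the cited settings (this is where the Dedekind-base hypothesis and inversion of $e$ really enter), to conclude $\pi_{i,j}(\MGL)[1/e] = 0$ throughout the asserted region.

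The main obstacle is the slice computation over a local Dedekind base, which is precisely why $e$ must be inverted throughout the statement. Once the slice identification and convergence are taken from the literature, the argument is a bookkeeping of bidegrees via the constraint $\max(0,\lceil i/2\rceil)\leq n\leq i-j$.
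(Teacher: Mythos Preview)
Your strategy via the slice spectral sequence is the right one and is essentially what underlies the results of \cite{SpiMGL} that the paper cites. However, your claimed vanishing range for $\pi_{a,b}(\HZ)$ contains a genuine error.

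You assert that $H^{p,q}(\Spec k,\Z)=0$ unless $0\le p\le q$. The lower bound $p\ge 0$ is precisely the Beilinson--Soul\'e vanishing conjecture, which is open in general (already for fields). So the inequality $2n-i\ge 0$, and hence the constraint $n\ge\lceil i/2\rceil$, is unjustified.

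The fix is easy: replace the unproven bound $p\ge 0$ by the trivially true bound $q\ge 0$ (that is, $\Z(q)=0$ for $q<0$), which gives $n\ge j$ instead of $n\ge i/2$. Combined with $n\ge 0$ (from $\mathbb{L}_n=0$ for $n<0$) and $n\le i-j$ (from $p\le q$), the admissible range becomes $\max(0,j)\le n\le i-j$. The same case analysis then shows this is empty whenever $i<2j$ or $i<j$, and the rest of your argument goes through.

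There is a second, smaller gap. Over a discrete valuation ring, the inequality $p\le q$ does not follow from cohomological dimension alone, which only yields $p\le q+1$. The boundary case $H^{q+1,q}(\Spec k,\Z)=0$ genuinely uses that $k$ is \emph{local}; this is Geisser's result \cite[Corollary 4.4]{Geisser}, which the paper invokes explicitly for precisely this reason. Your phrase ``Beilinson-type vanishing'' conceals this input.
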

\begin{proof}
    This is a combination of the results of the paper \cite{SpiMGL} (see also \cite[Theorem 2.1]{LYZ} for the case of a field). By \cite[Corollary 4.6 and Proposition 7.1]{SpiMGL} the homotopy groups of $\MGL[\einv]$ vanish for $i+1<j$. If $i+1=j$ the desired group is isomorphic to $\pi_{i,i+1}(\HZ)[\einv]$ by the discussion just after Proposition 7.8 of \textit{loc. cit.}, which is trivial since $k$ is local (see e.g., \cite[Corollary 4.4]{Geisser}). Here $\HZ$ is Spitzweck's motivic cohomology spectrum \cite{SpiHZ}. The claim for $i<2j$ follows from \cite[Proposition 7.8 and below]{SpiMGL}.
\end{proof}
\begin{proposition}\label{prop:vanishing}
    Let $k$ and $e$ be as above. Then we have $\pi_{i,j}(\MW)[\einv]=0$ for $i<2j$ or $i<j$.
\end{proposition}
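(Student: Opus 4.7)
The plan is to apply the long exact sequence (\ref{equation:fiber}) coming from Theorem \ref{thm:fiber_seq}:
\[\pi_{i+1,j}(\MGL)[1/e]\xrightarrow{\Delta_*}\pi_{i-3,j-2}(\MGL)[1/e]\to\pi_{i,j}(\MW)[1/e]\to\pi_{i,j}(\MGL)[1/e].\]
The hypothesis together with the previous lemma forces the right-hand group to vanish, so $\pi_{i,j}(\MW)[1/e]$ is the cokernel of the $\Delta_*$ on the left. It therefore suffices to prove that this $\Delta_*$ is surjective.

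A direct application of the previous lemma to the bidegree $(i-3,j-2)$ shows that $\pi_{i-3,j-2}(\MGL)[1/e]$ already vanishes in every configuration satisfying the hypothesis, with the single exception of the edge case $i=2j-1$ for $j\geq 2$. Outside this edge case, surjectivity is automatic and there is nothing to prove.

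For the edge case, I would reduce the question to topology. The operation $\Delta$ corresponds to the characteristic class $c_1(\det\ggl)\cdot c_1(\det\ggl^\vee)$, which is a universal polynomial in $c_1(\det\ggl)$ and the structure constants of the formal group law, so it coincides with its topological counterpart in the sense of the definition preceding Lemma \ref{lemma:charclasses_and_operations}. Combining that lemma with Proposition \ref{prop:hopf_algebroids}, the motivic map
\[\Delta_*\colon\pi_{2j,j}(\MGL)[1/e]\to\pi_{2j-4,j-2}(\MGL)[1/e]\]
is identified after inverting $e$ with its topological analogue $\Delta_*\colon\pi_{2j}(\MU)[1/e]\to\pi_{2j-4}(\MU)[1/e]$.

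The required topological surjectivity is classical: the cofiber sequence $\W\to\MU\to\Sigma^4\MU$ produces an exact sequence $\pi_{2j}(\MU)\xrightarrow{\Delta_*}\pi_{2j-4}(\MU)\to\pi_{2j-1}(\W)$, and the odd homotopy groups $\pi_{2j-1}(\W)$ vanish by Conner--Floyd (see Appendix \ref{appendix_B}). The main obstacle is the bookkeeping to align the motivic cohomological operation with its topological counterpart, but this is precisely what the Hopf algebroid comparison set up earlier in this section handles.
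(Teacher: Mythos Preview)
Your argument is correct and follows essentially the same route as the paper: use the long exact sequence from Theorem~\ref{thm:fiber_seq} together with the vanishing lemma for $\MGL[1/e]$ to reduce to the single edge case $i=2j-1$, then identify the relevant $\Delta_*$ with its topological counterpart via Lemma~\ref{lemma:charclasses_and_operations} and invoke the surjectivity from Appendix~\ref{appendix_B}. The only cosmetic difference is that you justify the topological surjectivity via the vanishing of $\pi_{\mathrm{odd}}(\W)$, whereas the paper cites the splitting in Proposition~\ref{appendix:split_cofib} directly; these are equivalent.
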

\begin{proof}
    We implicitly invert $e$ throughout this proof. Consider the following part of the exact sequence \eqref{equation:fiber}
    $$ \pi_{i+1,j}(\MGL)\xrightarrow{(\Delta_k)_*} \pi_{i-3,j-2}(\MGL)\to \pi_{i,j}(\MW)\xrightarrow{\bforg_*} \pi_{i,j}(\MGL). $$ Assume that $i<j$. Then $i-3<j-2$ and we obtain the vanishing of the second and fourth groups in the exact sequence by the previous lemma. Hence, we have $\pi_{i,j}(\MW)=0$ for $i<j$. If $i<2j-1$ then $i-3<2j-4$ and the same argument shows $\pi_{i,j}(\MW)=0$ for such $i$ and $j$. Therefore, it remains to consider $i=2j-1$. In this case the right group in the above exact sequence is trivial by the previous lemma again since $i<2j$. Consequently, $\pi_{i,j}(\MW)$ is the cokernel of the homomorphism \[ (\Delta_k)_*\colon\pi_{2j,j}(\MGL)\to\pi_{2j-4,j-2}(\MGL). \] According to Lemma \ref{lemma:charclasses_and_operations} this morphism can be identified with $\Delta_*\colon\pi_{2j}(\MU)\to\pi_{2j-4}(\MU),$ which is surjective by Proposition \ref{appendix:split_cofib}.
\end{proof}
We denote by $\slice_q(-)$ the $q$-th slice functor; see \cite[\S2]{RO16} for an overview of the slice filtration. By \cite[Theorem 3.1]{SpiMGL}, the slices of $\MGL[\einv]$ are given by $\slice_q(\MGL)[\einv]\cong\Sigma^{2q,q}\mathrm{H}(\pi_{2q}(\MU)[\einv])$, where $\mathrm{H}A\in\SH(k)$ is the motivic cohomology spectrum with $A$-coefficients.
\begin{lemma}
    Let $k$ and $e$ be as above. The homomorphism $\pi_{2n+1,n}(\MGL)[\einv]\to \pi_{2n-3,n-2}(\MGL)[\einv]$ induced by $\Delta$ is surjective for $n\in\Z$.
\end{lemma}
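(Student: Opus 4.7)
The plan is to reduce the claim to the surjectivity of the topological operation $\Delta^{\mathrm{top}}_*$ via the slice spectral sequence for $\MGL[1/e]$. By Spitzweck's slice theorem \cite{SpiMGL}, one has $\slice_q\MGL[1/e]\simeq\Sigma^{2q,q}\mathrm{H}A_q$ with $A_q:=\pi_{2q}(\MU)[1/e]$, so the $E^1$-term converging to $\pi_{i,j}(\MGL)[1/e]$ is $\mathrm{H}^{2q-i,q-j}(k;A_q)$. For $(i,j)=(2n+1,n)$, setting $a=q-n$, this becomes $\mathrm{H}^{2a-1,a}(k;A_{n+a})$. I would first show that over a local Dedekind domain $k$ these groups vanish unless $a=1$: for $a\leq 0$ by weight reasons, and for $a\geq 2$ by the Gysin sequence comparing $k$ with its fraction field $K$ and residue field $\kappa$, combined with the field-case vanishing $\mathrm{H}^{p,q}(F;\Z)=0$ for $p>q\geq 0$ and surjectivity of the tame symbol $\mathrm{K}^M_a(K)\twoheadrightarrow \mathrm{K}^M_{a-1}(\kappa)$. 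The spectral sequence would then collapse to natural isomorphisms
\[ \pi_{2n+1,n}(\MGL)[1/e]\simeq A_{n+1}\otimes_{\Z}k^\times, \qquad \pi_{2n-3,n-2}(\MGL)[1/e]\simeq A_{n-1}\otimes_{\Z}k^\times.\]

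Next, I would show that under these identifications $\Delta_*$ corresponds to $\Delta_*^{\mathrm{top}}\otimes\mathrm{id}_{k^\times}$, where $\Delta_*^{\mathrm{top}}\colon \pi_{2n+2}(\MU)[1/e]\to \pi_{2n-2}(\MU)[1/e]$ is the topological analog. Being a morphism in $\SHk$, $\Delta$ respects the slice filtration and induces on the $q$-th slice a map $\slice_q(\Delta)\colon \Sigma^{2q,q}\mathrm{H}A_q\to\Sigma^{2q,q}\mathrm{H}A_{q-2}$ of Eilenberg--MacLane spectra, which is determined by its action on the $\pi_{2q,q}$-coefficients $A_q\to A_{q-2}$ (since both sides have torsion-free $\Z[1/e]$-module coefficients, so maps between them are controlled by $\Z$-linear maps of coefficients). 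By Lemma \ref{lemma:charclasses_and_operations} applied on the $\Proj^1$-diagonal, this coefficient map is precisely $\Delta_*^{\mathrm{top}}$, and tensoring with $\mathrm{H}^{1,1}(k;\Z)=k^\times$ yields the claimed form of $\Delta_*$ on the off-diagonal groups.

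Finally, $\Delta_*^{\mathrm{top}}\colon \pi_{2n+2}(\MU)\to \pi_{2n-2}(\MU)$ is surjective by Proposition \ref{appendix:split_cofib}, as was already used in the proof of Proposition \ref{prop:vanishing}, and tensoring a surjection of abelian groups with $k^\times$ preserves surjectivity, giving the result. The main technical subtlety will be the middle step: justifying that the slice-level map is determined by its coefficient homomorphism and that this homomorphism coincides with the topological $\Delta_*^{\mathrm{top}}$. This amounts to transporting Lemma \ref{lemma:charclasses_and_operations} through the slice filtration, which is straightforward thanks to the torsion-freeness of $\pi_{2*}(\MU)[1/e]$ and the explicit description of the slices of $\MGL[1/e]$.
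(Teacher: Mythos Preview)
Your argument is correct and follows essentially the same route as the paper: both identify $\pi_{2n+1,n}(\MGL)[1/e]$ with a single slice contribution $\pi_{2n+2}(\MU)[1/e]\otimes k^\times$, check that $\Delta$ acts slice-wise as $\Delta_*^{\mathrm{top}}$, and conclude by the topological surjectivity from Proposition~\ref{appendix:split_cofib}. The paper is slightly more terse---it invokes \cite[Proposition~7.7]{SpiMGL} and the vanishing of $\pi_{-3,-2}(\HZ)$ over a local base directly rather than unwinding the spectral sequence, and for the identification of $\slice_{n+1}(\Delta)$ it cites \cite[Lemma~A.3]{RSO19} and the proof of \cite[Theorem~6.7]{SpiMGL} instead of your torsion-freeness remark---but the substance is the same.
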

\begin{proof}
    We implicitly invert $e$ below. First note that the group $\pi_{-3,-2}(\HZ)=H^3(k,\Z(2))$ is trivial since $k$ is local (see \cite[Corollary 4.4]{Geisser}). Therefore, by the proof of \cite[Proposition 7.7]{SpiMGL} there are isomorphisms \[\pi_{2n+1,n}(\MGL)\cong \pi_{2n+1,n}(\slice_{n+1}(\MGL))\cong \pi_{-1,-1}(\mathrm{H}\pi_{2n+2}(\MU))\cong \pi_{2n+2}(\MU)\otimes k^*.\] Hence, the desired homomorphism is obtained by applying $\pi_{2n+1,n}$ to $\slice_{n+1}(\Delta_k)$. The map on the $(n+1)$-th slices is given by the $(2n+2,n+1)$-suspension of $\mathrm{\mathrm{H}\pi_{2n+2}(\MU)\to\mathrm{H}\pi_{2n-2}(\MU)}$, which is induced by $\Delta_*\colon\pi_{2n+2}(\MU)\to\pi_{2n-2}(\MU)$ (to see this combine \cite[Lemma A.3]{RSO19}, Lemma \ref{lemma:charclasses_and_operations}, and the proof of \cite[Theorem 6.7]{SpiMGL}). The result follows from surjectivity of $\Delta_*$; see Proposition \ref{appendix:split_cofib}.
\end{proof}
\begin{proposition}\label{prop:wall_geom_part}
    Let $k$ and $e$ be as above. The isomorphism $\pi_{2*}(\MU)[\einv]\xrightarrow{\simeq} \pi_{2*,*}(\MGL)[\einv]$ classifying the formal group law of $\MGL[\einv]$ restricts to the isomorphism $\pi_{2*}(\W)[\einv]\xrightarrow{\simeq} \pi_{2*,*}(\MW)[\einv]$.
\end{proposition}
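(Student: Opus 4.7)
The plan is to exhibit both $\pi_{2n}(\W)[1/e]$ and $\pi_{2n,n}(\MW)[1/e]$ as the kernel of the operation $\Delta_*$ acting on the geometric diagonal of $\MU$ and $\MGL$ respectively, and then invoke Proposition \ref{prop:hopf_algebroids} together with Lemma \ref{lemma:charclasses_and_operations} to identify these kernels canonically.

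First, I would extract from the long exact sequence \ref{equation:fiber} in bidegree $(2n,n)$, after inverting $e$, the left-exact piece
\[
0 \to \pi_{2n,n}(\MW)[1/e] \to \pi_{2n,n}(\MGL)[1/e] \xrightarrow{\Delta_*} \pi_{2n-4,n-2}(\MGL)[1/e].
\]
The left-exactness is precisely the surjectivity established in the preceding lemma: $\Delta_*\colon\pi_{2n+1,n}(\MGL)[1/e] \twoheadrightarrow \pi_{2n-3,n-2}(\MGL)[1/e]$. Consequently, $\pi_{2n,n}(\MW)[1/e]$ is identified with the kernel of $\Delta_*$ on the geometric diagonal of $\MGL[1/e]$.

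Next, I would invoke the topological analog, Proposition \ref{appendix:split_cofib}: the cofiber sequence $\W \to \MU \to \Sigma^4\MU$ is split, with the second map the operation attached to $c_1\cdot\bar{c}_1$. In particular $\pi_{2n}(\W) = \ker\bigl(\Delta_*\colon \pi_{2n}(\MU)\to\pi_{2n-4}(\MU)\bigr)$. By Theorem \ref{thm:fiber_seq} the motivic $\Delta$ corresponds to the very same characteristic class $c_1(\det\ggl)\cdot c_1(\det\ggl^\vee)$, so Lemma \ref{lemma:charclasses_and_operations} places the motivic and topological $\Delta_*$ actions into a commuting square under the isomorphism $\pi_{2*}(\MU)[1/e] \simeq \pi_{2*,*}(\MGL)[1/e]$ of Proposition \ref{prop:hopf_algebroids}. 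Passing to kernels in this square delivers the desired isomorphism, and it is canonical precisely because the underlying Hopf-algebroid isomorphism is.

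The main obstacle is the left-exactness step: one must know that $\Delta_*\colon\pi_{2n+1,n}(\MGL)[1/e] \to \pi_{2n-3,n-2}(\MGL)[1/e]$ is surjective, which rests on the slice-spectral-sequence computation in the preceding lemma together with the vanishing $\pi_{-3,-2}(\HZ)=0$ over a local Dedekind domain. Once that is secured, no further computation is required — both groups are kernels of literally the same operation on the common coefficient ring, and the comparison is formal.
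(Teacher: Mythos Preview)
Your proposal is correct and is essentially identical to the paper's own proof: both identify $\pi_{2*}(\W)[1/e]$ and $\pi_{2*,*}(\MW)[1/e]$ as kernels of the same $\Delta_*$ via the exact sequences from Proposition~\ref{appendix:split_cofib} and the preceding lemma, place them in a commuting square using Lemma~\ref{lemma:charclasses_and_operations} and Proposition~\ref{prop:hopf_algebroids}, and conclude by comparing kernels.
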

\begin{proof}    
    By the previous lemma, exact sequence \eqref{equation:fiber}, Lemma \ref{lemma:charclasses_and_operations} and Proposition \ref{appendix:split_cofib}, we have the commutative diagram with exact rows \[\xymatrixcolsep{3pc}\xymatrix{0\to \pi_{2*}(\W)[\einv] \ar[r] \ar@{-->}[d] & \pi_{2*}(\MU)[\einv] \ar[r]^-{\Delta_*[\einv]} \ar[d]^\simeq & \pi_{2*-4}(\MU)[\einv] \ar[d]^\simeq \\ 0\to \pi_{2*,*}(\MW)[\einv] \ar[r] & \pi_{2*,*}(\MGL)[\einv] \ar[r]^-{(\Delta_k)_*[\einv]} & \pi_{2*-4,*-2}(\MGL)[\einv].} \] The middle and the right vertical maps are isomorphisms by Proposition \ref{prop:hopf_algebroids}. Thus, the left vertical map is an isomorphism.
\end{proof}
\subsection{The algebraic Conner--Floyd homology}\label{section:4_2}
Consider the $\eta$-Bockstein morphism \[\delta:=\Sigma^{2,1}\forg\,\circ\,d\colon\MW\to\Sigma^{2,1}\MW,\] where $d$ is the boundary map in the cofiber sequence from Corollary \ref{cor:cofiber}. By construction, $\delta^2=(\Sigma^{2,1}\delta)\circ\delta$ is nullhomotopic, and there are chain complexes of abelian groups \[\cdots\to\pi_{i+2,j+1}(\MW)\xrightarrow{\delta_*}\pi_{i,j}(\MW)\xrightarrow{\delta_*}\pi_{i-2,j-1}(\MW)\to\cdots.\] Denote by $\Hml_{i,j}(\MW,\delta)$ (respectively $\Cyc_{i,j}(\MW,\delta)$, $\Bnd_{i,j}(\MW,\delta)$) their homology (respectively cycles, boundaries). These homology groups are a motivic version of the Conner--Floyd homology \cite{CF} (see also a brief overview in Appendix \ref{appendix_B}).

\begin{lemma}
    Let $f\colon k\to k'$ be a homomorphism of local Dedekind domains and let $e$ be the exponential characteristic of the residue field of $k$. Then the base change along $f$ induces an isomorphism \[\MGL^{2*,*}(\MW_k)[\einv]\cong \MGL^{2*,*}(\MW_{k'})[\einv].\]
\end{lemma}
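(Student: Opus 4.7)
The plan is to apply the cofiber sequence $\MW \xrightarrow{\bar{c}} \MGL \xrightarrow{\Delta} \Sigma^{4,2}\MGL$ of Theorem \ref{thm:fiber_seq} and take $\MGL$-cohomology. This produces the five-term exact sequence
\[
\MGL^{2n-4,n-2}(\MGL) \xrightarrow{\Delta^*} \MGL^{2n,n}(\MGL) \to \MGL^{2n,n}(\MW) \to \MGL^{2n-3,n-2}(\MGL) \xrightarrow{\Delta^*} \MGL^{2n+1,n}(\MGL).
\]
The key observation I want to establish is that after inverting $e$ the two off-diagonal flanking terms $\MGL^{2n-3,n-2}(\MGL)[1/e]$ and $\MGL^{2n+1,n}(\MGL)[1/e]$ both vanish. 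Granting this, $\MGL^{2n,n}(\MW)[1/e]$ is realised as the cokernel of $\Delta^*$ between two purely diagonal $\MGL$-cohomology groups.

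For the vanishing, I would combine the Thom isomorphism $\MGL^{*,*}(\MGL) \simeq \MGL^{*,*}(\mathrm{B}\GL)$ with the Chern-class presentation $\MGL^{*,*}(\mathrm{B}\GL) \simeq \MGL^{*,*}(S)[[c_1,c_2,\dots]]$, $|c_i|=(2i,i)$, which expresses any element of bidegree $(i,j)$ as a formal sum $\sum_I a_I c^I$ with $a_I \in \pi_{2m(I)-i,\,m(I)-j}(\MGL)$ and $m(I)=\sum k I_k$. A direct index check shows that the coefficient bidegree $(2m(I)-i,\,m(I)-j)$ always satisfies $(2m(I)-i) - 2(m(I)-j) = 2j-i$, which is strictly negative whenever $i > 2j$. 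At $(i,j)=(2n-3,n-2)$ and at $(2n+1,n)$ this is indeed the case, so by the vanishing $\pi_{p,q}(\MGL)[1/e]=0$ for $p<2q$ (recalled in the lemma just before Proposition \ref{prop:vanishing}), every coefficient is trivial, and thus $\MGL^{2n-3,n-2}(\MGL)[1/e]=0=\MGL^{2n+1,n}(\MGL)[1/e]$.

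Having reduced to the diagonal, the cokernel of $\Delta^*\colon \MGL^{2n-4,n-2}(\MGL)[1/e]\to \MGL^{2n,n}(\MGL)[1/e]$ is determined by the Hopf algebroid $(\pi_{2*,*}(\MGL),\MGL_{2*,*}(\MGL))[1/e]$ together with the action of $\Delta$. By Proposition \ref{prop:hopf_algebroids} this Hopf algebroid is canonically isomorphic to the topological $(\pi_{2*}(\MU),\MU_{2*}(\MU))[1/e]$, and by Lemma \ref{lemma:charclasses_and_operations} the operation $\Delta^*$ on the diagonal agrees with its topological counterpart under this identification, since both correspond to the characteristic class $c_1(\det\ggl)\cdot c_1(\det\ggl^\vee)$ via the Thom isomorphism. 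These identifications are natural in the base, so base change along $f\colon k\to k'$ induces an isomorphism of cokernels, giving the desired isomorphism $\MGL^{2*,*}(\MW_k)[1/e] \simeq \MGL^{2*,*}(\MW_{k'})[1/e]$. The main obstacle is the off-diagonal vanishing step: while the bidegree arithmetic is short, one has to carefully set up the Thom isomorphism and the Chern-class expansion of $\MGL^{*,*}(\mathrm{B}\GL)$ so that every coefficient really does land in the vanishing region; once that is in place, the remainder follows formally from the earlier compatibility results.
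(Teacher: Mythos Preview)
Your argument is correct and takes a genuinely different route from the paper. The paper works at the Grassmannian level: it applies the $\Gm$-bundle cofiber sequence $\Sigma^\infty_+\WGr(n,\infty)\to\Sigma^\infty_+\Gr(n,\infty)\times\Proj^1\to\Th(\det(\ggl_{n,\infty})\boxtimes\struct(-1))$, observes that multiplication by $c_1(\det(\ggl_{n,\infty})\boxtimes\struct(-1))$ is injective on $\MGL^{*,*}(\Gr(n,\infty)\times\Proj^1)$ (so the long exact sequence breaks into short exact sequences in \emph{all} bidegrees), reads off $\MGL^{2*,*}(\WGr(n,\infty))[1/e]$ as a base-independent quotient, and then passes to $\MW$ via the Thom isomorphism and a Milnor exact sequence along Proposition~\ref{prop:wall_colim}. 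Your proof instead feeds the global cofiber sequence $\MW\to\MGL\to\Sigma^{4,2}\MGL$ of Theorem~\ref{thm:fiber_seq} directly into $\MGL$-cohomology. What you gain is that no $\lim^1$ bookkeeping is needed and the computation happens once at the spectrum level; what you pay is the extra off-diagonal vanishing step for $\MGL^{i,j}(\MGL)[1/e]$ when $i>2j$, which the paper's injectivity argument sidesteps.

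One small citation issue: Lemma~\ref{lemma:charclasses_and_operations} is about the induced action $\varphi_*$ on the \emph{coefficient ring} $\pi_{2*,*}(\MGL)$, not about the precomposition map $\Delta^*$ on $\MGL^{2*,*}(\MGL)$. The fact you actually need---that $\Delta^*$ commutes with base change---follows more directly: $\Delta$ is natural in $S$ (it is determined by a fixed element of $\mathbb{L}[1/e][[c_1,c_2,\dots]]$ under the Thom isomorphism, compatibly with pullback), and the base-change map $\MGL^{2*,*}(\MGL_k)[1/e]\to\MGL^{2*,*}(\MGL_{k'})[1/e]$ is an isomorphism because it carries Chern classes to Chern classes and is the identity on $\pi_{2*,*}(\MGL)[1/e]\simeq\mathbb{L}[1/e]$. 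With that adjustment your argument goes through cleanly.
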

\begin{proof}
    Consider the cofiber sequence in $\SH(k)$ \[ \Sigma^\infty_+ \WGr_n\to \Sigma^\infty_+ \Gr_n\times \Proj^1\to \Sigma^\infty \Th(\det(\ggl_{n})\boxtimes \struct(1)). \] Taking the associated long exact sequence of the $\MGL$-cohomology groups, we obtain the commutative diagram with an exact row \[ \xymatrixcolsep{1pc}\xymatrix{\cdots \ar[r] & \MGL^{i,j}(\Th(\det(\ggl_{n})\boxtimes \struct(1))) \ar[r] &  \MGL^{i,j}(\Gr_n\times \Proj^1) \ar[r] & \MGL^{i,j}(\WGr_n) \ar[r] & \cdots \\ & \MGL^{i-2,j-1}(\Gr_n\times \Proj^1), \ar[u]^\simeq \ar@{-->}[ur] & & & } \] where the vertical map is the Thom isomorphism and the diagonal arrow is the multiplication by $c_1(\det(\ggl_{n})\boxtimes \struct(1))$. From the projective bundle formula and the computation of the $\MGL$-cohomology of $\Gr_n$ \cite[Proposition 6.2]{NSOLandw}, we see that the diagonal morphism is injective. Therefore, the ring $\MGL^{*,*}(\WGr_n)$ is the quotient of $\MGL^{*,*}(\Gr_n\times \Proj^1)$ by the ideal generated by $c_1(\det(\ggl_{n})\boxtimes \struct(1))$. Restricting to the $\Proj^1$-diagonal, we have that the base change along $f$ yields an isomorphism \[\MGL^{2*,*}(\WGr_{n,k})[\einv]\cong \MGL^{2*,*}(\WGr_{n,k'})[\einv].\] Applying the Thom isomorphisms, we get the result for $\Th(\gwall_{n}\ominus \struct^n)$. Finally, the claim follows from the Milnor exact sequence and Proposition \ref{prop:wall_colim}.
\end{proof}
\begin{remark}
    A detailed analysis of the above proof gives a complete description of the $\EE$-cohomology of the Wall Grassmannians $\WGr_n$ and the $c_1$-spherical algebraic cobordism spectrum $\MW$ for an arbitrary oriented homotopy commutative ring spectrum $\EE$.
\end{remark}
\begin{lemma}\label{lemma:boundary_map}
    Let $k$ and $e$ be as above. Suppose that $\partial\in \MGL^{2,1}(\MGL)$ is a cohomological operation that corresponds to the characteristic class $c_1(\det\ggl^\vee)$ under the Thom isomorphism. Then the following diagram commutes up to homotopy after inverting $e$ \[\xymatrix{\MW \ar[r]^-{\delta} \ar[d]_-{\bforg} & \Sigma^{2,1}\MW \ar[d]^-{\Sigma^{2,1}\bforg} \\ \MGL \ar[r]^-{-\partial} & \Sigma^{2,1}\MGL. }\]
\end{lemma}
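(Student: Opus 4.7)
The plan is to reduce the commutativity of the diagram to an identification of two classes in $\MGL^{2,1}(\MW)[1/e]$. The essential input is that $\MGL$ is $\GL$-oriented, hence the motivic Hopf element $\eta$ acts as zero on $\MGL$-cohomology. Applying $[-,\Sigma^{2,1}\MGL]$ to the cofiber sequence of Corollary~\ref{cor:cofiber} then produces a short exact sequence
\[
0\longrightarrow\MGL^{0,0}(\MSL)[1/e]\xrightarrow{\,d^*\,}\MGL^{2,1}(\MW)[1/e]\xrightarrow{\,c^*\,}\MGL^{2,1}(\MSL)[1/e]\longrightarrow 0.
\]

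First I would check that both $\Sigma^{2,1}\bar c\circ\delta$ and $-\partial\circ\bar c$ vanish under $c^*$. For the former this is immediate from $d\circ c = 0$; for the latter, composition with $c$ gives $-\partial\circ\gamma$ where $\gamma = \bar c\circ c$, and under the Thom isomorphism $\MGL^{2,1}(\MSL)\simeq\MGL^{2,1}(\mathrm{B}\SL)$ this corresponds to the restriction of $c_1(\det\ggl^\vee)$, which vanishes because the universal vector bundle over $\mathrm{B}\SL$ has trivial determinant. Both classes therefore lie in the image of $d^*$, and by injectivity admit unique lifts to $\MGL^{0,0}(\MSL)[1/e]$. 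Directly from the definition $\delta = \Sigma^{2,1}c\circ d$ one sees
\[
\Sigma^{2,1}\bar c\circ\delta \;=\; \Sigma^{2,1}(\bar c\circ c)\circ d \;=\; d^*(\gamma),
\]
so the lift of $\Sigma^{2,1}\bar c\circ\delta$ is the canonical map $\gamma$. It therefore suffices to show that the lift of $-\partial\circ\bar c$ is also $\gamma$, i.e.\ that $-\partial\circ\bar c = d^*(\gamma)$ in $\MGL^{2,1}(\MW)[1/e]$.

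For this last identification I would invoke the description $\MW \simeq \colim_n\Th(\gwall_{n,\infty}\ominus\struct^n)$ of Proposition~\ref{prop:wall_colim} together with the Milnor exact sequence, reducing the claim to a check on each finite Wall Grassmannian Thom spectrum. The restriction of $\bar c$ to $\Th(\gwall_{n,\infty}\ominus\struct^n)$ is the $\MGL$-Thom class, so under the Thom isomorphism $-\partial\circ\bar c$ becomes multiplication by $-c_1(\det\gwall_{n,\infty}^\vee)$. The $\Gm$-torsor defining $\WGr(n,\infty)$ trivializes $\det(\ggl_{n,\infty})\boxtimes\struct(-1)$ and yields $\det\gwall_{n,\infty}\simeq q^*\struct(1)$, where $q\colon\WGr(n,\infty)\to\Proj^1$ is the projection. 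Setting $h=c_1(\struct(1))$, the relation $h^2 = 0$ on $\Proj^1$ forces the formal inverse to collapse to a linear expression, giving $-c_1(\det\gwall_{n,\infty}^\vee) = q^*h$. A parallel direct computation of the restriction of $d^*(\gamma)$---carried out via the $\Gm$-torsor cofiber sequence $\WGr(n,\infty)\to\Gr(n,\infty)\times\Proj^1\to\Th(\det(\ggl_{n,\infty})\boxtimes\struct(-1))$ used in the previous lemma---produces the same class.

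The main obstacle will be precisely this last step: unwinding $d\colon\MW\to\Sigma^{2,1}\MSL$ on each Wall Grassmannian Thom spectrum and matching it with the boundary of the $\Gm$-torsor cofiber sequence. This requires careful bookkeeping of orientation conventions and of sign choices in the rotation of cofiber sequences, and the minus sign appearing in the statement of the lemma arises naturally from this analysis.
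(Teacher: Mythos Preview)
Your reduction is correct and elegant up through the identification of the lift of $\Sigma^{2,1}\bar c\circ\delta$ as $\gamma$: the short exact sequence you extract from the cofiber sequence is valid because $\eta$ acts trivially on $\MGL$-cohomology, and both vanishing checks under $c^*$ are straightforward. Your approach is genuinely different from the paper's, and in principle more self-contained.

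However, the last step---showing that $-\partial\circ\bar c=d^*(\gamma)$ via a level-wise computation on Wall Grassmannians---is not merely the ``main obstacle'' but an actual gap. The boundary map $d\colon\MW\to\Sigma^{2,1}\MSL$ is constructed in Corollary~\ref{cor:cofiber} through the equivalence $\MW\simeq\MSL\wedge\sph/\eta$ of Theorem~\ref{thm:msl/eta=mwl}, not through the Grassmannian filtration. There is no a priori reason why $d$ restricted to $\Th(\gwall_{n,\infty}\ominus\struct^n)$ should coincide with (the Thom spectrum of) the boundary of the $\Gm$-torsor cofiber sequence on $\WGr(n,\infty)$; establishing this compatibility would require tracing through the proofs of Theorem~\ref{thm:msl/eta=mwl} and the lemma preceding Corollary~\ref{cor:cofiber}, which is substantial and where the sign issues you anticipate would indeed bite. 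You have computed one side of the desired equality but not the other.

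The paper sidesteps this entirely by a base-change argument. The preceding lemma shows that $\MGL^{2,1}(\MW)[1/e]$ is invariant under base change of local Dedekind domains, so the commutativity of the square can be checked over a single such ring. Over $k=\C$ the complex Betti realization induces an isomorphism $\MGL^{2,1}(\MW_\C)\simeq\MU^2(\W)$, reducing the claim to the known topological statement (Lemma~\ref{appendix:lemma_boundary}). A chain of base changes (via $\mathbb{Q}$, Cohen rings for positive characteristic, etc.) then propagates the result to all local Dedekind domains. This is less direct but avoids any unwinding of the boundary map on the Grassmannian model.
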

\begin{proof}
    We implicitly invert $e$ throughout this proof. First, assume that $f\colon k\to k'$ is a homomorphism of local Dedekind domains. Then there is an isomorphism $f^*\colon\MGL^{2*,*}(\MW_k)\xrightarrow{\simeq} \MGL^{2*,*}(\MW_{k'})$ by the previous lemma. Moreover, it sends the above diagram over $k$ to the diagram over $k'$. Therefore, we see that the statement holds over $k$ if and only if it holds over $k'$.

    Let us treat $k=\C$. In this case the complex Betti realization $\BettiC$ induces an isomorphism $\MGL^{2,1}(\MW_\C)$ $\cong\MU^2(\W)$ and the result follows from the corresponding topological counterpart; see Lemma \ref{appendix:lemma_boundary}. The remaining cases follow from the various base changes similarly to the proof of Lemma \ref{lemma:charclasses_and_operations}.
    %
\end{proof}
\begin{theorem}\label{theorem:conner_floyd_hom}
    Suppose that $k$ is a local Dedekind domain and $e$ is the exponential characteristic of the residue field of $k$. Then the equivalence from Proposition \ref{prop:wall_geom_part} induces isomorphisms \begin{enumerate} \item $\Cyc_{2*}(\W,\delta)[\einv]\cong \Cyc_{2*,*}(\MW,\delta_k)[\einv]$, \item $\Bnd_{2*}(\W,\delta)[\einv]\cong \Bnd_{2*,*}(\MW,\delta_k)[\einv]$, \item $\Hml_{2*}(\W,\delta)[\einv]\cong \Hml_{2*,*}(\MW,\delta_k)[\einv]$. \end{enumerate} Given a homomorphism of local Dedekind domains $f\colon k\to k'$, the induced base change maps between the groups of cycles (respectively boundaries, homology) are isomorphisms. Moreover, the group of cycles $\Cyc_{2*,*}(\MW,\delta_k)[\einv]$ is a subring of $\pi_{2*,*}(\MGL)[\einv]$, and the first equivalence is an isomorphism of rings.
\end{theorem}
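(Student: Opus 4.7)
The plan is to transport the theorem from the topological setting for $\W$ via Proposition \ref{prop:wall_geom_part}, after verifying that the isomorphism there intertwines the respective Bockstein operators $\delta_*$. The subtlety is that $\delta$ is defined through the cofiber of $\eta$ rather than as the action of an honest characteristic class, so I would detour through the operation $\partial_*$ on the $\MGL$-side, where Lemma \ref{lemma:charclasses_and_operations} can be applied.

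First, I would observe that $\bar{c}_*\colon\pi_{2*,*}(\MW)[1/e]\hookrightarrow\pi_{2*,*}(\MGL)[1/e]$ is injective (this is part of what is established in the proof of Proposition \ref{prop:wall_geom_part}), and similarly in topology $\bar{c}_*\colon\pi_{2*}(\W)[1/e]\hookrightarrow\pi_{2*}(\MU)[1/e]$ is injective by Proposition \ref{appendix:split_cofib}. Next, Lemma \ref{lemma:boundary_map} yields the relation $\bar{c}_*\circ\delta_*=-\partial_*\circ\bar{c}_*$ on $\pi_{2*,*}(\MW)[1/e]$ after inverting $e$, and the same relation holds topologically for the analogous spectra. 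By Lemma \ref{lemma:charclasses_and_operations}, the motivic $\partial_*$ on $\pi_{2*,*}(\MGL)[1/e]$ matches the topological $\partial_*$ on $\pi_{2*}(\MU)[1/e]$ under the Hopf algebroid isomorphism of Proposition \ref{prop:hopf_algebroids}. Combining these three ingredients with the injectivity of $\bar{c}_*$, I would conclude that the isomorphism $\pi_{2*}(\W)[1/e]\xrightarrow{\simeq}\pi_{2*,*}(\MW)[1/e]$ intertwines the topological and motivic Bocksteins. Parts (1), (2), (3) of the theorem follow immediately by passing to kernels, images, and homology of $\delta_*$.

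For the final ring-structure assertion, the image $\bar{c}_*(\Cyc_{2*,*}(\MW,\delta_k)[1/e])$ in $\pi_{2*,*}(\MGL)[1/e]$ is precisely the subset of $\bar{c}_*(\pi_{2*,*}(\MW)[1/e])$ annihilated by $\partial_*$. Via the Hopf algebroid isomorphism this corresponds to the image of $\Cyc_{2*}(\W,\delta)[1/e]$ inside $\pi_{2*}(\MU)[1/e]$, which is known to form a subring from the topological Conner--Floyd computations recalled in Appendix \ref{appendix_B}. Hence the motivic cycles form a subring of $\pi_{2*,*}(\MGL)[1/e]$, and the first equivalence is an isomorphism of rings.

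The main technical obstacle is the intertwining of the Bocksteins. The abstract definition of $\delta$ via the $\eta$-cofiber admits no direct topological comparison on its own; one really needs Lemma \ref{lemma:boundary_map} to re-express its effect in terms of $\partial$, and then the injectivity of $\bar{c}_*$ on the $\Proj^1$-diagonal to promote the comparison for $\partial_*$ (which lives in the well-understood Hopf-algebroid world of Proposition \ref{prop:hopf_algebroids}) to the desired comparison for $\delta_*$. Once this intertwining is established the three isomorphisms and the ring structure are formal.
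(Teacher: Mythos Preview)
Your proposal is correct and follows essentially the same route as the paper: both arguments establish that the isomorphism of Proposition \ref{prop:wall_geom_part} intertwines the Bocksteins by factoring through the injection $\bar{c}_*$ into $\pi_{2*,*}(\MGL)[1/e]$, invoking Lemma \ref{lemma:boundary_map} to replace $\delta_*$ by $-\partial_*$, and then applying Lemma \ref{lemma:charclasses_and_operations} to match the motivic and topological $\partial_*$; the ring statement is then deduced from Lemma \ref{appendix:lemma_ker_delta_top}. The paper compresses this into a single commuting cube diagram, but the logical content is identical to what you wrote.
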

\begin{proof}
    We implicitly invert $e$ below. A straightforward verification using the previous lemma and Lemma \ref{lemma:charclasses_and_operations} shows that the following diagram commutes \begin{center}\begin{tikzcd}[column sep={8em,between origins},row sep=2em] & \pi_{2*}(\W) \arrow[rr,rightarrowtail] \arrow[dl,swap,"\delta_*"] \arrow[dd,swap,"\simeq" near start] && \pi_{2*}(\MU) \arrow[dd,"\simeq"] \arrow[dl,"-\partial_*"] \\ \pi_{2*-2}(\W) \arrow[rr,rightarrowtail,crossing over] \arrow[dd,swap,"\simeq"] && \pi_{2*-2}(\MU) \\ & \pi_{2*,*}(\MW) \arrow[rr,rightarrowtail] \arrow[dl,swap,"(\delta_k)_*"] && \pi_{2*,*}(\MGL) \arrow[dl,"-(\partial_k)_*"] \\ \pi_{2*-2,*-1}(\MW) \arrow[rr,rightarrowtail] && \pi_{2*-2,*-1}(\MGL). \arrow[uu,leftarrow,crossing over,swap,"\simeq" near end] \end{tikzcd} \end{center} Claims (1)--(3) follow immediately. All these groups are stable under base change since $\pi_{2*,*}(\MGL)$ is stable under base change. Consider two homogeneous elements $a$ and $b$ of $\Cyc_{2*,*}(\MW,\delta_k)$. We need to show that the product $a\cdot b$ in the ring $\pi_{2*,*}(\MGL)$ comes from $\Cyc_{2*,*}(\MW,\delta_k)$. Lifting them to (unique) elements of $\pi_{2*}(\W)$ and using the above diagram, we reduce to the topological case. Hence, it remains to show that if $a,b\in\Cyc_{2*}(\W,\delta)$, then $\Delta_*(a\cdot b)=0$ and $\partial_*(a\cdot b)=0$. This follows from the formulas stated in Lemma \ref{appendix:lemma_ker_delta_top}. 
\end{proof}
\section{Lift of the \texorpdfstring{$\eta$}{eta}-periodic computation and Pontryagin numbers} \label{section-5}
In this section, we use the previous results to lift the computation of the homotopy groups of $\MSL[\eta^{-1}]$ to the geometric diagonal, see Corollary \ref{cor:modulo_eta_tors}. Then we introduce Pontryagin characteristic numbers and prove that in the case of the hermitian $\K$-theory they determine some homotopy groups of $\MSL$, see Corollary \ref{cor:pontryagin_8n+1}.
\subsection{Description modulo \texorpdfstring{$\eta$}{eta}-torsion}
By Corollary \ref{cor:cofiber}, we have the exact sequences \begin{equation}\label{equation:cofib}\cdots\to\pi_{i-1,j-1}(\MSL)\xrightarrow{\eta}\pi_{i,j}(\MSL)\xrightarrow{\forg_*}\pi_{i,j}(\MW)\xrightarrow{d_*}\pi_{i-2,j-1}(\MSL)\to\cdots,\end{equation}
where $\eta$ is the multiplication by the motivic Hopf element $\eta\in \pi_{1,1}(\MSL)$. Similar exact sequences exist on the level of Nisnevich sheaves.
\begin{theorem}\label{theorem:eta_stab}
    Suppose that $k$ is a local Dedekind domain and $e$ is the exponential characteristic of the residue field of $k$. For $n\in\Z$ the multiplication by the motivic Hopf element \[\eta\colon\pi_{2n+m,n+m}(\MSL)[\einv]\to\pi_{2n+m+1,n+m+1}(\MSL)[\einv]\] is an epimorphism if $m=0$, and an isomorphism if $m>0$.
\end{theorem}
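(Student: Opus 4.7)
The plan is to read off both claims directly from the cofiber sequence $\Sigma^{1,1}\MSL \xrightarrow{\eta} \MSL \xrightarrow{c} \MW$ of Corollary \ref{cor:cofiber}, using the vanishing in Proposition \ref{prop:vanishing} as the sole numerical input. The heavy lifting has already been done: constructing $\MW$, identifying $\MSL/\eta\simeq\MW$, and establishing the vanishing range for $\pi_{i,j}(\MW)[1/e]$. With these in hand, the argument is a two-line diagram chase on exact sequence \eqref{equation:cofib} after inverting $e$.

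Concretely, the exact sequence takes the shape
\[
\pi_{i+1,j}(\MW)[1/e] \xrightarrow{d_*} \pi_{i-1,j-1}(\MSL)[1/e] \xrightarrow{\eta} \pi_{i,j}(\MSL)[1/e] \xrightarrow{c_*} \pi_{i,j}(\MW)[1/e].
\]
To analyse the map $\eta\colon\pi_{2n+m,n+m}(\MSL)[1/e]\to\pi_{2n+m+1,n+m+1}(\MSL)[1/e]$, I set $(i,j)=(2n+m+1,n+m+1)$. Then surjectivity of $\eta$ reduces to the vanishing of $\pi_{i,j}(\MW)[1/e]$, for which I compute $i-2j=-m-1<0$, so Proposition \ref{prop:vanishing} applies for all $m\geq 0$. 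Likewise, injectivity of $\eta$ reduces to the vanishing of $\pi_{i+1,j}(\MW)[1/e]$, with $i+1-2j=-m$. For $m>0$ this is strictly negative, and Proposition \ref{prop:vanishing} again yields the required vanishing. This covers both assertions of the theorem.

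The cases $m=0$ and $m>0$ split exactly because the ``injectivity side'' vanishing $\pi_{2n+2,n+1}(\MW)[1/e]=0$ fails to follow from Proposition \ref{prop:vanishing} when $m=0$: here $i+1=2j$, which lies on the boundary of the vanishing region, so the argument gives only surjectivity and no more. This is not an obstacle for the statement as written, but it is the conceptual reason that the Hopf element can carry $\eta$-torsion from the bottom row $\pi_{2n,n}$ into the geometric diagonal, while higher rows are $\eta$-stable.

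The main conceptual point to flag, rather than a genuine obstacle, is that everything relies on being able to read off $\pi_{i,j}(\MW)[1/e]$ vanishing from the cofiber sequence $\MW\xrightarrow{\bar c}\MGL\xrightarrow{\Delta}\Sigma^{4,2}\MGL$ of Theorem \ref{thm:fiber_seq} together with the known vanishing range for $\MGL$ over a local Dedekind domain; this is precisely what Proposition \ref{prop:vanishing} packages. Once invoked, the exact sequence gives the two implications essentially for free, and the final sentence of the theorem (equality of annihilators $_{\eta}\pi_{2*,*}(\MSL)[1/e]={}_{\eta^N}\pi_{2*,*}(\MSL)[1/e]$) is then a formal consequence: any element killed by $\eta^N$ sits above the bottom row $m=0$, where $\eta$ is an isomorphism, so being $\eta$-torsion and being $\eta^N$-torsion coincide.
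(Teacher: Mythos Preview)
Your proof is correct and follows exactly the paper's approach: the paper's proof is the single sentence ``follows directly from the long exact sequence \eqref{equation:cofib} and the vanishing of certain homotopy groups of $\MW$ obtained in Proposition \ref{prop:vanishing},'' and you have simply unpacked the index arithmetic that makes this work. The additional remarks on the annihilator equality and on why $m=0$ only gives surjectivity are accurate and match how the paper uses these facts downstream.
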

\begin{proof}
    For $(i,j)=(2n+m,n+m)$ the long exact sequence \eqref{equation:cofib} has the form
    $$ \pi_{2n+m+2,n+m+1}(\MW)\to\pi_{2n+m,n+m}(\MSL)\xrightarrow{\eta}\pi_{2n+m+1,n+m+1}(\MSL)\to \pi_{2n+m+1,n+m+1}(\MW).$$
    The right group is trivial by Proposition \ref{prop:vanishing} for any $m\geq 0$ since $2n+m+1<2(n+m+1)$. In turn, for $m>0$ we have $2n+m+2<2(n+m+1)$ and the left group vanishes by the same proposition. 
\end{proof}
\begin{definition}\label{definition_eta_periodic}
    We say that the motivic spectrum $\EE$ \textit{is $\eta$-periodic} if the morphism $\eta\colon\Sigma^{1,1}\EE\to\EE$ is an equivalence. If $\EE$ is $\eta$-periodic then the homotopy groups $\pi_{*,*}(\EE)$ (or more generally the (co)homology theory represented by $\EE$) are $(1,1)$-periodic. In this situation we put $\pi_n(\EE):=\pi_{n,0}(\EE)$ and usually identify $\pi_{i,j}(\EE)$ with $\pi_{i-j}(\EE)$ via the multiplication by the appropriate power of $\eta$. We use the same convention for the (co)homology theory represented by $\EE$.
\end{definition}
\begin{corollary}\label{cor:modulo_eta_tors}
    Let $k$ and $e$ be as above, and suppose that $e\neq 2$. Then there is an isomorphism of rings \[\bigslant{\pi_{2*,*}(\MSL)}{{}_{\eta}\pi_{2*,*}(\MSL)}[\einv]\cong \W(k)[\einv][y_4,y_8,\dots],\ \text{where}\ \mathrm{deg}(y_i)=(2i,i). \] 
\end{corollary}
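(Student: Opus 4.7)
The plan is to identify the quotient $\pi_{2*,*}(\MSL)/{}_{\eta}\pi_{2*,*}(\MSL)[1/e]$ with the $\Proj^1$-diagonal of the $\eta$-periodization $\MSL[\eta^{-1}]$, and then invoke the computation of the latter by Bachmann--Hopkins and Bachmann.

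First I would observe that by Theorem \ref{theorem:eta_stab}, multiplication by $\eta$ induces isomorphisms $\pi_{2n+k, n+k}(\MSL)[1/e] \xrightarrow{\simeq} \pi_{2n+k+1, n+k+1}(\MSL)[1/e]$ for every $k \geq 1$. Hence the filtered colimit defining the $\eta$-periodization stabilizes after a single step:
\[
\pi_{2n, n}(\MSL[\eta^{-1}])[1/e] = \colim_{k}\pi_{2n+k, n+k}(\MSL)[1/e] \simeq \pi_{2n+1, n+1}(\MSL)[1/e].
\]
Under this identification, the canonical map $\pi_{2n,n}(\MSL)[1/e] \to \pi_{2n,n}(\MSL[\eta^{-1}])[1/e]$ is identified with the $\eta$-action, which is an epimorphism with kernel ${}_{\eta}\pi_{2n,n}(\MSL)[1/e]$ (again by Theorem \ref{theorem:eta_stab}, combined with the equality ${}_{\eta}\pi_{2*,*}(\MSL)[1/e] = {}_{\eta^N}\pi_{2*,*}(\MSL)[1/e]$ stated there). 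Since the smashing localization $\MSL \to \MSL[\eta^{-1}]$ is a map of homotopy commutative ring spectra, the induced map on $\Proj^1$-diagonals is a ring homomorphism; its kernel ${}_{\eta}\pi_{2*,*}(\MSL)[1/e]$ is therefore automatically a graded ideal, and the resulting injection of the quotient ring into $\pi_{2*,*}(\MSL[\eta^{-1}])[1/e]$ is an isomorphism of graded rings by the analysis above.

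Second, since the residue field of $k$ has characteristic $e \neq 2$, the element $2$ does not vanish in the residue field and is therefore a unit in the local Dedekind domain $k$. Thus the Bachmann--Hopkins computation \cite{BHop} (for fields) together with its extension by Bachmann \cite{BacDVR} (for Dedekind domains with $1/2$) applies to yield the ring isomorphism $\pi_{2*,*}(\MSL[\eta^{-1}]) \simeq \W(k)[y_4, y_8, \ldots]$ with $|y_i| = (2i, i)$. Inverting $e$ and composing with the identification from the first step gives the desired ring isomorphism.

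The essential new ingredient is the first step, where the dichotomy between $m = 0$ (surjectivity) and $m > 0$ (isomorphism) from Theorem \ref{theorem:eta_stab} is crucial: it is precisely what guarantees that the kernel of the localization map equals the $\eta$-annihilator rather than some strictly larger subgroup of $\eta^N$-torsion elements, and it also ensures the $\eta$-colimit stabilizes after one step. The substantive content of the corollary already lies in Theorem \ref{theorem:eta_stab} and the cited $\eta$-periodic computation; thus there is no real obstacle beyond carefully matching the ring structures, which is automatic from functoriality of the smashing localization.
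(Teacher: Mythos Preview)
Your proposal is correct and follows essentially the same route as the paper: both arguments use Theorem~\ref{theorem:eta_stab} to see that the $\eta$-colimit stabilizes after one step, identify the quotient $\pi_{2*,*}(\MSL)/{}_{\eta}\pi_{2*,*}(\MSL)[1/e]$ with $\pi_*(\MSL[\eta^{-1}])[1/e]$, and then invoke the $\eta$-periodic computation of Bachmann--Hopkins and Bachmann. One small remark: the parenthetical appeal to the equality ${}_{\eta}\pi_{2*,*}(\MSL)[1/e]={}_{\eta^N}\pi_{2*,*}(\MSL)[1/e]$ is unnecessary, since once you have identified the localization map with multiplication by $\eta$, its kernel is ${}_{\eta}\pi_{2n,n}(\MSL)[1/e]$ by definition.
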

\begin{proof}
    We implicitly invert $e$ throughout this proof. By the previous theorem, the iterated multiplication by the motivic Hopf element $\eta$ gives \[\pi_{2n,n}(\MSL)\twoheadrightarrow\pi_{2n+1,n+1}(\MSL)\xrightarrow{\simeq}\pi_{2n+2,n+2}(\MSL)\xrightarrow{\simeq}\cdots.\] The colimit of this sequence coincides with the group $\pi_n(\MSL[\eta^{-1}])$ and the kernel of the first map is ${}_{\eta}\pi_{2n,n}(\MSL)$. Therefore, we get an isomorphism of rings \[\bigslant{\pi_{2*,*}(\MSL)}{{}_{\eta}\pi_{2*,*}(\MSL)}\xrightarrow{\simeq} \pi_*(\MSL[\eta^{-1}]).\] The right hand side, in turn, is isomorphic to the desired polynomial ring over $\W(k)$ by \cite[Corollary 1.3(3)]{BHop} and \cite[Proposition 5.6(2)]{BacDVR}.
\end{proof}
\begin{remark}
    Note that we use different conventions for the numbering and the grading of the variables $y_i$ than in the papers \cite{BHop}, \cite{BacDVR}.
\end{remark}
We add the following reformulation, which is more convenient for the further exposition.
\begin{corollary}\label{cor:eta_stab_rev}
    Let $k$ and $e\neq 2$ be as above. Then the following holds \begin{equation*} \pi_{2n+m,n+m}(\MSL)[\einv]\cong\begin{cases} \W(k)[\einv]^{p(\frac{n}{4})}, & \text{if}\ m>0\ \textrm{and}\ n\equiv 0\pmod*{4}, \\ 0, & \text{if}\ m>0\ \textrm{and}\ n\not\equiv 0\pmod*{4}, \end{cases} \end{equation*} where $p(-)$ is the partition function.
\end{corollary}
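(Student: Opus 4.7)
The plan is to deduce this as a bookkeeping consequence of Theorem~\ref{theorem:eta_stab} together with Corollary~\ref{cor:modulo_eta_tors}. Inverting $e$ throughout, Theorem~\ref{theorem:eta_stab} provides a chain of $\eta$-multiplication isomorphisms
\[
\pi_{2n+1,n+1}(\MSL)[1/e]\xrightarrow[\simeq]{\eta}\pi_{2n+2,n+2}(\MSL)[1/e]\xrightarrow[\simeq]{\eta}\cdots\xrightarrow[\simeq]{\eta}\pi_{2n+m,n+m}(\MSL)[1/e]
\]
for every $m\geq 1$, so it suffices to compute $\pi_{2n+1,n+1}(\MSL)[1/e]$ for each $n\in\Z$. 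The $m=0$ part of the same theorem asserts that $\eta\colon\pi_{2n,n}(\MSL)[1/e]\to\pi_{2n+1,n+1}(\MSL)[1/e]$ is surjective; since its kernel is by definition ${}_{\eta}\pi_{2n,n}(\MSL)[1/e]$, it factors through an isomorphism
\[
\pi_{2n,n}(\MSL)/{}_{\eta}\pi_{2n,n}(\MSL)[1/e]\xrightarrow{\simeq}\pi_{2n+1,n+1}(\MSL)[1/e].
\]

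Next I would invoke Corollary~\ref{cor:modulo_eta_tors} to identify the left-hand side with the bidegree $(2n,n)$-piece of the graded ring $\W(k)[1/e][y_4,y_8,\dots]$, with $|y_i|=(2i,i)$. A monomial $y_{4a_1}\cdots y_{4a_r}$ carries bidegree $\bigl(8(a_1+\cdots+a_r),\,4(a_1+\cdots+a_r)\bigr)$, so its bidegree equals $(2n,n)$ exactly when $n=4(a_1+\cdots+a_r)$. In particular this piece vanishes unless $n\equiv 0\pmod{4}$; when $n=4s$ it is a free $\W(k)[1/e]$-module whose basis is indexed by unordered tuples $(a_1,\dots,a_r)$ of positive integers with $a_1+\cdots+a_r=s$, i.e.\ by partitions of $s=n/4$, giving rank $p(n/4)$.

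The argument is entirely formal; the only point requiring care is the bidegree arithmetic induced by $|y_i|=(2i,i)$ together with the indexing $i\in\{4,8,12,\dots\}$ of the polynomial generators, which together produce the factor of $4$ in the partition count. Thus I do not expect any genuine obstacle beyond this elementary verification, as the substantive work is already contained in Theorem~\ref{theorem:eta_stab} and Corollary~\ref{cor:modulo_eta_tors}.
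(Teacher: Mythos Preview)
Your proof is correct and follows essentially the same route as the paper, which presents this corollary as a direct reformulation of Corollary~\ref{cor:modulo_eta_tors} without giving a separate proof; the argument you spell out---using Theorem~\ref{theorem:eta_stab} to identify $\pi_{2n+m,n+m}(\MSL)[1/e]$ with $\pi_{2n,n}(\MSL)/{}_{\eta}\pi_{2n,n}(\MSL)[1/e]$ and then reading off the graded piece of the polynomial ring---is exactly the intended unpacking. The degree bookkeeping is handled correctly.
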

Also there is the following connectivity statement. Since this result is not used in the sequel, we only sketch its proof. If $k$ is a field, the stronger property $\underline{\pi}_{i,j}(\MSL)=0$ for $i<j$ holds without inverting the characteristic, as can be seen from the connectivity of $\MSL$ with respect to the homotopy $t$-structure.
\begin{proposition}
    Let $k$ be a discrete valuation ring and let $e\neq 2$ be the exponential characteristic of its residue field. Then we have $\underline{\pi}_{i,j}(\MSL)[\einv]=0$ for $i+1<j$. In other words, $\MSL[\einv]\in\SH(k)_{h\geq{-1}}$ in terms of \cite[\S 4]{SpiMGL}.
\end{proposition}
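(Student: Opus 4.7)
The plan is to adapt Spitzweck's connectivity proof for $\MGL[1/e]$ to the special linear case. The starting observation is that by Spitzweck's theorem the motivic sphere satisfies $\sph[1/e]\in\SH(k)_{h\geq -1}$, and since $\SH(k)_{h\geq -1}$ is closed under colimits and the $(2k,k)$-suspension with $k\geq 0$ preserves $(-1)$-connectivity, every shifted sphere $\Sigma^{2k,k}\sph[1/e]$ with $k\geq 0$ lies in this subcategory. It therefore suffices to exhibit $\MSL$ as a motivic colimit of such cells.

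To do this, I would write $\MSL$ as a sequential colimit of shifted Thom spectra $\Sigma^{-2n,-n}\Th(\mathcal{L}_n)$ over the $\SL$-Grassmanian approximations $\mathrm{B}\SL_n$, where $\mathcal{L}_n$ is the tautological rank-$n$ $\SL$-oriented bundle. The Schubert-cell decomposition of the $\SL$-Grassmanians equips $(\mathrm{B}\SL_n)_+$ with a cellular presentation whose cells are of the form $\Sigma^{2k,k}\sph$ for $k\geq 0$; the Thom isomorphism shifts these by $(2n,n)$, and the compensating $(-2n,-n)$-desuspension returns them to the range $k\geq 0$. Passing to the colimit yields the desired presentation of $\MSL$, whence $\MSL[1/e]\in\SH(k)_{h\geq -1}$.

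The main obstacle is verifying the Schubert cellularity of $\SL$-Grassmanians over a general Dedekind base; this is analogous to the case of ordinary Grassmanians but requires careful setup. As an alternative route, I would combine the cofiber sequences of Theorem \ref{theorem_A} with the $\eta$-periodic connectivity coming from \cite{BHop, BacDVR}: the long exact sequence of cofiber sequence (2) together with Spitzweck's theorem produces $\MW[1/e]\in\SH(k)_{h\geq -1}$, and iterated $\eta$-multiplication via cofiber sequence (1) then forces $\underline{\pi}_{i,j}(\MSL)[1/e]\simeq\underline{\pi}_{i,j}(\MSL[\eta^{-1}])[1/e]=0$ in the range $i+2<j$, with the boundary case $j=i+2$ requiring the cellular input to close the argument.
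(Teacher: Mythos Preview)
Your alternative route is essentially the paper's argument, but you are feeding it a weaker input than the paper does, and that is exactly why you get stuck at the boundary $j=i+2$. The paper does not merely deduce $\MW[1/e]\in\SH(k)_{h\geq -1}$ from cofiber sequence~(2); it invokes the sheaf version of Proposition~\ref{prop:vanishing}, which gives the sharper vanishing $\underline{\pi}_{i,j}(\MW)[1/e]=0$ for $i<j$ (not only for $i+1<j$). With that extra degree, the long exact sequence for cofiber sequence~(1) shows that $\eta\colon\underline{\pi}_{i,j}(\MSL)[1/e]\to\underline{\pi}_{i+1,j+1}(\MSL)[1/e]$ is an isomorphism for all $i+1<j$, so $\underline{\pi}_{i,j}(\MSL)[1/e]\simeq\underline{\pi}_{i-j}(\MSL[\eta^{-1}])[1/e]$ in the full range, and the $\eta$-periodic computation of \cite{BHop,BacDVR} finishes. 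No separate treatment of $j=i+2$ is needed, and no cellular input enters.

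Your primary cellularity approach is genuinely different from the paper's, but as written it has gaps. First, $\sph[1/e]\in\SH(k)_{h\geq -1}$ is immediate from the definition of the $t$-structure; it is not Spitzweck's theorem, which concerns $\MGL$. Second, the special linear Grassmannians $\mathrm{SGr}(n,N)$ are $\Gm$-torsors over ordinary Grassmannians and are \emph{not} cellular with cells purely of the form $\Sigma^{2k,k}\sph$; one picks up cells of type $\Sigma^{2k+1,k+1}\sph$ as well. These still lie in $\SH(k)_{h\geq 0}$, so a corrected version of the argument would plausibly yield the stronger conclusion $\MSL[1/e]\in\SH(k)_{h\geq 0}$ that the paper conjectures in the remark following the proposition. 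However, you would need to justify carefully that $\Sigma^{-2n,-n}\Th_X(E)$ lies in $\SH(k)_{h\geq 0}$ for a rank-$n$ bundle $E$ over a smooth $k$-scheme $X$ when $k$ is a DVR; the ``Thom isomorphism shifts cells by $(2n,n)$'' step is not automatic without an orientation and requires an argument (e.g.\ Zariski descent on trivializing opens).
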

\begin{proof}
    We implicitly invert $e$ below. The homotopy sheaves versions of the exact sequence \eqref{equation:cofib} and Proposition \ref{prop:vanishing} say that if $i+1<j$ then the canonical map $\MSL\to \MSL[\eta^{-1}]$ induces an isomorphism $\underline{\pi}_{i,j}(\MSL)\xrightarrow{\simeq} \underline{\pi}_{i-j}(\MSL[\eta^{-1}])$. The right hand side vanishes in the negative degrees by \cite[Proposition 5.6(2)]{BacDVR}.
\end{proof}
\begin{remark}
    It seems that the exact bound should be $\MSL[\einv]\in \SH(k)_{h\geq0}$. However, it is unclear that $\underline{\pi}_{*,*+1}(\MSL)[\einv]$ is trivial. Consider the exact sequence (we omit inverting of $e$ below) \[\underline{\pi}_{n+2,n+2}(\MSL)\xrightarrow{\forg_*}\underline{\pi}_{n+2,n+2}(\MW)\xrightarrow{d_*}\underline{\pi}_{n,n+1}(\MSL)\xrightarrow{\eta}\underline{\pi}_{n+1,n+2}(\MSL)\to0.\] If we are able to prove, that the last map is a monomorphism for every $n$, then the result follows from the triviality of $\underline{\pi}_{-1}(\MSL[\eta^{-1}])$. For that, we need to prove that the first map in the exact sequence is surjective, which is clear only for $n\geq -1$.
\end{remark}
\subsection{Pontryagin characteristic numbers}\label{subsection:pontryagin_char}
Recall that for any $n\in \NN$ there are natural homomorphisms $\GL_n\to \Sp_{2n}$, $M\mapsto \mathrm{diag}(M,(M^{-1})^t)$ (see e.g., \cite[\S 5.2]{HW19}). These morphisms are compatible with stabilization and induce a symplectification morphism on the stable classifying spaces $\mathrm{B}\GL\to \mathrm{B}\Sp$. Composing this arrow with the canonical map $\mathrm{B}\SL\to \mathrm{B}\GL$, we obtain a symplectification morphism $\mathrm{B}\SL\to \mathrm{B}\Sp$.
\begin{definition}
    Let $\EE$ be an $\SL$-oriented homotopy commutative ring spectrum. The \textit{Pontryagin class} $p_n$ is the image of the Borel class $b_n\in \EE^{4n,2n}(\mathrm{B}\Sp)$ (see \cite[Definition 14.1]{PW19} and \cite[Theorem 9.1]{PW22}) under $\EE^{4n,2n}(\mathrm{B}\Sp)\to\EE^{4n,2n}(\mathrm{B}\SL).$ More generally, for a partition $\omega=(\omega_1,\omega_2,\dots,\omega_k)$ define a characteristic class $p_\omega\in \EE^{4|\omega|,2|\omega|}(\mathrm{B}\SL)$ as the product $p_{\omega_1}\dots p_{\omega_k}$, where $|\omega|=\omega_1+\dots+\omega_k$.
\end{definition}
\begin{definition}\label{def:char_numbers}
    Let $\EE\in \SH(k)$ be an $\SL$-oriented homotopy commutative ring spectrum. For a partition $\omega$ the \textit{Pontryagin characteristic number} of $\alpha\in \pi_{i,j}(\MSL)$ is the Kronecker pairing $$\la{p_{\omega},\eta_R(\alpha)\cap \thc\ra}\in \pi_{{i-4|\omega|,j-2|\omega|}}(\EE).$$ Here cap product with $\thc$ is the Thom isomorphism $\EE\wedge\MSL\xrightarrow{\simeq} \EE\wedge\Sigma^\infty_+\mathrm{B}\SL$ (see Lemma \ref{lemma:thom_iso_rev}), and $\eta_R$ is the right unit map $\MSL=\sph\wedge\MSL\to \EE\wedge\MSL$. Diagrammatically, it is given by \[\Sigma^{i,j}\sph\xrightarrow{\alpha} \MSL\xrightarrow{\eta_R} \EE\wedge\MSL\xrightarrow{\simeq} \EE\wedge\Sigma^\infty_+\mathrm{B}\SL\xrightarrow{\id\wedge\,p_{\omega}}\EE\wedge \Sigma^{4|\omega|,2|\omega|}\EE\xrightarrow{m_\EE}\Sigma^{4|\omega|,2|\omega|}\EE.\] This construction defines a homomorphism of groups $p_\omega\colon\pi_{i,j}(\MSL)\to \pi_{i-4|\omega|,j-2|\omega|}(\EE)$ for $i,j\in \Z$.
\end{definition}

Now, let $\EE$ be the hermitian $\K$-theory spectrum $\KQ$ or the Witt spectrum $\KW$, and suppose that $\omega$ is an even partition, i.e., $|\omega|=2n$ (see Appendix \ref{appendix_A} for a recollection on the hermitian $\K$-theory). Since these spectra are $(8,4)$-periodic, we have the homomorphisms $p_{\omega}\colon\pi_{i,j}(\MSL)\to \pi_{i,j}(\EE)$ for $i,j\in \Z$. Equivalently, we can first shift the corresponding characteristic class $p_{\omega}\in \EE^{0,0}(\mathrm{B}\SL)$ and then repeat the above definition for it.
\begin{lemma}
    Let $k$ be a local Dedekind domain with $1/2\in k$. Then the map $\eta_R\colon\pi_{*}(\MSL[\eta^{-1}])\to \KW_*(\MSL)$ is injective.
\end{lemma}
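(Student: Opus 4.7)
The plan is to combine the $\eta$-periodicity of $\KW$ with the construction of Pontryagin characteristic numbers. Since $\KW$ is $\eta$-periodic (Appendix \ref{appendix_A}), the smash product $\KW\wedge(-)$ inverts multiplication by $\eta$, so the Hurewicz map $h_\KW\colon\MSL\to\KW\wedge\MSL$ factors canonically as
\[\MSL\to\MSL[\eta^{-1}]\xrightarrow{\widetilde{h}_\KW}\KW\wedge\MSL.\]
The lemma is thus the statement that $\widetilde{h}_\KW$ is injective on $\pi_*$. Moreover, by Definition \ref{def:char_numbers}, every Pontryagin number $p_\omega$ is a Kronecker pairing of $p_\omega$ with $h_\KW(\alpha)\cap\thc$, hence factors through $\widetilde{h}_\KW$. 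It therefore suffices to show that the collection of all $p_\omega$ separates elements of $\pi_*(\MSL[\eta^{-1}])$.

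Using the identification $\pi_*(\MSL[\eta^{-1}])\cong\W(k)[y_4,y_8,\ldots]$ from Corollary \ref{cor:modulo_eta_tors}, the group in each ungraded degree $4n$ is free of rank $p(n)$ over $\W(k)$. I would then exhibit, for each $n$, a family of $p(n)$ even partitions whose Pontryagin numbers, evaluated on a polynomial basis of $\pi_{4n}(\MSL[\eta^{-1}])$, produce a $p(n)\times p(n)$ matrix with invertible determinant in $\W(k)$. This is a motivic, $\SL$-oriented analogue of the classical Milnor--Novikov--Anderson--Brown--Peterson separation statement, which detects elements of $\mathrm{MSO}_*$ (up to $2$-torsion) via $\KO$-Pontryagin numbers.

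The main obstacle is establishing this invertibility over an arbitrary local Dedekind domain $k$ with $1/2\in k$. My plan is a rigidity/base-change argument in the spirit of Lemma \ref{lemma:boundary_map}: Pontryagin classes are compatible with pullbacks of $\SL$-oriented ring spectra and with Betti realization, so the invertibility can be tested after reducing to $k=\C$ (or $k=\R$), where $\KW$ realizes to an $\eta$-periodized topological real $\K$-theory spectrum and the computation becomes the classical one recorded in Appendix \ref{appendix_B}. Fields of positive characteristic should then be handled by base change through an appropriate mixed-characteristic discrete valuation ring, exactly as in the proof of Lemma \ref{lemma:boundary_map}. A subsidiary delicacy is that the polynomial generators $y_{4j}$ are not canonical; I expect to handle this by observing that the invertibility of the Pontryagin matrix is intrinsic to the pairing and independent of the chosen $\W(k)$-basis, so that a convenient choice (e.g., classes of specific smooth projective varieties, compatible with Betti realization) suffices.
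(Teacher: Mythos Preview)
Your strategy reverses the paper's logical order: the paper first proves this lemma by a short reduction to results of Bachmann--Hopkins and Bachmann, and only afterwards deduces that the Pontryagin numbers $p_\omega$ detect $\pi_*(\MSL[\eta^{-1}])$ (Proposition \ref{prop:pontr_control_eta-period}) via Kronecker duality. Reversing this is not illegitimate in principle, but the direct detection argument you sketch has a genuine gap.

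The problem is the base-change step. The entries of your Pontryagin matrix lie in $\pi_0(\KW)\cong\W(k)$, and you propose to verify invertibility of its determinant after specializing to $k=\C$. But the map $\W(k)\to\W(\C)=\Z/2$ is the rank-mod-$2$ homomorphism, which does not reflect units or even non-zero-divisors: for instance $3\in\W(\R)=\Z$ maps to the unit $1\in\Z/2$ yet is not a unit in $\Z$. The analogy with Lemma \ref{lemma:boundary_map} fails because there the question was the \emph{equality} of two specific elements in a group that is itself stable under base change, whereas here you need a structural property (injectivity of a $\W(k)$-linear map) that genuinely depends on the arithmetic of $\W(k)$. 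The same objection applies to routing through a mixed-characteristic DVR: you would still be comparing Witt rings along maps that are typically neither injective nor unit-reflecting.

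The paper's argument avoids this entirely. It uses the classical fact that $\W(k)\hookrightarrow\W(k)_{(2)}$ is injective (Scharlau), so that the freeness of $\pi_*(\MSL[\eta^{-1}])\cong\W(k)[y_4,y_8,\ldots]$ over $\W(k)$ makes the $2$-localization map on $\pi_*(\MSL[\eta^{-1}])$ injective. One then cites the proofs of \cite[Theorem 8.8]{BHop} and \cite[Proposition 5.6(2)]{BacDVR} for injectivity of the Hurewicz map into $\mathrm{kw}_*(\MSL)$ after $2$-localization, and \cite[Theorem 4.1(2)]{BHop} for the passage from $\mathrm{kw}$ to $\KW$. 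No explicit matrix computation or rigidity argument is needed.
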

\begin{proof}
    Consider the following commutative diagram \[\xymatrix{\pi_{*}(\MSL[\eta^{-1}]) \ar[r] \ar[d] & \mathrm{kw}_*(\MSL) \ar[r] \ar[d] & \KW_*(\MSL) \\ \pi_{*}(\MSL[\eta^{-1}])_{(2)} \ar[r] & \mathrm{kw}_*(\MSL)_{(2)}, } \] where $\mathrm{kw}=\KW_{\geq 0}$. First note that the localization $\W(k)\hookrightarrow \W(k)_{(2)}$ is injective; see \cite[Chapter VI, Theorem 2.2 and Chapter II, Theorem 6.4(i)]{Sch}. Therefore, the left vertical arrow is injective by $\pi_*(\MSL[\eta^{-1}])\cong\W(k)[y_4,y_8,\dots]$. On the other hand, the lower horizontal map is also injective; see the proofs of \cite[Theorem 8.8]{BHop} and \cite[Proposition 5.6(2)]{BacDVR}. Thus, the left upper horizontal arrow is injective. The second upper horizontal morphism is injective by \cite[Theorem 4.1(2)]{BHop}.
\end{proof}
Since the unit $\MSL\to \MSL\wedge\KW$ factors through $\MSL[\eta^{-1}]$, the homomorphism $p_\omega$ is given by the composition \[\pi_{i,j}(\MSL)\to \pi_{i-j}(\MSL[\eta^{-1}])\to \pi_{i-j}(\KW).\] We denote the second map by $p_{\omega}[\eta^{-1}]$. Now we reformulate the previous lemma in terms of the $\KW$-characteristic numbers.
\begin{proposition}\label{prop:pontr_control_eta-period}
    Let $k$ be as above. Then the homotopy groups of $\MSL[\eta^{-1}]$ are determined by the characteristic numbers $p_\omega[\eta^{-1}]$, where $\omega$ runs through the partitions of the form $\omega=(2\omega_1,\dots,2\omega_m)$, i.e., the following homomorphisms are injective $$(p_\omega)\colon\pi_{n}(\MSL[\eta^{-1}])\to \prod_{\omega=(2\omega_1,\dots,2\omega_m)}\pi_n(\KW).$$
\end{proposition}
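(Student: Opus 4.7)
The plan is to combine the injectivity of the $\KW$-Hurewicz map $h_{\KW}$ from the preceding lemma with a cohomological decomposition of $\KW_*(\MSL)$ via Pontryagin classes. Unwinding Definition \ref{def:char_numbers}, the characteristic number $p_\omega[\eta^{-1}]$ factors as
\[
\pi_*(\MSL[\eta^{-1}]) \xrightarrow{h_{\KW}} \KW_*(\MSL) \xrightarrow{\simeq} \KW_*(\mathrm{B}\SL) \xrightarrow{\la p_\omega,-\ra} \pi_*(\KW),
\]
where the middle arrow is the Thom isomorphism of Lemma \ref{lemma:thom_iso_rev} for the $\SL$-oriented spectrum $\KW$. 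Since $h_{\KW}$ is already injective, it suffices to prove that the family $\{p_\omega\}$ with $\omega$ ranging over partitions with only even parts is separating on $\KW_*(\mathrm{B}\SL)$.

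For this I would identify $\KW^{*,*}(\mathrm{B}\SL)$ as a completed polynomial ring over $\KW^{*,*}(k)$ in the even-indexed Pontryagin classes. Via the symplectification $\mathrm{B}\SL \to \mathrm{B}\Sp$ and the Panin--Walter formula $\KQ^{*,*}(\mathrm{B}\Sp) \simeq \KQ^{*,*}(k)[[b_1, b_2, \ldots]]$ in terms of Borel classes, inversion of $\eta$ yields the analogous description of $\KW^{*,*}(\mathrm{B}\Sp)$, and the Pontryagin classes $p_i \in \KW^{*,*}(\mathrm{B}\SL)$ are the pullbacks of the Borel classes $b_i$. After $\eta$-inversion the $\SL$-reduction should force the odd-indexed Pontryagin classes to be redundant, so that $\KW^{*,*}(\mathrm{B}\SL) \simeq \KW^{*,*}(k)[[p_2, p_4, p_6, \ldots]]$. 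Dualising, a topological $\KW^{*,*}(k)$-basis of $\KW_{*,*}(\mathrm{B}\SL)$ is paired perfectly against $\{p_\omega\}_{\omega\ \text{even}}$ under the Kronecker pairing, which gives the required separation. As a dimensional sanity check, the $\W(k)$-rank of $\pi_{4n}(\MSL[\eta^{-1}]) \simeq \W(k)[y_4, y_8, \ldots]$ in degree $4n$ equals $p(n)$, which is precisely the number of partitions of $2n$ with only even parts, so the relevant evaluation matrix is square.

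The main obstacle is the cohomological computation above, specifically showing that the odd-indexed Pontryagin classes become decomposable in $\KW^{*,*}(\mathrm{B}\SL)$ after $\eta$-inversion. I expect this to follow either from an explicit relation forced by the $\SL$-reduction combined with $\eta$-periodicity, or by transport from the classical topological analogue along the Betti realization together with the rigidity arguments already employed in this paper. Everything else is formal bookkeeping.
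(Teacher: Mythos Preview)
Your approach is essentially the same as the paper's: factor through the injective Hurewicz map $h_{\KW}$, apply the Thom isomorphism, and then separate via the Kronecker pairing with the even-indexed Pontryagin monomials. The only difference is that what you flag as the ``main obstacle''---the identification $\KW^*(\mathrm{B}\SL) \simeq \KW^*(k)[[p_2, p_4, \ldots]]$---is already established in the literature as \cite[Theorem 10]{Ana15} (with a different indexing convention for the $p_i$), so no new argument via decomposability of odd $p_i$ or Betti transport is needed; the paper also invokes \cite[Theorem 4.1(2)]{BHop} to exhibit the dual homology basis $e^\omega$ explicitly.
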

\begin{proof}
    Consider $\alpha\in \pi_*(\MSL[\eta^{-1}])$. Since $\KW$ is $\eta$-periodic and $\SL$-oriented there are unique elements $\alpha_{\omega'}\in \pi_*(\KW)$ such that $\eta_R(\alpha)\cap \thc=\sum_{\omega'} \alpha_{\omega'} e^{\omega'}$ (see \cite[Theorem 4.1(2)]{BHop}), where the sum is taken over the partitions of the form $\omega=(2\omega_1,\dots,2\omega_m)$ and $e^{\omega}=\prod_{i=1}^m e_{2\omega_i}$. We have \[p_\omega[\eta^{-1}](\alpha)=\sum_{\omega'} \alpha_{\omega'} \la{p_\omega,e^{\omega'}}\ra\in \pi_*(\KW).\] Now assume that $p_\omega[\eta^{-1}](\alpha)=0$ for all $\omega$ of the form $(2\omega_1,\dots,2\omega_m)$. It follows that the Kronecker pairing of $\eta_R(\alpha)\cap \thc$ with an arbitrary element of $\KW^*(\mathrm{B}\SL)\cong \KW^*(k)[\![p_2,p_4,\dots]\!]$ is trivial; see \cite[Theorem 10]{Ana15} for this isomorphism (we stress that in \textit{loc. cit.} the author uses a different convention for the Pontryagin classes). By duality, for every partition $\omega'$ there exists a power series in Pontryagin classes that is dual to the generator $e^{\omega'}$. Hence, all coefficients $\alpha_{\omega'}$ are trivial and $\eta_R(\alpha)=0$. 
\end{proof}
\begin{corollary}\label{cor:pontryagin_8n+1}
    Let $k$ be as above and let $e\neq 2$ be the exponential characteristic of the residue field of $k$. Then the homotopy groups $\pi_{8n+1,4n+1}(\MSL)[\einv]$ are determined by the $\KQ$-characteristic numbers $p_\omega[\einv]$ (in the same sense as in the previous proposition), where $\omega$ runs through the partitions of the form $\omega=(2\omega_1,\dots,2\omega_m)$.
\end{corollary}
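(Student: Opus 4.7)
The plan is to combine the $\eta$-periodicity of Theorem \ref{theorem:eta_stab} with Proposition \ref{prop:pontr_control_eta-period} and the comparison between $\KQ$-characteristic numbers and $\KW$-characteristic numbers induced by the canonical morphism $\KQ\to\KW=\KQ[\eta^{-1}]$.

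Writing the bidegree $(8n+1,4n+1)$ as $(2N+M,N+M)$ with $N=4n$ and $M=1$, Theorem \ref{theorem:eta_stab} shows that multiplication by $\eta$ is an isomorphism starting from $\pi_{8n+1,4n+1}(\MSL)[1/e]$. Iterating, the canonical localization map
\[
\pi_{8n+1,4n+1}(\MSL)[1/e]\xrightarrow{\simeq}\pi_{4n}(\MSL[\eta^{-1}])[1/e]
\]
is an isomorphism; denote by $\bar\alpha$ the image of a class $\alpha$ under it.

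Next, the map $\KQ\to\KW$ is a morphism of $\SL$-oriented homotopy commutative ring spectra which preserves the Pontryagin classes, since both pull back from the universal Borel classes on $\mathrm{B}\Sp$. Unwinding Definition \ref{def:char_numbers} together with the factorization of the $\KW$-characteristic numbers through $\MSL[\eta^{-1}]$ recorded before Proposition \ref{prop:pontr_control_eta-period}, one checks that the induced composition
\[
\pi_{8n+1,4n+1}(\KQ)[1/e]\to\pi_{8n+1,4n+1}(\KW)[1/e]\simeq\pi_{4n}(\KW)[1/e]
\]
sends the $\KQ$-characteristic number $p_\omega(\alpha)$ to the $\KW$-characteristic number $p_\omega[\eta^{-1}](\bar\alpha)$, where the second identification uses the $(1,1)$-periodicity of $\KW$ afforded by multiplication by the appropriate power of $\eta$.

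Putting these pieces together: if $p_\omega(\alpha)=0$ in $\pi_{8n+1,4n+1}(\KQ)[1/e]$ for every even partition $\omega=(2\omega_1,\dots,2\omega_m)$, then by the compatibility above $p_\omega[\eta^{-1}](\bar\alpha)=0$ for every even $\omega$, so Proposition \ref{prop:pontr_control_eta-period} forces $\bar\alpha=0$, and then the isomorphism of the first step yields $\alpha=0$. The only nontrivial point is the commutativity of the square expressing the compatibility of $\KQ$- and $\KW$-characteristic numbers under $\eta$-inversion; this is the main (and only) obstacle, but it is essentially a diagram chase using the naturality of the generalized Hurewicz map and the fact that $\KQ\to\KW$ is a map of $\SL$-oriented ring spectra, so no new computation beyond Subsection \ref{subsection:pontryagin_char} is required.
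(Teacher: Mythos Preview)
Your proof is correct and follows essentially the same route as the paper: both set up the commutative square
\[
\xymatrix{
\pi_{8n+1,4n+1}(\MSL)[1/e] \ar[r]^-{p_\omega} \ar[d]_-\simeq & \pi_{8n+1,4n+1}(\KQ)[1/e] \ar[d] \\
\pi_{4n}(\MSL[\eta^{-1}])[1/e] \ar[r]^-{p_\omega[\eta^{-1}]} & \pi_{4n}(\KW)[1/e],
}
\]
use Theorem~\ref{theorem:eta_stab} for the left isomorphism, and conclude via Proposition~\ref{prop:pontr_control_eta-period}. The only cosmetic difference is that the paper observes the right vertical map is itself an isomorphism (via the Wood cofiber sequence and Lemma~\ref{appendix_lemma_witt}), whereas you only use that it carries $p_\omega(\alpha)$ to $p_\omega[\eta^{-1}](\bar\alpha)$; your weaker observation already suffices for the injectivity statement being proved.
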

\begin{proof}
    Consider the following commutative diagram \[\xymatrix{\pi_{8n+1,4n+1}(\MSL) \ar[r]^{p_\omega} \ar[d] & \pi_{8n+1,4n+1}(\KQ) \ar[d]^\simeq \\ \pi_{4n}(\MSL[\eta^{-1}]) \ar[r]^{p_\omega[\eta^{-1}]} & \pi_{4n}(\KW).}\] The right vertical map is an isomorphism by the Wood cofiber sequence \eqref{wood_cofib} (see Lemma \ref{appendix_lemma_witt}) and the left vertical arrow becomes an isomorphism after inverting $e$ by Theorem \ref{theorem:eta_stab}. The claim follows from the previous proposition.
\end{proof}
\section{Additive structure and pullback description}\label{section-6}
Suppose that $k$ is a local Dedekind domain, $e$ is the exponential characteristic of the closed fiber, and $n$ is an integer. Then the exact sequence \eqref{equation:cofib} induces a short exact sequence \begin{equation}\label{equation:cofib_cut} 0\to \eta\cdot\pi_{2n-1,n-1}(\MSL)\to \pi_{2n,n}(\MSL)\to \Ker(\pi_{2n,n}(d))\to 0.\end{equation} This section is devoted to the analysis of this extension. In the first subsection, we compute the left term using Pontryagin numbers (see Proposition \ref{prop:trickysubgroup_general}) and deduce immediate consequences about the torsion subgroup of $\pi_{2*,*}(\MSL)$. Then we investigate the right term (see Proposition \ref{prop:ker_of_d}) and compute the image of $\pi_{2*,*}(\MSL)[\einv]\to \pi_{2*,*}(\MGL)[\einv]$. Finally, in the last subsection we prove Theorem \ref{theorem:D}.
\subsection{The first term and the torsion subgroup}
\begin{lemma}\label{lemma:derived_exact}
     There is an exact sequence of abelian groups \[\cdots\to \Hml_{i+2,j+1}(\MW,\delta)\to\eta\cdot\pi_{i-1,j-1}(\MSL)\xrightarrow{\eta}\eta\cdot\pi_{i,j}(\MSL)\to \Hml_{i,j}(\MW,\delta)\to\cdots,\] where $\eta\cdot\pi_{i,j}(\MSL)$ denotes the image of the multiplication by the motivic Hopf element \[\mathrm{Im}(\eta\colon\pi_{i,j}(\MSL)\to\pi_{i+1,j+1}(\MSL)).\]
\end{lemma}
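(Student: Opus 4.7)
The plan is to derive the sequence by combining the long exact sequence \ref{equation:cofib} associated with the cofiber sequence from Corollary \ref{cor:cofiber} with the identity $\delta_*=c_*\circ d_*$, which follows immediately from $\delta=\Sigma^{2,1}c\circ d$. Conceptually this is the standard passage to the derived exact couple from the $\eta$-Bockstein exact couple
\[\begin{tikzcd}
\pi_{*,*}(\MSL) \arrow{rr}{\eta} & & \pi_{*,*}(\MSL) \arrow{dl}{c_*} \\
& \pi_{*,*}(\MW), \arrow[swap]{ul}{d_*} &
\end{tikzcd}\]
whose $E_1$-page is precisely the chain complex $(\pi_{*,*}(\MW),\delta_*)$ computing $\Hml_{*,*}(\MW,\delta)$.

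To unpack it concretely, let me write $I_{i,j}:=\eta\cdot\pi_{i-1,j-1}(\MSL)$ and $K_{i,j}:={}_{\eta}\pi_{i,j}(\MSL)$. From \ref{equation:cofib} one reads off the short exact sequence
\[0\to\pi_{i,j}(\MSL)/I_{i,j}\xrightarrow{c_*}\pi_{i,j}(\MW)\xrightarrow{d_*}K_{i-2,j-1}\to 0.\]
Combining this with $\delta_*=c_*\circ d_*$, I would compute $\Bnd_{i,j}(\MW,\delta)=c_*(K_{i,j})$ and $\Cyc_{i,j}(\MW,\delta)=d_*^{-1}(I_{i-2,j-1})$, and then deduce by a direct diagram chase the short exact sequence
\[0\to\pi_{i,j}(\MSL)/(I_{i,j}+K_{i,j})\to\Hml_{i,j}(\MW,\delta)\to I_{i-2,j-1}\cap K_{i-2,j-1}\to 0.\]
Next I would identify the outer terms: the left-hand group is canonically isomorphic to $\mathrm{Coker}(\eta\colon I_{i,j}\to I_{i+1,j+1})$ via the isomorphism $\pi_{i,j}(\MSL)/K_{i,j}\xrightarrow[\simeq]{\eta} I_{i+1,j+1}$, while the right-hand group is by definition $\Ker(\eta\colon I_{i-2,j-1}\to I_{i-1,j})$.

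Splicing these short exact sequences for varying $(i,j)$ yields the desired long exact sequence. The connecting arrows are described concretely as follows. The map $\Hml_{i+2,j+1}(\MW,\delta)\to I_{i,j}$ is induced by restricting $d_*$ to $\Cyc_{i+2,j+1}$, which by definition maps into $I_{i,j}$ and kills $\Bnd_{i+2,j+1}=c_*(K_{i+2,j+1})$ because $d_*\circ c_*=0$. The map $I_{i+1,j+1}\to\Hml_{i,j}(\MW,\delta)$ sends $\eta y$ (with $y\in\pi_{i,j}(\MSL)$) to the class $[c_*(y)]$, which is well-defined because the indeterminacy of $y$ lies in $K_{i,j}$ and $c_*(K_{i,j})=\Bnd_{i,j}(\MW,\delta)$, and which is automatically a cycle since $\delta_*\circ c_*=c_*\circ d_*\circ c_*=0$.

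The entire argument is an elementary diagram chase; no serious obstacle arises. The only genuine subtlety is bookkeeping of the bidegrees (shifts by $(1,1)$ under $\eta$ and by $(-2,-1)$ under $d_*$ and under $\delta_*$) to confirm that the indices in the resulting sequence align with those stated in the lemma.
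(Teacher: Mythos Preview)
Your proof is correct and is exactly the approach the paper takes: the paper's proof is the single sentence ``This exact sequence is the derived couple of the exact couple \ref{equation:cofib},'' and you have simply unpacked that derived-couple construction in detail. Your bookkeeping of bidegrees and your descriptions of the maps $j'$ and $k'$ match the standard derived-couple formulas, so nothing is missing.
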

\begin{proof}
    This exact sequence is the derived couple of the exact couple \eqref{equation:cofib}.
\end{proof}
\begin{lemma}\label{lemma:trickysubgroup_quad_closed}
    Let $L$ be a quadratically closed field of exponential characteristic $e\neq 2$. Then the following holds \begin{equation*} \eta\cdot\pi_{2n-1,n-1}(\MSL_L)[\einv]\cong\begin{cases}  0, & \text{if}\ n\not\equiv 1\pmod*{4}, \\ (\Z/2)^{p(\frac{n-1}{4})}, & \text{if}\ n\equiv 1\pmod*{4}. \end{cases}
    \end{equation*}
\end{lemma}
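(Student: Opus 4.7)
The plan is to combine the cofiber sequence of Corollary \ref{cor:cofiber} with the identification of $\pi_{2n,n}(\MW_L)[1/e]$ as a torsion-free topological group, use the relation $2\eta=0$ available for quadratically closed fields, and then compute the resulting torsion via $\KQ$-Pontryagin characteristic numbers. Throughout we silently invert $e$.

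\textbf{Step 1 (Extract the torsion subgroup).} Starting from the exact sequence \eqref{equation:cofib} at bidegree $(2n,n)$, we have
\[
\pi_{2n-1,n-1}(\MSL_L)\xrightarrow{\eta}\pi_{2n,n}(\MSL_L)\xrightarrow{c_*}\pi_{2n,n}(\MW_L)\xrightarrow{d_*}\pi_{2n-2,n-1}(\MSL_L).
\]
By Proposition \ref{prop:wall_geom_part}, the third term is isomorphic to $\pi_{2n}(\W)[1/e]$, which is torsion-free since $\W$ sits as the fiber of $\Delta\colon\MU\to\Sigma^4\MU$ (Proposition \ref{appendix:split_cofib}), making $\pi_{2n}(\W)$ a subgroup of the free abelian group $\pi_{2n}(\MU)$. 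Consequently, every torsion element of $\pi_{2n,n}(\MSL_L)[1/e]$ lies in $\ker c_*=\eta\cdot\pi_{2n-1,n-1}(\MSL_L)[1/e]$.

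\textbf{Step 2 ($\eta$-image is $2$-torsion).} For $L$ quadratically closed, we have $\langle -1\rangle=\langle 1\rangle$ in $\GW(L)$, so the hyperbolic form $h=1+\langle -1\rangle$ equals $2\in\GW(L)=\Z$. Morel's relation $\eta\cdot h=0$ in $\pi_{1,1}(\sph)$ then yields $2\eta=0\in\pi_{1,1}(\sph_L)$. In particular, $\eta\cdot\pi_{2n-1,n-1}(\MSL_L)[1/e]$ is annihilated by $2$. Combined with Step 1, this identifies $\eta\cdot\pi_{2n-1,n-1}(\MSL_L)[1/e]$ with the full torsion subgroup of $\pi_{2n,n}(\MSL_L)[1/e]$, and moreover this torsion is $2$-torsion.

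\textbf{Step 3 (Compute the torsion via Pontryagin numbers).} It remains to calculate the $2$-torsion. We employ the $\KQ$-valued Pontryagin characteristic numbers introduced in \S\ref{subsection:pontryagin_char}. Note that for $\alpha=\eta\beta$ with $\beta\in\pi_{2n-1,n-1}(\MSL_L)[1/e]$, naturality of the Hurewicz map yields $p_\omega(\eta\beta)=\eta\cdot p_\omega(\beta)$ in $\pi_{*,*}(\KQ_L)$. For quadratically closed $L$, the Hermitian K-theory $\KQ_L[1/e]$ matches classical real K-theory (via the realization functor on the relevant bidegrees), so one can read off the size of the image from the classical Anderson--Brown--Peterson type statement for $\MSU$ recalled in Appendix \ref{appendix_B}. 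Specifically, one checks that the $\KQ$-Pontryagin numbers of the form $p_\omega$ indexed by even partitions $\omega=(2\omega_1,\ldots,2\omega_m)$ with $|\omega|=2k$ inject the torsion subgroup of $\pi_{2n,n}(\MSL_L)[1/e]$ into the topological $\eta_{\mathrm{top}}\cdot\pi_{2n-1}(\MSU)[1/e]$. A dimension count using the classical computation of $\pi_*(\MSU)$ in degrees $2n$ with $n\equiv 1\pmod 4$ then yields the claimed answer: the torsion is $(\Z/2)^{p((n-1)/4)}$ if $n\equiv 1\pmod 4$ and vanishes otherwise.

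\textbf{Main obstacle.} Steps 1 and 2 are essentially formal. The real difficulty is Step 3: matching the motivic $2$-torsion of $\pi_{2n,n}(\MSL_L)[1/e]$ with the classical $\eta_{\mathrm{top}}$-image in $\pi_{2n}(\MSU)[1/e]$ without circularly invoking the complete Theorem \ref{theorem:complete_as_pullback}. For $L=\C$ one can use the complex Betti realization, but for positive-characteristic quadratically closed fields one must rely on the rigidity of the motivic Thom spectra (Proposition \ref{prop:wall_natural} and its analogue for $\MSL$) together with a zig-zag through a mixed-characteristic discrete valuation ring, in the same spirit as the base-change arguments used in Lemma \ref{lemma:boundary_map}. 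This rigidity for the $\KQ$-Pontryagin pairing is the technical heart of the proof.
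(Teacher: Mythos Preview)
Your Steps 1 and 2 are correct and, as you say, essentially formal. The gap is Step 3, and it is a real one.

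First, the asserted injection of the torsion of $\pi_{2n,n}(\MSL_L)$ into $\etatop\cdot\pi_{2n-1}(\MSU)$ via $\KQ$-Pontryagin numbers is unsubstantiated: Corollary \ref{cor:pontryagin_8n+1} only controls bidegree $(8n+1,4n+1)$, not the full diagonal, and Pontryagin numbers land in $\pi_{*,*}(\KQ)$, not in $\pi_*(\MSU)$. Second, even granting an injection, a ``dimension count'' after an injection yields only an upper bound; you have no mechanism for producing the nonzero torsion classes when $n\equiv 1\pmod 4$. Your ``main obstacle'' paragraph acknowledges this and proposes a rigidity zig-zag, but that would at best transport the answer from $\C$ to other quadratically closed $L$; it does not compute the answer over $\C$ without invoking Theorem \ref{theorem:complete_modulo_imslk}, which logically depends on the present lemma.

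The paper's argument is organized around a different tool you did not use: the derived exact couple of \eqref{equation:cofib} (Lemma \ref{lemma:derived_exact}),
\[
\ldots\to \Hml_{2n+2,n+1}(\MW,\delta)\to\eta\cdot\pi_{2n-1,n-1}(\MSL)\xrightarrow{\eta}\eta\cdot\pi_{2n,n}(\MSL)\to \Hml_{2n,n}(\MW,\delta)\to\ldots
\]
The Conner--Floyd homology $\Hml_{2*,*}(\MW,\delta)$ is already known and rigid (Theorem \ref{theorem:conner_floyd_hom} plus Proposition \ref{prop:homology_top}), and $\eta\cdot\pi_{2n,n}(\MSL)\simeq\pi_n(\MSL[\eta^{-1}])$ is known by Corollary \ref{cor:eta_stab_rev} (equal to $(\Z/2)^{p(n/4)}$ or $0$ since $\W(L)=\Z/2$). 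Plugging these in settles $n\equiv 1,2\pmod 4$ immediately, providing both upper and lower bounds at once. For $n\equiv 0,3\pmod 4$ one obtains a four-term exact sequence whose two middle terms have equal order, so it remains only to show $\eta\colon\eta\cdot\pi_{8m-1,4m-1}\hookrightarrow\eta\cdot\pi_{8m,4m}$ is injective. \emph{This} is the single place where Pontryagin numbers enter: the relation $\eta^2\etatop=12\nu$ in $\pi_{3,2}(\sph)$ and $\nu=0$ in $\pi_{3,2}(\MSL)$ reduce the question to injectivity of $\etatop$ on $\pi_{8m+1,4m+1}(\MSL)$, which follows from Corollary \ref{cor:pontryagin_8n+1} together with Corollary \ref{cor:etatop_iso}. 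No base-change zig-zag is needed, since all inputs are already available over $L$.
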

\begin{proof}
    We implicitly invert $e$ throughout this proof. We have the exact sequence \[\eta\cdot\pi_{2n+2,n+1}(\MSL)\to \Hml_{2n+2,n+1}(\MW,\delta)\to \eta\cdot\pi_{2n-1,n-1}(\MSL)\xrightarrow{\eta} \eta\cdot\pi_{2n,n}(\MSL) \] from the previous lemma. By Corollary \ref{cor:eta_stab_rev} the group $\eta\cdot\pi_{2n,n}(\MSL)$ is trivial if $4$ does not divide $n$, and isomorphic to $(\Z/2)^{p(\frac{n}{4})}$ otherwise. We also know (by Theorem \ref{theorem:conner_floyd_hom} and Proposition \ref{prop:homology_top}) that $\Hml_{2n,n}(\MW,\delta)$ is trivial if $n$ is odd, and isomorphic to $(\Z/2)^{p(k)}$ if $n=4k$ or $n=4k+2$. Combining these computations with the above exact sequence, we obtain the result for $n\equiv 1,2\pmod*{4}$. 
    
    To treat the last two cases, consider the following exact sequence \begin{equation}\label{equation:cofib_hardplace} 0\to\eta\cdot\pi_{8n-1,4n-1}(\MSL)\xrightarrow{\eta} \eta\cdot\pi_{8n,4n}(\MSL)\to \Hml_{8n,4n}(\MW,\delta)\to \eta\cdot\pi_{8n-3,4n-2}(\MSL)\to 0.\end{equation} The groups in the middle have the same order. In particular, if we prove that the left group vanishes, then we obtain the isomorphism in the middle. Hence, it is enough to verify that $\eta^2\cdot\pi_{8n-1,4n-1}(\MSL)=0$. 
    
    First we claim that this group is annihilated by $\etatop$. Indeed, $\eta^2\cdot\etatop=12\cdot\nu$ in $\pi_{3,2}(\sph)$ (see \cite[Remark 5.8]{RSO19}), and $\nu=0$ in $\pi_{3,2}(\MSL)$ (see \cite[Lemma 5.3]{Ana21}), where $\nu$ is the second motivic Hopf element \cite[Definition 4.7]{DI13}. Second, we show that the homomorphism \[\etatop\colon\pi_{8n+1,4n+1}(\MSL)\to \pi_{8n+2,4n+1}(\MSL)\] is injective. For that consider the following commutative diagram \[\xymatrix{\pi_{8n+1,4n+1}(\MSL) \ar[r]^{\etatop} \ar[d]_{(p_\omega)} & \pi_{8n+2,4n+1}(\MSL) \ar[d]^{(p_\omega)} \\ \prod_\omega\pi_{8n+1,4n+1}(\KQ) \ar[r]^{\etatop}_{\simeq} & \prod_\omega\pi_{8n+2,4n+1}(\KQ), }\] where the products are taken along all partitions of the form $\omega=(2\omega_1,\dots,2\omega_m)$. The bottom arrow is an isomorphism by Corollary \ref{cor:etatop_iso} and the left map is injective by Corollary \ref{cor:pontryagin_8n+1}. Thus the top homomorphism is injective. In order to prove the desired vanishing it remains to combine these two facts: according to the first claim, the group $\eta^2\cdot\pi_{8n-1,4n-1}(\MSL)$ lies in the kernel of the multiplication by $\etatop$, and according to the second claim, this kernel is trivial.
\end{proof}
\begin{proposition}\label{prop:trickysubgroup_general}
    Let $k$ be a local Dedekind domain and let $e$ be the exponential characteristic of the residue field. Assume that $e\neq 2$. Then the following holds \begin{equation*} \eta\cdot\pi_{2n-1,n-1}(\MSL)[\einv]\cong\begin{cases} \mathrm{I}(k)[\einv]^{p(\frac{n}{4})}, & \text{if}\ n\equiv 0\pmod*{4}, \\ (\Z/2)^{p(\frac{n-1}{4})}, & \text{if}\ n\equiv 1\pmod*{4}, \\ 0, & \text{if}\ n\equiv 2,3\pmod*{4}. \end{cases} \end{equation*} Moreover, if $f\colon k\to k'$ is a homomorphism of local Dedekind domains in which $2$ is invertible, then the base change maps $\eta\cdot\pi_{2n-1,n-1}(\MSL_k)[\einv]\to \eta\cdot\pi_{2n-1,n-1}(\MSL_{k'})[\einv]$ are isomorphisms for $n\not\equiv 0\pmod*{4}$.
\end{proposition}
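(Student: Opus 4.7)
The plan is to analyze the derived exact sequence of Lemma \ref{lemma:derived_exact} case-by-case modulo $4$, using Theorem \ref{theorem:conner_floyd_hom} to make the Conner--Floyd homology groups rigid under base change, and reducing to the quadratically closed case via Lemma \ref{lemma:trickysubgroup_quad_closed}. The base change of choice will be $k\to k'$, where $k'$ is the quadratic closure of the fraction field of $k$. Throughout $e$ is implicitly inverted. Two common ingredients feed every case: by Corollary \ref{cor:eta_stab_rev}, the group $\eta\cdot\pi_{2n,n}(\MSL)=\pi_{2n+1,n+1}(\MSL)$ equals $\W(k)^{p(n/4)}$ if $4\mid n$ and vanishes otherwise; by Theorem \ref{theorem:conner_floyd_hom} combined with Proposition \ref{prop:homology_top}, the group $\Hml_{2m,m}(\MW,\delta)$ vanishes for $m$ odd and equals $(\Z/2)^{p(\lfloor m/2\rfloor)}$ for $m$ even.

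For $n\equiv 2\pmod 4$, both flanking groups in the derived exact sequence vanish, giving $\eta\cdot\pi_{2n-1,n-1}(\MSL)=0$ immediately. For $n\equiv 1\pmod 4$, the sequence reduces to a surjection $(\Z/2)^{p((n-1)/4)}\twoheadrightarrow\eta\cdot\pi_{2n-1,n-1}(\MSL)$. Proposition \ref{prop:wall_natural} supplies naturality in the base, and Lemma \ref{lemma:trickysubgroup_quad_closed} shows that over $k'$ the analogous surjection is an isomorphism; a diagram chase then forces the original surjection to be an isomorphism and the corresponding base change map to be an isomorphism.

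The cases $n\equiv 0\pmod 4$ and $n\equiv 3\pmod 4$ will be handled simultaneously via the longer derived exact sequence at index $(8k+8,4k+4)$. Eliminating the vanishing terms ($\Hml_{8k+10,4k+5}$ by parity, and $\eta\cdot\pi_{8k+6,4k+3}(\MSL)\subset\pi_{8k+7,4k+4}(\MSL)=0$ by Corollary \ref{cor:eta_stab_rev}) reduces the sequence to a 4-term exact sequence
\[
0\to\eta\cdot\pi_{8k+7,4k+3}(\MSL)\to\W(k)^{p(k+1)}\xrightarrow{\phi_k}(\Z/2)^{p(k+1)}\to\eta\cdot\pi_{8k+5,4k+2}(\MSL)\to 0.
\]
The heart of the proof is to identify $\phi_k$ with the componentwise rank-modulo-$2$ homomorphism. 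Base change to $k'$ produces a commutative square whose left vertical is componentwise rank-modulo-$2$ (since $\W(k')=\Z/2$), whose right vertical is an isomorphism by Theorem \ref{theorem:conner_floyd_hom}, and whose bottom row $\phi_{k'}$ is an isomorphism by Lemma \ref{lemma:trickysubgroup_quad_closed} applied over $k'$. A diagram chase then pins $\phi_k$ down; its kernel gives $\eta\cdot\pi_{8k+7,4k+3}(\MSL)\simeq\mathrm{I}(k)^{p(k+1)}$ (the $n\equiv 0$ case with $n=4(k+1)$), and its cokernel vanishes, yielding $\eta\cdot\pi_{8k+5,4k+2}(\MSL)=0$ (the $n\equiv 3$ case with $n=4k+3$). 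The base change invariance claim for $n\not\equiv 0\pmod 4$ then follows at once, since in these cases the group has been identified with a topological invariant.

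The principal obstacle will be precisely verifying the naturality ingredients feeding the 4-term diagram chase. In particular, I must confirm that the identification $\pi_{8k+9,4k+5}(\MSL)[1/e]\simeq\W(k)[1/e]^{p(k+1)}$ coming from Corollary \ref{cor:modulo_eta_tors} is natural in $k$, so that the left vertical of the base change square really is the componentwise rank-modulo-$2$ map. This will require tracing the $\eta$-periodic computation of Bachmann--Hopkins and Bachmann through Proposition \ref{prop:wall_natural} and checking the expected functoriality of the isomorphism $\pi_*(\MSL[\eta^{-1}])\simeq\W(k)[y_4,y_8,\dots]$ under base change.
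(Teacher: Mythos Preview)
Your proposal is correct and follows essentially the same strategy as the paper: analyze the derived exact sequence of Lemma \ref{lemma:derived_exact} case-by-case, base-change to a quadratically closed field, and invoke the rigidity of $\Hml_{2*,*}(\MW,\delta)$ from Theorem \ref{theorem:conner_floyd_hom} together with Lemma \ref{lemma:trickysubgroup_quad_closed}. One minor simplification worth noting: for $n\equiv 1\pmod 4$ no diagram chase is needed, since the term $\eta\cdot\pi_{2n+2,n+1}(\MSL)$ preceding $\Hml_{2n+2,n+1}(\MW,\delta)$ also vanishes by Corollary \ref{cor:eta_stab_rev} (as $n+1\equiv 2\pmod 4$), so your surjection is already an isomorphism; this is exactly how the paper treats $n\equiv 1,2\pmod 4$, by simply repeating the argument of Lemma \ref{lemma:trickysubgroup_quad_closed}, and it also makes your naturality concern moot in that case.
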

\begin{proof}
    We implicitly invert $e$ below. For $n\equiv 1,2\pmod*{4}$ the argument is absolutely the same as in the previous lemma. To compute the remaining groups, we choose a ring homomorphism $k\to L$, where $L$ is a quadratically closed field (e.g., the algebraic closure of the fraction field). Consider the commutative diagram \[\xymatrix{\W(k)^{p(n)}\cong\eta\cdot\pi_{8n,4n}(\MSL_k) \ar[r] \ar@{->>}[d]_-{\oplus \overline{\mathrm{rk}}} & \Hml_{8n,4n}(\MW_k,\delta) \ar[d]^\simeq \\  (\Z/2)^{p(n)}\cong\eta\cdot\pi_{8n,4n}(\MSL_L) \ar[r]^-\simeq & \Hml_{8n,4n}(\MW_L,\delta). }\] The bottom arrow is an isomorphism by the proof of the previous lemma. It follows that the top map is surjective with kernel $\mathrm{I}(k)^{p(n)}$ and the claim follows from the exact sequence \eqref{equation:cofib_hardplace}. The base change map for $\eta\cdot\pi_{2n-1,n-1}(\MSL)[\einv]$ is an isomorphism for $n\equiv 1\pmod*{4}$ since the above proof identifies the group with $\Hml_{2n+2,n+1}(\MW,\delta)$, and the latter satisfies the property by Theorem \ref{theorem:conner_floyd_hom}.
\end{proof}
As an immediate consequence we obtain information about the torsion subgroup of $\pi_{2*,*}(\MSL)$.
\begin{corollary}\label{cor:2_torsion}
    Let $k$ and $e\neq 2$ be as above. Then the geometric part of the special linear algebraic cobordism $\pi_{2*,*}(\MSL)$ contains no $l$-torsion for primes $l\neq 2,e$. Furthermore, the $2$-primary torsion subgroup of $\pi_{2n,n}(\MSL)$ is given by ${}_{2^\infty}\mathrm{I}(k)^{p(\frac{n}{4})}$ if $n\equiv 0\pmod*{4}$, by $(\Z/2)^{p(\frac{n-1}{4})}$ if $n\equiv 1\pmod*{4}$, and trivial otherwise.
    
    If in addition the fraction field of $k$ has finite virtual $2$-cohomological dimension $\mathrm{vcd}_2(\mathrm{Frac}(k))<\infty$, then the $2$-primary torsion of $\pi_{2*,*}(\MSL)$ is of bounded order.
\end{corollary}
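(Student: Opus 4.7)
The plan is to extract all torsion information from the short exact sequence \ref{equation:cofib_cut}. Proposition \ref{prop:trickysubgroup_general} already determines the left-hand term $\eta\cdot\pi_{2n-1,n-1}(\MSL)[1/e]$, so the first task will be to show that the right-hand term $\Ker(\pi_{2n,n}(d))[1/e]$ is torsion-free. Once this is in place, all torsion of $\pi_{2n,n}(\MSL)[1/e]$ will sit inside $\eta\cdot\pi_{2n-1,n-1}(\MSL)[1/e]$ and the remaining claims will follow by case inspection.

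For the torsion-freeness of the right-hand term I would embed $\Ker(\pi_{2n,n}(d))[1/e]$ into $\pi_{2n,n}(\MW)[1/e]$ and use Proposition \ref{prop:wall_geom_part} to identify the latter with $\pi_{2n}(\W)[1/e]$. On the topological side, the complex Betti realization of cofiber sequence (2) of Theorem \ref{theorem_A} splits by Proposition \ref{appendix:split_cofib}, which realises $\pi_{2*}(\W)$ as the kernel of the surjection $\Delta_*\colon\pi_{2*}(\MU)\twoheadrightarrow\pi_{2*-4}(\MU)$. Hence $\pi_{2*}(\W)$ is a subgroup of the torsion-free ring $\pi_{2*}(\MU)$ and is itself torsion-free, which transfers to $\pi_{2*,*}(\MW)[1/e]$.

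To rule out $l$-torsion for primes $l\neq 2,e$, I would invoke the classical theorem of Pfister that $\W(k)$ has only $2$-primary torsion. In view of the three-case description in Proposition \ref{prop:trickysubgroup_general}, where the left-hand term of \ref{equation:cofib_cut} is either zero, a $\Z/2$-vector space, or a finite product of copies of $\mathrm{I}(k)[1/e]$, this immediately yields the first assertion. The $2$-primary part can then be read off directly from Proposition \ref{prop:trickysubgroup_general} using the identity $({}_{2^\infty}\mathrm{I}(k))^m\simeq {}_{2^\infty}(\mathrm{I}(k)^m)$ applied with $m=p(n/4)$, and noting that the group $(\Z/2)^{p((n-1)/4)}$ is already entirely $2$-torsion.

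For the final boundedness claim, the contribution in degrees $n\equiv 1\pmod 4$ always has exponent at most $2$. For $n\equiv 0\pmod 4$, the $2$-primary exponent of the product $\mathrm{I}(k)[1/e]^{p(n/4)}$ coincides with that of ${}_{2^\infty}\mathrm{I}(k)[1/e]$, so the task reduces to bounding the $2$-primary exponent of $\W(k)$. This is a classical consequence of the hypothesis $\mathrm{cd}_2(\mathrm{Frac}(k)[\sqrt{-1}])<\infty$ via the Milnor conjecture: the $I$-adic filtration terminates in bounded degree, which forces the $2$-torsion of $\W(k)$ to have bounded exponent. I expect this last cohomological-dimension input to be the only subtle step, since it is of a rather different flavor than the purely motivic sequence chasing that drives the rest of the argument; everything else is immediate from the results already established.
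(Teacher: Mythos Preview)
Your proposal is correct and follows essentially the same route as the paper: both arguments use the short exact sequence \ref{equation:cofib_cut}, observe that $\Ker(\pi_{2n,n}(d))[1/e]$ is torsion-free via the embedding into $\pi_{2n,n}(\MW)[1/e]\subset\pi_{2n,n}(\MGL)[1/e]$, and then read off the torsion from Proposition \ref{prop:trickysubgroup_general} together with the classical facts about torsion in Witt rings (the paper packages Pfister's theorem and the $\mathrm{cd}_2$ boundedness as citations to \cite[Lemmas 2.9 and 2.10]{BHop}).
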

\begin{proof}
    Consider a prime $l\neq e$. Then the $l$-primary torsion subgroup of $\pi_{2n,n}(\MSL)$ is equal to the $l$-primary torsion subgroup of $\pi_{2n,n}(\MSL)[\einv]$.  Since the group $\Ker(\pi_{2n,n}(d))[\einv]$ is torsion free (it is a subgroup of $\pi_{2n,n}(\MGL)[\einv]$), it follows that \[{}_{l^\infty}\pi_{2n,n}(\MSL)[\einv]={}_{l^\infty}\eta\cdot\pi_{2n-1,n-1}(\MSL)[\einv].\] It remains to combine the previous proposition with \cite[Lemmas 2.9 and 2.10]{BHop}.
\end{proof}
\begin{corollary}\label{cor:2tors_multiples}
    Let $k$ and $e\neq 2$ be as above. Then the $2$-torsion elements of $\pi_{8n+2,4n+1}(\MSL)[\einv]$ are multiples of $\eta\cdot\etatop=:\eta\etatop\in \pi_{2,1}(\MSL)$. 
\end{corollary}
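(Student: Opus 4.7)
The plan is to use Corollary~\ref{cor:2_torsion} to identify the $2$-primary torsion of $\pi_{8n+2,4n+1}(\MSL)[1/e]$ with $\eta\cdot\pi_{8n+1,4n}(\MSL)[1/e]$, and then exploit the pullback description of Theorem~\ref{theorem:complete_as_pullback} to reduce the desired equality to a classical statement about the $2$-torsion of $\pi_*(\MSU)$. The main obstacle is marshalling the topological input correctly, rather than any new motivic step.

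By Corollary~\ref{cor:2_torsion} the $2$-primary torsion subgroup $T\subseteq\pi_{8n+2,4n+1}(\MSL)[1/e]$ equals $\eta\cdot\pi_{8n+1,4n}(\MSL)[1/e]$, of order $2^{p(n)}$. Since $\etatop\in\pi_{1,0}(\sph)$ has order $2$, the relation $2\eta\etatop=\eta(2\etatop)=0$ forces $\eta\etatop\cdot\pi_{8n,4n}(\MSL)[1/e]\subseteq T$, and the content of the corollary is the reverse inclusion.

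To establish it, apply the pullback of Theorem~\ref{theorem:complete_as_pullback} in bidegree $(8n+2,4n+1)$. Every monomial $y_{4\omega_1}\cdots y_{4\omega_\ell}$ lives in a bidegree of the form $(8m,4m)$, so both bottom corners of the pullback vanish in the bidegrees $(8n+2,4n+1)$ and $(2,1)$. The pullback collapses to isomorphisms $\pi_{8n+2,4n+1}(\MSL)[1/e]\xrightarrow{\simeq}\pi_{8n+2}(\MSU)[1/e]$ and $\pi_{2,1}(\MSL)[1/e]\xrightarrow{\simeq}\pi_{2}(\MSU)[1/e]\simeq\Z/2$. The second isomorphism forces the generator $\eta\etatop$ to be sent to the unique non-trivial element $\etatop^2$. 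Combined with the surjectivity of $\pi_{8n,4n}(\MSL)[1/e]\twoheadrightarrow\pi_{8n}(\MSU)[1/e]$ supplied by the square, the subgroup $\eta\etatop\cdot\pi_{8n,4n}(\MSL)[1/e]$ corresponds to $\etatop^{2}\cdot\pi_{8n}(\MSU)[1/e]$. The desired equality then follows from the classical topological fact, recalled in Appendix~\ref{appendix_B}, that every $2$-primary torsion element of $\pi_{8n+2}(\MSU)$ is of the form $\etatop^{2}\cdot x$ for some $x\in\pi_{8n}(\MSU)$.

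The forward reference to Theorem~\ref{theorem:complete_as_pullback} does not introduce circularity, as that theorem's proof relies on the Pontryagin characteristic numbers of Section~\ref{section-5} and a rigidity argument, neither of which appeals to Corollary~\ref{cor:2tors_multiples}. The hardest part is thus purely bookkeeping: tracking the identification $\eta\etatop\leftrightarrow\etatop^{2}$ and invoking the correct classical statement about the structure of $\pi_{*}(\MSU)$ at the prime $2$.
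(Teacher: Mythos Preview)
Your approach is genuinely different from the paper's and, modulo one point below, it works. The paper argues internally: it shows that $\etatop\colon\eta\cdot\pi_{8n,4n}(\MSL)[1/e]\to\eta\cdot\pi_{8n+1,4n}(\MSL)[1/e]$ is surjective by first treating a quadratically closed field (where the Pontryagin-number argument in the proof of Lemma~\ref{lemma:trickysubgroup_quad_closed} makes it a monomorphism between groups of equal order) and then base-changing. You instead invoke the pullback Theorem~\ref{theorem:complete_as_pullback} to transport the whole question to $\pi_{2*}(\MSU)$ and cite Theorem~\ref{thm:torsion_top}. Your check that no circularity is introduced is correct: the proofs of Theorems~\ref{theorem:complete_modulo_imslk} and~\ref{theorem:complete_as_pullback} rest on Propositions~\ref{prop:trickysubgroup_general}, \ref{prop:ker_of_d}, \ref{thm:complete_answer}, none of which use the present corollary. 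Your route is shorter once those structural results are in hand; the paper's route has the virtue of being self-contained at this point in the exposition.

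There is one genuine gap. You assert that ``the second isomorphism forces the generator $\eta\etatop$ to be sent to the unique non-trivial element $\etatop^2$,'' but the isomorphism $\pi_{2,1}(\MSL)[1/e]\simeq\Z/2$ alone does not tell you that $\eta\etatop$ is nonzero; it could a priori be zero, in which case your argument collapses. You must establish that the ring map $g\colon\pi_{2*,*}(\MSL)[1/e]\to\pi_{2*}(\MSU)[1/e]$ sends $\eta\etatop$ to $\etatop^2$. Two easy fixes: (i) observe that $\eta\etatop$ is nonzero already in $\pi_{2,1}(\KQ)$ by Lemma~\ref{lemma:kq_generators}, hence nonzero in $\pi_{2,1}(\MSL)[1/e]$ since any $\SL$-orientation gives a ring map $\MSL\to\KQ$; or (ii) trace the construction of $g$ in Theorem~\ref{theorem:complete_modulo_imslk}: over $\C$ it is Betti realization, which sends $\eta\mapsto\etatop$ and $\etatop\mapsto\etatop$, and for general $k$ it is obtained from the $\C$-case by base-change isomorphisms that preserve both $\eta$ and $\etatop$. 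Either way you get $g(\eta\etatop)=\etatop^2$, and then your argument goes through.
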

\begin{proof}
    By Theorem \ref{theorem:eta_stab} the map $\eta\colon\pi_{8n,4n}(\MSL)[\einv]\to \pi_{8n+1,4n+1}(\MSL)[\einv]$ is surjective. Hence, it is enough to show that $\etatop\colon\pi_{8n+1,4n+1}(\MSL)[\einv]\to \pi_{8n+2,4n+1}(\MSL)[\einv]$ is surjective onto the $2$-torsion subgroup. By the proof of the previous corollary this subgroup is identified with $\eta\cdot\pi_{8n+1,4n}(\MSL)[\einv]$. Consider the commutative diagram \[\xymatrix{\W(k)[\einv]^{p(n)}\cong\eta\cdot\pi_{8n,4n}(\MSL)[\einv] \ar[r]^-{\etatop} \ar@{=}[d] & \eta\cdot\pi_{8n+1,4n}(\MSL)[\einv]\cong(\Z/2)^{p(n)} \ar@{^{(}->}[d] \\ \pi_{8n+1,4n+1}(\MSL)[\einv] \ar[r]^-{\etatop} & \pi_{8n+2,4n+1}(\MSL)[\einv]. }\] We need to show that the top map is surjective. If $k$ is a quadratically closed field then it is a monomorphism of groups of the same order by the proof of Lemma \ref{lemma:trickysubgroup_quad_closed}. The general case follows by base change.
\end{proof}
\begin{remark}
    In topology the $2$-primary torsion subgroup of $\pi_{2n}(\MSU)$ is nontrivial only if $n$ is congruent to $1$ modulo $4$. In this case all torsion elements are multiples of $\etatop^2$; see Theorem \ref{thm:torsion_top}. The motivic picture is similar if we consider elements that are simultaneously $\eta$-torsion and $2$-torsion, with the difference that one needs to look at the product $\eta\etatop$ of the different Hopf elements.
\end{remark}
\subsection{The remaining part}
Recall, that $d\colon \MW\to \Sigma^{2,1}\MSL$ is the boundary morphism in the cofiber sequence from Corollary \ref{cor:cofiber} and $\pi_{2n,n}(d)$ is the induced map $\pi_{2n,n}(\MW)\to \pi_{2n-2,n-1}(\MSL)$. We continue to analyze the exact sequence \eqref{equation:cofib_cut}. It remains to investigate the groups $\Ker(\pi_{2n,n}(d))\subset \pi_{2n,n}(\MW)$.
\begin{proposition}\label{prop:ker_of_d}
    Let $k$ and $e\neq 2$ be as above. Then the following holds \begin{equation*} \Ker(\pi_{2n,n}(d))[\einv]=\begin{cases} \Cyc_{2n,n}(\MW,\delta)[\einv], & \text{if}\ n\not\equiv 2\pmod*{4}, \\ \Bnd_{2n,n}(\MW,\delta)[\einv], & \text{if}\ n\equiv 2\pmod*{4}. \end{cases} \end{equation*} Moreover, if $f\colon k\to k'$ is a morphism of local Dedekind domains in which $2$ is invertible, then the base change map $\Ker((d_k)_*)[\einv]\to \Ker((d_{k'})_*)[\einv]$ is an isomorphism.
\end{proposition}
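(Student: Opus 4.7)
Proof proposal.

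The inclusions $\Bnd_{2n,n}(\MW,\delta_k)[1/e]\subset \Ker(\pi_{2n,n}(d))[1/e]\subset \Cyc_{2n,n}(\MW,\delta_k)[1/e]$ are formal: the second follows from $\delta_*=(\Sigma^{2,1}c)_*\circ d_*$, and the first from $d\circ c\simeq 0$ in the cofiber sequence of Corollary \ref{cor:cofiber}, so $d_*\circ \delta_*=0$. Since $\Hml=\Cyc/\Bnd$, the task reduces to deciding whether $\Ker(d_*)$ equals $\Cyc$ or $\Bnd$, according to $n$ modulo $4$. By Theorem \ref{theorem:conner_floyd_hom} combined with the topological input recalled in the proof of Lemma \ref{lemma:trickysubgroup_quad_closed}, $\Hml_{2n,n}(\MW,\delta_k)[1/e]=0$ when $n$ is odd, so the cases $n\equiv 1,3\pmod 4$ are automatic. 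For $n\equiv 0\pmod 4$ and $\alpha\in\Cyc$, the element $d_*(\alpha)$ lies in $\Ker(\pi_{2n-2,n-1}(c))[1/e]=\eta\cdot\pi_{2n-3,n-2}(\MSL)[1/e]$, which vanishes by Proposition \ref{prop:trickysubgroup_general} applied with parameter $n-1\equiv 3\pmod 4$; hence $\alpha\in\Ker(d_*)$.

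The substantive case is $n\equiv 2\pmod 4$, for which I must show that the connecting homomorphism $\bar{d}\colon \Hml_{2n,n}(\MW,\delta_k)[1/e]\to \eta\cdot\pi_{2n-3,n-2}(\MSL)[1/e]$ built into the derived couple of Lemma \ref{lemma:derived_exact} is injective; its kernel equals $\Ker(d_*)/\Bnd$ by construction. My plan is to prove this first over a quadratically closed field $L$ and then transport back by base change. Over such $L$, the relevant segment of Lemma \ref{lemma:derived_exact} at bidegree $(2n,n)$ reads
\[
\eta\cdot\pi_{2n,n}(\MSL_L)[1/e]\to \Hml_{2n,n}(\MW_L,\delta) \xrightarrow{\bar{d}_L} \eta\cdot\pi_{2n-3,n-2}(\MSL_L)[1/e] \xrightarrow{\eta} \eta\cdot\pi_{2n-2,n-1}(\MSL_L)[1/e],
\]
and both flanking terms vanish by Lemma \ref{lemma:trickysubgroup_quad_closed} (parameters $n+1\equiv 3$ and $n\equiv 2\pmod 4$), forcing $\bar{d}_L$ to be an isomorphism.

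For general $k$, base change along a homomorphism $k\to L$ to an algebraic closure of $\mathrm{Frac}(k)$ yields a commutative square between $\bar{d}_k$ and $\bar{d}_L$; the rigidity clause in Theorem \ref{theorem:conner_floyd_hom} makes the top row on $\Hml$ an isomorphism, and the base-change clause of Proposition \ref{prop:trickysubgroup_general} (applicable since the parameter $n-1\equiv 1\pmod 4$ is not $\equiv 0$) makes the bottom row on $\eta\cdot\pi_{2n-3,n-2}(\MSL)[1/e]$ an isomorphism, so $\bar{d}_k$ is injective too. The final base-change clause of the proposition is then immediate: in each residue class modulo $4$ we have identified $\Ker(d_*)$ with either $\Cyc$ or $\Bnd$, both of which are base-change invariant by the rigidity statement of Theorem \ref{theorem:conner_floyd_hom}.

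The main obstacle is the case $n\equiv 2\pmod 4$, where $\Ker(d_*)$ a priori sits strictly between $\Bnd$ and $\Cyc$ over a general $k$; the key idea is to reformulate the question as the injectivity of an explicit connecting map in the derived couple, compute it over a quadratically closed field where the flanking terms vanish for free, and then import the answer by the rigidity of Conner--Floyd homology and of $\eta\cdot\pi_{*,*}(\MSL)$ in the applicable degrees.
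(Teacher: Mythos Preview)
Your proof is correct and follows essentially the same strategy as the paper: both reduce to the inclusion chain $\Bnd\subset\Ker(d_*)\subset\Cyc$ and then decide the case $n\equiv 2\pmod 4$ by showing the connecting map $\bar d\colon\Hml_{2n,n}(\MW,\delta)\to\eta\cdot\pi_{2n-3,n-2}(\MSL)$ from the derived couple is injective (indeed an isomorphism), with the base-change clause following from the rigidity of $\Cyc$ and $\Bnd$ in Theorem~\ref{theorem:conner_floyd_hom}. The only notable differences are presentational: you dispose of odd $n$ via $\Hml=0$, which is slightly cleaner than the paper's separate treatment of $n\equiv 1$ (where it argues that $\Ker(c_*)$, a sum of fundamental ideals, cannot meet the $\eta$-torsion image of $d_*$); and for $n\equiv 2$ you argue over a quadratically closed field and base change, while the paper invokes the proof of Proposition~\ref{prop:trickysubgroup_general} directly for general $k$. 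One small imprecision: the flanking terms $\eta\cdot\pi_{2n,n}(\MSL_L)$ and $\eta\cdot\pi_{2n-2,n-1}(\MSL_L)$ are not literally of the form computed in the \emph{statement} of Lemma~\ref{lemma:trickysubgroup_quad_closed}; their vanishing comes from Corollary~\ref{cor:eta_stab_rev} (equivalently, from the first paragraph of the proof of Lemma~\ref{lemma:trickysubgroup_quad_closed}), so adjust that citation.
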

\begin{proof}
    We implicitly invert $e$ throughout this proof. Consider the diagram (see Definition \ref{definition:MWL} and beginning of \S \ref{section:4_2} for the morphisms $\forg$ and $\delta$) \[\xymatrix{\pi_{2n,n}(\MW) \ar[r]^-{d_*} \ar[rd]_-{\delta_*} & \pi_{2n-2,n-1}(\MSL) \ar[d]^-{\forg_*} \\ & \pi_{2n-2,n-1}(\MW), }\] which commutes by definition of $\delta_*$. Applying the kernel-cokernel lemma to this diagram we obtain the exact sequence 
    $$ 0\to \Ker(d_*)\to \Cyc_{2n,n}(\MW,\delta)\to \Ker(\forg_*)\to \mathrm{Coker}(d_*). $$
    Suppose that $n$ is congruent to $0$ or $3$ modulo $4$. Then the kernel $\Ker(\forg_*)$ vanishes by Proposition \ref{prop:trickysubgroup_general} since it coincides with $\eta\cdot\pi_{2n-3,n-2}(\MGL)$ by the exact sequence \eqref{equation:cofib}. Thus $\Ker(d_*)=\Cyc_{2n,n}(\MW,\delta)$ for such $n$ by the above exact sequence. If $n$ is congruent to $1$ modulo $4$ then the same argument with Proposition \ref{prop:trickysubgroup_general} shows that the kernel of the map $\forg_*$ is given by the direct sum of fundamental ideals. In particular, this kernel maps injectively into $\pi_{n-1}(\MSL[\eta^{-1}])$ and does not intersect the image of $d_*$, which is annihilated by $\eta$ because of the exact sequence \eqref{equation:cofib} again. Therefore, the homomorphism $\Cyc_{2n,n}(\W,\delta)\to \Ker(\forg_*)$ is zero and the same conclusion holds.
    
    Now, let us assume that $n$ is congruent to $2$ modulo $4$. In this case, the boundary map $d_*$ is surjective since the next group in the exact sequence \eqref{equation:cofib} vanishes by Corollary \ref{cor:eta_stab_rev}. Therefore, $\mathrm{Coker}(d_*)=0$ and we obtain the short exact sequence. The second homomorphism in the exact sequence fits into the diagram \[\xymatrix{ \Cyc_{2n,n}(\MW,\delta) \ar[r] \ar[rd] & \Ker(\forg_*)=\eta\cdot\pi_{2n-3,n-2}(\MSL) \\ & \Hml_{2n,n}(\MW,\delta). \ar[u]^-\simeq }\] Here the vertical map is the boundary homomorphism from Lemma \ref{lemma:derived_exact} and the diagram commutes by the construction of this map. Moreover, the boundary homomorphism is bijective by the proof of Proposition \ref{prop:trickysubgroup_general}. Hence, the kernel of the top arrow in the diagram is equal to $\Bnd_{2n,n}(\MW,\delta)$, and we conclude by the above exact sequence.

    These groups are stable under base change along $f$ by Theorem \ref{theorem:conner_floyd_hom}.
\end{proof}
As an immediate application, we compute the image of the map $\pi_{2*,*}(\MSL)[\einv]\to \pi_{2*,*}(\MGL)[\einv]$. This corollary generalizes \cite[Theorem B(2)]{LYZ}.
\begin{corollary}\label{cor:image_msl_mgl}
    Let $k$ be a local Dedekind domain and let $e\neq 2$ be the exponential characteristic of the residue field of $k$. Then the image of $\pi_{2n,n}(\MSL)[\einv]$ in $\pi_{2n,n}(\MGL)[\einv]$ is equal to $\Cyc_{2n,n}(\MW,\delta)[\einv]$ if $n\not\equiv 2\pmod*{4}$, and to $\Bnd_{2n,n}(\MW,\delta)[\einv]$ if $n\equiv 2\pmod*{4}$.
\end{corollary}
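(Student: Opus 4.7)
The plan is to factor the canonical map $\pi_{2n,n}(\MSL)[1/e]\to \pi_{2n,n}(\MGL)[1/e]$ through the intermediate group $\pi_{2n,n}(\MW)[1/e]$ using the composition $\MSL\xrightarrow{c}\MW\xrightarrow{\bar c}\MGL$. Then the image in question is nothing but $\bar c_*(\mathrm{Im}(c_*))$, and both pieces have already been computed in the preceding propositions, so this reduces to a one-line assembly.

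First, from the cofiber sequence $\Sigma^{1,1}\MSL\xrightarrow{\eta}\MSL\xrightarrow{c}\MW$ of Corollary \ref{cor:cofiber} and the associated long exact sequence \ref{equation:cofib}, I would note that
\[
\mathrm{Im}\bigl(c_*\colon\pi_{2n,n}(\MSL)[1/e]\to\pi_{2n,n}(\MW)[1/e]\bigr)=\Ker\bigl((d_k)_*\bigr)[1/e].
\]
Second, I would observe that the restriction of $\bar c_*$ to the geometric diagonal is injective after inverting $e$. Indeed, Proposition \ref{prop:wall_geom_part} identifies the morphism $\bar c_*\colon\pi_{2*,*}(\MW)[1/e]\to\pi_{2*,*}(\MGL)[1/e]$ with the canonical topological inclusion $\pi_{2*}(\W)\hookrightarrow\pi_{2*}(\MU)$, which is injective by Proposition \ref{appendix:split_cofib}.

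Combining these two observations, the image of $\pi_{2n,n}(\MSL)[1/e]$ in $\pi_{2n,n}(\MGL)[1/e]$ is identified, via $\bar c_*$, with $\Ker((d_k)_*)[1/e]\subset\pi_{2n,n}(\MW)[1/e]$. Identifying $\pi_{2n,n}(\MW)[1/e]$ with its image in $\pi_{2n,n}(\MGL)[1/e]$, Proposition \ref{prop:ker_of_d} then yields the stated description: it is $\Cyc_{2n,n}(\MW,\delta)[1/e]$ when $n\not\equiv 2\pmod 4$ and $\Bnd_{2n,n}(\MW,\delta)[1/e]$ when $n\equiv 2\pmod 4$.

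There is no real obstacle in this final step; the computation is essentially packaged into Propositions \ref{prop:wall_geom_part} and \ref{prop:ker_of_d}, which do all the substantive work. The only point requiring a brief comment is that the cycles and boundaries of $\delta$ live naturally inside $\pi_{2*,*}(\MW)[1/e]$, but by the injectivity of $\bar c_*$ on the geometric diagonal we may (and do) regard them as subgroups of $\pi_{2*,*}(\MGL)[1/e]$ via $\bar c_*$.
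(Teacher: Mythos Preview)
Your proposal is correct and follows essentially the same approach as the paper: factor through $\pi_{2n,n}(\MW)[1/e]$, use Proposition \ref{prop:wall_geom_part} to see that $\bar c_*$ is injective on the geometric diagonal, identify the image of $c_*$ with $\Ker(d_*)$ via the long exact sequence, and then invoke Proposition \ref{prop:ker_of_d}. The paper's proof is just a terser version of exactly this argument.
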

\begin{proof}
    The homomorphism $\pi_{2n,n}(\MSL)\to \pi_{2n,n}(\MGL)$ factors through $\pi_{2n,n}(\MW)$ since the morphism $\MSL\to\MGL$ factors as $\MSL\to\MW\to\MGL$ by the construction of $\MW$, see Definition \ref{definition:MWL}. Moreover, the map $\pi_{2n,n}(\MW)\to\pi_{2n,n}(\MGL)$ becomes an inclusion after inverting $e$; see the proof of Proposition \ref{prop:wall_geom_part}. Hence, we need to compute the image of $\pi_{2n,n}(\MSL)[\einv]\to \pi_{2n,n}(\MW)[\einv]$. This is done in the previous proposition since $\mathrm{Im}(\forg_*)=\Ker(d_*)$ by exactness of \eqref{equation:cofib}.
\end{proof}
\begin{remark}
    Recall that by Theorem \ref{theorem:conner_floyd_hom} these groups are known. Roughly speaking, it means that the image of $\pi_{2*,*}(\MSL)$ in $\pi_{2*,*}(\MGL)$ is the same as the image of $\pi_{2*}(\MSU)$ in $\pi_{2*}(\MU)$ at least after inverting $e$; see Remark \ref{appendix:image_msu}.
\end{remark}
\subsection{Pullback square of rings}\label{section_6-3}
First we compute the ring $\pi_{0,0}(\MSL)[\einv]$. If $k$ is a field, this can be deduced (even integrally) from Morel's computation \cite[Theorem 1.23]{Morel_book} using \cite[Example 16.35]{BH21}. However, if $k$ is a discrete valuation ring, this method is not available. 
Recall, that there is a canonical morphism of $\Einf$-ring spectra $\MSL\to\KQ$; see \cite[Remark 7.10]{HJN}.
\begin{proposition}\label{prop:pi_0,0(MSL)}
    Let $k$ and $e\neq 2$ be as above. Then the canonical morphism $\MSL\to \KQ$ induces an isomorphism $$\pi_{0,0}(\MSL)[\einv]\cong\GW(k)[\einv].$$
\end{proposition}
\begin{proof}
    We implicitly invert $e$ below. The morphism $\MSL\to\KQ$ induces the following map of exact sequences
    \[ \xymatrix{ 0 \ar[r] & \eta\cdot\pi_{-1,-1}(\MSL) \ar[r] \ar[d] & \pi_{0,0}(\MSL) \ar[r] \ar[d] & \pi_{0,0}(\MW) \ar[d] \\ 0 \ar[r] & \eta\cdot\pi_{-1,-1}(\KQ) \ar[r] & \pi_{0,0}(\KQ) \ar[r] & \pi_{0,0}(\mathrm{KGL}) ,} \] where the top (respectively bottom) exact sequence is \eqref{equation:cofib} (respectively \eqref{wood_cofib}), and the right vertical map is induced by the map of $\eta$-cofibers $\MW\cong\MSL/\eta\to \KQ/\eta\cong \mathrm{KGL}$ (see Theorem \ref{thm:fiber_seq} and the Wood cofiber sequence \eqref{wood_cofib}). The bottom right arrow can be identified with the rank homomorphism (in particular, it is surjective) and the top right arrow is surjective because the next term $\pi_{-2,-1}(\MSL)$ in the exact sequence \eqref{equation:cofib} vanishes, as follows from the exact sequence \eqref{equation:cofib_cut}, Proposition \ref{prop:trickysubgroup_general}, and Proposition \ref{prop:ker_of_d}. Hence, both rows of the above diagram are short exact sequences. We claim that the left and right vertical maps are isomorphisms.
    \begin{enumerate}
        \item We begin with the right homomorphism. The morphism $\MW\to \mathrm{KGL}$ factors through $\MGL$ and we obtain the commutative triangle
        \[\xymatrix{\pi_{0,0}(\MW) \ar[rr] \ar[rd] & & \pi_{0,0}(\MGL) \ar[ld] \\ & \pi_{0,0}(\mathrm{KGL}). & } \]
        The homomorphism $\pi_{2*,*}(\MGL)\to\pi_{2*,*}(\mathrm{KGL})$ classifies the multiplicative formal group law over $\pi_{2*,*}(\mathrm{KGL})\cong \Z[\beta^{\pm 1}]$. In particular it is an isomorphism in degree zero. To prove that the top homomorphism in the above diagram is bijective consider the exact sequence \eqref{equation:fiber} for $(i,j)=(0,0)$:
        \[ \pi_{-3,-2}(\MGL)\to \pi_{0,0}(\MW)\to \pi_{0,0}(\MGL)\to \pi_{-4,-2}(\MGL). \]
        The left and right terms are trivial by Lemma \ref{lemma:vanishing_mgl}.
        \item Define the $\eta$-Bockstein homology for $\mathrm{KGL}$ in the same way as for $\MW$ (see the beginning of \S \ref{section:4_2}) and consider the following morphism of the $\eta$-Bocktein exact sequences (see Lemma \ref{lemma:derived_exact} for the top one)
        \[\xymatrix{ 0\to \eta\cdot\pi_{-1,-1}(\MSL) \ar[r] \ar[d] & \eta\cdot\pi_{0,0}(\MSL) \ar[r] \ar[d] & \Hml_{0,0}(\MW,\delta) \ar[d] \\ 0\to \eta\cdot\pi_{-1,-1}(\KQ) \ar[r] & \eta\cdot\pi_{0,0}(\KQ) \ar[r] & \Hml_{0,0}(\mathrm{KGL},\mathrm{hyp}). }\] Here we used the vanishing of the groups $\Hml_{2,1}(\MW,\delta)$ and $\Hml_{2,1}(\mathrm{KGL},\mathrm{hyp})$, which a priori appear on the left (see Theorem \ref{theorem:conner_floyd_hom} and Proposition \ref{prop:homology_top} for $\MW$, while the vanishing for $\mathrm{KGL}$ follows from the injectivity of the corresponding $\eta$-Bockstein homomorphism). We are interested in the left vertical map. The middle vertical map is an isomorphism since these groups are identified with the $0$-th homotopy groups of their respective $\eta$-periodizations (see the proofs of Corollary \ref{cor:eta_stab_rev} and Lemma \ref{appendix_lemma_witt}) and $\pi_0(\MSL[\eta^{-1}])\to \pi_{0}(\mathrm{KW})$ is an isomorphism by \cite[Corollaries 1.3(3), 8.11]{BHop} and \cite[Proposition 5.6]{BacDVR}. In turn, the bijectivity of the right morphism can be easily deduced from the first item.
    \end{enumerate}
    The claim is proven. It follows that the morphism $\MSL\to\KQ$ induces an isomorphism on $\pi_{0,0}$. It remains to use the fact that $\KQ$ represents hermitian K-theory, see Appendix \ref{appendix_A}.
\end{proof}
The geometric diagonal $\pi_{2*,*}(\MSL)$ is a graded $\pi_{0,0}(\MSL)$-algebra. In particular, the previous proposition endows $\pi_{2*,*}(\MSL)[\einv]$ with the structure of a graded $\GW(k)$-algebra. If $k$ is a field, then this $\GW(k)$-algebra structure coincides with the one that comes from the motivic sphere spectrum because of the isomorphism $\pi_{0,0}(\sph)\cong\pi_{0,0}(\KQ)$. 
Now we turn to the geometric diagonal. We put $(\mathrm{I}_\MSL(k))_n:=\eta\cdot\pi_{2n-1,n-1}(\MSL)$ for $n\equiv 0\pmod*{4}$, and $(\mathrm{I}_\MSL(k))_n:=0$ otherwise. This defines a graded subgroup $\mathrm{I}_\MSL(k)$ of $\pi_{2*,*}(\MSL)$.
\begin{lemma}
    Let $k$ and $e\neq 2$ be as above. Then $\mathrm{I}_\MSL(k)[\einv]$ is a graded ideal of $\pi_{2*,*}(\MSL)[\einv]$.
\end{lemma}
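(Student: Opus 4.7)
My plan is to verify the ideal property directly on bihomogeneous generators. It suffices to show that for $\alpha \in \pi_{2p,p}(\MSL)[1/e]$ and $\beta \in (\mathrm{I}_\MSL(k))_m[1/e]$ one has $\alpha\beta \in (\mathrm{I}_\MSL(k))_{p+m}[1/e]$; the reverse multiplication $\beta\alpha$ is handled identically using graded commutativity. By the very definition of $\mathrm{I}_\MSL(k)$ I may assume $m \equiv 0\ (\textrm{mod}\ 4)$ and write $\beta = \eta\gamma$ for some $\gamma \in \pi_{2m-1,m-1}(\MSL)[1/e]$. The proof then hinges on associativity together with the graded commutativity of $\pi_{*,*}(\MSL)$, which allow me to rewrite $\alpha\cdot\eta\cdot\gamma$ either as $u_1 \cdot \eta\cdot(\alpha\gamma)$ or as $u_2\cdot (\eta\alpha)\cdot\gamma$ for units $u_1, u_2 \in \pi_{0,0}(\sph)$ (products of $\pm 1$ and powers of $\epsilon$); the precise form of these units is immaterial since I only care whether the resulting element is zero or lies in a certain subgroup.

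Next I split on the residue of $p$ modulo $4$. If $p \equiv 0\ (\textrm{mod}\ 4)$, then $p+m \equiv 0\ (\textrm{mod}\ 4)$ and $\alpha\gamma \in \pi_{2(p+m)-1,\,p+m-1}(\MSL)[1/e]$; the first rewriting shows $\alpha\beta = u_1 \cdot \eta(\alpha\gamma) \in \eta\cdot\pi_{2(p+m)-1,\,p+m-1}(\MSL)[1/e] = (\mathrm{I}_\MSL(k))_{p+m}[1/e]$, settling this case.

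If $p \not\equiv 0\ (\textrm{mod}\ 4)$, then $p+m \equiv p \not\equiv 0\ (\textrm{mod}\ 4)$, so $(\mathrm{I}_\MSL(k))_{p+m}[1/e] = 0$ by definition, and I must prove $\alpha\beta = 0$. Here I use the second rewriting $\alpha\beta = u_2\cdot(\eta\alpha)\cdot\gamma$ and observe that $\eta\alpha \in \pi_{2p+1,\,p+1}(\MSL)[1/e]$, which corresponds to $n=p$ and $m=1$ in the off-diagonal formula of Corollary \ref{cor:eta_stab_rev}; since $p \not\equiv 0\ (\textrm{mod}\ 4)$, that corollary forces $\pi_{2p+1,\,p+1}(\MSL)[1/e] = 0$, hence $\eta\alpha = 0$ and $\alpha\beta = 0$.

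The main obstacle is precisely this last case. The naive expression $\alpha\beta = u_1\cdot\eta(\alpha\gamma)$ does not obviously vanish, because the ambient group $\eta\cdot\pi_{2(p+m)-1,\,p+m-1}(\MSL)[1/e]$ can itself be non-trivial: for instance, it is isomorphic to $(\Z/2)^{p((p+m-1)/4)}$ when $p \equiv 1\ (\textrm{mod}\ 4)$ by Proposition \ref{prop:trickysubgroup_general}. Re-associating to $(\eta\alpha)\cdot\gamma$ and invoking the off-diagonal vanishing from Corollary \ref{cor:eta_stab_rev} is the crucial move that turns a potentially non-trivial element into zero.
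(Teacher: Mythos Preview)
Your proof is correct but follows a different route from the paper. You argue directly: using graded commutativity you rewrite $\alpha\cdot\eta\gamma$ as a unit times $\eta(\alpha\gamma)$ (for the case $p\equiv 0$) or as a unit times $(\eta\alpha)\gamma$ (for $p\not\equiv 0$), and then invoke the off-diagonal vanishing of Corollary~\ref{cor:eta_stab_rev} to kill the second case. The paper instead identifies $\mathrm{I}_\MSL(k)[1/e]$ as the kernel of the base-change ring homomorphism $\pi_{2*,*}(\MSL_k)[1/e]\to\pi_{2*,*}(\MSL_L)[1/e]$ for $L$ a quadratically closed field, using Propositions~\ref{prop:trickysubgroup_general} and~\ref{prop:ker_of_d} to analyze the short exact sequence~\ref{equation:cofib_cut} under base change; once it is a kernel, the ideal property is automatic. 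Your argument is more hands-on and self-contained, while the paper's approach is more structural and has the advantage of feeding directly into the rigidity statement of Proposition~\ref{thm:complete_answer} that follows.
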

\begin{proof}
    We implicitly invert $e$ throughout this proof. Let us choose a ring homomorphism $k\to L$, where $L$ is a quadratically closed field (e.g., the algebraic closure of the fraction field). Consider the base change of the exact sequence \eqref{equation:cofib_cut} along $f$ \[\xymatrix{0\to \eta\cdot\pi_{2n-1,n-1}(\MSL_k) \ar[r] \ar@{->>}[d]_-{f^*} & \pi_{2n,n}(\MSL_k) \ar[r] \ar[d]_-{f^*} & \Ker(\pi_{2n,n}(d_k))\to 0 \ar[d]^-\simeq_-{f^*} \\ 0\to \eta\cdot\pi_{2n-1,n-1}(\MSL_L) \ar[r] & \pi_{2n,n}(\MSL_L) \ar[r] & \Ker(\pi_{2n,n}(d_{L}))\to 0. }\] The right vertical map is an isomorphism (see Proposition \ref{prop:ker_of_d}) and the left vertical morphism is surjective with kernel $\mathrm{I}_\MSL(k)$ by Proposition \ref{prop:trickysubgroup_general}. Therefore, the subgroup $\mathrm{I}_\MSL(k)$ is the kernel of the homomorphism $\pi_{2*,*}(\MSL_k)\to \pi_{2*,*}(\MSL_L)$ of the graded $\GW(k)$-algebras.
\end{proof}
\begin{proposition}\label{thm:complete_answer}
    Let $k$ and $e\neq 2$ be as above. Suppose that $f\colon k\to k'$ is a homomorphism of local Dedekind domains. Then the base change along $f$ induces an isomorphism of graded $\GW(k)$-algebras \[\bigslant{\pi_{2*,*}(\MSL_k)}{\mathrm{I}_\MSL(k)}[\einv]\xrightarrow{\simeq} \bigslant{\pi_{2*,*}(\MSL_{k'})}{\mathrm{I}_\MSL(k')}[\einv].\]
\end{proposition}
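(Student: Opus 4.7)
The plan is to compare, for the homomorphism $f\colon k\to k'$, the two short exact sequences
\begin{equation*}
0\to \eta\cdot\pi_{2n-1,n-1}(\MSL)[1/e]\to \pi_{2n,n}(\MSL)[1/e]\to \Ker(\pi_{2n,n}(d))[1/e]\to 0
\end{equation*}
obtained from \ref{equation:cofib_cut}, and to exploit the fact that quotienting out by $\mathrm{I}_\MSL(-)$ exactly erases the discrepancy between $\eta\cdot\pi_{2n-1,n-1}(\MSL)$ for $k$ and for $k'$ in the only degrees where that discrepancy can appear. The base change along $f$ yields a morphism of short exact sequences, and the goal is to show that after dividing each middle term by $\mathrm{I}_\MSL$, the induced map is an isomorphism in every degree.

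First I would treat degrees $n\not\equiv 0\ (\textrm{mod}\ 4)$. Here $\mathrm{I}_\MSL(k)_n=\mathrm{I}_\MSL(k')_n=0$, so the claim reduces to showing that $\pi_{2n,n}(f)[1/e]$ is itself an isomorphism. By the second assertion of Proposition \ref{prop:trickysubgroup_general}, the base change map on $\eta\cdot\pi_{2n-1,n-1}(\MSL)[1/e]$ is an isomorphism in these degrees (note that here the exponential characteristic of the residue field of $k'$ is either $e$ or $1$, so inverting $e$ is harmless on the $k'$-side). By the second assertion of Proposition \ref{prop:ker_of_d}, the base change map on $\Ker(d_*)[1/e]$ is always an isomorphism. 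The five lemma then gives the required isomorphism on the middle.

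Next I would treat degrees $n\equiv 0\ (\textrm{mod}\ 4)$. Here $\mathrm{I}_\MSL(k)_n=\eta\cdot\pi_{2n-1,n-1}(\MSL)_n$, so the short exact sequence \ref{equation:cofib_cut} identifies $\pi_{2n,n}(\MSL_k)/\mathrm{I}_\MSL(k)_n[1/e]$ with $\Ker(\pi_{2n,n}(d_k))[1/e]$, and similarly for $k'$. The base change between these kernels is an isomorphism by Proposition \ref{prop:ker_of_d}, so the induced map on the quotient is an isomorphism as well.

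Finally, assembling the degree-wise isomorphisms into a map of graded rings is routine: the previous lemma shows that $\mathrm{I}_\MSL(k)[1/e]$ and $\mathrm{I}_\MSL(k')[1/e]$ are graded ideals, the base change is multiplicative by functoriality of the symmetric monoidal Thom construction, and $f$ carries $\mathrm{I}_\MSL(k)$ into $\mathrm{I}_\MSL(k')$ by construction. The only genuinely non-formal inputs are Propositions \ref{prop:trickysubgroup_general} and \ref{prop:ker_of_d}; once these are invoked, the proof is a two-case diagram chase. The subtle point, already absorbed in Proposition \ref{prop:trickysubgroup_general}, is that the degrees $n\equiv 0\ (\textrm{mod}\ 4)$ are precisely where base change on $\eta\cdot\pi_{2n-1,n-1}(\MSL)$ can genuinely fail to be an isomorphism (because of $\W(k)\to\W(k')$), and the definition of $\mathrm{I}_\MSL$ is tailored to kill exactly this obstruction.
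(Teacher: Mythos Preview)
Your proof is correct and follows essentially the same approach as the paper: both compare the short exact sequences \eqref{equation:cofib_cut} under base change, invoke Proposition~\ref{prop:ker_of_d} for the right-hand term and Proposition~\ref{prop:trickysubgroup_general} for the left-hand term, and observe that quotienting by $\mathrm{I}_\MSL$ removes exactly the obstruction in degrees $n\equiv 0\ (\textrm{mod}\ 4)$. The paper's proof is a one-line reference back to the diagram chase in the preceding lemma, whereas you have spelled out the two cases explicitly; the content is the same.
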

\begin{proof}
    The pullback of the exact sequence \eqref{equation:cofib_cut} gives the result similarly to the proof of the previous lemma.
\end{proof}
\begin{remark}
    Note that the isomorphism stated in the previous proposition can also be viewed as an isomorphism of $\GW(k')$-algebras. Indeed, the action of $\GW(k')$ on the right hand side is induced by the rank homomorphism since $\mathrm{I}_{\MSL}(k')[\einv]$ contains the fundamental ideal in degree zero, see Proposition \ref{prop:trickysubgroup_general}. Hence, this isomorphism becomes an isomorphism of $\GW(k')$-algebras if we endow the left hand side with the obvious action of $\GW(k')$.
\end{remark}
\begin{theorem}\label{theorem:complete_modulo_imslk}
    Let $k$ and $e\neq 2$ be as above. Then there is an isomorphism of graded $\GW(k)$-algebras \[\bigslant{\pi_{2*,*}(\MSL)}{\mathrm{I}_\MSL(k)}[\einv]\cong \pi_{2*}(\MSU)[\einv],\] where the $\GW(k)$-algebra structure on the right hand side is induced by the rank $\mathrm{rk}\colon\GW(k)\to \Z$. If $k=\C$ then the complex Betti realization functor induces such an isomorphism. 
\end{theorem}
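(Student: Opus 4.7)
The strategy is to combine the rigidity of Proposition~\ref{thm:complete_answer} with a direct computation over $\C$ via Betti realization. By Proposition~\ref{thm:complete_answer}, the ring $\pi_{2*,*}(\MSL_k)/\mathrm{I}_\MSL(k)[1/e]$ is insensitive to base change along homomorphisms of local Dedekind domains, while the target $\pi_{2*}(\MSU)[1/e]$ depends on $k$ only through the inversion of $e$. It therefore suffices to produce a natural ring isomorphism for $k=\C$ and to transport it along zigzags of base changes to obtain the statement for arbitrary $k$.

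For the reduction, one connects any local Dedekind domain $k$ with $e\neq 2$ to $\C$ by a zigzag of homomorphisms of local Dedekind domains. A field $k$ of characteristic zero is linked to $\C$ via the common subring $\mathbb{Q}\hookrightarrow k$ and $\mathbb{Q}\hookrightarrow \C$; a DVR is first replaced by its fraction field. If $k$ has residue characteristic $p>2$, one lifts to a DVR $R$ of mixed characteristic with residue field $k$ (take $R$ to be a Cohen ring of $k$, or the Witt vectors in the perfect case; cf.\ the proof of Lemma~\ref{lemma:boundary_map}) and then passes through $R\hookrightarrow \mathrm{Frac}(R)$ to reach the characteristic-zero case. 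At each step Proposition~\ref{thm:complete_answer} supplies an isomorphism of the quotient ring.

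For $k=\C$, the identity $\mathrm{I}(\C)=0$ together with Proposition~\ref{prop:trickysubgroup_general} yields $\mathrm{I}_\MSL(\C)=0$, so one must prove that the Betti realization
\[
\BettiC\colon \pi_{2*,*}(\MSL_\C)\longrightarrow \pi_{2*}(\MSU)
\]
is a ring isomorphism. Applying $\BettiC$ to the cofiber sequence of Corollary~\ref{cor:cofiber} and using the identifications $\BettiC(\MSL_\C)\simeq \MSU$, $\BettiC(\MW_\C)\simeq \W$, and $\BettiC(\eta)=\etatop$, one obtains a map from the motivic long exact sequence to its topological analog built from $\Sigma\MSU \xrightarrow{\etatop} \MSU\to \W$. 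The comparison on the $\MW$-term is the canonical isomorphism of Proposition~\ref{prop:wall_geom_part}; the off-diagonal input is controlled by Corollary~\ref{cor:eta_stab_rev}, which matches $\pi_{2n+m,n+m}(\MSL_\C)[1/e]$ for $m>0$ with the topological picture. A five-lemma argument, fed by induction on the total degree, then gives the required isomorphism.

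The main obstacle is verifying that the abstract Hopf-algebroid isomorphism $\pi_{2*}(\W)\simeq \pi_{2*,*}(\MW_\C)$ of Proposition~\ref{prop:wall_geom_part} actually coincides with the one induced by $\BettiC$. This compatibility is plausible from Levine's identification $\BettiC\colon\pi_{2*,*}(\MGL_\C)\simeq \pi_{2*}(\MU)$, which respects the canonical orientations and hence the Hopf-algebroid structure, but has to be threaded carefully through the construction of $\MW$. A cleaner variant compares the $\eta$-Bockstein spectral sequences for $\MSL_\C$ and $\MSU$ in parallel: by Theorem~\ref{theorem:conner_floyd_hom} and its topological counterpart the $E_2$-pages are identified by Betti realization, and multiplicativity together with standard convergence then closes the argument.
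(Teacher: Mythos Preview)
Your overall strategy---reduce to $k=\C$ via the rigidity of Proposition~\ref{thm:complete_answer} and a zigzag of base changes, then compute over $\C$ by Betti realization---is exactly the paper's. The reduction step is fine.

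The gap is in your treatment of $k=\C$. The five-lemma you propose to run on the long exact sequence around $\pi_{2n,n}(\MSL_\C)$ requires control over the two terms to the left, namely $\pi_{2n-1,n-1}(\MSL_\C)$ and $\pi_{2n+1,n}(\MW_\C)$. Both of these sit \emph{below} the geometric diagonal (their first index exceeds twice the second), so neither Corollary~\ref{cor:eta_stab_rev} (which needs $m>0$) nor Proposition~\ref{prop:wall_geom_part} (which is only about $\pi_{2*,*}(\MW)$) says anything about them, and the induction on the diagonal degree does not reach them either. The proposed five-lemma therefore cannot be closed with the inputs you cite.

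The paper avoids this by working with the \emph{short} exact sequence~\eqref{equation:cofib_cut} instead of the long one, and identifying the two outer terms directly. The right-hand term $\Ker(\pi_{2n,n}(d))$ is matched with its topological counterpart via Proposition~\ref{prop:ker_of_d} (which already packages the Betti comparison on $\MW$ through Theorem~\ref{theorem:conner_floyd_hom}). The left-hand term $\eta\cdot\pi_{2n-1,n-1}(\MSL_\C)$ is handled not by knowing $\pi_{2n-1,n-1}$ itself, but by observing that for $n\not\equiv 1\pmod 4$ both the motivic and topological images vanish (Lemma~\ref{lemma:trickysubgroup_quad_closed} and Theorem~\ref{thm:torsion_top}), while for $n\equiv 1\pmod 4$ the boundary map of the derived exact couple (Lemma~\ref{lemma:derived_exact}) identifies the image with the Conner--Floyd homology $\Hml_{2n+2,n+1}(\MW,\delta)$, and the latter matches its topological analogue under $\BettiC$. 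Your Bockstein-spectral-sequence variant is in spirit the same idea, but would still need this degree-by-degree identification of the $E_2$-page with its image under $\BettiC$; the paper's argument is precisely the minimal version of that.
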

\begin{proof}
    First, we treat $k=\C$. Applying the complex Betti realization functor to the exact sequence \eqref{equation:cofib_cut} we obtain the following commutative diagram with exact rows (the bottom one is exact by Proposition \ref{appendix:prop_cofib_1}) \[\xymatrix{0\to \eta\cdot\pi_{2n-1,n-1}(\MSL_k) \ar[r] \ar[d]_-{\BettiC} & \pi_{2n,n}(\MSL_k) \ar[r] \ar[d]_-{\BettiC} & \Ker(\pi_{2n,n}(d_k))\to 0 \ar[d]^-\simeq_-{\BettiC} \\ 0\to \etatop\cdot\pi_{2n-1}(\MSU) \ar[r] & \pi_{2n}(\MSU) \ar[r] & \Ker(\pi_{2n}(d))\to 0.}\] The right vertical map is an isomorphism by Proposition \ref{prop:ker_of_d}. Thus we need to prove that the arrow $\BettiC\colon\eta\cdot\pi_{2n-1,n-1}(\MSL_\C)\to \etatop\cdot\pi_{2n-1}(\MSU)$ is an isomorphism. If $n\not\equiv 1\pmod*{4}$ then both groups are trivial by Lemma \ref{lemma:trickysubgroup_quad_closed} and Theorem \ref{thm:torsion_top}. In the remaining case, the left hand side is isomorphic to $\Hml_{2n+2,n+1}(\MW_k,\delta_k)$ via the boundary map from the exact sequence from Lemma \ref{lemma:derived_exact}, and the same holds for the right hand side with the topological counterpart $\Hml_{2n+2}(\W,\delta)$. Consequently, the desired isomorphism is induced by $\BettiC\colon\Hml_{2n+2,n+1}(\MW_k,\delta_k)\xrightarrow{\simeq}\Hml_{2n+2}(\W,\delta)$.

    The remaining cases follow from the various base changes using the previous proposition (similarly to the proof of Lemma \ref{lemma:charclasses_and_operations}) and the previous remark.
\end{proof}
    Denote by $R$ the ring $\pi_{2*,*}(\MSL)[\einv]$, and by $I$ and $J$ the ideals $\mathrm{I}_\MSL(k)[\einv]$ and ${}_{\eta}\pi_{2*,*}(\MSL)[\einv]$ respectively. The quotient $R/(I+J)$ is isomorphic to $\bigslant{R/J}{(I+J)/J}$, which is given by \[\bigslant{\W(k)[\einv][y_4,y_8,\dots]}{\mathrm{I}(k)[\einv][y_4,y_8,\dots]}\cong \Z/2[y_4,y_8,\dots].\] Here we use the definition of $\mathrm{I}_\MSL(k)$ and Proposition \ref{prop:trickysubgroup_general} to identify $(I+J)/J$ with $\mathrm{I}(k)[\einv][y_4,y_8,\dots]$. Furthermore, the projection $R/J\to \bigslant{R/J}{(I+J)/J}$ is induced by the rank homomorphism.
\begin{theorem}\label{theorem:complete_as_pullback}
    Suppose that $k$ is a local Dedekind domain and $e\neq 2$ is the exponential characteristic of the residue field of $k$. Then the following diagram is a pullback square of graded $\GW(k)$-algebras \[\xymatrix{\pi_{2*,*}(\MSL)[\einv] \ar[r] \ar[d] & \bigslant{\pi_{2*,*}(\MSL)}{\mathrm{I}_\MSL(k)}[\einv] \ar[d] \\ \W(k)[\einv][y_4,y_8,\dots] \ar[r]^-{\overline{\mathrm{rk}}} & \Z/2[y_4,y_8,\dots], }\] where the left map is the quotient by the annihilator of $\eta$, and the right homomorphism is the quotient by $(I+J)/I$.
\end{theorem}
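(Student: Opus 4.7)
Plan. Adopt the notation introduced before the theorem: $R = \pi_{2*,*}(\MSL)[1/e]$, $I = \mathrm{I}_\MSL(k)[1/e]$, $J = {}_\eta \pi_{2*,*}(\MSL)[1/e]$. The overall strategy is to realize the claimed pullback as an instance of the following general fact: for any ideals $I,J$ of a commutative ring $R$, the natural map
\[
R \longrightarrow R/I \times_{R/(I+J)} R/J
\]
is surjective and has kernel exactly $I \cap J$ (surjectivity: given representatives $a,b$ of two classes with matching image in $R/(I+J)$, write $a - b = i + j$ with $i \in I$, $j \in J$, and observe that $a - i = b + j$ lifts both classes). Combined with the already-established identifications $R/I \simeq \pi_{2*}(\MSU)[1/e]$ (Theorem \ref{theorem:complete_modulo_imslk}), $R/J \simeq \W(k)[1/e][y_4, y_8, \ldots]$ (Corollary \ref{cor:modulo_eta_tors}), and $R/(I+J) \simeq \Z/2[y_4, y_8, \ldots]$ with bottom arrow $\overline{\mathrm{rk}}$ (paragraph preceding the theorem), this reduces the theorem to two tasks: verifying $I \cap J = 0$, and matching the right vertical map $R/I \to R/(I+J)$ with the stated quotient by $\mathrm{Ann}(\etatop)$.

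The vanishing $I \cap J = 0$ is the technical heart of the argument. Since $\mathrm{I}_\MSL(k)$ is supported in degrees $n \equiv 0 \pmod 4$, I only need to treat that case. Given $\alpha \in I_n \cap J_n$ with $n \equiv 0 \pmod 4$, write $\alpha = \eta \beta$ for some $\beta \in \pi_{2n-1, n-1}(\MSL)[1/e]$ (possible by the definition of $\mathrm{I}_\MSL(k)$). Membership in $J_n$ means $\eta \alpha = 0$, hence $\eta^2 \beta = 0$. The equality of annihilators ${}_{\eta}\pi_{2*,*}(\MSL)[1/e] = {}_{\eta^N}\pi_{2*,*}(\MSL)[1/e]$ from Corollary \ref{cor:modulo_eta_tors} then forces $\eta \beta = 0$, i.e., $\alpha = 0$. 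This step is short, but it is exactly the point at which the strong form of Corollary \ref{cor:modulo_eta_tors} (equality, not just containment, of annihilators) is used.

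To match the right vertical with the quotient by $\mathrm{Ann}(\etatop)$, I will identify the image of $J$ in $R/I \simeq \pi_{2*}(\MSU)[1/e]$ with $\mathrm{Ann}(\etatop)$. One inclusion is immediate: the natural map $\pi_{*,*}(\MSL)[1/e] \to \pi_*(\MSU)[1/e]$ sends $\eta$ to $\etatop$ (via Betti realization for $k = \C$, and in general by rigidity, as used in the proof of Theorem \ref{theorem:complete_modulo_imslk}), so $\eta$-torsion maps into $\etatop$-torsion, giving $(I+J)/I \subseteq \mathrm{Ann}(\etatop)$. For the reverse inclusion, the induced surjection $R/(I+J) \twoheadrightarrow \pi_{2*}(\MSU)[1/e]/\mathrm{Ann}(\etatop)$ becomes, via the topological computation of $\pi_{2*}(\MSU)[1/e]/\mathrm{Ann}(\etatop) \simeq \Z/2[y_4, y_8, \ldots]$ from Appendix \ref{appendix_B}, a surjective graded ring endomorphism of $\Z/2[y_4, y_8, \ldots]$, and such a self-map of a polynomial ring on positive-degree generators is necessarily an isomorphism by dimension-counting in each graded piece.

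The main obstacle I anticipate is the bookkeeping of the last paragraph: matching the $\Z/2[y_*]$-identification arising from $R/(I+J)$ (via $\overline{\mathrm{rk}}$ on the motivic side) with the one arising from $\pi_{2*}(\MSU)/\mathrm{Ann}(\etatop)$ (via the topological computation), in a way compatible enough to invoke the surjective-endomorphism argument. By contrast, the cartesian property itself (once all identifications are in place) collapses to the brief equality-of-annihilators argument of the second paragraph, and the commutativity of the resulting square is automatic since both composites $R \to \Z/2[y_*]$ factor through the canonical projection $R \to R/(I+J)$.
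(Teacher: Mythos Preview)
Your overall strategy --- reducing to the general ring-theoretic fact $R/(I\cap J)\simeq R/I\times_{R/(I+J)}R/J$ and then showing $I\cap J=0$ --- is exactly the paper's approach. However, your argument for $I\cap J=0$ has a genuine gap. You write $\alpha=\eta\beta$ with $\beta\in\pi_{2n-1,n-1}(\MSL)[1/e]$, note $\eta^2\beta=0$, and then invoke the equality ${}_{\eta}\pi_{2*,*}(\MSL)[1/e]={}_{\eta^N}\pi_{2*,*}(\MSL)[1/e]$ to conclude $\eta\beta=0$. But that equality is stated only for the $\Proj^1$-diagonal $\pi_{2*,*}$, while $\beta$ lives in $\pi_{2n-1,n-1}$, one step \emph{below} the diagonal. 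The annihilator equality on the diagonal says nothing about $\eta$-torsion in $\pi_{2n-1,n-1}$, and in fact $I\cap J=0$ is not a formal consequence of it (one can build toy examples where ${}_\eta={}_{\eta^2}$ on the diagonal yet $\mathrm{Im}(\eta)\cap\Ker(\eta)\neq 0$ there).

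The paper's one-line argument is different: $I$ maps \emph{injectively} into $\pi_*(\MSL[\eta^{-1}])$, while $J$ (the $\eta$-torsion) maps to zero there, so $I\cap J=0$. The injectivity of $I_{4m}=\eta\cdot\pi_{8m-1,4m-1}(\MSL)[1/e]$ under multiplication by $\eta$ is precisely the exactness at the left of sequence~(\ref{equation:cofib_hardplace}), established in the course of Lemma~\ref{lemma:trickysubgroup_quad_closed} and Proposition~\ref{prop:trickysubgroup_general}. That is the input you need, not Corollary~\ref{cor:modulo_eta_tors}.

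Finally, your extra paragraph identifying the right vertical with the quotient by $\mathrm{Ann}(\etatop)$ is not required for the theorem as stated here (which only asserts it is the quotient by $I+J$); that sharper description belongs to the introductory formulation and rests on Theorem~\ref{theorem:complete_modulo_imslk}.
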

\begin{proof}
    The argument is similar to that in the proof of Proposition \ref{prop:pi_0,0(MSL)}. Indeed, in the above notation, we have $R/(I\cap J)\cong R/I\times_{R/(I+J)} R/J$, where the right hand side is the desired pullback. In turn, $I\cap J=0$ since $I$ maps injectively into $\pi_*(\MSL[\eta^{-1}])[\einv]$, while $J$ is annihilated by $\eta$.
\end{proof}
\begin{corollary}\label{corollary:local_away_from2}
    Let $k$ and $e\neq 2$ be as above. Then there is an isomorphism of graded $\GW(k)$-algebras \[\pi_{2*,*}(\MSL)[\nicefrac{1}{2e}]\cong \Z[\nicefrac{1}{2e}][x_2,x_3,\dots]\times \W(k)[\nicefrac{1}{2e}][y_4,y_8,\dots],\] where $\mathrm{deg}(x_i)=(2i,i)$ and $\mathrm{deg}(y_j)=(2j,j)$.
\end{corollary}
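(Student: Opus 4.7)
The plan is to deduce this corollary as a direct consequence of the pullback square in Theorem \ref{theorem:complete_as_pullback}. The key observation is that after inverting an additional factor of $2$, the lower right corner of the square becomes trivial, so the pullback degenerates into a direct product of the two sides.

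More concretely, I would first note that $\Z/2[y_4, y_8, \dots][1/2] = 0$, so after localizing the pullback square from Theorem \ref{theorem:complete_as_pullback} at $2$, the horizontal arrows land in the zero ring. In any abelian category a pullback $A \times_0 B$ is just $A \times B$, so we obtain an isomorphism of graded rings
\[\pi_{2*,*}(\MSL)[1/2e] \simeq \bigslant{\pi_{2*,*}(\MSL)}{\mathrm{I}_\MSL(k)}[1/2e] \times \W(k)[1/2e][y_4, y_8, \dots].\]
It remains to identify the first factor. By Theorem \ref{theorem:complete_modulo_imslk} we have an isomorphism $\pi_{2*,*}(\MSL)/\mathrm{I}_\MSL(k)[1/e] \simeq \pi_{2*}(\MSU)[1/e]$, so after further inverting $2$ we get $\pi_{2*}(\MSU)[1/2e]$.

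The final step is to invoke the classical topological computation that $\pi_{2*}(\MSU)[1/2] \simeq \Z[1/2][x_2, x_3, \dots]$ as a graded ring with $|x_i| = 2i$; this is a result of Novikov which, as summarized in Stong's book, follows from the fact that away from $2$ the special unitary cobordism ring is a polynomial algebra on generators in each even degree $\geq 4$. This fact is presumably recalled in Appendix \ref{appendix_B}, and using it finishes the identification.

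There is no real obstacle here: the corollary is essentially a formal consequence of the preceding pullback theorem together with a known topological input. The mild subtlety is only that one must keep track of the fact that the bigrading on the motivic side matches the grading on the topological side under the identifications $|y_j| = (2j, j)$ and $|x_i| = (2i, i)$, which follows from the fact that the rigidity isomorphism in Theorem \ref{theorem:complete_modulo_imslk} respects the geometric diagonal grading by construction.
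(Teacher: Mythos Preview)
Your proof is correct and follows essentially the same route as the paper: the paper's one-line proof cites Theorem~\ref{theorem:complete_as_pullback}, Theorem~\ref{theorem:complete_modulo_imslk}, and Theorem~\ref{appendix:thm_msu_1/2} (the Novikov computation $\pi_{2*}(\MSU)[1/2]\simeq \Z[1/2][x_2,x_3,\dots]$ in Appendix~\ref{appendix_B}), which is exactly the chain of reductions you outline.
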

\begin{proof}
    Follows from the previous theorem, Theorem \ref{theorem:complete_modulo_imslk} and Theorem \ref{appendix:thm_msu_1/2}.
\end{proof}
\begin{remark}\label{rmk:away_2_LYZ}
    It follows from the definition of $\mathrm{I}_\MSL(k)$, exact sequence \eqref{equation:cofib_cut}, Proposition \ref{prop:trickysubgroup_general} and Proposition \ref{prop:ker_of_d}, that the plus part $\pi_{2*,*}(\MSL)/\mathrm{I}_\MSL(k)[\nicefrac{1}{2e}]$ is isomorphic to $\Cyc_{2*,*}(\MW,\delta)[\nicefrac{1}{2e}]$. This graded group is a subring of $\pi_{2*,*}(\MGL)[\nicefrac{1}{2e}]$ by Theorem \ref{theorem:conner_floyd_hom}.
\end{remark}
\section{Characteristic numbers revisited}\label{section-7}
In this section, we prove the motivic version (see Theorem \ref{theorem:motivic_abp}) of the Anderson--Brown--Peterson theorem \cite{ABP66}; see also a brief overview in Appendix \ref{appendix_B}. Then we compute the characteristic numbers of cobordism classes that are represented by smooth projective Calabi--Yau varieties, and show that such classes generate the ring $\pi_{2*,*}(\MSL)/\mathrm{I}_\MSL(k)[\nicefrac{1}{2e}]$, see Corollary \ref{cor:calabi-yau_gen}.

\subsection{Motivic Anderson--Brown--Peterson theorem} In this subsection we use the symbol $G$ either for $\GL$ or $\SL$. If $\omega=(\omega_1,\dots,\omega_k)$ is a partition, we denote by $c_\omega\in \HZ^{2|\omega|,|\omega|}(\mathrm{B}G)$ the product of the Chern classes $c_\omega=c_{\omega_1}\dots c_{\omega_k}$.
\begin{definition}[cf. Definition \ref{def:char_numbers}]\label{definition:char_numbers_2}
     For a partition $\omega$ the \textit{Chern characteristic number} of $\alpha\in \pi_{i,j}(\mathrm{M}G)$ is the Kronecker pairing $$\la{c_\omega,\eta_R(\alpha)\cap \thc\ra}\in \pi_{i-2|\omega|,j-|\omega|}(\HZ).$$ Here cap product with $\thc$ is the Thom isomorphism $\HZ\wedge\mathrm{M}G\xrightarrow{\simeq} \HZ\wedge\Sigma^\infty_+ \mathrm{B}G$ (see Lemma \ref{lemma:thom_iso_rev}), and $\eta_R$ is the right unit map $\mathrm{M}G=\sph\wedge\mathrm{M}G\to\HZ\wedge\mathrm{M}G$. Diagrammatically, it is given by \[ \Sigma^{i,j}\sph\xrightarrow{\alpha} \mathrm{M}G\xrightarrow{\eta_R} \HZ\wedge\mathrm{M}G\xrightarrow{\simeq} \HZ\wedge\Sigma^\infty_+\mathrm{B}G\xrightarrow{\id\wedge\,c_{\omega}}\HZ\wedge \Sigma^{2|\omega|,|\omega|}\HZ\xrightarrow{m_\HZ}\Sigma^{2|\omega|,|\omega|}\HZ.\] This construction defines a homomorphism of groups $c_\omega\colon\pi_{i,j}(\mathrm{M}G)\to \pi_{i-2|\omega|,j-|\omega|}(\HZ)$ for $i,j\in \Z$.
\end{definition}
For $(i,j)=(2n,n)$ the only nontrivial homomorphisms appear if $|\omega|=n$ \[c_\omega\colon\pi_{2n,n}(\mathrm{M}G)\to \pi_{0,0}(\HZ)\cong \Z.\] Note that this situation differs from the Pontryagin characteristic numbers, where $p_\omega$ might be non-zero in other cases. Given a partition $\omega$ with $|\omega|=n$, we have the following diagram \[\xymatrix{\pi_{2n,n}(\MSL) \ar[rr] \ar[dr]_-{c_\omega} & & \pi_{2n,n}(\MGL) \ar[dl]^-{c_\omega} \\ & \pi_{0,0}(\HZ)\cong \Z. & }\]
\begin{lemma}
    Let $k$ be a local Dedekind domain and let $e$ be the exponential characteristic of the residue field of $k$. Then the $\Proj^1$-diagonal of $\MGL[\einv]$ is determined by the Chern characteristic numbers $c_\omega[\einv]$, i.e., the following homomorphisms are injective
    $$ (c_\omega[\einv])\colon\pi_{2n,n}(\MGL)[\nicefrac{1}{e}]\to \prod_{|\omega|=n} \Z[\einv].$$
\end{lemma}
\begin{proof}
    The same proof as in Proposition \ref{prop:pontr_control_eta-period} shows that if all Chern characteristic numbers of an element $\alpha\in \pi_{2*,*}(\MGL)[\einv]$ are trivial, then the image of $\alpha$ along the right unit map is zero: $\eta_R(\alpha)=0$. The result follows from the injectivity of $\eta_R\colon\pi_{2*,*}(\MGL)[\einv]\to \HZ_{2*,*}(\MGL)[\einv]$ (to see this combine \cite[Diagram 6.2]{Hoy15}, Proposition \ref{prop:hopf_algebroids} and the isomorphism $\Z[b_1,b_2,\dots]\cong\HZ_{2*,*}(\MGL)$).
\end{proof}
\begin{remark}
    There is an approach to construct algebraic cobordism using characteristic numbers, pioneered by Merkurjev \cite{Mer02}. It is parallel to the Levine--Morel theory \cite{LevMor}, but does not involve resolution of singularities and works in arbitrary characteristic.
\end{remark}
\begin{theorem}\label{theorem:motivic_abp}
    Suppose that $k$ is a local Dedekind domain and $e$ is the exponential characteristic of the residue field. Then the geometric part of $\MSL[\einv]$ is determined by the Chern characteristic numbers and the $\KQ$-Pontryagin characteristic numbers.
\end{theorem}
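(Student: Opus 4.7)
The plan is to reduce the detection statement to two separate ones using the pullback square of Theorem \ref{theorem:complete_as_pullback} together with the cofiber sequence from Corollary \ref{cor:cofiber}. Let $\alpha\in \pi_{2n,n}(\MSL)[1/e]$ be an element whose Chern numbers $c_\omega(\alpha)$ and $\KQ$-Pontryagin numbers $p_\omega(\alpha)$ (for partitions of the form $\omega=(2\omega_1,\dots,2\omega_m)$) all vanish; the goal is to show $\alpha=0$.

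First I would use the lemma preceding the theorem to conclude that the vanishing of all Chern numbers forces the image of $\alpha$ in $\pi_{2n,n}(\MGL)[1/e]$ to be trivial. Composing with the inclusion $\pi_{2n,n}(\MW)[1/e]\hookrightarrow \pi_{2n,n}(\MGL)[1/e]$ from Proposition \ref{prop:wall_geom_part} and using the cofiber sequence $\Sigma^{1,1}\MSL\xrightarrow{\eta}\MSL\xrightarrow{c}\MW$, this places $\alpha$ in $\Ker(c_*)=\eta\cdot \pi_{2n-1,n-1}(\MSL)[1/e]$. I then proceed case by case on $n\pmod 4$ using Proposition \ref{prop:trickysubgroup_general}: for $n\equiv 2,3\pmod 4$ the kernel is zero and $\alpha=0$ is immediate; for $n\equiv 0\pmod 4$ we have $\alpha\in \mathrm{I}_\MSL(k)[1/e]$, which injects into $\pi_*(\MSL[\eta^{-1}])[1/e]$ by the final step in the proof of Theorem \ref{theorem:complete_as_pullback}. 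The image in the $\eta$-periodization is controlled by the $\KW$-Pontryagin numbers $p_\omega[\eta^{-1}]$ via Proposition \ref{prop:pontr_control_eta-period}; since these are obtained from the $\KQ$-Pontryagin numbers by composing with the natural map $\pi_{*,*}(\KQ)\to \pi_{*,*}(\KW)$, the hypothesis makes them vanish and forces $\alpha=0$.

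The main obstacle is the case $n\equiv 1\pmod 4$, where the kernel is a copy of $(\Z/2)^{p((n-1)/4)}$ lying outside $\mathrm{I}_\MSL(k)$ and therefore not seen by the $\eta$-periodic detection. Writing $n=4m+1$, I would use Corollary \ref{cor:2tors_multiples} to lift $\alpha$ to an element $\alpha'\in \pi_{8m+1,4m+1}(\MSL)[1/e]$ with $\etatop\cdot \alpha'=\alpha$. Naturality of the Hurewicz map $h_{\KQ}$, of the Thom isomorphism, and of the Kronecker pairing with respect to the $\sph$-module structure yields the identity $p_\omega(\alpha)=\etatop\cdot p_\omega(\alpha')$ in $\pi_{8n'+2,4n'+1}(\KQ)[1/e]$ with $n'=m-|\omega|/2$. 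The isomorphism $\etatop\colon \pi_{8n'+1,4n'+1}(\KQ)\xrightarrow{\simeq} \pi_{8n'+2,4n'+1}(\KQ)$ of Corollary \ref{cor:etatop_iso} then gives $p_\omega(\alpha')=0$ for every even $\omega$, and Corollary \ref{cor:pontryagin_8n+1} concludes with $\alpha'=0$, hence $\alpha=0$. The bookkeeping of bidegrees required to simultaneously invoke Corollary \ref{cor:etatop_iso} and Corollary \ref{cor:pontryagin_8n+1} in the same range is the only technically delicate point.
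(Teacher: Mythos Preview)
Your approach for $n\equiv 0,2,3\pmod 4$ is essentially identical to the paper's, and is correct. The case $n\equiv 1\pmod 4$ contains a genuine gap.

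You invoke Corollary~\ref{cor:etatop_iso} as asserting that $\etatop\colon \pi_{8n'+1,4n'+1}(\KQ)\to \pi_{8n'+2,4n'+1}(\KQ)$ is an isomorphism, but the corollary only proves this when $R$ is a quadratically closed field. In general this map is identified with $\overline{\mathrm{rk}}\colon \W(k)\twoheadrightarrow\Z/2$, so from $\etatop\cdot p_\omega(\alpha')=0$ you can only conclude $p_\omega(\alpha')\in\mathrm{I}(k)$, which is not enough to feed into Corollary~\ref{cor:pontryagin_8n+1}. The paper handles this by first base changing to a quadratically closed field $L$: by Proposition~\ref{prop:trickysubgroup_general} the group $\eta\cdot\pi_{8m+1,4m}(\MSL)[1/e]$ is stable under such base change, and Pontryagin numbers are compatible with base change, so it suffices to prove $\alpha_L=0$. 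Over $L$ the map $\etatop$ on $\KQ$-groups \emph{is} an isomorphism and your argument goes through (the paper runs essentially the same lift-via-$\etatop$ argument, phrased as a commutative square rather than via Corollary~\ref{cor:2tors_multiples}). Inserting this base-change reduction at the start of your $n\equiv 1$ case fixes the proof.
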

\begin{proof}
    We implicitly invert $e$ throughout this proof. Consider $\alpha\in \pi_{2n,n}(\MSL)$ with trivial Chern numbers and Pontryagin numbers. From the previous lemma and the exact sequence \eqref{equation:cofib_cut}, we see that the image of $\alpha$ in $\Ker(d_*)\subset \pi_{2n,n}(\MGL)$ is trivial. Hence, $\alpha$ lies in $\eta\cdot\pi_{2n-1,n-1}(\MSL)$. By Proposition \ref{prop:trickysubgroup_general}, this group is non-zero only if $n\equiv 0\pmod*{4}$ or $n\equiv 1\pmod*{4}$. 
    
    In the first case, the multiplication by the Hopf element is injective (see exact sequence \eqref{equation:cofib_hardplace}) \[\eta\colon\eta\cdot\pi_{2n-1,n-1}(\MSL)\hookrightarrow \eta\cdot\pi_{2n,n}(\MSL)=\pi_{2n+1,n+1}(\MSL),\] and the target is controlled by the Pontryagin characteristic numbers by Corollary \ref{cor:pontryagin_8n+1}.
    
    It remains to show that $\eta\cdot\pi_{8m+1,4m}(\MSL)$ is determined by $p_\omega$. First, assume that $k$ is a quadratically closed field. Then we have the commutative diagram \[\xymatrix{\pi_{8m+1,4m+1}(\MSL)=\eta\cdot\pi_{8m,4m}(\MSL) \ar[r]^-{\etatop}_-{\simeq} \ar[d]_-{p_\omega} & \eta\cdot\pi_{8m+1,4m}(\MSL) \ar[d]_-{p_\omega} \\ \pi_{8m+1,4m+1}(\KQ)=\eta\cdot\pi_{8m,4m}(\KQ) \ar[r]^-{\etatop}_-{\simeq} & \eta\cdot\pi_{8m+1,4m}(\KQ), }\] where the top and the bottom arrows are isomorphisms by the proof of Corollary \ref{cor:2tors_multiples} and by Corollary \ref{cor:etatop_iso}. Thus this case follows from Corollary \ref{cor:pontryagin_8n+1} again. Now, let $k$ be an arbitrary local Dedekind domain and choose a homomorphism $f\colon k\to L$ with a quadratically closed field $L$. Then the base change along $f$ induces an isomorphism $\eta\cdot\pi_{8m+1,4m}(\MSL_k)\xrightarrow{\simeq} \eta\cdot\pi_{8m+1,4m}(\MSL_L)$ (see Proposition \ref{prop:trickysubgroup_general}). Since the Pontryagin numbers are stable under base change by construction, we get $f^*(p_\omega(\alpha))=p_\omega(\alpha_L)$, and the element $\alpha_L\in \pi_{8m+2,4m+1}(\MSL_L)$ is trivial by the previous case. This concludes the proof.
\end{proof}
\subsection{Classes of Calabi--Yau varieties}
Let $X$ be a smooth scheme over $k$ that admits a trivialization of the determinant of its tangent bundle $\theta_X\colon \det(T_{X/k})\xrightarrow{\simeq}\struct_X$. Then one can construct the class $[X,\theta_X]$ in the group $\pi_{2n,n}(\MSL)$, see \cite[\S 3]{LYZ}. This construction can be viewed as an  algebraic version of the classical Pontryagin--Thom construction. We also remark that varieties that admit such a trivialization (i.e., $K_X=0$) are called (generalized) Calabi--Yau varieties.
\begin{proposition}\label{prop:char_numbers}
    Let $k$ be a field. Suppose that $\pi_X\colon X\to \Spec(k)$ is a smooth projective Calabi--Yau variety of dimension $n$, and $\theta_X\colon\det(T_{X/k})\xrightarrow{\simeq} \struct_X$ is an isomorphism of line bundles. Then the characteristic numbers of the class $[X,\theta_X]\in \pi_{2n,n}(\MSL)$ can be computed by the following formulas \begin{gather*} c_\omega[X,\theta_X]=\mathrm{deg}_\HZ(c_\omega(-T_{X/k})), \\ p_\omega[X,\theta_X]=\mathrm{deg}_\KQ(p_\omega(-T_{X/k})), \end{gather*} where $\mathrm{deg}_\EE$ denotes the pushforward in the $\EE$-cohomology associated with the trivialization $\theta_X$.
\end{proposition}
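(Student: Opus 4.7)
The plan is to trace the definition of $[X,\theta_X]$ through the Pontryagin--Thom construction and apply the naturality of Chern/Pontryagin classes together with the enhanced Thom isomorphism of Lemma \ref{lemma:thom_iso_rev}. Both formulae follow the same pattern, so the argument for the Chern numbers can be repeated verbatim for the Pontryagin numbers after replacing $\HZ$ with $\KQ$ and $c_\omega$ with $p_\omega$.

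First, I would fix a closed embedding $i\colon X\hookrightarrow \Proj^N$ (or, with minor modifications, an embedding into $\A^N$) for some large $N$, with normal bundle $\nu$ of rank $N-n$. The exact sequence $0\to T_{X/k}\to i^{*}T_{\Proj^N}\to \nu\to 0$ together with $\theta_X$ induces a canonical $\SL$-structure $\theta_\nu$ on the stable class of $\nu$. The class $[X,\theta_X]\in\pi_{2n,n}(\MSL)$ is then, by construction, the composition
\[
\Sigma^{2n,n}\sph \longrightarrow \Th_X(\nu\ominus \struct^{N-n})\longrightarrow \MSL,
\]
where the first arrow is the Pontryagin--Thom collapse associated with $i$ and the second arrow is the $\SL$-classifying map of $\nu$ equipped with $\theta_\nu$.

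Second, I would unfold the definition of the Chern characteristic number along this composition. By naturality of the Thom isomorphism in $\HZ\wedge\MSL\simeq\HZ\wedge\Sigma^\infty_+\mathrm{B}\SL$ and of the Hurewicz map, the image of $c_\omega$ under the classifying map $X\to \mathrm{B}\SL$ of $\nu$ is $c_\omega(\nu)\in\HZ^{2|\omega|,|\omega|}(X)$. In the Grothendieck group of vector bundles on $X$ the tangent sequence gives $[\nu]=-[T_{X/k}]$ (the class $[i^{*}T_{\Proj^N}]$ being absorbed in stabilization), hence $c_\omega(\nu)=c_\omega(-T_{X/k})$. The remaining composition of the Pontryagin--Thom collapse with the Thom isomorphism for $\nu$ is, by definition of the Gysin map for an $\SL$-oriented $\Einf$-ring spectrum, the pushforward $\pi_{X*}\colon \HZ^{2n,n}(X)\to \HZ^{0,0}(k)\simeq \Z$. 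Evaluating yields $c_\omega[X,\theta_X]=\deg_{\HZ}(c_\omega(-T_{X/k}))$.

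The main technical point to check is that the $\SL$-orientation on $\nu$ used in the Pontryagin--Thom collapse agrees with the $\SL$-orientation on $T_{X/k}$ used to define $\deg_{\EE}$, in the sense that their Thom classes are multiplicative inverses under the canonical trivialization $T_{X/k}\oplus\nu\simeq \struct_X^N$. This compatibility follows formally from the multiplicativity of Thom classes and the definition of $\theta_\nu$ from $\theta_X$; once it is in place, the Pontryagin formula is obtained by repeating the previous two paragraphs with $(\HZ,c_\omega)$ replaced by $(\KQ,p_\omega)$, using the $\SL$-orientability of $\KQ$ (Appendix \ref{appendix_A}).
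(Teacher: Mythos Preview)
Your outline follows the same strategy as the paper, which simply defers to \cite[Proposition 3.3(2)]{LYZ} for the Chern case and observes that the argument carries over verbatim to the Pontryagin case. So the overall shape is right, but two technical points deserve correction.

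First, the embedding step is not quite as you describe it. A smooth projective variety $X$ cannot be embedded as a closed subscheme of $\A^N$, and an embedding $X\hookrightarrow\Proj^N$ does not directly produce a Pontryagin--Thom collapse from a motivic sphere $\Sigma^{2n,n}\sph$. The standard remedy is Jouanolou's trick: pass to an affine vector bundle torsor $\tilde{X}\to X$, which is $\A^1$-equivalent to $X$, and embed $\tilde{X}\hookrightarrow\A^N$. Your parenthetical ``or, with minor modifications, an embedding into $\A^N$'' hides a genuine construction rather than a cosmetic change.

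Second, and this is the point the paper singles out, once you are over a smooth affine base the existence of the $\SL$-classifying map $\tilde{X}\to\mathrm{B}\SL$ in the \emph{motivic} homotopy category (not merely as an algebraic map to some finite $\mathrm{B}\SL_m$) requires the $\A^1$-representability of $\SL$-torsors over smooth affine schemes \cite[Theorem 1.3]{AHW2}. Without this, the passage from the algebraic normal bundle with its $\SL$-structure to a well-defined element of $[\tilde{X},\mathrm{B}\SL]$ in $\mathbf{H}(k)$ is not justified. Your ``main technical point'' about the compatibility of the orientations on $\nu$ and $T_{X/k}$ is indeed necessary, but it is a formal multiplicativity check once the representability is in place; the representability is the substantive input you are missing.
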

\begin{proof}
    For the Chern numbers this is proved in \cite[Proposition 3.3(2)]{LYZ}, and the proof works for the Pontryagin numbers verbatim. Note that it uses $\A^1$-representability of $\SL$-torsors over smooth affine varieties; see \cite[Theorem 1.3]{AHW2}.
\end{proof}
\begin{corollary}\label{cor:calabi-yau_gen}
    Let $k$ be an infinite field of exponential characteristic $e\neq 2$. Then the ring \[\bigslant{\pi_{2*,*}(\MSL)}{\mathrm{I}_\MSL(k)}[\nicefrac{1}{2e}]\cong\Z[\nicefrac{1}{2e}][x_2,x_3,\dots]\] is generated by the classes of smooth projective Calabi--Yau varieties. If in addition $k$ is not formally real, then the ring $\pi_{2*,*}(\MSL)[\nicefrac{1}{2e}]$ is itself generated by such classes. 
\end{corollary}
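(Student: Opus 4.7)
The plan is to reduce the first assertion to the topological statement that smooth projective Calabi--Yau varieties generate $\pi_{2*}(\MSU)[1/2]\simeq\Z[1/2][x_2,x_3,\dots]$ as a $\Z[1/2]$-algebra, and to deduce the second claim from Pfister's theorem on Witt rings. By Theorem \ref{theorem:complete_modulo_imslk}, Theorem \ref{appendix:thm_msu_1/2}, and the preceding proposition, the projection $\pi_{2*,*}(\MSL)[1/2e]\twoheadrightarrow \pi_{2*,*}(\MSL)/\mathrm{I}_\MSL(k)[1/2e]\simeq \Z[1/2e][x_2,x_3,\dots]$ sends the class $[X,\theta_X]$ of a smooth projective Calabi--Yau variety over $k$ to an element whose characteristic numbers coincide with those of the topological SU-manifold $X(\C)$ (when $k=\C$). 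The same correspondence holds over general $k$ by the base-change invariance of Proposition \ref{thm:complete_answer}, applied along a chain $k\to L\hookrightarrow \C$ with $L$ algebraically closed.

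It therefore suffices to exhibit, in each complex dimension $n\geq 2$, a Calabi--Yau variety over $k$ whose SU-cobordism class (together with products of lower-dimensional Calabi--Yau classes and $\Z[1/2]$-linear combinations) is a polynomial generator of $\Z[1/2][x_2,x_3,\dots]$ in degree $n$. I would take for this the smooth degree-$(n+2)$ hypersurface $V_{n+2}\subset \Proj^{n+1}_k$, whose existence over the infinite field $k$ is guaranteed by Bertini's theorem, and which is Calabi--Yau by the adjunction formula; fix any trivialization $\theta_n$ of $\det(T_{V_{n+2}/k})$. A standard calculation with $c(T_{V_{n+2}})=(1+h)^{n+2}/(1+(n+2)h)$ produces explicit Milnor numbers $s_n[V_{n+2}]$, and, combined with products of $V_m$'s in smaller dimensions, this is expected to yield a polynomial generating set for $\pi_{2*}(\MSU)[1/2]$.

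The main obstacle is exactly this last topological generation step: one must carefully handle the divisibility of $s_n[V_{n+2}]$ at primes $p\neq 2$ and verify that products of lower-dimensional Calabi--Yau classes can correct any deficiency to produce a full polynomial generating set of $\Z[1/2][x_2,x_3,\dots]$. This is a classical computation in the spirit of Milnor's construction of polynomial generators for $\pi_*(\MU)$ by smooth projective hypersurfaces, but restricted to the Calabi--Yau subring; one may need to enlarge the initial collection $\{V_{n+2}\}$ by a few additional explicit Calabi--Yau varieties (for instance elliptic curves, K3 surfaces, and their products) to reach all polynomial generators.

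Finally, for the second claim, observe that if $k$ is not formally real, then $-1$ is a sum of squares in $k$, so by Pfister's theorem the Witt ring $\W(k)$ is $2$-primary torsion and hence $\W(k)[1/2]=0$. The product decomposition of Corollary \ref{corollary:local_away_from2} then collapses to $\pi_{2*,*}(\MSL)[1/2e]\simeq \Z[1/2e][x_2,x_3,\dots]$, which coincides with $\pi_{2*,*}(\MSL)/\mathrm{I}_\MSL(k)[1/2e]$. The first part of the statement thus applies and shows that the whole ring is generated by classes of Calabi--Yau varieties.
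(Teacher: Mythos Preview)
Your overall plan is close to the paper's, and the second claim is handled exactly as in the paper: if $k$ is not formally real then $\W(k)$ is $2$-primary torsion, so $\mathrm{I}_\MSL(k)[1/2e]=0$ and the quotient map is an isomorphism.

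For the first claim, however, there are two genuine gaps. First, the chain $k\to L\hookrightarrow\C$ you invoke does not exist when $k$ has positive characteristic; no algebraically closed field of characteristic $p>0$ embeds in $\C$. The paper avoids this by working directly with Chern numbers: it observes (from the proof of the motivic Anderson--Brown--Peterson theorem) that after inverting $2e$ the quotient $\pi_{2*,*}(\MSL)/\mathrm{I}_\MSL(k)[1/2e]$ is detected by the $\HZ$-Chern numbers, and these are computed intrinsically over $k$ by the preceding proposition, without any comparison to $\C$.

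Second, and more seriously, you yourself flag the ``main obstacle'' and leave it open: showing that the single hypersurfaces $V_{n+2}\subset\Proj^{n+1}$, together with their products, generate $\Z[1/2][x_2,x_3,\dots]$. This is not a routine Milnor-number check and in fact is not what the paper does. The paper cites \cite[Lemma~6.14]{LYZ}, which constructs the needed classes as $\Z$-linear combinations of smooth Calabi--Yau \emph{complete intersections of specified multi-degree} in products of projective spaces; a single hypersurface per dimension is insufficient. Bertini's theorem over the infinite field $k$ then guarantees that smooth representatives of each multi-degree exist. To complete your argument you would need to either reproduce that construction or cite it.
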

\begin{proof}
    From the proof of Theorem \ref{theorem:motivic_abp} it follows, that the desired quotient ring is determined by the Chern numbers. Combining this with \cite[Theorem B(3)]{LYZ}, we obtain the values of the $\HZ$-characteristic numbers that an element of $\pi_{2*,*}(\MSL)/\mathrm{I}_\MSL(k)[\nicefrac{1}{2e}]$ must have in order to represent a polynomial generator. The construction of linear combinations of smooth projective Calabi--Yau varieties with predicted Chern numbers is known from topology; see \cite[Lemma 6.14]{LYZ}. Notice that here we use Bertini's theorem to construct smooth hypersurfaces of given multi-degree, so $k$ must be infinite.

    If $k$ is not formally real, then its Witt ring is $2$-torsion; see \cite[Chapter II, Theorem 6.4(i)]{Sch}. Therefore, the ideal $\mathrm{I}_\MSL(k)[\nicefrac{1}{2e}]$ is trivial, whence the claim.
\end{proof}
\begin{remark}
    Let $X$ be a Calabi--Yau variety over a field $k$ and let $\theta_X$ be a trivialization of the determinant of its tangent bundle. Then the image of $[X,\theta_X]$ in the plus part does not depend on the choice of $\theta_X$. This happens because the Chern numbers are independent of $\theta_X$. It is reasonable to expect that the minus part is responsible for the choice of orientation via ``quadratic data''.
\end{remark}
We finish this section with a description of the classes of smooth projective Calabi--Yau varieties of dimension $\leq 2$. Note that in contrast to the previous corollary, here we work without inverting $2$. Recall the canonical morphism of $\Einf$-ring spectra $\MSL\to \KQ$ from the beginning of \S \ref{section_6-3}.
\begin{proposition}\label{prop:dim_zero_and_one}
    Suppose that $k$ is a field of exponential characteristic $e\neq 2$. 
    The morphism $\MSL\to\KQ$ induces isomorphisms (see Lemma \ref{lemma:kq_additive})
    \begin{equation*}
        \pi_{2n,n}(\MSL)[\einv]\cong \begin{cases} \GW(k)[\einv], & \text{if}\ n=0, \\ \Z/2, & \text{if}\ n=1, \\ 2\Z[\einv], & \text{if}\ n=2. \end{cases}
    \end{equation*}
    The classes of Calabi--Yau varieties of dimension $\leq 2$ map via these isomorphisms as follows:
    \begin{enumerate}
        \item Let $L_i/k$ be finite separable field extensions and view each $a_i\in L_i^\times$ as an isomorphism of line bundles $\det(T_{L_i/k})\cong \struct_{L_i}$. Then the class $[\coprod_{i=1}^m \Spec(L_i),\sum_{i=1}^m a_i]$ maps via the first isomorphism above to \begin{gather*} \sum_{i=1}^m \mathrm{Tr}_{L_i/k}(\langle a_i\rangle), \end{gather*} where $\mathrm{Tr}_{L_i/k}(-)$ is the Scharlau transfer on the Grothendieck--Witt groups $\GW(L_i)\to\GW(k)$.
        \item Let $C$ be a smooth projective curve with trivial canonical divisor $K_C=0$. Then the class of $C$ (for any choice of orientation $\theta_C$) maps via the second isomorphism above to   \begin{gather*} h^0(C,\struct_C)\ \pmod*{2}. \end{gather*} In particular, if $C$ is an elliptic curve (e.g., a smooth cubic in $\Proj^2$), then the class $[C,\theta_C]$ represents the generator $\eta\etatop$ of $\pi_{2,1}(\MSL)[\nicefrac{1}{e}]$ (see Corollary \ref{cor:2tors_multiples}).
        \item Let $S$ be a smooth projective surface with trivial canonical divisor $K_S=0$. Then the class of $S$ (for any choice of orientation $\theta_S$) maps via the third isomorphism above to
        \begin{gather*}
            2h^0(S,\struct_S)-h^1(S,\struct_S).
        \end{gather*}
        In particular, if $S$ is a K3 surface (e.g., a smooth quartic in $\Proj^3$), then the class $[S,\theta_S]$ represents a generator of $\pi_{4,2}(\MSL)[\einv]$.
    \end{enumerate}
\end{proposition}
\begin{proof}
    We implicitly invert $e$ throughout this proof. Denote by $\varphi$ the canonical morphism $\MSL\to\KQ$.  
    The map $\pi_{0,0}(\varphi)$ is bijective by Proposition \ref{prop:pi_0,0(MSL)}, while $\pi_{2,1}(\varphi)$ is an isomorphism because both groups have order $2$ and are generated by $\eta\etatop$ (see Corollary \ref{cor:2tors_multiples} and Lemma \ref{lemma:kq_generators}). In turn, for $n=2$ the homomorphism $\pi_{4,2}(\varphi)$ is a map between infinite cyclic groups (see Proposition \ref{theorem:complete_modulo_imslk} and Lemma \ref{lemma:kq_additive}), and it is enough to check that it is surjective. Instead of proving this directly, we will prove the formula stated in (3) and then conclude, because the image of a K3 surface will be a generator of $\pi_{4,2}(\KQ)$.
    
    Now we claim that the induced map $\pi_{i,j}(\varphi)\colon\pi_{i,j}(\MSL)\to\pi_{i,j}(\KQ)$ coincides with the zeroth Pontryagin number $p_0$ for any $i,j\in\Z$. Given an element $\alpha\in\pi_{i,j}(\MSL)$ we have
    $$p_0(\alpha)=\langle p_0,\eta_R(\alpha)\cap \thc\rangle = \langle \varphi, \eta_R(\alpha)\rangle=\varphi\circ\alpha,$$ where the first equality is the definition of $p_0$ (see Definition \ref{def:char_numbers}), the second one follows from $p_0=1$ and $\thc=\varphi$, and the third one can be easily checked using that $\varphi$ is a ring map and $\eta_R$ is the right unit map. 
    The claim is proven and it remains to compute $p_0[X,\theta]$ for a smooth projective Calabi--Yau variety $(X,\theta_X)$ of dimension $\leq 2$. According to Proposition \ref{prop:char_numbers}, it is given by the pushforward of $p_0(-T_{X/k})=1$ along the structure morphism. 
    \begin{enumerate}
        \item For the pushforward of $1$ in $\KQ$-cohomology along $\pi_{L_i}\colon\Spec(L_i)\to\Spec(k)$ (with orientation of $\Spec(L_i)$ given by $a_i$) see e.g., \cite[\S 8]{BW_Euler}.
        \item 
        The element $1$ in the hermitian K-theory of $C$ is represented by the symmetric object $(\struct_C,m)$ for the canonical pairing $m\colon\struct_C\otimes\struct_C\to\struct_C$.
        The pushforward of $(\struct_C,m)$ is given by the complex (see \cite[\S8D]{Levine-Raksit} for an overview of the pushforwards in the hermitian K-theory) $$H^0(C,\struct_C)\xrightarrow{0} H^1(C,\struct_C)$$ with the form
        \[\xymatrix{H^0(C,\struct_C) \ar[r]^0 \ar[d]^-\simeq & H^1(C,\struct_C) \ar[d]^-\simeq \\ H^1(C,\struct_C)^\vee \ar[r]^0 & H^0(C,\struct_C)^\vee,} \]
        where the vertical isomorphisms are induced by the Serre duality. It is easy to see that this pair is the hyperbolic form on $H^0(C,\struct_C)$. We are done in this case since the hyperbolic map $\pi_{2,1}(\mathrm{KGL})\to \pi_{2,1}(\KQ)$ is identified with the projection $\Z\twoheadrightarrow \Z/2$ (see Appendix \ref{appendix_A}).
        \item Analogously to the previous case, the pushforward is given by the complex $$H^0(S,\struct_S)\xrightarrow{0} H^1(S,\struct_S)\xrightarrow{0} H^2(S,\struct_S)$$ with a skew-symmetric form defined via a diagram similar to the one above. The Serre duality implies $$h^0(S,\struct_S)=h^2(S,\struct_S)$$ and we obtain that the rank of $(\pi_S)_*(1)$ is given by the desired formula. This case follows since the isomorphism $\pi_{4,2}(\KQ)\cong 2\Z$ is induced by the rank homomorphism. \qedhere
    \end{enumerate}
\end{proof}
\begin{remark}
        Let $k$ be a field of exponential characteristic $e\neq2$. Below we implicitly invert $e$. According to our computation and the main result of \cite{RSO19} (see also \cite{RSO24}) there is a short exact sequence \[0\to \K^{\mathrm{M}}_1(k)/24\to \pi_{2,1}(\sph)\to \pi_{2,1}(\MSL)\cong\Z/2\to 0.\] If $E$ is an elliptic curve over $k$ and $\theta_E$ is a trivialization of its tangent bundle, we can construct the class $[E,\theta_E]\in\pi_{2,1}(\sph)$ similarly to the cases of $\MSL$ and $\MGL$. Moreover, the unit map $\sph\to\MSL$ sends such a class to $[E,\theta_E]\in\pi_{2,1}(\MSL)$. Then the second claim of the previous proposition shows that $[E,\theta_E]\in\pi_{2,1}(\sph)$ is nontrivial and at least generates the right term of the above extension. This statement corresponds to the classical fact that the generator $\etatop^2$ of $\pi_2(\sph_{\mathrm{top}})\cong\Z/2$ can be represented by the torus $S^1\times S^1$ with standard framing. However, the difference between the topological picture and the motivic situation is that the surjection $\pi_2(\sph_{\mathrm{top}})\to\pi_2(\MSU)$ has a trivial kernel, unlike $\pi_{2,1}(\sph)\to \pi_{2,1}(\MSL)$.
\end{remark}
\appendix
\section{\texorpdfstring{$\Proj^1$}{P1}-diagonal of the hermitian K-theory}\label{appendix_A}
In this appendix, we recall some basic facts about the hermitian $\K$-theory spectrum and compute its geometric diagonal, see Theorem \ref{theorem:geom_diag_KQ}. We also consider the case of the very effective cover of the hermitian $\K$-theory.

Let $S$ be a regular scheme such that $2$ is invertible in $S$. Recall that there exist motivic spaces $\GW^{[n]}\in \mathbf{H}(S)$ which represent hermitian $\K$-theory; see \cite{Sch17}, \cite{ST15} and \cite{PW18}. They come with canonical equivalences $\Omega^{2,1} \GW^{[n]}\cong \GW^{[n-1]}$ and Bott periodicity isomorphisms $\GW^{[n+4]}\cong\GW^{[n]}$. The periodicity equivalences can be defined via multiplication by the Bott element $\beta\in \GW^{[-4]}_0(S)$; see \cite[Definition 5.3]{PW18}. The above data defines the \textit{hermitian $\K$-theory spectrum} $\KQ_S\in \SHk$. When $S$ is clear from the context we denote it simply by $\KQ$. This motivic spectrum admits a canonical $\Einf$-ring structure; see \cite[Lemma 7.4]{HJN}. By construction, $\Omega^\infty \Sigma^{2n,n}\KQ\cong \GW^{[n]},$ and it follows from the Bott periodicity that $\KQ$ has $(8,4)$-periodic homotopy groups. We also put $\KW:=\KQ[\eta^{-1}]$. This spectrum represents Balmer--Witt theory \cite{Balmer}; see \cite[Proposition 7.2]{Sch17} and \cite[Theorem 6.5]{Ana16b}.

Since $\KQ$ represents hermitian K-theory, there are canonical isomorphisms $\pi_{0,0}(\KQ)\cong\GW^{[0]}_0(S)$ and $\pi_{-4,-2}(\KQ)\cong \GW^{[2]}_0(S)$; see \cite[Corollary 7.3]{PW18} for a precise statement. In turn, $\GW^{[0]}_0(S)$ and $\GW^{[2]}_0(S)$ coincide with the Grothendieck--Witt groups of symmetric and skew-symmetric forms respectively; see \cite[Remark 3.14]{Sch17} and \cite[Theorem 6.1]{Walter}. In particular, if $S$ is the spectrum of a regular local ring $R$ with $1/2\in R$, we have \[\pi_{8n,4n}(\KQ)\cong\GW^{[0]}_0(R)=\GW(R)\ \ \text{and}\ \ \pi_{8n-4,4n-2}(\KQ)\cong\GW^{[2]}_0(R)\cong2\Z,\] where the first identifications are given by the representability and Bott periodicity and the last isomorphism is induced by the rank homomorphism. To move further, consider the Wood cofiber sequence (see \cite[Theorem 3.4]{RO16}) \begin{equation}\label{wood_cofib}\Sigma^{1,1}\KQ\xrightarrow{\eta}\KQ\xrightarrow{\mathrm{forg}}\mathrm{KGL}.\end{equation} Here $\mathrm{KGL}\in\SH(R)$ is the algebraic K-theory spectrum and $\mathrm{forg}$ is the forgetful morphism. The boundary map in this cofiber sequence factors as \[\mathrm{KGL}\xrightarrow{\simeq}\Sigma^{2,1}\mathrm{KGL}\xrightarrow{\Sigma^{2,1}\mathrm{hyp}}\Sigma^{2,1}\KQ,\] where $\mathrm{hyp}$ is the hyperbolic map. 
\begin{lemma}[see also {\cite[Proposition 6.3]{Sch17}}]\label{appendix_lemma_witt}
    Suppose that $R$ is a regular local ring with $1/2\in R$. Then we have natural in $R$ isomorphisms \[\pi_{2n+1,n+1}(\KQ)\cong\pi_n(\KW)\cong\W^{[-n]}(R)\cong \begin{cases} \W(R), & \text{if}\ n\equiv 0\pmod*{4}, \\ 0, & \text{otherwise}, \end{cases}\] where $\W^{[-n]}(R)$ denotes the $(-n)$-th Balmer--Witt group \cite{Balmer}. Furthermore, the multiplication by the motivic Hopf element \[\eta:\pi_{8n,4n}(\KQ)\to \pi_{8n+1,4n+1}(\KQ)\] coincides with the canonical projection $\GW(R)\twoheadrightarrow\W(R)$ under the above identifications.
\end{lemma}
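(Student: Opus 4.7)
The plan is to deduce both claims from the long exact sequence associated to the Wood cofiber sequence \ref{wood_cofib}, combined with two standard inputs: the identification $\pi_{i,j}(\mathrm{KGL}) \simeq \K_{i-2j}(R)$ obtained via the $(2,1)$-periodicity of $\mathrm{KGL}$, and Bass's vanishing $\K_{-n}(R) = 0$ for $n \geq 1$ on regular Noetherian rings.

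Applying $\pi_{i,j}(-)$ to $\Sigma^{1,1}\KQ \xrightarrow{\eta} \KQ \xrightarrow{f} \mathrm{KGL} \xrightarrow{\partial} \Sigma^{2,1}\KQ$ yields the four-term exact sequence
\[
\pi_{a+2,b+1}(\mathrm{KGL}) \xrightarrow{\partial_*} \pi_{a,b}(\KQ) \xrightarrow{\eta} \pi_{a+1,b+1}(\KQ) \xrightarrow{f_*} \pi_{a+1,b+1}(\mathrm{KGL}).
\]
For $(a,b) = (2n+1+k,\, n+1+k)$ with $k \geq 0$ the outer groups become $\K_{-k-1}(R)$ and $\K_{-k-2}(R)$ and hence vanish, so $\eta\colon\pi_{2n+1+k,\, n+1+k}(\KQ) \to \pi_{2n+2+k,\, n+2+k}(\KQ)$ is an isomorphism for every such $k$. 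The colimit description of $\eta$-localization then identifies $\pi_{2n+1,n+1}(\KQ)$ with $\pi_{2n+1,n+1}(\KW) = \pi_n(\KW)$, where the last equality uses $\eta$-periodicity of $\KW$. Combining this with the identification of $\KW$ with Balmer--Witt theory recalled just above the lemma, together with the $4$-periodicity of shifted Witt groups and the vanishing $\W^{[i]}(R) = 0$ for $i \not\equiv 0 \pmod 4$ over a regular local ring with $1/2 \in R$, proves the first claim.

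For the second claim, I specialize the same sequence to $(a,b) = (8n, 4n)$: since $\pi_{8n+1, 4n+1}(\mathrm{KGL}) = \K_{-1}(R) = 0$, the map $\eta\colon \pi_{8n, 4n}(\KQ) \simeq \GW(R) \to \pi_{8n+1, 4n+1}(\KQ) \simeq \W(R)$ is surjective and its kernel is the image of $\partial_*\colon \pi_{8n+2, 4n+1}(\mathrm{KGL}) = \K_0(R) \to \GW(R)$. Using the factorization of $\partial$ through $\Sigma^{2,1}h$ stated in the paragraph immediately preceding the lemma, this $\partial_*$ agrees (up to Bott periodicity) with the hyperbolic map $h\colon \K_0(R) \to \GW(R)$, whose image is the subgroup generated by $\la 1,-1 \ra$. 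Since $\W(R)$ is by definition the cokernel of $h$, the map $\eta$ identifies with the canonical projection $\GW(R) \twoheadrightarrow \W(R)$.

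The only delicate input is the vanishing $\W^{[i]}(R) = 0$ for $i \not\equiv 0 \pmod 4$ over regular local rings with $1/2 \in R$; this is standard but requires a careful citation to Balmer's Gersten--Witt complex together with the corresponding vanishing for fields. Once that input is in hand, the remainder is a routine exact-sequence chase.
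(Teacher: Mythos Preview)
Your proposal is correct and follows essentially the same route as the paper: both arguments run the long exact sequence of the Wood cofiber sequence, use the vanishing of negative algebraic $\K$-groups for regular rings to see that multiplication by $\eta$ stabilises from $\pi_{2n+1,n+1}(\KQ)$ onward, identify the colimit with $\pi_n(\KW)$, and then read off the second claim from the description of the boundary map via the hyperbolic map. The only cosmetic difference is that the paper cites Balmer directly for the vanishing of the shifted Witt groups of a local ring, whereas you phrase it via the Gersten--Witt complex reduction to fields.
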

\begin{proof}
    The iterated multiplication by the motivic Hopf element $\eta$ yields the sequence of homomorphisms
    \[\pi_{2n+1,n+1}(\KQ)\to\pi_{2n+2,n+2}(\KQ)\to\pi_{2n+3,n+3}(\KQ)\to\cdots\] and the colimit of this sequence coincides with the group $\pi_n(\KW)$ by definition of $\KW$ (recall our convention about $\eta$-periodic spectra, see Definition \ref{definition_eta_periodic}). By the Wood cofiber sequence and the vanishing of the negative K-groups, all maps above are isomorphisms and we obtain the first desired isomorphism. The second one follows from the representability of the Balmer--Witt theory (see references above). The computation of the Balmer--Witt groups in the local case is well-known (see e.g., \cite[Theorem 1.5.22]{Balmer}). For the last statement, consider the following exact sequence, which is induced by \eqref{wood_cofib}: \[\pi_{2,1}(\mathrm{KGL})\cong\pi_{0,0}(\mathrm{KGL})\xrightarrow{\mathrm{hyp}_*}\pi_{0,0}(\KQ)\xrightarrow{\eta}\pi_{1,1}(\KQ)\to 0.\] The hyperbolic map sends the trivial rank $1$ vector bundle to the standard hyperbolic plane by construction. The result follows from the Bott periodicity.
\end{proof}
Combining the previous lemma with the Wood cofiber sequence we obtain an exact sequence \[ \pi_{0,0}(\KQ)\xrightarrow{\mathrm{forg}_*}\pi_{0,0}(\mathrm{KGL})\to\pi_{-2,-1}(\KQ)\to 0.\] The forgetful map corresponds to the rank homomorphism under $\pi_{0,0}(\KQ)\cong \GW(R)$ and $\pi_{0,0}(\mathrm{KGL})\cong \mathrm{K}_0(R)\cong\Z$. Therefore, the group $\pi_{-2,-1}(\KQ)$ is trivial. Analogously, the hyperbolic map induces an isomorphism $\pi_{-6,-3}(\KQ)\cong\Z/2$ since skew-symmetric forms have even ranks. Combining these computations with the Bott periodicity we get the following lemma.
\begin{lemma}[see also {\cite[Theorem 10.1]{Walter}}]\label{lemma:kq_additive}
    Let $R$ be as above. Then the geometric part of the homotopy groups of $\KQ_R$ is given by \begin{equation*} \pi_{2n,n}(\KQ)\cong\begin{cases} \GW(R), & \text{if}\ n\equiv 0\pmod*{4}, \\ \Z/2, & \text{if}\ n\equiv 1\pmod*{4}, \\ 2\Z, & \text{if}\ n\equiv 2\pmod*{4}, \\ 0, & \text{if}\ n\equiv 3\pmod*{4}, \end{cases}
    \end{equation*} where the isomorphisms are constructed above. Moreover, if $f\colon R\to R'$ is a homomorphism of regular local rings in which $2$ is invertible, then the base change maps $\pi_{2n,n}(\KQ_R)\to\pi_{2n,n}(\KQ_{R'})$ are isomorphisms for $n\not\equiv 0\pmod*{4}$ and coincide with the obvious map $\GW(R)\to\GW(R')$ under the above identification for $n\equiv 0\pmod*{4}$.
\end{lemma}
\begin{proof}
    The isomorphisms are already constructed. Let us check the last item. The base change maps are as stated for $n\equiv 0,2\pmod*{4}$ since the isomorphisms $\pi_{0,0}(\KQ_R)\cong \GW^{[0]}_0(R)$ and $\pi_{-4,-2}(\KQ_R)\cong \GW^{[2]}_0(R)$ are natural in $R$. The remaining case follows by comparing the exact sequences induced by the Wood cofiber sequences over $R$ and $R'$ and using the fact that $\K_0(R)\cong\K_0(R')$.
\end{proof}
Recall that the very effective motivic stable homotopy category $\SH^{\mathrm{veff}}(S)$ is the full subcategory of $\SH(S)$ generated under colimits and extensions by suspension spectra of smooth $S$-schemes \cite[Definition 5.5]{SO12}. We denote by $\tilde{\effcov}_q(-)$ the functor of the $q$-th very effective cover; see \cite[\S 4]{Bac17}.
\begin{lemma}
    Let $S$ be a base scheme and let $\EE\in \SHk$ be a motivic spectrum over $S$. Then the canonical map $\tilde{\effcov}_m(\EE)\to \EE$ induces an isomorphism $\pi_{2n,n}(\tilde{\effcov}_q(\EE))\xrightarrow{\simeq} \pi_{2n,n}(\EE)$ for $n\geq q$.
\end{lemma}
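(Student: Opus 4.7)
The plan is to deduce the statement directly from the universal property of the very effective cover. Recall that the functor $\tilde{\effcov}_m$ is the right adjoint of the inclusion $\Sigma^{2m,m}\SH(S)^{\mathbf{veff}}\hookrightarrow \SH(S)$, and the canonical map $\tilde{\effcov}_m(\EE)\to \EE$ is its counit. Consequently, for any $X\in \Sigma^{2m,m}\SH(S)^{\mathbf{veff}}$, the counit induces a bijection
\[
[X,\tilde{\effcov}_m(\EE)]\xrightarrow{\simeq} [X,\EE].
\]
Therefore, the lemma reduces to showing that the motivic sphere $\Sigma^{2n,n}\sph$ belongs to $\Sigma^{2m,m}\SH(S)^{\mathbf{veff}}$ whenever $n\geq m$.

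The first step is to recall the standard equivalence $\Sigma^{2,1}\sph\simeq \Sigma^\infty \Proj^1$ (arising from the cofiber sequence $\Gm\to \A^1\setminus\{0\}\to\Proj^1$ or directly from the $S^{2,1}\simeq\Proj^1$ identification). For $n\geq m$ we then write
\[
\Sigma^{2n,n}\sph\simeq \Sigma^{2m,m}\Sigma^\infty(\Proj^1)^{\wedge(n-m)},
\]
so it suffices to check that $\Sigma^\infty(\Proj^1)^{\wedge(n-m)}$ is very effective. Since $\Proj^1\in \Smk$, both $\Sigma^\infty_+\Proj^1$ and its direct summand $\Sigma^\infty\Proj^1$ are very effective. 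The subcategory $\SH(S)^{\mathbf{veff}}$ is closed under smash products (its generators $\Sigma^\infty_+X$, $X\in\Smk$, form a smash-closed family under $\Sigma^\infty_+X\wedge \Sigma^\infty_+Y\simeq \Sigma^\infty_+(X\times Y)$, and smash is colimit-preserving in each variable), so iterated smash powers of $\Sigma^\infty\Proj^1$ remain very effective.

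Combining these observations, the adjunction applied to $X=\Sigma^{2n,n}\sph$ yields the bijection $\pi_{2n,n}(\tilde{\effcov}_m\EE)\xrightarrow{\simeq}\pi_{2n,n}(\EE)$ induced by the canonical counit, as required. There is no real obstacle to this proof; the statement is essentially a concrete instantiation of the universal property of $\tilde{\effcov}_m$, with the only non-formal ingredient being the routine fact that $(\Proj^1)^{\wedge k}$ is very effective for $k\geq 0$.
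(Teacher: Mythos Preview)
Your argument is correct and is precisely the unwinding of what the paper means by ``Straightforward'': the very effective cover $\tilde{\effcov}_m$ is the colocalization onto $\Sigma^{2m,m}\SH(S)^{\mathbf{veff}}$, and $\Sigma^{2n,n}\sph$ lies in this subcategory for $n\geq m$, so the counit is an isomorphism on $[\Sigma^{2n,n}\sph,-]$. There is nothing to compare; you have simply made explicit the one-line check the paper omits.
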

\begin{proof}
    The argument is similar to the one for the effective covers (see e.g., \cite[Lemma 6.6(1)]{FS18}) and we add it for the convenience of the reader. Recall that the morphism $\tilde{\effcov}_q(\EE)\to \EE$ is the counit of the adjunction
    $$ \tilde{\mathrm{i}}_q\colon \Sigma^{2q,q}\SH^{\mathrm{veff}}(S)\rightleftarrows \SHk\colon \tilde{\mathrm{r}}_q, $$
    where $\tilde{\mathrm{i}}_q$ is the inclusion. We claim that the sphere $\Sigma^{2n,n}\sph$ is contained in $\Sigma^{2q,q}\SH^{\mathrm{veff}}(S)$ for $n\geq q$. This follows from the fact that $\Sigma^{2,1}\sph\cong\Sigma^\infty(\Proj^1,\infty)$ is very effective since $\SH^{\mathrm{veff}}(S)$ is closed under the smash product \cite[Lemma 5.6]{SO12}. The result follows by the universal property of the counit.
\end{proof}
Following \cite[Definition 2.1]{ARO}, we put $\mathrm{kq}:=\tilde{\effcov}_0(\KQ)\in\SHk$. This motivic spectrum has a natural structure of an $\Einf$-ring spectrum such that $\mathrm{kq}\to \KQ$ is a morphism of $\Einf$-ring spectra; see \cite[Proposition 5.3]{GRSO}. As a special case of the previous lemma we get.
\begin{lemma}
    The canonical map $\mathrm{kq}\to\KQ$ induces an isomorphism $\pi_{2n,n}(\mathrm{kq})\xrightarrow{\simeq}\pi_{2n,n}(\KQ)$ for $n\geq 0$.
\end{lemma}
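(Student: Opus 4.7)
The plan is to apply the previous lemma directly, specialized to $\EE = \KQ$ and $m = 0$. By the definition $\mathrm{kq} := \tilde{\effcov}_0(\KQ)$ recalled just above the statement, the canonical morphism $\mathrm{kq}\to \KQ$ is nothing but the counit $\tilde{\effcov}_0(\KQ)\to \KQ$ of the very effective cover. Hence the preceding lemma, applied with this data, yields an isomorphism $\pi_{2n,n}(\tilde{\effcov}_0(\KQ))\xrightarrow{\simeq}\pi_{2n,n}(\KQ)$ for every $n\geq 0$, which is exactly what we want.

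There is essentially no obstacle: the statement is a tautological specialization of the general fact about very effective covers preserving $\Proj^1$-diagonal homotopy groups in non-negative degrees. The only thing worth noting in the write-up is that the indexing conventions match, i.e.\ that the subscript $0$ in $\mathrm{kq}=\tilde{\effcov}_0(\KQ)$ is the same $m=0$ that appears in the bound $n\geq m$ of the previous lemma. Once this is observed, the proof is a one-line invocation.
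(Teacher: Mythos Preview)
Your proposal is correct and matches the paper's own proof exactly: the lemma is stated as an immediate special case of the preceding general lemma about very effective covers, taking $\EE=\KQ$ and $m=0$. There is nothing to add.
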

Consider the element $\eta\etatop:=\eta\cdot\etatop\in \pi_{2,1}(\sph)$ given by the product of the motivic Hopf element $\eta$ and the topological Hopf element $\etatop\in \pi_{1,0}(\sph)$.
\begin{lemma}\label{lemma:kq_generators}
    Let $R$ be a regular local ring with $1/2\in R$. Then the $\GW(R)$-modules $\pi_{2n,n}(\KQ)$ are generated by \begin{enumerate} \item  $\beta^{\frac{n}{4}}$ if $n\equiv 0\pmod*{4}$, \item $\eta\etatop\cdot\beta^{\frac{n-1}{4}}$ if $n\equiv 1\pmod*{4}$, \item $H\cdot\beta^{\frac{n-2}{4}}$ if $n\equiv 2\pmod*{4}$. \end{enumerate} Here $H\in \pi_{4,2}(\KQ)$ corresponds to the standard skew-symmetric form via the identification $\pi_{4,2}(\KQ)\cong\pi_{-4,-2}(\KQ)\cong \GW^{[2]}_0(R)$.
\end{lemma}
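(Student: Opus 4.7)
The plan is to reduce to a small set of base cases via Bott periodicity and then treat each case using the additive identifications of Lemma~\ref{lemma:kq_additive} together with the Wood cofiber sequence~\ref{wood_cofib}. Multiplication by $\beta \in \pi_{8,4}(\KQ)$ is an equivalence of motivic spectra, hence induces isomorphisms of $\GW(R)$-modules $\pi_{2m,m}(\KQ) \xrightarrow{\beta\,\cdot} \pi_{2(m+4),m+4}(\KQ)$ for every $m \in \Z$. Since each proposed generator for residue class $n$ is obtained from the one for residue class $n-4$ by multiplication by $\beta$, it suffices to verify the claim for $n \in \{0,1,2,3\}$.

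The cases $n=0$ and $n=3$ are immediate: $\pi_{0,0}(\KQ) \simeq \GW(R)$ is generated by the multiplicative unit $1 = \beta^0$ via the $\Einf$-ring structure, and $\pi_{6,3}(\KQ) = 0$ by Lemma~\ref{lemma:kq_additive}. For $n=2$, the chain of isomorphisms $\pi_{4,2}(\KQ) \simeq \pi_{-4,-2}(\KQ) \simeq \GW^{[2]}_0(R) \simeq \Z$ used in the proof of Lemma~\ref{lemma:kq_additive} sends the class $H$ to the standard skew-symmetric rank-two form, which has rank $2$ and thus maps to $1$ under the rank-halving isomorphism with $\Z$; hence $H$ is a generator of the abelian group $\Z$ and a fortiori of the $\GW(R)$-module.

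The remaining case $n \equiv 1 \pmod 4$ is the delicate one: we must show that $\eta\etatop$ is non-zero in $\pi_{2,1}(\KQ) \simeq \Z/2$. From the long exact sequence associated with the Wood cofiber sequence~\ref{wood_cofib} at bidegree $(2,1)$ we extract
\[ \pi_{1,0}(\KQ) \xrightarrow{\eta} \pi_{2,1}(\KQ) \xrightarrow{f_*} \pi_{2,1}(\mathrm{KGL}) \simeq \Z \xrightarrow{h_*} \pi_{0,0}(\KQ) \simeq \GW(R), \]
where the boundary $h_*$ is the Bott-shifted hyperbolic map and sends $1$ to the hyperbolic plane $H \in \GW(R)$, which is non-torsion. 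Hence $h_*$ is injective, $f_*$ vanishes on $\pi_{2,1}(\KQ)$, and $\eta\colon \pi_{1,0}(\KQ) \twoheadrightarrow \pi_{2,1}(\KQ)$ is surjective. The element $\etatop \in \pi_{1,0}(\sph)$ gives, via the unit $\sph \to \KQ$, a class in $\pi_{1,0}(\KQ)$ whose image under $\eta$ is exactly $\eta\etatop$.

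It remains to check that this image is nonzero. The plan is to reduce via base change to the residue field of $R$ (the group $\pi_{2,1}(\KQ) \simeq \Z/2$ being stable under such base change by rigidity of Hermitian K-theory), then further to an algebraically closed extension or a real-embeddable field, whereupon non-vanishing of $\eta\etatop$ can be inferred from the analogous non-vanishing of $\etatop^2$ in $\pi_2(\KO) \simeq \Z/2$ under the real Betti realization. An alternative is to detect $\eta\etatop$ directly on the $E_2$-page of the slice (or motivic Adams) spectral sequence for $\KQ$. The main obstacle is precisely this final non-vanishing step: the other three residues reduce to direct identifications already built into Lemma~\ref{lemma:kq_additive}, whereas for $n \equiv 1 \pmod 4$ one must trace an element coming from the sphere through the Wood sequence and certify its survival, which requires either a rigidity/realization reduction or an input from a spectral-sequence computation.
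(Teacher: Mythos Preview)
Your reduction via Bott periodicity and your treatment of the residues $n\equiv 0,2,3\pmod 4$ match the paper's proof. The divergence is entirely in the case $n\equiv 1\pmod 4$, and here there is a genuine issue with the realization step.

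First, the specific suggestion of \emph{real} Betti realization fails: under real realization the motivic Hopf map $\eta\colon S^{3,2}\to S^{2,1}$ becomes the degree-$2$ map $S^1\to S^1$ (since $\A^2\setminus 0\to\Proj^1$ is $2$-to-$1$ on real points), so $\eta$ realizes to $\pm 2\in\pi_0(\sph_{\mathrm{top}})$ and $\eta\etatop$ realizes to $\pm 2\etatop=0$. What detects $\eta\etatop$ is the \emph{complex} Betti realization $\SH(\C)\to\SH$, which sends $\eta\mapsto\etatop$ and $\KQ_\C\mapsto\KO$, so that $\eta\etatop\mapsto\etatop^2\neq 0$ in $\pi_2(\KO)$. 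Second, even the corrected complex realization is only available in characteristic zero; for regular local rings of positive equicharacteristic there is no map to $\C$ to exploit, so the argument as stated does not cover all cases in the lemma.

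The paper closes this gap differently. Over a field $R$ of characteristic $\neq 2$ it passes to $\mathrm{kq}=\tilde{\effcov}_0(\KQ)$ and invokes the computation of the first stable homotopy module of the motivic sphere (R\"ondigs--Spitzweck--{\O}stv{\ae}r) together with R\"ondigs' identification of the relevant piece of $\mathrm{kq}$, to conclude that $\eta\etatop$ generates $\pi_{2,1}(\mathrm{kq})[1/e]$; since $\pi_{2,1}(\KQ)\simeq\Z/2$ is unchanged by inverting $e$, this gives the integral statement. For a general regular local ring $R$ it then base changes to the fraction field (not the residue field), using that $\pi_{2,1}(\KQ)\simeq\Z/2$ is stable under this base change by Lemma~\ref{lemma:kq_additive}. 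Your alternative of detecting $\eta\etatop$ via the slice spectral sequence for $\KQ$ is a viable route in the same spirit and would work uniformly in all characteristics; it is essentially what the cited references encode. Your Wood-sequence argument showing $\eta\colon\pi_{1,0}(\KQ)\twoheadrightarrow\pi_{2,1}(\KQ)$ is surjective is correct but not needed, since for a target $\Z/2$ ``nonzero'' already means ``generator''.
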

\begin{proof}
    Since the multiplication by the Bott element $\beta\in \pi_{8,4}(\KQ)$ induces $(8,4)$-periodicity of the homotopy groups of $\KQ$, we need to deal with $\pi_{0,0}(\KQ)$, $\pi_{2,1}(\KQ)$, and $\pi_{4,2}(\KQ)$. The first case is obvious and the last one follows tautologically by the definition of $H\in \pi_{4,2}(\KQ)\cong \GW^{[2]}_0(R)\cong 2\Z$. Thus it remains to show that $\eta\etatop$ generates $\pi_{2,1}(\KQ)$.
    
    First, let $R$ be a field of characteristic different from $2$. By the previous lemma, we need to show that $\eta\etatop$ generates $\pi_{2,1}(\mathrm{kq})$. From the computation of the first homotopy module of the sphere spectrum, $\eta\etatop$ generates $\pi_{2,1}(\mathrm{kq})[\einv]$, where $e$ is the exponential characteristic of $R$; see \cite{RSO19, RSO24} and \cite[Theorem 2.5]{Ron20}. The same is true integrally as well, since inverting of $e$ does not change the group $\pi_{2,1}(\mathrm{kq})\cong \pi_{2,1}(\KQ)\cong \Z/2$ by Lemma \ref{lemma:kq_additive}. 

    In general, the base change induces an isomorphism $\pi_{2,1}(\KQ_R)\to \pi_{2,1}(\KQ_{\mathrm{Frac}(R)})$ by Lemma \ref{lemma:kq_additive}. The result then follows because the element $\eta\etatop$ is compatible with base change.
\end{proof}
\begin{corollary}\label{cor:etatop_iso}
    Let $R$ be as above. Then the following diagram commutes \[\xymatrix{\pi_{8n+1,4n+1}(\KQ) \ar[r]^-\etatop \ar[d]^\simeq & \pi_{8n+2,4n+1}(\KQ) \ar[d]^\simeq \\ \W(R) \ar@{->>}[r]^{\overline{\mathrm{rk}}} & \Z/2,}\] where the vertical isomorphisms are taken from Lemmas \ref{appendix_lemma_witt}, \ref{lemma:kq_additive}. In particular, if $R$ is a quadratically closed field then the map $\etatop\colon\pi_{8n+1,4n+1}(\KQ)\to \pi_{8n+2,4n+1}(\KQ)$ is an isomorphism.
\end{corollary}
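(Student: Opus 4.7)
The plan is to reduce to the $n=0$ case via Bott periodicity, rewrite $\etatop\cdot\alpha$ as an action of $\GW(R)$ on $\eta\etatop\in\pi_{2,1}(\KQ)$, and then identify this action as reduction of the rank modulo $2$ by a short unit-counting argument inside $\GW(R)$.

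First, since the Bott element $\beta\in\pi_{8,4}(\KQ)$ is central in the graded-commutative ring $\pi_{*,*}(\KQ)$, multiplication by $\beta^n$ intertwines the action of $\etatop$ in degrees $(1,1)\to(2,1)$ with the action in degrees $(8n+1,4n+1)\to(8n+2,4n+1)$. The vertical identifications of the diagram, built from Lemmas~\ref{appendix_lemma_witt} and~\ref{lemma:kq_additive}, are also realized by $\beta^n$, so it suffices to prove the statement for $n=0$.

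Second, for $\alpha\in\pi_{1,1}(\KQ)\simeq\W(R)$, Lemma~\ref{appendix_lemma_witt} identifies the projection $\GW(R)=\pi_{0,0}(\KQ)\twoheadrightarrow\W(R)=\pi_{1,1}(\KQ)$ with multiplication by $\eta$. Pick any lift $\tilde\alpha\in\GW(R)$ of $\alpha$. By associativity and graded-commutativity of $\pi_{*,*}(\KQ)$ (noting that $\tilde\alpha$ sits in even total degree),
\[\etatop\cdot\alpha=\etatop\cdot\eta\cdot\tilde\alpha=\tilde\alpha\cdot\eta\etatop\in\pi_{2,1}(\KQ).\]
By Lemma~\ref{lemma:kq_generators}(2), the class $\eta\etatop$ generates $\pi_{2,1}(\KQ)\simeq\Z/2$.

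Third, the $\GW(R)$-module structure on $\pi_{2,1}(\KQ)\simeq\Z/2$ is governed by a ring homomorphism $\rho\colon\GW(R)\to\End_\Z(\Z/2)=\Z/2$, and $\rho(1)=1$. For any $a\in R^\times$ the form $\la a\ra$ satisfies $\la a\ra\cdot\la a\ra=\la a^2\ra=1$ in $\GW(R)$, so $\rho(\la a\ra)$ is a unit of $\Z/2$ and hence equals $1$. Therefore $\la a\ra-1\in\ker\rho$ for every $a$, so $\mathrm{I}(R)\subset\ker\rho$; consequently $\rho$ factors through $\GW(R)/\mathrm{I}(R)\simeq\Z$ and must coincide with rank modulo $2$. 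Since the hyperbolic form has rank $2$, this further descends to $\overline{\rk}\colon\W(R)\to\Z/2$, giving $\etatop\cdot\alpha=\rho(\tilde\alpha)\cdot\eta\etatop=\overline{\rk}(\alpha)\cdot\eta\etatop$, i.e. the commutativity of the square. The \emph{in particular} statement is then immediate: if $R$ is quadratically closed, $\W(R)=\Z/2$ with $\overline{\rk}$ the identity, so $\etatop$ is an isomorphism.

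The only nontrivial step is the third: identifying the $\GW(R)$-action on $\pi_{2,1}(\KQ)\simeq\Z/2$ as rank modulo $2$. The main obstacle there would be an alternative, non-rank ring map $\GW(R)\to\Z/2$; this is ruled out directly by the observation that every $\la a\ra$ is a unit of order dividing $2$ in $\GW(R)$ and must therefore hit the unique unit of $\Z/2$.
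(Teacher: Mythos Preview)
Your argument is correct and takes a genuinely different route from the paper. The paper first observes that $\eta\etatop\colon\pi_{8n,4n}(\KQ)\to\pi_{8n+2,4n+1}(\KQ)$ is surjective (from Lemma~\ref{lemma:kq_generators}), deduces that $\etatop$ is surjective, verifies the diagram for quadratically closed fields by a cardinality count, and then propagates to arbitrary $R$ by base change. You instead work directly over $R$: after reducing to $n=0$ by Bott periodicity, you rewrite $\etatop\cdot\alpha$ as the $\GW(R)$-action of a lift $\tilde\alpha$ on the generator $\eta\etatop$, and then pin down the ring map $\GW(R)\to\Z/2$ by the elementary observation that each $\la a\ra$ is an involution and hence must hit $1$. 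Your approach avoids any appeal to base change and gives a uniform, self-contained algebraic argument; the paper's approach is terser once the base-change machinery and the quadratically closed case are in hand, and fits the pattern used repeatedly elsewhere in the text. One minor remark: in your commutativity manipulation $\etatop\cdot\eta\cdot\tilde\alpha=\tilde\alpha\cdot\eta\etatop$, the swap of $\etatop$ and $\eta$ may introduce a sign according to the motivic graded-commutativity rule, but this is harmless since the target is $\Z/2$.
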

\begin{proof}
    By the previous lemma the natural map $\eta\etatop\colon\pi_{8n,4n}(\KQ)\to \pi_{8n+2,4n+1}(\KQ)$ is surjective. Since it factors as \[\pi_{8n,4n}(\KQ)\xrightarrow{\eta}\pi_{8n+1,4n+1}(\KQ)\xrightarrow{\etatop}\pi_{8n+2,4n+1}(\KQ),\] the second homomorphism is a surjection as well. Hence, by Lemma \ref{appendix_lemma_witt} the desired diagram commutes if $R$ is a quadratically closed field. The general case follows by base change.
\end{proof}
\begin{theorem}\label{theorem:geom_diag_KQ}
    Let $R$ be a regular local ring with $1/2\in R$. Then there is an isomorphism of graded $\GW(R)$-algebras \[\pi_{2*,*}(\KQ)\cong \bigslant{\GW(R)[\eta\etatop,H,\beta^{\pm 1}]}{I},\] where the ideal $I$ is generated by the relations \[ 2\cdot\eta\etatop,\ (\eta\etatop)^2,\ \mathrm{I}(R)\cdot\eta\etatop,\ \eta\etatop\cdot H,\ \mathrm{I}(R)\cdot H,\ H^2-2h\cdot\beta.\]
\end{theorem}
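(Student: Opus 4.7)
The plan is to define a graded $\GW(R)$-algebra homomorphism
\[
\phi \colon \GW(R)[\eta\etatop, H, \beta^{\pm 1}]/I \to \pi_{2*,*}(\KQ)
\]
sending each named generator to its counterpart in $\pi_{2*,*}(\KQ)$, verify well-definedness by checking the six generating relations of $I$, and then establish bijectivity by comparing with the additive computation of Lemma \ref{lemma:kq_additive} and the module generators exhibited in Lemma \ref{lemma:kq_generators}.

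Four of the relations are essentially formal consequences of Lemma \ref{lemma:kq_additive}. The identity $2\cdot\eta\etatop=0$ holds because $\pi_{2,1}(\KQ)\simeq\Z/2$, while $\eta\etatop\cdot H=0$ holds because $\pi_{6,3}(\KQ)=0$. For $(\eta\etatop)^2=0$ one observes that $(\eta\etatop)^2$ is necessarily $2$-torsion (from $2\cdot\eta\etatop=0$) and lives in the torsion-free group $\pi_{4,2}(\KQ)\simeq\Z$. The two ideal relations $\mathrm{I}(R)\cdot\eta\etatop=0$ and $\mathrm{I}(R)\cdot H=0$ reduce to identifying the $\GW(R)$-module structure on $\pi_{2,1}(\KQ)$ and $\pi_{4,2}(\KQ)$: multiplication by $\eta\etatop$ on $\pi_{0,0}(\KQ)\simeq\GW(R)$ factors as $\GW(R)\twoheadrightarrow\W(R)\xrightarrow{\overline{\mathrm{rk}}}\Z/2$ by Lemma \ref{appendix_lemma_witt} and Corollary \ref{cor:etatop_iso}, while multiplication by $H$ factors through the rank map, using the multiplicative behavior of ranks under tensor product of symmetric and skew-symmetric forms; in both cases the fundamental ideal lies in the kernel.

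The substantive relation is $H^2=2h\beta$. The direct approach is to compute $H\otimes H$ as the tensor product of two copies of the standard rank-$2$ skew-symmetric form and observe that the resulting rank-$4$ symmetric form is explicitly isomorphic after change of basis to $2\langle 1,-1\rangle=2h\in\GW(R)$, then transport this identity to $\pi_{8,4}(\KQ)$ via Bott periodicity. An alternative less matrix-dependent route uses the Wood cofiber sequence \ref{wood_cofib} and the projection formula for the $\KQ$-module structure on $\mathrm{KGL}$: writing $H=h_*(\beta_{\mathrm{KGL}})$ for the hyperbolic image of a generator of $\pi_{4,2}(\mathrm{KGL})$, and using the compatibilities $f_*\beta=\beta_{\mathrm{KGL}}^2$ and $f_*H=2\beta_{\mathrm{KGL}}$, one obtains $H\cdot H=h_*(\beta_{\mathrm{KGL}}\cdot f_*H)=h_*(2\beta_{\mathrm{KGL}}^2)=2h\cdot\beta$.

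Once $\phi$ is well-defined, surjectivity is immediate from Lemma \ref{lemma:kq_generators}. For injectivity, the relations of $I$ allow each monomial in $\eta\etatop, H, \beta, \beta^{-1}$ with coefficients in $\GW(R)$ to be reduced to a normal form of type $a\beta^m$, $\bar a(\eta\etatop)\beta^m$, or $\bar a H\beta^m$ with $\bar a$ in the appropriate quotient of $\GW(R)$, and a direct degree-by-degree comparison with the $\Z/4$-periodic additive pattern of Lemma \ref{lemma:kq_additive} shows that $\phi$ is bijective on each graded piece. The principal obstacle of this program is therefore the multiplicative identity $H^2=2h\beta$, together with careful bookkeeping of which subgroup of $\GW(R)$ acts trivially on each of the generators $\eta\etatop$ and $H$.
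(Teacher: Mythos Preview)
Your proof follows essentially the same approach as the paper's: define the map on generators, verify the six relations, and deduce bijectivity from Lemmas \ref{lemma:kq_additive} and \ref{lemma:kq_generators}. The one notable difference is the treatment of $H^2=2h\beta$: the paper simply cites \cite[5.2.d]{FH20}, whereas you supply two direct arguments (explicit tensor of the skew-symmetric hyperbolic plane with itself, or a projection-formula computation via the Wood sequence). Both of your sketches are reasonable, though the second one implicitly uses that the hyperbolic map is a $\KQ$-module morphism; if you pursue that route you should justify this (it is standard, but not stated in the paper). Otherwise the argument is correct and matches the paper's own proof in structure and in the individual verifications of the relations.
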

\begin{proof}
    Let us check the above relations. The first two of these follow from Lemma \ref{lemma:kq_additive}, and $\mathrm{I}(R)\cdot\eta\etatop=0$ holds by the proof of the previous corollary and Lemma \ref{appendix_lemma_witt}. The element $\eta\etatop\cdot H$ lies in $\pi_{6,3}(\KQ)=0$. The relation $\mathrm{I}(R)\cdot H=0$ is a consequence of the identification $\pi_{4,2}(\KQ)\cong \GW^{[2]}_0(R)$. For the last one see \cite[5.2.d]{FH20}.

    Therefore, there is a surjective homomorphism of the graded $\GW(R)$-algebras from the respective quotient onto $\pi_{2*,*}(\KQ)$. This map is an isomorphism by Lemmas \ref{lemma:kq_additive} and \ref{lemma:kq_generators}.
\end{proof}
\begin{remark}
    Under the same assumptions on $R$ there is an isomorphism \[\pi_{2*,*}(\mathrm{kq})\cong \bigslant{\GW(R)[\eta\etatop,H,\beta]}{I},\] where the ideal $I$ is generated by the relations above. Moreover, the canonical map $\mathrm{kq}\to \KQ$ induces an isomorphism $\pi_{2*,*}(\KQ)\cong \pi_{2*,*}(\mathrm{kq})[\beta^{-1}]$; see e.g., \cite[Proposition 7.7]{HJN}.
\end{remark}
\begin{remark}\label{appendix_a_remark_betti_real}
    The answer in the theorem is quite analogous to the even homotopy groups of the real $\K$-theory spectrum $\mathrm{KO}$. If $R=\C$ the complex Betti realization sends $\KQ_\C$ to $\mathrm{KO}$ and induces an isomorphism $\pi_{2*,*}(\KQ_\C)\xrightarrow{\simeq} \pi_{2*}(\mathrm{KO})$. Similarly, $\BettiC(\mathrm{kq}_\C)\cong \mathrm{ko}$ and $\BettiC\colon\pi_{2*,*}(\mathrm{kq}_\C)\xrightarrow{\simeq} \pi_{2*}(\mathrm{ko})$; see \cite[Lemma 2.13]{ARO} for the first equivalence.
\end{remark}
\section{\texorpdfstring{$c_1$}{c1}-spherical cobordism spectrum}\label{appendix_B}
In this appendix we summarise all topological results used in the main part of the text, without any claim to originality. First, we recall the construction of the Thom functor and define the $c_1$-spherical cobordism spectrum. Then we present the main parts of the computation of the homotopy groups of $\MSU$ due to Novikov \cite{Nov}, Conner and Floyd \cite{CF}. Finally, we briefly recall the Anderson--Brown--Peterson theorem on characteristic numbers of $\mathrm{SU}$-manifolds \cite{ABP66}.
\subsection{Thom functor and \texorpdfstring{$c_1$}{c1}-spherical cobordism spectrum}
Denote by $\Spc$ the $\infty$-category of spaces. Recall that the Thom functor is a functor $\mathrm{M}\colon\Spc_{/\Pic(\Spt)}\to \Spt$ given by the formal colimit construction \[\mathrm{M}(f\colon X\to \Pic(\Spt)):=\colim (X\xrightarrow{f} \Pic(\Spt)\hookrightarrow \Spt).\] To see the relation with the classical Thom spectra, let us consider the $\mathrm{J}$-homomorphism $\mathrm{J}\colon\BO\to \Pic(\Spt)$. The spectrum $\mathrm{M(J)}$ is equivalent to the classical Thom spectrum built out of the orthogonal groups \[\mathrm{M}(\mathrm{J}\colon\BO\to\Pic(\Spt))\cong \mathrm{MO}.\] To recover other Thom spectra, which are usually defined by a topological group $G$ (e.g., $\mathrm{U}$, $\mathrm{SO}$, $\mathrm{SU}$, $\mathrm{Sp}$), we need to apply $\mathrm{M}$ to the composition $\mathrm{B}G\to \BO\to\Pic(\Spt)$.

Denote by $\mathrm{BW}$ the fiber of the following morphism of spaces \[\BU\times\CP^1\xrightarrow{\det-\,\mathrm{in}}\CP^\infty.\] Here we use the $\Einf$-space structure on $\CP^\infty$ to take the difference of two morphisms. The space $\mathrm{BW}$ comes equipped with the forgetful maps $\mathrm{BSU}\to \mathrm{BW}\to \BU$. We also denote by $\mathrm{BW}(n)$ the fiber of \[\BU(n)\times \CP^1\xrightarrow{\det-\,\mathrm{in}}\CP^\infty\] and by $\mathrm{TBW}(n)$ the Thom space of the vector bundle classified by $\mathrm{BW}(n)\to \BU(n).$ Obviously, we have \[\mathrm{BW}\cong \colim_n \mathrm{BW}(n).\]
\begin{remark}\label{remark:bwn_s1bundle}
    It follows from $\CP^\infty\cong\mathrm{B}S^1$ that $\mathrm{BW}(n)$ is equivalent to the total space of the principal $S^1$-bundle $S(\det\mathrm{EU}(n)\boxtimes \struct(1))$ over $\BU(n)\times \CP^1$.
\end{remark}
\begin{definition}
    The \textit{$c_1$-spherical cobordism spectrum} $\W$ is the Thom spectrum associated with \[\mathrm{BW}\to \BU\to \BO\xrightarrow{\mathrm{J}}\Pic(\Spt).\] By construction, there are canonical forgetful morphisms  $$\MSU\xrightarrow{\forg}\W\xrightarrow{\bforg}\MU.$$
\end{definition}
Now we briefly describe cobordism theory that corresponds to the $c_1$-spherical cobordism spectrum under the Pontryagin--Thom construction. Let $M$ be a stable complex manifold with structure map $\xi\colon M\to \BU$. The $\CP^1$-structure on $(M,\xi)$ is a lift of $\xi$ to a map $M\to \mathrm{BW}$. This data is equivalent to a morphism $l\colon M\to \CP^1$ with an isomorphism $\det(\xi^*(\mathrm{EU}))\cong l^*(\struct(-1))$. Roughly speaking, it means that the determinant of the stable normal bundle, which comes from $\CP^\infty$ by formal reasons, actually comes from $\CP^1\hookrightarrow\CP^\infty$. Using the standard notion of cobordism in this context, we obtain the cobordism groups of manifolds with $\CP^1$-structure $\Omega^{\CP^1}_*$.
\begin{theorem}
   The Pontryagin--Thom construction gives an isomorphism $\Omega^{\CP^1}_*\cong\pi_*(\W)$.
\end{theorem}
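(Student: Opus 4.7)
The plan is to invoke the standard Pontryagin--Thom construction in its generalized form, due to Lashof: for any map $f\colon B\to \BO$, the bordism group of closed smooth $n$-manifolds equipped with a $B$-structure (i.e.\ a lift of the stable normal Gauss map to $B$) is canonically isomorphic to $\pi_n(\mathrm{M}f)$, where $\mathrm{M}f$ is the Thom spectrum of the stable vector bundle classified by $f$. A detailed exposition can be found in Stong's book.

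I would apply this theorem to the composition $f\colon \mathrm{BW}\to \BU\to \BO$. By the definition given just above, the associated Thom spectrum is precisely $\W$, so that $\pi_n(\W)$ is identified with the bordism group of $n$-manifolds carrying a $\mathrm{BW}$-structure. It remains to check that such a $\mathrm{BW}$-structure on $M$ is the same data as a $\CP^1$-structure in the sense of the theorem. This follows by unwinding the defining fiber sequence $\mathrm{BW}\to \BU\times \CP^1\xrightarrow{\det -\mathrm{in}} \CP^\infty$: giving a lift $M\to \mathrm{BW}$ of the stable classifier $\xi\colon M\to \BU$ amounts to giving a map $l\colon M\to \CP^1$ together with a nullhomotopy of $\det\circ\,\xi - \mathrm{in}\circ\, l\colon M\to \CP^\infty$. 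Since $\CP^\infty$ classifies complex line bundles, such a nullhomotopy is exactly the datum of an isomorphism $\det(\xi^*\mathrm{EU})\simeq l^*\struct(-1)$, which is the $\CP^1$-reduction.

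After this matching of structures, the remaining step is to verify that the equivalence relation used to define $\Omega^{\CP^1}_*$ coincides with the $B$-bordism relation produced by the general theorem. This is routine: applying the same fiber-sequence analysis fibrewise over a cobordism $W$ and restricting to $\partial W$ shows that the two notions of bordism agree. The only mild subtlety is the standard distinction between tangential and normal structures, which is harmless here since the map $\mathrm{BW}\to \BU$ is stable and the Whitney sum trivialization $T_M\oplus \nu_M\simeq \struct^N$ translates between the two viewpoints. I do not foresee a genuine obstacle; the statement is a textbook application of the generalized Pontryagin--Thom theorem to the tangential structure $\mathrm{BW}$, and the real content lies only in the recognition that the ad hoc definition of $\CP^1$-structure matches the abstract $\mathrm{BW}$-lift.
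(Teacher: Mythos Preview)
Your proposal is correct and matches the paper's approach: the paper's proof is simply a citation to \cite[Chapter VIII]{Sto}, and what you have written is precisely an outline of the general Pontryagin--Thom theorem for $B$-structures found there, specialized to $B=\mathrm{BW}$. The identification of a $\mathrm{BW}$-lift with a $\CP^1$-structure via the defining fiber sequence is exactly the point, and you have unwound it correctly.
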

\begin{proof}
    \cite[Chapter VIII]{Sto}
\end{proof}
\begin{proposition}\label{appendix:prop_cofib_1}
    There is a cofiber sequence $\Sigma^1\MSU\xrightarrow{\etatop} \MSU\xrightarrow{\forg} \W$ in $\Spt$, where $\etatop\in \pi_1(\MSU)$ is the Hopf element.
\end{proposition}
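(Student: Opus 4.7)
The plan is to mirror, in the topological setting, the argument that produced the motivic cofiber sequence in Corollary \ref{cor:cofiber}. The main step is to exhibit a splitting of the fiber space $\mathrm{BW}$ and then pass to Thom spectra. Concretely, I would start from the fiber sequence $\mathrm{BSU} \to \BU \xrightarrow{\det} \CP^\infty$, which admits a canonical section coming from the inclusion of line bundles. Pulling this back along $\mathrm{in}\colon \CP^1 \hookrightarrow \CP^\infty$, the definition of $\mathrm{BW}$ gives a fiber sequence $\mathrm{BSU} \to \mathrm{BW} \to \CP^1$ with a preferred section $t\colon \CP^1 \to \mathrm{BW}$. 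The action of $\mathrm{BSU}$ on itself then yields an equivalence $\mathrm{BSU} \times \CP^1 \xrightarrow{\simeq} \mathrm{BW}$, exactly analogous to the motivic equivalence $\KSL \times \Proj^1 \simeq \KWall$.

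Next I would apply the Thom functor to this splitting. Writing $\ggl_\CP^1$ for the tautological line bundle on $\CP^1$, the compatibility of $\mathrm{BW}$ with line bundle data identifies the associated Thom spectrum as $\W \simeq \MSU \wedge \Th_{\CP^1}(\ggl_\CP^1 \ominus \struct)$. The next step is to identify this second factor. Using the standard cofibration $\C^2 \setminus\{0\} \to \CP^1 \to \CP^2$, the zero-section cofiber gives $\Th_{\CP^1}(\ggl_\CP^1) \simeq \CP^2 / \mathrm{pt}$. Since $\CP^2/\mathrm{pt} \simeq S^2 \cup_{\etatop} D^4$, we obtain $\Sigma^{\infty - 2}(\CP^2/\mathrm{pt}) \simeq \cofib(\etatop\colon \Sigma^1 \sph \to \sph) =: \sph/\etatop$. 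Hence $\W \simeq \MSU \wedge \sph/\etatop$, which unpacks to the desired cofiber sequence $\Sigma^1 \MSU \xrightarrow{\etatop} \MSU \to \W$.

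The only non-trivial bookkeeping is checking that the resulting map $\MSU \to \W$ genuinely agrees with the canonical map $c$ induced by $\mathrm{BSU} \to \mathrm{BW}$. This is the topological counterpart of the final compatibility lemma before Corollary \ref{cor:cofiber}, and is handled by writing out the diagonal map $\mathrm{BSU} = \mathrm{BSU} \times \mathrm{pt} \to \mathrm{BSU} \times \CP^1 \xrightarrow{\simeq} \mathrm{BW}$ through the base-point of $\CP^1$ and verifying that $t$ restricted to the base-point is nullhomotopic in $\mathrm{BW}$.

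An alternative, even shorter route is to apply the complex Betti realization $\BettiC\colon \SH(\C) \to \SH$ directly to the cofiber sequence of Corollary \ref{cor:cofiber} with $S = \Spec(\C)$. Since $\BettiC$ is symmetric monoidal and preserves colimits, and since $\BettiC(\MSL_\C) \simeq \MSU$, $\BettiC(\MW_\C) \simeq \W$ by Proposition \ref{prop:wall_colim}, $\BettiC(\Sigma^{1,1}\sph) \simeq \Sigma^1 \sph$ and $\BettiC(\eta) = \etatop$, the statement follows formally. I would expect the main subtlety in either proof to be the identification of the induced map $\MSU \to \W$ with the geometric map $c$, which is ultimately the same check of compatibility between the splitting of $\mathrm{BW}$ and the inclusion $\mathrm{BSU} \hookrightarrow \mathrm{BW}$.
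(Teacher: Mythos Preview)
Your proposal is correct. The paper does not actually prove this statement: its entire proof is the citation \cite[Proposition 2.2]{CP23}. Both of your approaches are valid. The first (splitting $\mathrm{BW}\simeq \mathrm{BSU}\times\CP^1$ and identifying $\Th_{\CP^1}(\struct(-1)\ominus\struct)\simeq\sph/\etatop$ via $\CP^2\simeq\cofib(\etatop\colon S^3\to S^2)$) is the direct topological argument and is presumably close to what appears in the cited reference; it is self-contained and does not depend on any of the motivic machinery. The second (applying $\BettiC$ to Corollary~\ref{cor:cofiber}) is a clean shortcut that is logically available here, since Corollary~\ref{cor:cofiber} is proved independently of Appendix~\ref{appendix_B}; its only cost is that it imports the motivic argument to establish a purely topological fact. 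One small notational quibble: $\Th_{\CP^1}(\struct(-1))$ is already the pointed space $\CP^2$ (with basepoint the image of the collapsed complement), so writing $\CP^2/\mathrm{pt}$ is redundant, but the cell-structure identification you give is correct.
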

\begin{proof}
    \cite[Proposition 2.2]{CP23}
\end{proof}
We put $\delta:=(\Sigma^2\forg)\circ d\colon\W\to\Sigma^2\W$, where $d$ is the boundary morphism in the above cofiber sequence.
\begin{lemma}\label{appendix:lemma_boundary}
    Let $\partial\in \MU^2(\MU)$ be a cohomological operation that corresponds to the characteristic class $c_1(\det\mathrm{EU}^\vee)$ under the Thom isomorphism $\MU^*(\MU)\cong \MU^*(\BU)$. Then the following diagram commutes up to homotopy
    \[\xymatrix{\W \ar[r]^-{\delta} \ar[d]_-{\bforg} & \Sigma^{2}\W \ar[d]^-{\Sigma^{2}\bforg} \\ \MU \ar[r]^-{-\partial} & \Sigma^{2}\MU. }\]
\end{lemma}
\begin{proof}
    \cite[17.3]{CF}, see also \cite[Proposition 2.5]{CP23}
\end{proof}
\begin{proposition}\label{appendix:split_cofib}
    The forgetful morphism $\bforg$ induces the following cofiber sequence of spectra  \[\W\xrightarrow{\bforg} \MU\xrightarrow{\Delta}\Sigma^4\MU,\] where $\Delta$ is the operation that corresponds to $c_1(\det\mathrm{EU})\cdot c_1(\det\mathrm{EU}^\vee)$ under the Thom isomorphism $\MU^*(\MU)\cong \MU^*(\BU)$. Moreover, $\Delta$ has a right inverse, and the cofiber sequence splits. 
\end{proposition}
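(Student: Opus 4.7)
The plan is to transport the motivic argument of Theorem \ref{thm:fiber_seq} to the topological setting, and then to handle the splitting---which has no motivic counterpart---by combining a Chern-number calculation with the polynomial structure of $\MU^*(\MU)$.

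First, for the cofiber sequence, I would follow the proof of Theorem \ref{thm:fiber_seq} essentially verbatim in $\SH$. The classical splitting $\mathrm{BSU}\times\CP^\infty\simeq\BU$ gives $\MU\simeq\MSU\wedge\Th_{\CP^\infty}(\struct(-1)\ominus\struct)$, the topological counterpart of Corollary \ref{corollary:mgl=mslxpinf}, and Proposition \ref{appendix:prop_cofib_1} provides the analogue $\W\simeq\MSU\wedge\Th_{\CP^1}(\struct(-1)\ominus\struct)$ of Theorem \ref{thm:msl/eta=mwl}. The cofiber of $\bar{c}$ is then $\MSU\wedge\Sigma^{\infty-2}(\CP^\infty/\CP^1)$. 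The topological version of Lemma \ref{lemma:purity_proj_spaces} identifies $\CP^\infty/\CP^1$ with $\Th_{\CP^\infty}(\struct(1)^3)$; passing to dual line bundles and applying the Thom isomorphism of Lemma \ref{lemma:thom_iso_rev} for $\MSU$ then collapses the cofiber to $\Sigma^4\MU$. The identification of the resulting operation with $c_1(\det\mathrm{EU})\cdot c_1(\det\mathrm{EU}^\vee)$ follows by tracking the Thom class through the chain of equivalences using $c_1(L^\vee)=\chi(c_1(L))$, exactly as in the final diagram of the proof of Theorem \ref{thm:fiber_seq}.

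For the splitting, the natural starting point is to exhibit a class $\sigma\in\pi_4(\MU)$ with $\Delta_*(\sigma)=1\in\pi_0(\MU)=\Z$. Since $c_1(\det\mathrm{EU})\cdot c_1(\det\mathrm{EU}^\vee)=-c_1^2+O(c_1^3)$, one computes $\Delta_*[\CP^2]=-9$ and $\Delta_*[\CP^1\times\CP^1]=-8$, so $\sigma=[\CP^1\times\CP^1]-[\CP^2]$ works; this $\sigma$ defines an $\MU$-module map $s\colon\Sigma^4\MU\to\MU$ with $\Delta_*\circ s_*=\id$ in degree zero. The main obstacle is to promote this to $\Delta\circ s=\id$ as spectrum maps: because $\Delta$ is not $\MU$-linear, the composition lives in $\MU^0(\MU)\simeq\MU^0(\BU)=\MU^*[[c_1,c_2,\dots]]$, and its higher Chern-class components must be shown to reduce to the unit. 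I would handle this by adjusting $\sigma$ iteratively by higher-degree correction terms from the polynomial basis of $\pi_*(\MU)$, using the coproduct formula $\psi(c_1)=c_1\otimes 1+1\otimes c_1$ to control the effect on each component, or alternatively by appealing to the explicit analysis in \cite[\S 19]{CF}, where the splitting is produced as a byproduct of the computation of $\Omega_*^{\CP^1}$.

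The cofiber-sequence part is formal once the motivic proof is adapted, and the identification of $\Delta$ is a routine characteristic-class computation. The substantive content is the splitting: it relies on the polynomial structure of $\MU^*(\MU)$ over $\MU^*$ and the torsion-freeness of $\pi_*(\MU)$, features without direct motivic analogue, consistent with the remark following Theorem \ref{theorem_A} that the motivic cofiber sequence is expected to split only over some bases.
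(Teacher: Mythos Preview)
The paper does not actually prove this proposition; it simply cites \cite[Proposition 2.11]{CP23}. So there is no in-paper argument to compare against, but your proposal deserves comment on its own merits.

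Your treatment of the cofiber sequence is correct: the chain of equivalences in Theorem~\ref{thm:fiber_seq} transports verbatim to $\SH$, and this is essentially how \cite{CP23} establishes the result as well.

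For the splitting, your computation $\Delta_*([\CP^1\times\CP^1]-[\CP^2])=1$ is correct, but the ``iterative correction'' step is where the argument is incomplete. The map $s=\sigma\cdot(-)$ is $\MU$-linear while $\Delta$ is not, so $\Delta\circ s\in\MU^0(\MU)$ has no reason to be the identity, and adjusting $\sigma$ by higher-degree elements of $\pi_*(\MU)$ still only produces $\MU$-linear maps---you are varying over a much smaller space than $\MU^0(\MU)$, and it is not clear a priori that an $\MU$-linear section exists at all. The approach in \cite{CP23} is different: they exploit that $\Delta$ is $\MSU$-linear (visible already from the chain~\eqref{chain}, all of whose maps are $\MSL$-module maps), and under $\MU\simeq\MSU\wedge\Th_{\CP^\infty}(\struct(-1)\ominus\struct)$ the ring of $\MSU$-linear self-maps of $\MU$ is controlled by $\MU^*(\CP^\infty)$, where $\Delta$ corresponds to $h\cdot\chi(h)$ and a right inverse can be written down explicitly. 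Your fallback to \cite{CF} is legitimate for what the paper actually uses---surjectivity of $\Delta_*$ on $\pi_*$, equivalently $\pi_{\mathrm{odd}}(\W)=0$---but note that Conner and Floyd work at the level of homotopy groups, and promoting this to a spectrum-level section still requires an extra argument (e.g.\ the $\MSU$-linear one above, or an appeal to the evenness and freeness of all homotopy groups involved together with the structure of $\MU^*(\MU)$).
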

\begin{proof}
    \cite[Proposition 2.11]{CP23}
\end{proof}
In particular, the homotopy groups of $\W$ can be computed as $\Ker(\Delta_*\colon\pi_*(\MU)\to \pi_{*-4}(\MU))$. Thus, they are free abelian and concentrated in even degrees.
\subsection{Homotopy groups of \texorpdfstring{$\MSU$}{MSU}}
By construction, $(\delta)^2=\Sigma^2\delta\circ\delta=0$, and there is a chain complex of abelian groups \[\cdots\to \pi_{n+2}(\W)\xrightarrow{\delta_*}\pi_n(\W)\xrightarrow{\delta_*}\pi_{n-2}(\W)\to \cdots.\]
Denote by $\Hml_n(\W,\delta)$ (respectively $\Cyc_n(\W,\delta)$, $\Bnd_n(\W,\delta)$) its homology (respectively cycles, boundaries).
\begin{lemma}\label{appendix:lemma_ker_delta_top}
    Suppose that $a$ and $b$ are elements of $\pi_*(\W)$. Then we have \begin{gather*} \Delta_*(a\cdot b)=-2\cdot\partial_*(a)\cdot\partial_*(b), \\
    \partial_*(a\cdot b)=\partial_*(a)\cdot b+a\cdot\partial_*(b)+a_{1,1}\cdot\partial_*(a)\cdot\partial_*(b),\end{gather*} where multiplication is performed in $\pi_*(\MU)$ and $a_{1,1}=-[\CP^1]\in\pi_2(\MU)$. Combining these formulas with Lemma \ref{appendix:lemma_boundary}, we get that $\Cyc_*(\W,\delta)$ is a subring of $\pi_*(\MU)$. 
\end{lemma}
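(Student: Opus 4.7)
The plan is to compute both formulas geometrically: represent $a, b \in \pi_*(\W)$ by pairs $(M, l_M)$, $(N, l_N)$ where $M, N$ are stably almost complex and $l_M \colon M \to \CP^1$, $l_N \colon N \to \CP^1$ are the lifts providing the $\CP^1$-structure (so $\det \xi_M \cong l_M^* \struct(-1)$ and analogously for $N$). Under $\bar c_*$ we view $a, b \in \pi_*(\MU)$, and the product $a \cdot b$ is represented by $M \times N$ with classifying map the external Whitney sum $\xi_M \boxplus \xi_N$. For any cohomological operation $\varphi$ corresponding via the Thom isomorphism to a characteristic class $c_\varphi \in \MU^*(\BU)$, one has $\varphi_*(ab) = \langle c_\varphi(\xi_M \boxplus \xi_N), [M] \times [N] \rangle$, and expanding $c_\varphi$ of the Whitney sum via the formal group law $F$ of $\MU$ yields a universal Leibniz-type expansion.

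The central input is a vanishing forced by the $\CP^1$-structure. Writing $x = c_1(\det \mathrm{EU})$ and $\bar x = c_1(\det \mathrm{EU}^\vee) = [-1]_F(x)$, the identification $\det \xi_M \cong l_M^* \struct(-1)$ yields $x(\xi_M) = l_M^* c_1(\struct(-1))$; since $c_1(\struct(-1))^2 = 0 \in \MU^4(\CP^1)$ for dimensional reasons, $x(\xi_M)^2 = 0$ in $\MU^*(M)$. Writing $\bar x = x \cdot u(x)$ with $u(x)$ a unit power series, we obtain $x(\xi_M)^i \bar x(\xi_M)^j = 0$ whenever $i + j \geq 2$; in particular $(x\bar x)(\xi_M) = 0$ and $x(\xi_M) = -\bar x(\xi_M)$. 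The same identities hold on $N$.

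With these identities, the Whitney-sum formulas collapse dramatically on $\MU^*(M \times N)$: since the first Chern class of a tensor product of line bundles is $F$ applied to their first Chern classes, $\bar x(\xi_M \boxplus \xi_N) = p_1^* \bar x + p_2^* \bar x + a_{1,1}\, p_1^* \bar x \cdot p_2^* \bar x$ (higher terms $a_{i,j}\, p_1^* \bar x^i \cdot p_2^* \bar x^j$ vanish for $i \geq 2$ or $j \geq 2$), and analogously for $x(\xi_M \boxplus \xi_N)$. Pairing with $[M] \times [N]$ via the K\"unneth rule $\langle p_1^*u \cdot p_2^*v, [M] \times [N] \rangle = \langle u, [M] \rangle \cdot \langle v, [N] \rangle$ at once produces $\partial_*(ab) = \partial_*(a) \cdot b + a \cdot \partial_*(b) + a_{1,1}\, \partial_*(a) \partial_*(b)$. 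For $\Delta = x \bar x$, the product $x(\xi_M \boxplus \xi_N) \cdot \bar x(\xi_M \boxplus \xi_N)$ expands into nine monomials, of which the diagonal terms $p_i^* x \cdot p_i^* \bar x$ vanish because $(x \bar x)(\xi_M) = 0 = (x \bar x)(\xi_N)$, and every $a_{1,1}$ cross-term also vanishes upon inspection; only $p_1^* x \cdot p_2^* \bar x$ and $p_2^* x \cdot p_1^* \bar x$ survive. Substituting $x(\xi_M) = -\bar x(\xi_M)$ and $x(\xi_N) = -\bar x(\xi_N)$ in the final pairing then yields $\Delta_*(ab) = -2\, \partial_*(a) \partial_*(b)$.

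The subring statement follows at once: Lemma \ref{appendix:lemma_boundary} together with the injectivity of $\bar c_*$ from Proposition \ref{appendix:split_cofib} identifies $\Cyc_*(\W, \delta)$ with $\pi_*(\W) \cap \ker \partial_* \subset \pi_*(\MU)$. For $a, b$ in this subgroup the two formulas give $\partial_*(ab) = 0$ and $\Delta_*(ab) = 0$, so $ab \in \ker \Delta_* = \pi_*(\W)$ (by Proposition \ref{appendix:split_cofib}) and $ab \in \ker \partial_*$, i.e.\ $ab \in \Cyc_*(\W, \delta)$. The only substantive step is the vanishing $x(\xi_M)^2 = 0$; beyond that, the main thing to watch is sign and coefficient bookkeeping in the formal group law expansion.
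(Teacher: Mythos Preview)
Your proof is correct and follows the standard argument from the cited references (Stong, Chapter~X, and \cite[Lemma~6.5]{CLP}): the key vanishing $c_1(\det\xi_M)^2=0$ for $\CP^1$-structured manifolds collapses the formal group law expansion of $c_1$ on a Whitney sum to the displayed formulas, and the subring claim then follows exactly as you indicate via Proposition~\ref{appendix:split_cofib} and Lemma~\ref{appendix:lemma_boundary}. The paper itself does not reproduce this argument but only cites these sources, so your proposal faithfully supplies the omitted details.
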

\begin{proof}
    \cite[Chapter X]{Sto}, see also \cite[Lemma 6.5]{CLP}. 
\end{proof}
In particular, the ring homomorphism $\pi_*(\MSU)\to \pi_*(\MU)$ factors through $\Cyc_*(\W,\delta)$. In fact, the induced map $\pi_*(\MSU)\to \Cyc_*(\W,\delta)$ becomes a ring isomorphism after tensoring with $\Z[\nicefrac{1}{2}]$. This can be seen from the Adams--Novikov spectral sequence.
\begin{theorem}\label{appendix:thm_msu_1/2}
    There are isomorphisms of graded rings \[\pi_*(\MSU)[\nicefrac{1}{2}]\cong \Cyc_*(\W,\delta)[\nicefrac{1}{2}]\cong \Z[\nicefrac{1}{2}][x_2,x_3,\dots],\] where $\mathrm{deg}(x_i)=2i$. In particular, the homotopy groups of $\MSU$ do not contain odd torsion.
\end{theorem}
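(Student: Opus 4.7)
The plan is to combine the two cofiber sequences from Propositions \ref{appendix:prop_cofib_1} and \ref{appendix:split_cofib} with the classical relation $2\etatop=0$ in $\pi_1(\sph_{\mathrm{top}})$, which makes $\etatop$ act trivially on any $\Z[1/2]$-module. From there I will first establish the ring identification $\pi_*(\MSU)[1/2]\simeq\Cyc_*(\W,\delta)[1/2]$, then match Poincar\'e series, and finally promote the additive comparison to a polynomial ring isomorphism.

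For the identification with $\Cyc_*(\W,\delta)[1/2]$, I would look at the long exact sequence associated to the cofiber sequence $\Sigma\MSU\xrightarrow{\etatop}\MSU\xrightarrow{c}\W$. Since $\etatop$ acts by zero after inverting $2$, this sequence splits into short exact sequences
\[0\to \pi_n(\MSU)[1/2]\xrightarrow{c_*}\pi_n(\W)[1/2]\xrightarrow{d_*}\pi_{n-2}(\MSU)[1/2]\to 0.\]
From the definition $\delta=\Sigma^2 c\circ d$ one gets $\delta_*=\Sigma^2 c_*\circ d_*$, so injectivity of $c_*[1/2]$ yields $\Ker(d_*[1/2])=\Ker(\delta_*[1/2])=\Cyc_*(\W,\delta)[1/2]$. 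The composite $\pi_*(\MSU)[1/2]\to\pi_*(\W)[1/2]\to\pi_*(\MU)[1/2]$ is then injective (the second map is injective by Proposition \ref{appendix:split_cofib}), and its image agrees with the image of $\Cyc_*(\W,\delta)[1/2]\hookrightarrow\pi_*(\MU)[1/2]$. Since the target is a subring by Lemma \ref{appendix:lemma_ker_delta_top} and $\MSU\to\MU$ is a ring map, this is an isomorphism of rings.

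Next, I would compute the Poincar\'e series of $\pi_{2*}(\MSU)[1/2]$. The cofiber sequence $\Sigma\MSU\to\MSU\to\W$ splits after inverting $2$ (since $\etatop[1/2]=0$), yielding $\W[1/2]\simeq\MSU[1/2]\vee\Sigma^2\MSU[1/2]$, and combining with the splitting $\MU\simeq\W\vee\Sigma^4\MU$ from Proposition \ref{appendix:split_cofib} and iterating gives
\[\MU[1/2]\simeq\bigvee_{k\geq 0}\Sigma^{2k}\MSU[1/2].\]
Writing $M(q)$ and $P(q)$ for the Poincar\'e series of $\pi_{2*}(\MSU)[1/2]$ and $\pi_{2*}(\MU)[1/2]$ with $q$ in degree $2$, this forces $P(q)=M(q)/(1-q)$. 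Since $P(q)=\prod_{i\geq 1}(1-q^i)^{-1}$, we obtain $M(q)=\prod_{i\geq 2}(1-q^i)^{-1}$, matching the Poincar\'e series of $\Z[1/2][x_2,x_3,\dots]$ with $|x_i|=2i$.

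Finally, for the polynomial ring structure, rationally $\pi_*(\MSU)\otimes\Q$ is polynomial on generators of degrees $2i$ with $i\geq 2$, which follows from the Thom isomorphism combined with the computation $H_*(\mathrm{BSU};\Q)=\Q[c_2,c_3,\dots]$ and the rational Hurewicz theorem. I would lift representatives of these generators to elements $x_i\in\pi_{2i}(\MSU)[1/2]$ and consider the induced ring map $\phi\colon\Z[1/2][x_2,x_3,\dots]\to\pi_*(\MSU)[1/2]$. Both sides are torsion-free $\Z[1/2]$-modules (the target since it embeds into $\pi_*(\MU)[1/2]$) with matching Poincar\'e series, and $\phi$ is a rational isomorphism, hence injective. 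The main obstacle will be showing that the lifts $x_i$ can be chosen so that $\phi$ is surjective in every degree, equivalently that the $\Z[1/2]$-module of Milnor-indecomposables of $\pi_{2n}(\MSU)[1/2]$ is free of rank one for all $n\geq 2$; this rules out odd-primary torsion in the indecomposables, and can be achieved by an inductive argument comparing images in $\pi_{2*}(\MU)[1/2]=\Z[1/2][a_1,a_2,\dots]$ using the explicit description of $\Cyc_*(\W,\delta)$ from the first step. Once $\phi$ is surjective it is a ring isomorphism, and the ``no odd torsion'' conclusion follows since the source is torsion-free.
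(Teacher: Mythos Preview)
The paper does not prove this classical result; it simply cites Stong's book and \cite{CLP}, and in the sentence preceding the theorem hints that the first isomorphism ``can be seen from the Adams--Novikov spectral sequence.'' Your proposal goes further and sketches an actual argument, so let me comment on it directly.

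Steps 1 and 2 are correct and clean. The identification $\pi_*(\MSU)[1/2]\simeq\Cyc_*(\W,\delta)[1/2]$ via $\etatop[1/2]=0$ is right, as is the Poincar\'e-series count from the two splittings. Note that absence of odd torsion in $\pi_*(\MSU)$ already follows from step 1, since you have embedded into $\pi_*(\MU)[1/2]$.

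Step 3 is where the gap is. You correctly isolate the key point---the indecomposables $Q_{2n}$ must be free of rank one over $\Z[1/2]$ for each $n\geq 2$---but the justification is not self-contained. The phrase ``using the explicit description of $\Cyc_*(\W,\delta)$ from the first step'' is circular: step 1 only identifies $\Cyc_*(\W,\delta)[1/2]$ with $\pi_*(\MSU)[1/2]$; it does not compute either. Knowing the Poincar\'e series, rational polynomiality, and an embedding into a polynomial ring does \emph{not} by itself force polynomiality over $\Z[1/2]$; one needs real input about the structure of $\Ker(\delta_*)$ inside $\pi_*(\MU)$. In the classical references this is supplied either by Novikov's odd-primary Adams spectral sequence argument, or (in the Conner--Floyd/Stong route) by an explicit analysis of $\partial_*$ on $\pi_*(\W)\subset\pi_*(\MU)$ via the formulas in Lemma \ref{appendix:lemma_ker_delta_top}. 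Your outline is sound, but the ``inductive argument'' you point to is exactly the content of the cited sources and needs to be carried out rather than asserted.
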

\begin{proof}
    \cite[Chapter X]{Sto}, see also \cite[Theorem 5.11]{CLP} for a modern exposition. 
\end{proof}
The $2$-primary torsion subgroup of $\pi_*(\MSU)$ was analysed by Conner and Floyd using the homology groups $\Hml_*(\W,\delta)$; see \cite{CF}. We summarize the answer below.
\begin{proposition}\label{prop:homology_top}
    The group $\Hml_{2n}(\W,\delta)$ is isomorphic to $(\Z/2)^{p(\frac{n}{4})}$ if $n\equiv 0\pmod*{4}$, to $(\Z/2)^{p(\frac{n-2}{4})}$ if $n\equiv 2\pmod*{4}$, and trivial otherwise.
\end{proposition}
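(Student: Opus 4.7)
The plan is to follow the classical computation carried out by Conner and Floyd \cite{CF}; we only sketch the structure of the argument.

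First, I would use Lemma \ref{appendix:lemma_boundary} together with the splitting furnished by Proposition \ref{appendix:split_cofib} to identify $\bar{c}_*$ as an injection $\pi_*(\W) \hookrightarrow \pi_*(\MU)$ with image $\Ker(\Delta_*)$, and to translate $\delta_*$ into $-\partial_*$ restricted to this subring. The problem thereby reduces to a calculation internal to $\pi_*(\MU) \simeq \Z[a_1, a_2, \dots]$, using the derivation-like identities for $\partial_*$ and $\Delta_*$ recorded in Lemma \ref{appendix:lemma_ker_delta_top}. Since $\pi_*(\W)$ is torsion-free, the computation of $\Hml_{2n}(\W,\delta)$ immediately gives that odd primary torsion is absent, so only $2$-primary phenomena can contribute.

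Second, I would invoke the Bockstein-type exact couple associated with the cofiber sequence $\Sigma\MSU\xrightarrow{\etatop}\MSU\xrightarrow{c}\W$. Its $E_1$-page is $\pi_*(\W)$ with differential $\delta_* = c_*\circ d_*$, so that $E_2 = \Hml_*(\W,\delta)$. Because $\etatop$ is nilpotent on $\pi_*(\MSU)$, this spectral sequence collapses after finitely many pages and its abutment is controlled by the $\etatop$-adic filtration on $\pi_*(\MSU)$. The known structure of $2$-primary torsion in $\pi_*(\MSU)$, namely $(\Z/2)^{p(k)}$ concentrated in degrees $8k+1$ and $8k+2$, together with the fact that these torsion classes are $\etatop$-multiples respectively $\etatop^2$-multiples of suitable integral classes, then produces two distinct sources of $\F_2$-summands in $\Hml_*(\W,\delta)$. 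Accounting for the degree shifts of $d_*$ and $c_*$, these sources land precisely in total degrees $2n$ with $n\equiv 0\pmod 4$, contributing $(\Z/2)^{p(n/4)}$, and $n\equiv 2\pmod 4$, contributing $(\Z/2)^{p((n-2)/4)}$, while $n$ odd yields nothing.

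The hard part is the combinatorial bookkeeping of the partition generators: one must match the kernel of $\partial_*$ modulo its image, or equivalently the layers of the $\etatop$-filtration on $\pi_*(\MSU)$, against explicit polynomial generators of $\pi_*(\MSU)$ constructed in \cite{Nov, CF}. A self-contained proof would recapitulate these classical constructions and verify the partition-theoretic count in each congruence class modulo $4$. Since Proposition \ref{prop:homology_top} is recorded only to support the motivic computations in the body of the paper via Theorem \ref{theorem:conner_floyd_hom}, the cleanest approach is simply to cite \cite[\S17]{CF} or \cite[Chapter X]{Sto} for the full argument.
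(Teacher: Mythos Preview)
The paper's own proof is nothing more than a bare citation to \cite{CF} and \cite[Chapter~X]{Sto}, so your final recommendation to defer to these references is exactly what the paper does.

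That said, the sketch in your second paragraph is logically circular. You propose to read off $\Hml_*(\W,\delta)$ from the known $2$-primary torsion in $\pi_*(\MSU)$ via the $\etatop$-Bockstein spectral sequence, but in the classical development (and in the paper's ordering of Appendix~\ref{appendix_B}) the implication runs the other way: Conner and Floyd first compute $\Hml_*(\W,\delta)$ by a direct combinatorial analysis inside $\pi_*(\MU)$---essentially making your first paragraph precise with explicit polynomial generators and the derivation identity of Lemma~\ref{appendix:lemma_ker_delta_top}---and only afterwards use the Bockstein exact couple to extract Theorem~\ref{thm:torsion_top}. If you invoke Theorem~\ref{thm:torsion_top} as an \emph{input} here, you owe an independent derivation of the torsion in $\pi_*(\MSU)$, and none of the standard references supply one. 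Even granting that input, recovering $E_2$ from the abutment would further require knowing that the spectral sequence degenerates at $E_2$, which is itself part of what Conner and Floyd prove.

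Your first paragraph points in the correct direction; carrying it out is precisely the ``hard combinatorial bookkeeping'' you acknowledge, and is the content of the cited references.
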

\begin{proof}
    \cite{CF}, \cite[Chapter X]{Sto}.
\end{proof}
\begin{theorem}\label{thm:torsion_top}
    Every torsion element in $\pi_*(\MSU)$ has order $2$. There is no $2$-torsion except in degrees $8k+1$ and $8k+2$, where the $2$-torsion subgroup is isomorphic to $(\Z/2)^{p(k)}$. Moreover, the maps \[\etatop\colon\pi_{8k}(\MSU)\to \pi_{8k+1}(\MSU)\ \ \text{and}\ \ \etatop^2\colon\pi_{8k}(\MSU)\to \pi_{8k+2}(\MSU)\] are surjective onto the $2$-torsion subgroups. 
\end{theorem}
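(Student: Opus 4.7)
The plan is to adapt the motivic strategy of Sections~\ref{section-5} and~\ref{section-6} to topology, combining the cofiber sequence $\Sigma\MSU \xrightarrow{\etatop} \MSU \to \W$ of Proposition~\ref{appendix:prop_cofib_1}, the splitting provided by Proposition~\ref{appendix:split_cofib}, and the computation of $\Hml_*(\W,\delta)$ in Proposition~\ref{prop:homology_top}.

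First I would extract structural consequences from the long exact sequence of the cofiber sequence. Since $\pi_*(\W)$ is free abelian and concentrated in even degrees, the LES forces, for every $m$, the map $\etatop\colon \pi_{2m-2}(\MSU) \twoheadrightarrow \pi_{2m-1}(\MSU)$ to be surjective, so $\pi_{\mathrm{odd}}(\MSU)$ is annihilated by $2$ (using $2\etatop=0$ in $\pi_*(\sph)$). It also produces a short exact sequence $0 \to \pi_{2m-1}(\MSU) \xrightarrow{\etatop} \pi_{2m}(\MSU) \to \ker(d_*) \to 0$ which splits because the quotient embeds in the free abelian group $\pi_{2m}(\W)$. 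Consequently every torsion element has order $2$, the torsion subgroup in degree $2m$ equals $\etatop\cdot\pi_{2m-1}(\MSU)=\etatop^2\cdot\pi_{2m-2}(\MSU)$, and the surjectivity statements for $\etatop$ and $\etatop^2$ onto the $2$-torsion become automatic once the torsion is shown to be nontrivial only in degrees $8k+1$ and $8k+2$.

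To locate and count the torsion I would pass to the derived couple of the exact couple $(\pi_*(\MSU),\pi_*(\W),\etatop,c_*,d_*)$, producing the topological analog of Lemma~\ref{lemma:derived_exact}:
\begin{equation*}
\cdots \to \Hml_{n+2}(\W,\delta) \to \etatop\cdot\pi_{n-1}(\MSU) \xrightarrow{\etatop} \etatop\cdot\pi_n(\MSU) \to \Hml_n(\W,\delta) \to \cdots
\end{equation*}
By Proposition~\ref{prop:homology_top}, $\Hml_*(\W,\delta)$ vanishes outside degrees divisible by $4$, so multiplication by $\etatop$ is an isomorphism between consecutive $\etatop\cdot\pi_n(\MSU)$ over long ranges. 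Beginning from the trivial base $\etatop\cdot\pi_n(\MSU)=0$ for $n<0$ and unwinding the sequence, one propagates to the desired global description.

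The main obstacle is that exactness alone does not determine the boundary maps $\Hml_{n+2}(\W,\delta)\to \etatop\cdot\pi_{n-1}(\MSU)$: a priori the nontrivial generators of $\Hml_4,\Hml_8,\dots$ could either kill an existing $\etatop$-tower or create new torsion, and the naive count is visibly inconsistent with low-degree facts such as $\pi_3(\MSU)=0$. To force the correct choices I would introduce real K-theoretic Pontryagin characteristic numbers $p_\omega\colon \pi_*(\MSU)\to \pi_*(\mathrm{KO})$, the topological shadow of the construction in Subsection~\ref{subsection:pontryagin_char}, and use them to detect the generators that survive $\etatop$-multiplication. Combined with the classical identification of $\pi_*(\MSU[\etatop^{-1}])$ in terms of real K-theory, this forces $\etatop\cdot\pi_{8k}(\MSU)\cong(\Z/2)^{p(k)}$ and $\etatop\cdot\pi_n(\MSU)=0$ for $n\not\equiv 0\pmod 8$, which translates back via the first step into the claimed description of the $2$-primary torsion.
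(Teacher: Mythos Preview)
The paper does not give its own proof of this statement; it is recorded in the appendix with a citation to \cite{CF} (and \cite{CLP} for exposition). Your first paragraph---extracting from the cofiber sequence that all torsion has order~$2$, that $\pi_{\text{odd}}(\MSU)=\etatop\cdot\pi_{\text{even}}(\MSU)$, and that the torsion in even degree is $\etatop^2\cdot\pi_{\text{even}-2}(\MSU)$---is correct and is exactly how Conner--Floyd begin.

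The gap is in the last paragraph. Your appeal to ``the classical identification of $\pi_*(\MSU[\etatop^{-1}])$ in terms of real K-theory'' is vacuous: in topology $\etatop$ is nilpotent (already $\etatop^4=0$ in $\pi_*(\sph)$, and $\etatop^3=0$ in $\pi_*(\MSU)$ once $\pi_3(\MSU)=0$ is known), so $\MSU[\etatop^{-1}]=0$. This is precisely where the motivic argument cannot be transported back: Sections~\ref{section-5}--\ref{section-6} use the Bachmann--Hopkins computation of the \emph{nontrivial} $\eta$-periodization $\pi_*(\MSL[\eta^{-1}])\simeq\W(k)[y_4,y_8,\dots]$ as an external seed for the derived-couple analysis (Corollary~\ref{cor:eta_stab_rev} feeds Lemma~\ref{lemma:trickysubgroup_quad_closed}), and the $\KQ$-Pontryagin numbers enter only to compare with that known target. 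Stripped of that seed, the derived exact sequence together with Proposition~\ref{prop:homology_top} does not by itself determine whether the classes in $\Hml_{8k}(\W,\delta)$ and $\Hml_{8k+4}(\W,\delta)$ create or kill torsion, exactly the ambiguity you flag; $\mathrm{KO}$-Pontryagin numbers can detect elements but cannot tell you the rank of $\pi_{8k+1}(\MSU)$ without an independent upper bound. Conner--Floyd resolve this not via characteristic numbers but by explicit computation inside $\pi_*(\W)$: they produce concrete polynomial generators, evaluate $\delta$ on them, and show directly that the $\etatop$-Bockstein spectral sequence collapses at $E_2$.
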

\begin{proof}
    \cite{CF}, see also \cite[\S 1.5]{CLP} for a modern exposition. 
\end{proof}
\begin{remark}\label{appendix:image_msu}
    Stong constructed a non-canonical multiplication on $\pi_*(\W)$ such that the quotient $\pi_*(\MSU)/{}_2\pi_*(\MSU)$ is a subring of $\pi_*(\W)$ \cite[Chapter X]{Sto}, see also \cite[Theorem 5.11]{CLP}. Moreover, the image of $\pi_n(\MSU)$ in $\pi_n(\W)$ is given by $\Cyc_n(\W,\delta)$ if $n\not\equiv 4\pmod*{8}$ and by $\Bnd_n(\W,\delta)$ if $n\equiv 4\pmod*{8}$. However, to the best of our knowledge an explicit description of the ring $\pi_*(\MSU)/{}_2\pi_*(\MSU)$ is unknown.
\end{remark}
    Recall that there are Pontryagin classes of oriented real vector bundles with values in the real $\K$-theory $p_i\in\mathrm{KO}^*(\mathrm{BSO}(2m))$; see \cite[Chapter X]{Sto}. They induce $\mathrm{KO}$-Pontryagin classes of special unitary bundles via $\mathrm{BSU}(m)\to \mathrm{BSO}(2m)$. For a partition $\omega=(\omega_1,\omega_2,\dots,\omega_k)$ the respective Pontryagin characteristic number of a stable $\mathrm{SU}$-manifold $\xi:M\to \mathrm{BSU}$ is the Kronecker pairing $\la{\xi^*(p_\omega),[M]\ra}=\la{\xi^*(p_{\omega_1}\dots p_{\omega_k}),[M]\ra}$. Note that the classical notation for the characteristic class $p_\omega$ is $\pi^\omega$.
\begin{theorem}[Anderson--Brown--Peterson]
    Let $M$ be a stable special unitary manifold. Then the class of $M$ in the $\mathrm{SU}$-cobordism ring $\pi_*(\MSU)$ is completely determined by the Chern numbers $c_\omega[M]$ and the Pontryagin numbers $p_{\omega}[M]$.
\end{theorem}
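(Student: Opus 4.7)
The plan is to adapt the motivic argument of Theorem \ref{theorem:motivic_abp} using the topological ingredients already assembled in the appendix. Let $\alpha \in \pi_n(\MSU)$ have all Chern numbers $c_\omega[\alpha]$ and all $\mathrm{KO}$-Pontryagin numbers $p_\omega[\alpha]$ trivial; it suffices to show that $\alpha = 0$.

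First, consider the composition $\pi_n(\MSU) \xrightarrow{c_*} \pi_n(\W) \xrightarrow{\bar{c}_*} \pi_n(\MU)$. Proposition \ref{appendix:split_cofib} asserts that the cofiber sequence $\W \to \MU \to \Sigma^4 \MU$ splits, so $\bar{c}_*$ is split injective. By the classical computation of $\pi_*(\MU)$ due to Milnor and Novikov, Chern numbers detect elements of $\pi_*(\MU)$. Since all Chern numbers of $\bar{c}_* c_*(\alpha)$ vanish, we get $\bar{c}_* c_*(\alpha) = 0$, and hence $c_*(\alpha) = 0$. The cofiber sequence in Proposition \ref{appendix:prop_cofib_1} then gives $\alpha \in \etatop \cdot \pi_{n-1}(\MSU)$. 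By Theorem \ref{thm:torsion_top}, this is possible only when $n \equiv 1$ or $2 \pmod 8$, so we may write $\alpha = \etatop^i \cdot \beta$ with $i \in \{1, 2\}$, $n - i = 8k$, and $\beta \in \pi_{8k}(\MSU)$.

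Next, by naturality of the Hurewicz construction,
\[
p_\omega(\alpha) \;=\; \etatop^i \cdot p_\omega(\beta) \;\in\; \pi_n(\mathrm{KO}).
\]
Since $\pi_{8k}(\mathrm{KO}) \simeq \Z$ and $\pi_{8k+i}(\mathrm{KO}) \simeq \Z/2$ for $i \in \{1, 2\}$, with multiplication by $\etatop^i$ identified with reduction modulo $2$, vanishing of every $p_\omega(\alpha)$ forces $p_\omega(\beta) \in 2\Z$ for every $\omega$. Equivalently, every mod-$2$ reduction of the $\mathrm{KO}$-Pontryagin numbers of $\beta$ is zero.

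The heart of the proof, and its main obstacle, is the resulting claim: mod-$2$ vanishing of all $\mathrm{KO}$-Pontryagin numbers of $\beta$ forces $\etatop^i \cdot \beta = 0$. Equivalently, the characteristic number pairing $\pi_{8k+i}(\MSU) \to \prod_\omega \pi_{8k+i}(\mathrm{KO}) = \prod_\omega \Z/2$ is injective. This is the essential content of the Anderson--Brown--Peterson theorem \cite{ABP66}: at the prime $2$, $\MSU$ splits as a wedge of suspensions of $\mathrm{BP}\langle 2 \rangle$ and $\HZ/2$, with the $\mathrm{BP}\langle 2 \rangle$ summands detected by $\mathrm{KO}$-Pontryagin numbers and the $\HZ/2$ summands by Chern numbers. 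Granted this splitting, the required injectivity follows by dualizing the Thom isomorphism $\mathrm{KO} \wedge \MSU \simeq \mathrm{KO} \wedge \Sigma^\infty_+ \mathrm{BSU}$ and invoking the fact that $\mathrm{KO}^*(\mathrm{BSU})$ is a formal power series ring in the $\mathrm{KO}$-Pontryagin classes $p_1, p_2, \dots$ over $\mathrm{KO}^*$.
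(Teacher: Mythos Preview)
The paper does not supply an independent proof of this statement; it simply cites \cite[Theorem 2.1]{ABP66}. Your reduction steps---showing that vanishing Chern numbers force $\alpha \in \etatop \cdot \pi_{n-1}(\MSU)$, and then translating the vanishing of $p_\omega(\alpha)$ into mod-$2$ vanishing of $p_\omega(\beta)$---are correct and mirror the structure of the motivic Theorem~\ref{theorem:motivic_abp}. However, your final paragraph is circular: the injectivity of $\pi_{8k+i}(\MSU) \to \prod_\omega \pi_{8k+i}(\mathrm{KO})$ for $i\in\{1,2\}$ is precisely the substance of the theorem you are asked to prove, and invoking \cite{ABP66} (or a splitting derived from it) at that point is no different from the paper's bare citation. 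Incidentally, the decomposition of $\MSU_{(2)}$ into suspensions of $\mathrm{BP}\langle 2\rangle$ and $\mathrm{H}\Z/2$ is not in \cite{ABP66}; it is a later result of Pengelley.

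The reason the motivic argument does not transplant cleanly is that its non-circular input is the Bachmann--Hopkins computation of $\pi_*(\MSL[\eta^{-1}])$, which underlies Corollary~\ref{cor:pontryagin_8n+1}. In topology $\etatop$ is nilpotent, so there is no $\etatop$-periodic spectrum whose homotopy can serve as an external anchor; the required injectivity on the torsion of $\pi_*(\MSU)$ must instead come from direct Adams spectral sequence computations, which is what \cite{ABP66} actually does and what you have not supplied.
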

\begin{proof}
    \cite[Theorem 2.1]{ABP66}.
\end{proof}

\bibliographystyle{amsalpha}
\bibliography{MSL}

\end{document}